\documentclass[sglanonrev]{informs4}
\RequirePackage{tgtermes}
\RequirePackage{newtxtext}
\RequirePackage{newtxmath}
\RequirePackage{endnotes}

\OneAndAHalfSpacedXII 

\usepackage{algorithm}
\usepackage{algpseudocode}
\usepackage{tikz}

\usepackage{natbib}
 \bibpunct[, ]{(}{)}{,}{a}{}{,}%
 \def\bibfont{\small}%
\EquationsNumberedThrough    

\TheoremsNumberedThrough     
\ECRepeatTheorems  %

\MANUSCRIPTNO{MOOR-0001-2024.00}

\newtheorem{Defi}{Definition}    
\usepackage{subfigure}  
\usepackage[linesnumbered, ruled, vlined, algo2e]{algorithm2e}  
\OneAndAHalfSpacedXI  
\usepackage{booktabs}  
\usepackage{multirow}  
\usepackage{anyfontsize}  
\usepackage{bbm}  
\usepackage{amssymb}  

\begin{document}


\RUNAUTHOR{Jia et~al.}

\RUNTITLE{The Distributionally Robust CIRP}

\TITLE{The Distributionally Robust Cyclic Inventory Routing Problem}

\ARTICLEAUTHORS{%
\AUTHOR{Menglei Jia}
\AFF{School of Maritime Economics and Management, Dalian Maritime University, Dalian 116026, China, \EMAIL{jia\_menglei@163.com}}
\AFF{Department of Management Science, Antai College of Economics and Management, Shanghai Jiao Tong University, Shanghai 200030, China}

\AUTHOR{Albert H. Schrotenboer}
\AFF{Operations, Planning, Accounting and Control Group, Department of Industrial Engineering \& Innovation Sciences, Eindhoven University of Technology, 5612 AZ Eindhoven, Netherlands, \EMAIL{a.h.schrotenboer@tue.nl}}

\AUTHOR{Ahmadreza Marandi}
\AFF{Operations, Planning, Accounting and Control Group, Department of Industrial Engineering \& Innovation Sciences, Eindhoven University of Technology, 5612 AZ Eindhoven, Netherlands, \EMAIL{a.marandi@tue.nl}}

\AUTHOR{Feng Chen$^*$}
\AFF{Sino-US Global Logistics Institute, Antai College of Economics and Management, Shanghai Jiao Tong University, Shanghai 200030, China, \EMAIL{fchen@sjtu.edu.cn}}
} 

\ABSTRACT{%
We study the cyclic inventory routing problem that involves joint decisions on vehicle routing and inventory replenishment on an infinite, cyclic horizon. It considers a single warehouse and a set of geographically dispersed retailers. We model retailer demand as random variables with uncertain distributions belonging to a moment-based ambiguity set. We develop a distributionally robust optimization formulation that minimizes the worst-case expected cost over the ambiguity set, while ensuring service reliability through a distributionally robust chance constraint. Our main results are that we prove that the worst-case expected inventory cost is attained under a multi-point distribution, which can be identified a posteriori via linear programming, and that the distributionally robust chance constraint can be reformulated into near-equivalent deterministic forms. This yields a deterministic reformulation of the original problem. To solve it, we design a nested branch-and-price framework, in which the first level partitions retailers into clusters, and the second level concerns routing and replenishment decisions within each cluster. Computational experiments on both synthetic instances and real-world data from SAIC Volkswagen Automobile Co., Ltd. demonstrate the effectiveness and efficiency of the proposed approach.
}%




\KEYWORDS{cyclic inventory routing; distributionally robust optimization; chance constraints; branch-and-price}

\maketitle


\section{Introduction}\label{s1}
Stochastic inventory routing problems (SIRPs) are fundamental to a wide range of real-world logistics systems. It involves a one-to-many distribution network comprising a central warehouse, a geographically dispersed set of retailers, and a fleet of vehicles stationed at the warehouse to replenish retailer inventories, while the retailers face stochastic demand \citep[see, e.g.,][]{gaur2004periodic}. A central feature of SIRPs is the trade-off between the retailers’ inventory-related costs (including holding and backorder costs) and the transportation costs incurred during replenishment: frequent small-quantity deliveries lower inventory costs and improve customer service while less-frequent high-quantity deliveries lower transportation cost and improve vehicle utilization. The SIRP address this trade-off by jointly optimizing inventory decisions and vehicle routing, thereby providing an integrated framework for coordinating replenishment and distribution operations. As a result, they have found broad applicability in domains such as hydrogen distribution \citep{hasturk2024stochastic}, supermarket supply systems \citep{gaur2004periodic}, heating-oil delivery \citep{trudeau1992stochastic}, medical supply distribution \citep{rave2025cyclic}, the automobile aftermarket \citep{jia2024scenario}, and perishable-product logistics \citep{crama2018stochastic}, among others.

However, implementing joint inventory replenishment and transportation decisions over long operational periods poses challenges.  In practice, SIRPs are often solved using a rolling-horizon approach. At each decision epoch, a distribution plan for the next periods is optimized based on the currently observed inventory levels and some measure of future demand. Then, only the first few periods of the resulting plan are executed, after which the problem is re-optimized using updated inventories for the next epoch \citep[see, e.g.,][]{jaillet2002delivery, coelho2014heuristics, jia2024scenario}. This straightforward approach has two main drawbacks, however. First, it generates end-of-horizon  effects \citep{malicki2021cyclic, zojaji2022cyclic}: to reduce costs, replenishment at the end of each horizon is often delayed, producing unrealistically low terminal inventories and potential service disruptions. Second, independently optimizing each horizon reduces plan stability and repeatability, requiring warehouses and retailers to reconcile schedules at every iteration \citep{coelho2012consistency, malicki2021cyclic}. This is impossible in many practical contexts, as  up- and downstream planning processes in the supply chain are depending on it. 
In response, researchers and practitioners are focusing on cyclic schedules that overcome these drawbacks. That is, they design a repeatable distribution plan in which the end of each cycle aligns with the start of the next. This eliminates end-of-horizon effects, reduces schedule reconciliation, improves operational stability by allowing drivers and planners to build familiarity with routes, and provides retailers with predictable delivery intervals. 

Designing cyclic inventory replenishment and transportation schedules gives rise to cyclic inventory routing problems (CIRPs), for which incorporating stochastic demand is particularly challenging: schedules must remain repeatable across cycles while still being able to deal with retailer demand variability both within and across cycles. Early research addressed this tension indirectly by first solving a deterministic CIRP and subsequently adjusting the resulting schedules to hedge against uncertainty \citep[see, e.g.,][]{gaur2004periodic,aghezzaf2008robust}. More recent work on stochastic CIRPs has converged around three principal modeling streams. The first stream compresses stochasticity analytically by deriving closed-form expressions or constructing deterministic or parameterized surrogates for demand-dependent terms, such as order-up-to levels, safety stocks, or expected holding, backlog, and expedited transportation costs, thereby embedding the main effects of uncertainty within a deterministic cyclic framework \citep[see, e.g.,][]{raa2021multi,malicki2021cyclic,raa2023shortfall,hasturk2024stochastic}. The second stream employs chance constraints, which enforce probabilistic service-level and/or vehicle-capacity requirements, yielding nonlinear chance-constrained models that are subsequently reformulated or approximated as tractable mixed-integer programs while retaining explicit probabilistic guarantees \citep[see, e.g.,][]{sonntag2023stochastic,hasturk2024stochastic}. The third stream relies on scenario-based two-stage stochastic programming, as in \citet{rave2025cyclic}, where a finite set of demand scenarios is generated, first-stage cyclic routing and inventory decisions are fixed, and second-stage emergency shipments are optimized for each scenario, thereby directly capturing distributional impacts on both repeating schedules and scenario-specific operations. Solution approaches across these streams are predominantly heuristic or approximation-based; exact methods remain rare, although \citet{sonntag2023stochastic} demonstrate a branch-price-and-cut implementation.

Distributionally robust optimization (DRO) provides a promising framework for modeling stochastic demand in CIRPs; however, its application in this context remains limited. Existing CIRP studies that address stochastic demand, whether through analytical compression of stochasticity, chance constraints, or scenario-based stochastic programming, uniformly assume that the underlying demand distribution is known a priori. DRO relaxes this assumption by enabling decision-making under partial distributional information and optimizing against the worst-case distribution within a prespecified ambiguity set \citep{rahimian2019distributionally, kuhn2025distributionally, jiang2025optimized}. This approach effectively addresses distributional ambiguity, which arises because the true demand distribution is typically unknown or cannot be uniquely identified \citep{cai2025distributionally}, while simultaneously enhancing the robustness of the resulting solutions. This feature is particularly valuable for accommodating cycle-to-cycle demand fluctuations. Moreover, unlike conventional approaches that often assume stationary demand and may yield suboptimal decisions under nonstationary conditions, DRO offers sufficient flexibility to capture within-cycle demand nonstationarity, enabling more responsive cyclic schedules. Despite these advantages, to the best of our knowledge, DRO has only been applied to the classical inventory routing problem (IRP), but not to the CIRP. \citet{cui2023inventory} model a finite-horizon IRP with transportation cost budgets and inventory bounds, minimizing the risk of inventory violations, while routing decisions are fixed at the horizon start and replenishment follows a linear decision rule based on realized demand. Their model assumes a single vehicle with unlimited capacity and is reformulated as a deterministic linear program solved exactly. \citet{che2025robust} consider a finite-horizon IRP with fixed routing and replenishment decisions, minimizing transportation costs as well as worst-case expected holding, backorder, and overcapacity penalties. They propose a globalized DRO framework with soft budget constraints to hedge against ambiguity set misspecification and derive tractable reformulations solved using CPLEX. Compared with these studies, our work provides several advances. First, by incorporating cyclic scheduling for an infinite-horizon setting, we explicitly account for residual inventories at the end of each cycle, which eliminates the need for given initial inventories and avoids EOH effects. Second, relative to \citet{cui2023inventory}, we generalize the model to multi-vehicle planning, and relative to \citet{che2025robust}, we enable adaptive replenishments that respond to realized demand. Finally, we develop a nested branch-and-price algorithm to solve the resulting problem efficiently.

In this work, we present the distributionally robust CIRP (DR-CIRP), which employs a moment-based ambiguity set that constrains the support, mean, and variance of uncertain demand. The objective is to minimize the combined fixed vehicle cost, flexible transportation cost, and worst-case expected inventory costs, including both holding and backorder costs, over all distributions within the ambiguity set. To ensure reliable transportation service, we impose distributionally robust chance constraints, requiring that vehicle capacities are satisfied with high probability under the worst-case distribution. Our main theoretical result is to show that the worst-case expected inventory cost is attained by a multi-point distribution, which can be identified a posteriori via linear programming, and that the distributionally robust chance constraints can be reformulated as equivalent deterministic conditions. 

These results yield a tractable deterministic reformulation of the DR-CIRP, denoted as Deter-CIRP. To solve the Deter-CIRP, we develop a nested branch-and-price framework in which the first level partitions retailers into clusters, while the second level decouples routing and replenishment within each cluster into two subproblems linked through a master problem. Most existing IRP solution approaches tightly couple routing and replenishment by embedding replenishment logic directly into routing formulations. While such integrated designs can facilitate computational tractability, they restrict the flexibility needed to implement advanced inventory control policies and, as a result, may fail to respond adequately to demand variability, leading to less responsive replenishment and higher total costs. The decoupling in our nested branch-and-price framework gives rise to a new variant of the inventory management problem: cyclic replenishment for a single retailer under distributional ambiguity, which, to the best of our knowledge, has not been previously explored in the inventory management literature.

We summarize our key contributions as follows.
\begin{itemize}
    \item We propose a distributionally robust CIRP that models demand uncertainty via moment-based ambiguity sets and safeguards service reliability through distributionally robust chance constraints. We prove that (i) the worst-case expected inventory cost is realized under discrete multi-point demand distributions that can be efficiently identified via tractable linear programming, and (ii) the chance constraints admit deterministic reformulations.
    
    \item We propose a novel decomposition framework that decouples routing and replenishment decisions through a master coordination problem, enabling the integration of advanced inventory control strategies into inventory routing. This leads to the identification of a new inventory management problem, cyclic replenishment for a single retailer under distributional ambiguity, which has not been studied in the literature, and we develop an efficient lazy-constraint-based solution method for its solution.

    \item Our framework accommodates multiple service policies with minimal structural modifications. We formally define and analyze three representative service policies that extend and generalize existing CIRP models, thereby facilitating policy evaluation and selection under diverse operational settings.
    
    \item We validate the practical relevance and robustness of our framework through computational experiments on synthetic instances and a real-world case study using data from SAIC Volkswagen Automotive Co., Ltd. The results show that our best-performing approach reduces transportation costs by 32.27\% on average compared with the company’s actual operational practice.
\end{itemize}

The remainder of the paper is organized as follows. Section~\ref{s2} introduces the problem setting and formulates the distributionally robust model. Section~\ref{s3} derives deterministic reformulations of the worst-case inventory cost and chance constraints. Section~\ref{s4} develops the nested branch-and-price decomposition framework. Section~\ref{s5} reports computational results, and Section~\ref{s6} concludes the paper and outlines directions for future research. Proofs of all propositions and theorems are provided in Online Appendix~\ref{oa-proofs}.

\subsection{Notations}
We adopt the following notations throughout the paper. We denote by $\mathbb{R}$ the set of real numbers and by $\mathbb{R}^+$ the set of nonnegative reals, while $\mathbb{Z}$ and $\mathbb{Z}^+$ represent the sets of integers and nonnegative integers, respectively. For $a,b\in \mathbb{Z}$, we write $[a,b]:=\{a,a+1,\dots,b\}$ for the closed integer interval and $[a,b):=\{a,a+1,\dots,b-1\}$ for the half-open one, where by convention $[1,X]$ is abbreviated as $[X]$. The symbol $(a,b)$ always denotes an ordered pair rather than an open interval. For any $x\in\mathbb{R}$, we define its positive and negative parts by $x^+:=\max\{x,0\}$ and $x^-:=\max\{-x,0\}$. When the indices of a variable are unambiguous from the context, they may be omitted (e.g., we may write $x$ for $x_{ijkv}$). For a directed graph $\mathcal{G}=(O, A)$ and a node $o\in O$, we denote by $\eta^+(\mathcal{G},o)=\bigl\{(o,o')\in A: o'\in O\bigr\},\ \eta^-(\mathcal{G},o)=\bigl\{(o',o)\in A: o'\in O\bigr\}$ the sets of arcs outgoing from and incoming to $o$, respectively.
\section{Problem Description}\label{s2}
Consider a spatio-temporal network $\mathcal{G}=\mathcal{G}^{\rm spatial}\times\mathcal{G}^{\rm temporal}$, where the directed graphs $\mathcal{G}^{\rm spatial}$ and $\mathcal{G}^{\rm temporal}$ represent the spatial and temporal networks, respectively. The spatial network is given by $\mathcal{G}^{\rm spatial}=\bigl(O^{\rm spatial}, A^{\rm spatial}\bigr)$, where the node set $O^{\rm spatial}=\{0\}\cup[N]$ comprises the warehouse (node 0) and a set of retailers $[N]$. The arc set $A^{\rm spatial}\subseteq\bigl\{(i,j): i,j\in O^{\rm spatial}, i\neq j\bigr\}\cup\bigl\{(0,0)\bigr\}$ specifies feasible travel arcs, with the self-loop $(0,0)$ representing an empty tour that starts and ends at the warehouse. The temporal network is defined as $\mathcal{G}^{\rm temporal}=\bigl(O^{\rm temporal}, A^{\rm temporal}\bigr)$, where $O^{\rm temporal}=[T]$ indexes a discrete cyclic planning horizon that repeats indefinitely (i.e., period $T$ is followed by period $1$). The arc set $A^{\rm temporal}\subseteq O^{\rm temporal}\times O^{\rm temporal}$ encodes feasible temporal transitions, corresponding to arcs connecting two successive replenishment periods.

For each retailer $i \in [N]$ in each period $t \in [T]$, demand is represented by a discrete random variable $\zeta_i^t$ that is independent across retailers and periods. Let $\boldsymbol{\zeta}^i=[\zeta_i^1,\ldots,\zeta_i^T]$ denote the demand vector of retailer $i$, and let $\boldsymbol{\zeta}=[\boldsymbol{\zeta}^1,\ldots,\boldsymbol{\zeta}^N]$ denote the joint demand vector across all retailers. Each retailer maintains its own inventory to satisfy the stochastic demand, and $h$ and $b$ denote the per-unit holding and backorder costs, respectively. We assume that the warehouse has unlimited inventory and serves as the source for replenishing retailer inventories through a homogeneous fleet of $V$ vehicles, indexed by $[V]$, each with capacity $Q$. The vehicles depart from and return to the warehouse for each tour, which is limited by a maximum travel distance $L$, and $c_{ij}$ denotes the distance between spatial nodes $i$ and $j$. Each unit of distance traveled incurs a variable transportation cost $\rho$, and each active vehicle incurs a fixed cost $p$. Our objective is to determine an infinite-horizon cyclic plan over $[T]$ that jointly specifies inventory replenishment decisions in the temporal network and routing decisions in the spatial network to minimize the total expected cost, including the fixed vehicle activation, variable transportation, and inventory holding and backorder costs, subject to constraints on vehicle capacity and maximum travel distance per tour.

Specifically, we assume that the true distribution $\mathbb{P}$ of $\boldsymbol{\zeta}$ is unknown, but belongs to the moment-based ambiguity set $\mathcal{P}$, following \citet[Section~4.1]{ghosal2020distributionally}:
\begin{align}
\mathcal{P}=\left\{ \mathbb{P} \left |
    \begin{aligned}
    &\mathbb{P}\bigl(\zeta_t^i\in [\underline{\zeta}_t^i,\bar{\zeta}_t^i]\bigr)=1&&\forall i\in[N],t\in[T]\\
    &\mathbb{E}_{\mathbb{P}}[\zeta_t^i]=\mu_t^i&&\forall i\in[N],t\in[T]\\
    &\mathbb{E}_{\mathbb{P}} \bigl[|\zeta_t^i-\mu_t^i|\bigr]\leq \sigma_t^i&&\forall i\in[N],t\in[T]\\
    \end{aligned} \right.\nonumber
\right\}.
\end{align}
Each support set $[\underline{\zeta}_t^i,\bar{\zeta}_t^i]$ satisfies $\underline{\zeta}_t^i,\bar{\zeta}_t^i\in\mathbb{R}^+$. The mean $\mu_t^i$ lies in $(\underline{\zeta}_t^i,\bar{\zeta}_t^i)$; otherwise, the distribution collapses to a degenerate point and the problem reduces to a deterministic case. The parameter $\sigma_t^i$ specifies an upper bound on the mean absolute deviation. Under this ambiguity set, our objective is to determine a cyclic plan that minimizes the worst-case expected total cost over all distributions $\mathbb{P} \in \mathcal{P}$.

Figure~\ref{fig:coupled} illustrates the interaction between the spatial and temporal networks. Spatial decisions, which retailers to visit and in what sequence, are determined in the spatial network but must be made in each temporal period. Temporal decisions, when and how much to replenish, are determined in the temporal network but must be specified for each retailer in the spatial network. The two layers are coupled in two ways: (i) any positive replenishment decision in the temporal network must be supported by a feasible route in the spatial network, and (ii) vehicle capacity constraints link replenishment quantities in the temporal network to the corresponding spatial routes. We now present the formulation in three parts: spatial routing, temporal replenishment, and their integration.

\begin{figure}[!htb]
\centering
  \includegraphics[width=0.7\linewidth]{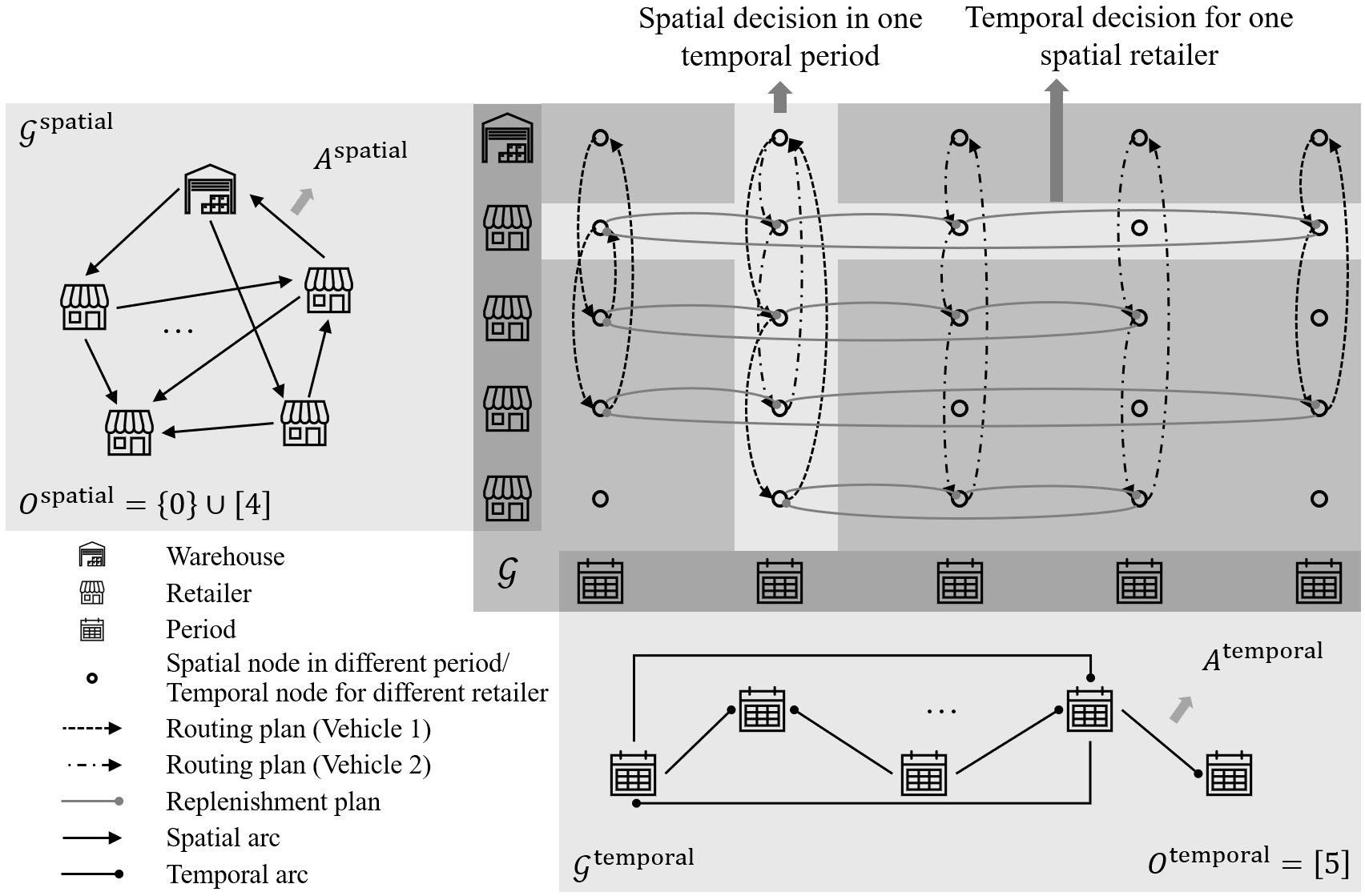}
  \caption{Interaction between spatial routing and temporal replenishment networks in DR-CIRP.}
  \label{fig:coupled}
\end{figure}

\subsection{Spatial Routing}\label{s2.1}
In the spatial dimension of the network, a central task is to determine efficient routing plans. We consider a fixed partition of the retailer set $[N]$ into mutually exclusive clusters, with each cluster permanently assigned to a dedicated vehicle over the planning horizon. Let $z_v\in\{0,1\}$ indicate whether vehicle $v\in [V]$ is activated, and let $w_{iv}\in\{0,1\}$ indicate whether retailer $i\in[N]$ is assigned to vehicle $v$. We further define binary variables $x_{ijtv}\in\{0,1\}$ to represent routing decisions, where $x_{ijtv}=1$ if vehicle $v$ traverses arc $(i,j)\in A^{\rm spatial}$ in period $t\in[T]$, and 0 otherwise. The feasible set $\mathcal{X}$ is then characterized by the following constraints: at-most-once visitation (\ref{X_feasible_set}a), warehouse departure/return (\ref{X_feasible_set}b), flow conservation (\ref{X_feasible_set}c), subtour elimination (\ref{X_feasible_set}d), a tour-length limit (\ref{X_feasible_set}e), and binary restrictions (\ref{X_feasible_set}f):
\begin{align}
\mathscr{X}=\left\{ x_{ijtv} \left|
    \begin{aligned}
    &\sum_{(i,j)\in\eta^+(\mathcal{G}^{\rm spatial},i)}x_{ijtv}\leq 1  &&\forall i\in[N], t\in[T],v\in [V],&(a)\\
  &\sum_{i\in [N]\cup\{0\}} x_{0itv} = \sum_{i\in [N]\cup\{0\}} x_{i0tv} = 1 &&\forall t\in[T],v\in [V],&(b)\\
  &\sum_{(i,j)\in \eta^+(\mathcal{G}^{\rm spatial},i)} x_{ijtv} = \sum_{(j,i)\in \eta^-(\mathcal{G}^{\rm spatial},i)} x_{jitv} &&\forall i\in [N], t\in[T], v\in[V],&(c)\\
  &\sum_{i\in S}\sum_{j\in S, j\neq i}x_{ijtv}\leq |S|-1 &&\forall S\subset [N], 2\leq|S|\leq N,t\in[T], v\in[V],&(d)\\
  & \sum_{(i,j)\in A^{\rm spatial}}c_{ij}x_{ijtv}\leq L,&&\forall v\in[V],&(e)\\
  & x_{ijtv}\in \{0,1\} &&\forall (i,j)\in A^{\rm spatial}, t\in[T],v\in[V].&(f)
    \end{aligned} \right .
\right\} \label{X_feasible_set}
\end{align}

The associated spatial costs, comprising the fixed vehicle cost and the average transportation cost per period, are defined as
\begin{align}
    C^{\rm vehicle}=\sum_{v\in [V]} p\, z_v \ \ {\rm and}\ \ C^{\rm trans}=\frac{1}{T}\sum_{v\in [V]}\sum_{t\in[T]}\sum_{(i,j)\in A^{\rm spatial}} \rho\, c_{ij}\, x_{ijtv},\nonumber
\end{align}
respectively. The corresponding spatial constraint set is formulated as follows:
\begin{align}
    &\sum_{v\in [V]} w_{iv}=1 \quad \forall i\in[N], \label{eq:partition}\\
    &\sum_{i\in[N]} w_{iv}\le N z_v \quad \forall v\in [V],\label{eq:wz}\\
    &\sum_{t\in[T]}\sum_{(i,j)\in\eta^+(\mathcal{G}^{\rm spatial},i)}x_{ijtv} \leq M w_{iv} \quad \forall i\in[N], v\in[V],\label{eq:assign_consistency}\\
    &w_{iv}\in\{0,1\}\quad  \forall i\in[N],v\in [V],\label{eq:domain1}\\
    &z_v\in\{0,1\}\quad  \forall v\in [V], \label{eq:domain2}\\
    &x_{ijtv}\in\mathscr{X}\quad\forall (i,j)\in A^{\rm spatial}, t\in [T], v\in [V],\label{eq:domain3}
\end{align}
where Constraints \eqref{eq:partition} ensure that each retailer is assigned to exactly one vehicle, Constraints \eqref{eq:wz} link vehicle activation to retailer assignments, Constraints \eqref{eq:assign_consistency} enforces assignment-to-route consistency, and Constraints \eqref{eq:domain1}-\eqref{eq:domain3} specify the variables domains.

\subsection{Temporal Replenishment}\label{s2.2}
In the temporal network, both the replenishment plan and replenishment quantities must be determined. We begin by focusing on the replenishment plan. Since each visit incurs a transportation cost, it is generally not optimal to replenish in every period. Instead, replenishments are scheduled only in selected periods to minimize the overall cost. This gives rise to \textit{cyclic replenishment intervals}, defined by consecutive replenishment visits, where each interval begins with a delivery and ends with the subsequent one; these intervals are also referred to as “inventory cycles” in \citet{chen2023robust}. We adopt this interval-based perspective to formulate the problem. To this end, we now introduce two supporting definitions.
\begin{definition}[Cyclic interval]
For $(t_1,t_2)\in A^{\rm temporal}$, the cyclic interval is defined as the ordered sequence of periods from $t_1$ to $t_2$, or wrapping around cyclically if $t_2 < t_1$. Formally,
\[
[t_1,t_2]:=
\begin{cases}
\{t_1,t_1{+}1,\ldots,t_2\}, & t_1\le t_2,\\
\{t_1,t_1{+}1,\ldots,T,1,\ldots,t_2\}, & t_1>t_2.
\end{cases}
\]
For instance, with $T=5$, one has $[3,5]=\{3,4,5\}$ and $[4,2]=\{4,5,1,2\}$.
\end{definition}

\begin{definition}[Sequence-based arithmetic and ordering]
For any interval $[t_1, t_2]$ with $(t_1,t_2)\in A^{\rm temporal}$, arithmetic and logical operations on its elements are defined relative to their positions in the ordered sequence $[t_1, t_2]$, rather than their numeric values. Specifically, for $t, t^{\prime} \in [t_1, t_2]$:
\begin{itemize}
    \item [($i$)] The difference $t^{\prime} - t$ denotes the number of forward steps from $t$ to $t^{\prime}$ when traversing $[t_1,t_2]$.
    \item [($ii$)] We write $t^{\prime} \succ t$ if $t^{\prime}$ appears after $t$ in this sequence.
    \item [($iii$)] Successor and predecessor operations (e.g., $t+1$ or $t-1$) are interpreted relative to the sequence order.
\end{itemize}
For example, in $[4,2] = \{4,5,1,2\}$, one has $1 - 4 =\bigl|\{5,1\}\bigr|= 2$ since $1$ lies two steps after $4$, and $1 \succ 4$ under the sequence order. Likewise, $5 + 1 = 1$ because $1$ follows $5$ in the cyclic sequence.
\end{definition}

We define $y_{it_1t_2v}\in\{0,1\}$ as an binary variable that equals 1 if retailer $i$ is visited by vehicle $v$ in period $t_1$ and its next visit occurs in period $t_2$. This decision variable thus encodes the replenishment interval $[t_1,t_2]$. The set of feasible replenishment plans, denoted by $\mathcal{Y}$, is characterized by coverage (\ref{Y_feasible_set}a), per-period visit limits (\ref{Y_feasible_set}b), flow conservation (\ref{Y_feasible_set}c), cycle restriction (Definition \ref{def: cycle}) (\ref{Y_feasible_set}d), and binary requirements (\ref{Y_feasible_set}e):
\begin{align}
\mathscr{Y}=\left\{y_{it_1t_2v} \left |
    \begin{aligned}
  & \sum_{v\in[V]}\sum_{(t_1,t_2)\in A^{\rm temporal}} y_{it_1t_2v} \geq 1 &&\forall i\in[N], &(a)\\
  & \sum_{(t_1,t_2)\in \eta^+(\mathcal{G}^{\rm temporal},t_1)} y_{it_1t_2v} \leq 1 &&\forall i\in[N],t_1\in [T], v\in[V], &(b)\\
  &\sum_{(t_1,t_2)\in\eta^+(\mathcal{G}^{\rm temporal},t_1)}y_{it_1t_2v} = \\
  &\qquad\qquad\qquad\sum_{(t_2, t_1)\in \eta^-(\mathcal{G}^{\rm temporal},t_1)} y_{it_2t_1v} &&\forall i\in[N], t_1\in [T], v\in[V], &(c)\\
  & \sum_{v\in[V]}\sum_{t_1\in [T]}\sum_{t_2\in[1,t_1]}y_{it_1t_2v} = 1 &&\forall i\in[N], &(d)\\
  & y_{it_1t_2v}\in \{0,1\} &&\forall i\in[N],  (t_1,t_2)\in A^{\rm temporal}, v\in[V].&(e)
    \end{aligned} \right .
\right\} \label{Y_feasible_set}
\end{align}

\begin{Defi}[Cycle restriction]
\label{def: cycle}
A replenishment plan satisfies the cycle restriction if the visit schedule of each retailer is entirely contained within a single planning cycle $[T]$ and repeats identically across subsequent cycles. For instance, when $[T] = \{1,\dots,5\}$, the sequence $1 \rightarrow 3 \rightarrow 5 \rightarrow 2 \rightarrow 4 \rightarrow 1$ spans two consecutive cycles and thus violates the restriction, whereas the sequence $1 \rightarrow 3 \rightarrow 5 \rightarrow 1$ is fully contained within one single cycle and is therefore admissible. This restriction ensures temporal consistency and periodicity of replenishment. Consequently, under the cycle restriction, any two plans composed of the same set of periods are regarded as equivalent representations of the same replenishment plan.
\end{Defi}

In addition to the inventory replenishment planning, we assume that the inventory replenishment quantities are governed by an order-up-to policy. Specifically, let $s_{it}\in\mathbb{Z}^{+}$ denote the order-up-to level for retailer $i$ in period $t$. Further, let $u_{itv}(\boldsymbol{\zeta}^i)$ denote the replenishment quantity delivered by vehicle $v$ in period $t$, and let $I_{it}(\boldsymbol{\zeta}^i)$ denote the inventory of retailer $i$ at the end of period $t$. The temporal constraints can then be formulated as follows:
\begin{align}
    &s_{it_1} \leq \sum_{v\in[V]}\sum_{(t_1,t_2)\in\eta^+(\mathcal{G}^{\rm temporal},t_1)}y_{it_1t_2v}\sum_{t\in[t_1,t_2-1]}\bar{\zeta}_t^i\quad \forall i\in[N], t_1\in[T],\label{st:sandy}\\
    &u_{itv}(\boldsymbol{\zeta}^i)=\sum_{(t_1,t_2)\in \eta^+(\mathcal{G}^{\rm temporal},t_1)} y_{it_1t_2v}\,\bigl(s_{it_2}-s_{it_1}+\sum_{t'\in [t_1,t_2{-}1]}\zeta_{t'}^i\bigr)^+,\label{eq:u}\\
    &I_{it}(\boldsymbol{\zeta}^i)=I_{i,t-1}(\boldsymbol{\zeta}^i)+\sum_{v\in [V]} u_{itv}(\boldsymbol{\zeta}^i)-\zeta_t^i,\qquad \forall i\in[N], t\in[T], \label{eq:I}\\
    &y_{it_1t_2v}\in\mathscr{Y}\quad\forall i\in[N], (t_1,t_2)\in A^{\rm temporal}, v\in[V],\label{eq:domain4}\\
    &s_{it}\in\mathbb{Z}^{+}\quad\forall i\in[N], t\in[T].\label{eq:domain5}
\end{align}
Constraints \eqref{st:sandy} captures the relationship between the order-up-to levels $s_{it}$ and the replenishment plan $y_{it_1t_2v}$, Constraints \eqref{eq:u} defines the replenishment quantity delivered by vehicle $v$ in period $t$, Constraints \eqref{eq:I} ensure the inventory balance, and Constraints \eqref{eq:domain4} and \eqref{eq:domain5} specify the feasible domains of the decision variables. Accordingly, the average worst-case expected inventory cost per period is defined as
\begin{align}
    C^{\rm DR\text{-}inv}=\frac{1}{T}\sum_{i\in[N]}\sum_{v\in [V]}\sum_{(t_1,t_2)\in A^{\rm temporal}} y_{it_1t_2v}
\sup_{\mathbb{P}\in\mathcal{P}} \mathbb{E}_{\mathbb{P}}\!\bigl[\, h\!\!\sum_{t\in [t_1,t_2{-}1]} \!\!I_{it}(\boldsymbol{\zeta}^i)^+ + b\!\!\sum_{t\in [t_1,t_2{-}1]} \!\!I_{it}(\boldsymbol{\zeta}^i)^- \bigr].\nonumber
\end{align}

\subsection{Integration of Spatial and Temporal Decisions}\label{s2.3}
Spatial and temporal decisions in the DR-CIRP are coupled through two key restrictions. The first is the \textit{visit consistency} constraint, which links routing variables $x_{ijtv}$ with replenishment plan variables $y_{it_1t_2v}$. Intuitively, if retailer $i$ is replenished by vehicle $v$ in period $t$, then $i$ must appear on $v$’s route in that period:
\begin{align}
\sum_{(i,j)\in \eta^+(\mathcal{G}^{\rm spatial},i)} x_{ijtv}
\ \ge\
\sum_{(t_1,t_2)\in \eta^+(\mathcal{G}^{\rm temporal},t_1)} y_{it_1t_2v}
\qquad \forall i\in[N],\, t\in[T],\, v\in [V].\label{eq:visit_consistency}
\end{align}
This condition is one-sided: a retailer may be visited without receiving a replenishment.  

The second coupling arises from considerations of vehicle capacity. We impose a \textit{distributionally robust capacity feasibility} constraint that ensures the shipment quantities implied by temporal decisions can be feasibly accommodated on the spatial route. Specifically, the aggregated load cannot exceed vehicle capacity $Q$ with high probability across all distributions in the ambiguity set $\mathcal{P}$. Formally, this is captured by the distributionally robust chance constraints:
\begin{align}
\inf_{\mathbb{P}\in\mathcal{P}}
\mathbb{P}\!\Bigl[
\sum_{i\in[N]}\ \sum_{(t_1,t_2)\in \eta^+(\mathcal{G}^{\rm temporal},t_1)}
y_{it_1t_2v}\,\bigl(s_{it_2}-s_{it_1}+\!\!\sum_{t'\in [t_1,t_2{-}1]}\!\! \zeta_{t'}^i\bigr)^+ \ \le\ Q
\Bigr] \ \ge\ 1-\varepsilon_1,\quad \forall t\in[T],\, v\in [V].
\label{eq:chance}
\end{align}

\begin{remark}[Emergency transportation under rare violations]
To accommodate the small probability $\varepsilon_1$ of violating \eqref{eq:chance}, we allow outsourced (emergency) transportation at a fixed per-unit cost to fulfill any excess load. For clarity, however, we do not explicitly include emergency transportation costs in the main model. Nevertheless, the model can be easily extended to incorporate such costs, as described in Appendix \ref{etc}. Section~\ref{5.2.2} reports a sensitivity analysis of realized emergency costs across different $\varepsilon_1$.$\hfill\blacksquare$
\end{remark}

Bringing these components together, the DR-CIRP can be formally written as
\begin{align}
\min_{x,\ y,\ s}\left\{C^{\rm vehicle}+C^{\rm trans}+C^{\rm DR\textit{-}inv}\big|{\rm Constraints}\ \eqref{eq:partition}-\eqref{eq:domain3},\ \eqref{st:sandy}-\eqref{eq:chance}\right\}.\label{dr-cirp}
\end{align}


\subsection{Special Cases}\label{s2.4}
The DR-CIRP formulation permits fully flexible routing and replenishment decisions, without prescribing how routes or replenishment plans must be organized. Several service policies studied in the CIRP literature can be obtained as tractable special cases of this general framework by introducing additional organizational assumptions. We highlight three representative cases. To this end, we first state several assumptions that will be used throughout.

\begin{assumption}[Stationary demand]\label{a1}
Retailer demand is stationary across time: $\zeta_t^i\equiv \zeta^i$ for all $i\in[N]$, $t\in[T]$.
\end{assumption}

\begin{assumption}[Fixed shipment interval]\label{a3}
All retailers served by the same vehicle share a common visit frequency: there exists $\kappa_v\in\{1,\ldots,T\}$, such that for any $t_1,t_2\in\mathcal{T}_v^\ast$, $t_2\%\kappa_v=t_1\%\kappa_v$. $\mathcal{T}_v^\ast$ denotes the set of periods in which vehicle $v$ is active.
\end{assumption}

\begin{assumption}[Route consistency]\label{a2}
Each vehicle executes the same route whenever it operates: $x_{ijt_1 v}=x_{ijt_2 v}=:x_{ijv}$ for all $(i,j)\in A^{\rm spatial}$, $t_1,t_2\in\mathcal{T}_v^\ast$, $v\in [V]$.
\end{assumption}

Different combinations of these assumptions yield the following canonical service policies:

\begin{itemize}
\item \textit{Fixed-Interval Policy.} Under Assumptions~\ref{a1}-\ref{a2}, vehicle $v$ follows a fixed tour that is repeated every $\kappa_v$ periods. Each retailer $i$ implements a periodic-review order-up-to policy $(\kappa_v,s_i)$. This policy has been analyzed, for example, by \citet{sonntag2023stochastic} in the setting of stochastic demand with known stationary distributions.

\item \textit{Consistent Policy.} Imposing only Assumption~\ref{a2} fixes the route of each vehicle while allowing visit intervals to vary across the horizon. This policy has been studied by \citet{hasturk2024stochastic} in the context of stochastic demand with known stationary distributions.

\item \textit{Flexible Policy.} With no additional restrictions, corresponding to the baseline DR-CIRP formulation, both routing and replenishment schedules are fully flexible. To the best of our knowledge, such flexible strategies have only been considered by \citet{malicki2021cyclic}, who assume normally distributed demand and propose heuristic large neighborhood search algorithms.
\end{itemize}

In this way, the DR-CIRP provides a unifying framework that encompasses flexible, consistent, and fixed-interval service policies. Our computational study will compare these policies within a common setting and highlight their relative performance.
\section{Deterministic Reformulation}\label{s3}
This section derives tractable deterministic counterparts of the DR-CIRP model in \eqref{dr-cirp}. Two components require reformulation: (i) the distributionally robust chance constraints that guarantee vehicle capacity feasibility, and (ii) the worst-case expected inventory cost \(C^{\rm DR\text{-}inv}\). We address these in turn.

\subsection{Distributionally Robust Chance Constraints}
Given the replenishment quantity defined in \eqref{eq:u}, the distributionally robust capacity feasibility condition \eqref{eq:chance} can be written as
\begin{align}
  \inf_{\mathbb{P}\in\mathcal{P}}\mathbb{P}\Bigl[\sum_{i\in[N]}\sum_{(t_1,t_2)\in \eta^+(\mathcal{G}^{\rm temporal},t_1)}y_{it_1t_2v}\bigl(s_{it_2}-s_{it_1}+\sum_{t^{\prime}\in[t_1,t_2-1]} \zeta^i_{t^{\prime}}\bigr)^+\leq Q\Bigr]\geq 1-\varepsilon_1 \quad \forall t_1\in[T], v\in[V].\label{st_cc}
\end{align}
The nonnegativity operator $(\cdot)^+$ ensures that only positive delivery amounts are enforced, preventing infeasibility when cumulative demand is low and inventory overshoots the order-up-to level. While operationally realistic, the non-smoothness of $(\cdot)^+$ complicates both analysis and computation.

To derive a tractable relaxation, we relax \eqref{st_cc} with two separate chance constraints. The first protects vehicle capacity,
\begin{align}
  \inf_{\mathbb{P}\in\mathcal{P}}\mathbb{P}\Bigl[\sum_{i\in[N]}\sum_{(t_1,t_2)\in \eta^+(\mathcal{G}^{\rm temporal},t_1)}y_{it_1t_2v}\bigl(s_{it_2}-s_{it_1}+\sum_{t^{\prime}\in[t_1,t_2-1]} \zeta^i_{t^{\prime}}\bigr) \leq Q\Bigr]\geq 1-\varepsilon_1, \quad \forall t_1\in[T],\ v\in[V], \label{st_cc1}
\end{align}
while the second bounds the probability of inventory overshoot,
\begin{align}
 \sup_{\mathbb{P}\in\mathcal{P}}\mathbb{P}\Bigl[\sum_{v\in[V]}\sum_{(t_1,t_2)\in \eta^+(\mathcal{G}^{\rm temporal},t_1)}y_{it_1t_2v}\bigl(s_{it_2}-s_{it_1}+\sum_{t^{\prime}\in[t_1,t_2-1]} \zeta^i_{t^{\prime}}\bigr)\leq 0\Bigr]\leq \varepsilon_2, \quad \forall i\in[N],\ t_1\in[T], \label{st_cc2}
\end{align}
where, since each retailer is served by at most one vehicle per period, the summation over \(v\in[V]\) in \eqref{st_cc2} effectively identifies the assigned vehicle, with all other terms vanishing due to \(y_{it_1t_2v}=0\). 

Inventory overshoot is a well-documented phenomenon in order-up-to policies \citep[see, e.g.,][]{chen2020dynamic}. While eliminating overshoot enhances control, it may reduce responsiveness. Moreover, overshooting under demand uncertainty remains analytically challenging. Following the argument in \citet{mohebbi2003supply} that moderate overshoot is acceptable, we adopt a soft-constraint formulation: Constraints~\eqref{st_cc2} limit the probability of overshoot to a user-defined threshold $\varepsilon_2$, where setting $\varepsilon_2 = 0$ enforces a hard constraint and yields a fully equivalent deterministic reformulation. Later numerical experiments demonstrate how the changes in $\epsilon_2$ affect the overshooting.

To reformulate \eqref{st_cc1} and \eqref{st_cc2}, we introduce two quantile-based risk measures that capture the upper- and lower-tail behavior of random variable $z$ following distribution $\mathbb{Q}$:
\begin{itemize}
    \item [($i$)] The upper tail is measured by the value-at-risk (VaR) at level $1-\varepsilon_1$:
    \begin{align}
      \mathbb{Q}\text{-}{\rm VaR}_{1-\varepsilon_1}[\mathcal{Z}] := \inf\bigl\{z\in\mathbb{R} : \mathbb{Q}[\mathcal{Z} \leq z] \geq 1 - \varepsilon_1\bigr\}.\label{st:444}
    \end{align}
    \item [($ii$)] The lower tail is measured by the left quantile (LQ) at level $\varepsilon_2$:
    \begin{align}
      \mathbb{Q}\text{-}{\rm LQ}_{\varepsilon_2}[\mathcal{Z}] := \sup\bigl\{z\in\mathbb{R} : \mathbb{Q}[\mathcal{Z} \leq z] \leq \varepsilon_2\bigr\}.\label{st:555}
    \end{align}
\end{itemize}
These definitions imply the following logical equivalences:
\begin{align}
  \mathbb{Q}[\mathcal{Z} \leq \tau] \geq 1-\varepsilon_1 \Leftrightarrow \mathbb{Q}\text{-}{\rm VaR}_{1-\varepsilon_1}[\mathcal{Z}] \leq \tau,\quad
  \mathbb{Q}[\mathcal{Z} \leq \tau] \leq \varepsilon_2 \Leftrightarrow \mathbb{Q}\text{-}{\rm LQ}_{\varepsilon_2}[\mathcal{Z}] \geq \tau.\nonumber
\end{align}

Under the ambiguity set $\mathcal{P}$ and the i.i.d. assumptions, the Chance Constraints \eqref{st_cc1} and \eqref{st_cc2} admit the following equivalent reformulations:
\begin{align}
    &\sum_{i\in[N]}\sum_{(t_1,t_2)\in \eta^+(\mathcal{G}^{\rm temporal},t_1)}y_{it_1t_2v}\sum_{t^{\prime}\in[t_1,t_2-1]} \sup_{\mathbb{P}\in\mathcal{P}}\mathbb{P}\text{-}{\rm VaR}_{1-\varepsilon_1}[\zeta^i_{t^{\prime}}]\leq Q-\sum_{i\in[N]}\sum_{(t_1,t_2)\in \eta^+(\mathcal{G}^{\rm temporal},t_1)}y_{it_1t_2v}(s_{it_2}-s_{it_1}), \nonumber\\
    &\sum_{v\in[V]}\sum_{(t_1,t_2)\in \eta^+(\mathcal{G}^{\rm temporal},t_1)}y_{it_1t_2v}\sum_{t^{\prime}\in[t_1,t_2-1]} \inf_{\mathbb{P}\in\mathcal{P}}\mathbb{P}\text{-}{\rm LQ}_{\varepsilon_2}[\zeta^i_{t^{\prime}}]\geq -\sum_{v\in[V]}\sum_{(t_1,t_2)\in \eta^+(\mathcal{G}^{\rm temporal},t_1)}y_{it_1t_2v}(s_{it_2}-s_{it_1}).\nonumber
\end{align}
These results follow from the additive structure of worst-case risk measures under the specified ambiguity set, as established in Theorem 3 of \citet{ghosal2020distributionally}. 

We further have the following proposition, which is a direct consequence of Proposition 2 of \citet{ghosal2020distributionally}:
\begin{proposition}[\citet{ghosal2020distributionally}]
\label{VaR_Prop}
Under the ambiguity set $\mathcal{P}$, the worst-case VaR and the worst-case LQ admit closed-form expressions:
\begin{align}
    &\sup_{\mathbb{P}\in\mathcal{P}}\mathbb{P}\text{-}{\rm VaR}_{1-\varepsilon_1}[ \zeta^i_{t}]=\mu_t^i+\min\bigl\{\bar{\zeta}_t^i-\mu_t^i,\frac{1-\varepsilon_1}{\varepsilon_1}(\mu_t^i-\underline{\zeta}_t^i),\frac{1}{2\varepsilon_1}\sigma_t^i\bigr\}=:U_t^i,\nonumber\\
  &\inf_{\mathbb{P}\in\mathcal{P}}\mathbb{P}\text{-}{\rm LQ}_{\varepsilon_2}[ \zeta^i_{t}]=\mu_t^i-\min\bigl\{\frac{1-\varepsilon_2}{\varepsilon_2}(\bar{\zeta}_t^i-\mu_t^i),\mu_t^i-\underline{\zeta}_t^i, \frac{1}{2\varepsilon_2}\sigma_t^i\bigr\}=:L_t^i.\nonumber
\end{align}
\end{proposition}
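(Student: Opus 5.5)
The plan is a direct moment-problem argument for a single random variable. By the additivity result already invoked (Theorem~3 of \citet{ghosal2020distributionally}), it suffices to work with one $\zeta:=\zeta_t^i$, which has support $[a,\bar b]:=[\underline{\zeta}_t^i,\bar{\zeta}_t^i]$, mean $\mu:=\mu_t^i\in(a,\bar b)$ and mean absolute deviation at most $\sigma:=\sigma_t^i$. Write $\varepsilon=\varepsilon_1$. From definition \eqref{st:444},
\[
\mathbb{P}\text{-}{\rm VaR}_{1-\varepsilon}[\zeta]>\tau \iff \mathbb{P}[\zeta>\tau]>\varepsilon .
\]
Hence
\[
\sup_{\mathbb{P}\in\mathcal{P}}\mathbb{P}\text{-}{\rm VaR}_{1-\varepsilon}[\zeta]=\sup\Bigl\{\tau:\ \sup_{\mathbb{P}\in\mathcal{P}}\mathbb{P}[\zeta>\tau]>\varepsilon\Bigr\}.
\]
The first step is therefore to compute the worst-case tail probability $\pi(\tau):=\sup_{\mathbb{P}\in\mathcal{P}}\mathbb{P}[\zeta\ge\tau]$ for $\tau\in(\mu,\bar b]$. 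For $\tau\le\mu$ this tail probability can be made arbitrarily close to $1$, so such $\tau$ do not bind.

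For the upper bound on $\pi(\tau)$, let $p=\mathbb{P}[\zeta\ge\tau]$. I will use three constraints, one from each ingredient of $\mathcal{P}$.
\begin{itemize}
\item[(i)] Support: $\pi(\tau)=0$ for $\tau>\bar b$.
\item[(ii)] Mean: since $\zeta\ge a$, we have $\mu\ge p\tau+(1-p)a$, which gives $p\le(\mu-a)/(\tau-a)$.
\item[(iii)] Mean absolute deviation: because $\mathbb{E}[\zeta]=\mu$, we have $\mathbb{E}(\zeta-\mu)^+=\mathbb{E}(\zeta-\mu)^-=\tfrac12\mathbb{E}|\zeta-\mu|\le\sigma/2$. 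Combined with $\mathbb{E}(\zeta-\mu)^+\ge p(\tau-\mu)$, this gives $p\le\sigma/(2(\tau-\mu))$.
\end{itemize}
For the matching lower bound, I will exhibit a distribution with an atom of mass $p^\ast=\min\{(\mu-a)/(\tau-a),\,\sigma/(2(\tau-\mu))\}$ at $\tau$. The remaining mass is placed on at most two points in $[a,\mu]$, for instance $a$ and $\mu$, chosen so that the mean equals $\mu$. The deviation then equals $2p^\ast(\tau-\mu)\le\sigma$. This shows
\[
\pi(\tau)=\min\Bigl\{1,\ \frac{\mu-a}{\tau-a},\ \frac{\sigma}{2(\tau-\mu)}\Bigr\}\quad\text{for }\tau\in(\mu,\bar b].
\]

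Next I invert. The function $\pi$ is continuous and nonincreasing on $(\mu,\bar b]$. The threshold $\pi(\tau)\ge\varepsilon$ holds exactly when three conditions hold: $\tau\le \bar b$; $\tau\le a+(\mu-a)/\varepsilon=\mu+\frac{1-\varepsilon}{\varepsilon}(\mu-a)$; and $\tau\le\mu+\sigma/(2\varepsilon)$. The supremum of such $\tau$ is therefore $\mu+\min\{\bar b-\mu,\frac{1-\varepsilon}{\varepsilon}(\mu-a),\frac{\sigma}{2\varepsilon}\}=U_t^i$.

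The step I expect to need the most care is the strict-versus-weak inequality at the boundary. With an atom of mass exactly $\varepsilon$ at $\tau$, the VaR is strictly below $\tau$, so the supremum need not be attained. I will handle this by approximation. For every $\tau'<U_t^i$ with $\tau'>\mu$, take $\tau\in(\tau',U_t^i]$, where $\pi(\tau)\ge\varepsilon$, and perturb the extremal distribution slightly so that $\mathbb{P}[\zeta>\tau']>\varepsilon$. This gives VaR $>\tau'$. Conversely, for $\tau\ge U_t^i$ every $\mathbb{P}\in\mathcal{P}$ satisfies $\mathbb{P}[\zeta>\tau]\le\varepsilon$; this follows from the bounds above applied to $\mathbb{P}[\zeta\ge\tau+\delta]$ as $\delta\downarrow 0$. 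Hence the supremum equals $U_t^i$.

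The LQ formula follows by reflection. Set $\xi=-\zeta$, which has support $[-\bar b,-a]$, mean $-\mu$ and the same deviation bound, so the reflected ambiguity set has the same form. Using definition \eqref{st:555}, $\mathbb{P}[\zeta\le z]\le\varepsilon_2$ is a lower-tail condition on $\zeta$, that is, an upper-tail condition on $\xi$. Repeating the tail-probability computation with the roles of $a$ and $\bar b$ swapped gives the worst-case lower-tail mass
\[
\min\Bigl\{1,\ \frac{\bar b-\mu}{\bar b-\tau},\ \frac{\sigma}{2(\mu-\tau)}\Bigr\}\quad\text{for }\tau<\mu.
\]
Inverting exactly as before, with the analogous boundary argument, yields $\inf_{\mathbb{P}\in\mathcal{P}}\mathbb{P}\text{-}{\rm LQ}_{\varepsilon_2}[\zeta]=\mu-\min\{\frac{1-\varepsilon_2}{\varepsilon_2}(\bar b-\mu),\,\mu-a,\,\frac{\sigma}{2\varepsilon_2}\}=L_t^i$.
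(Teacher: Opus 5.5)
Your proof is correct, and it takes a different route from the paper. The paper's proof is purely by citation. It imports the $U_t^i$ formula from Proposition~2 of Ghosal and Wiesemann, and obtains $L_t^i$ by asserting that their argument can be re-run with $\sup$ and $\inf$, $1-\varepsilon$ and $\varepsilon$, inequalities, and primal/dual signs swapped.

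You instead stay on the primal side. You bound $\sup_{\mathbb{P}\in\mathcal{P}}\mathbb{P}[\zeta\ge\tau]$ using two elementary inequalities, one from the mean and one from $\mathbb{E}(\zeta-\mu)^+=\frac12\mathbb{E}|\zeta-\mu|$. You show tightness with an explicit law on $\{\underline{\zeta}_t^i,\mu_t^i,\tau\}$, and then invert. This gives a self-contained argument that exhibits near-worst-case distributions. It also deals explicitly with the strict-versus-weak inequality at atoms, which the paper's one-line appeal never addresses. That issue is real: the paper's stated equivalence $\mathbb{Q}[\mathcal{Z}\le\tau]\le\varepsilon_2\Leftrightarrow\mathbb{Q}\text{-}{\rm LQ}_{\varepsilon_2}[\mathcal{Z}]\ge\tau$ fails when $\mathcal{Z}$ has an atom at $\tau$.

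Your reflection step is also cleaner than ``mutatis mutandis'', and it can be made exact. For every distribution, $\mathbb{Q}\text{-}{\rm LQ}_{\varepsilon}[\zeta]=-\mathbb{Q}\text{-}{\rm VaR}_{1-\varepsilon}[-\zeta]$, because the up-sets $\{w:\mathbb{Q}[-\zeta\ge w]\le\varepsilon\}$ and $\{w:\mathbb{Q}[-\zeta>w]\le\varepsilon\}$ have the same infimum. The map $\zeta\mapsto-\zeta$ sends the ambiguity set onto one of the same type, with $(\underline{\zeta},\bar{\zeta},\mu)\mapsto(-\bar{\zeta},-\underline{\zeta},-\mu)$. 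So the $L_t^i$ formula follows from the $U_t^i$ formula with no second boundary argument. What the citation buys is brevity, and continuity with the multivariate additivity result the paper needs anyway for the sums in the chance constraints.

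Three small repairs are needed.
\begin{enumerate}
\item The reduction to a single variable does not come from the additivity theorem. The proposition concerns only the marginal of $\zeta_t^i$, and every univariate law in the marginal set extends to an element of $\mathcal{P}$, for example by taking a product with Dirac masses at the other means. So the two suprema coincide.
\item Your lower-tail display should hold only for $\tau\in[\underline{\zeta}_t^i,\mu_t^i)$, with value $0$ for $\tau<\underline{\zeta}_t^i$. As written it is positive below the support, and it is exactly this cutoff that produces the $\mu_t^i-\underline{\zeta}_t^i$ term in $L_t^i$.
\item No perturbation is needed at the boundary. When $\sigma_t^i>0$, both ratios are strictly decreasing in $\tau$, so $\pi(\tau)>\varepsilon_1$ for every $\tau\in(\mu_t^i,U_t^i)$. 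The extremal law at such a $\tau$ already gives $\mathbb{P}[\zeta>\tau']>\varepsilon_1$ for every $\tau'<\tau$.
\end{enumerate}
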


Substituting into \eqref{st:444} and \eqref{st:555}, we obtain deterministic constraints:
\begin{align}
    & \sum_{i\in[N]}\sum_{(t_1,t_2)\in \eta^+(\mathcal{G}^{\rm temporal},t_1)}y_{it_1t_2v}\bigl(s_{it_2}-s_{it_1}+\sum_{t^{\prime}\in[t_1,t_2-1]}U_{t^{\prime}}^i\bigr)\leq Q \quad \forall t_1\in [T],v\in[V],\label{st:111}\\
   & \sum_{v\in[V]}\sum_{(t_1,t_2)\in \eta^+(\mathcal{G}^{\rm temporal},t_1)}y_{it_1t_2v}\bigl(s_{it_2}-s_{it_1}+ \sum_{t^{\prime}\in[t_1,t_2-1]} L_{t^{\prime}}^i\bigr)\geq 0 \quad \forall i\in[N], t_1\in[T].\label{st:222}
\end{align}

\subsection{Worst-Case Expected Inventory Cost}
Recall that the worst-case expected inventory cost 
\begin{align}
C^{\rm DR-inv}=\frac{1}{T}\sum_{i\in[N]}\sum_{v\in [V]}\sum_{(t_1,t_2)\in A^{\rm temporal}} y_{it_1t_2v}f^{\rm inv}(i, t_1, t_2, s_{it_1})\nonumber,
\end{align}
where the interval-wise inventory cost is defined as
\begin{align}
    f^{\rm inv}(i, t_1, t_2, s_{it_1}) := \sup_{\mathbb{P}\in\mathcal{P}} \mathbb{E}_{\mathbb{P}}\!\bigl[\, h\!\!\sum_{t\in [t_1,t_2{-}1]} \!\!I_{it}(\boldsymbol{\zeta}^i)^+ + b\!\!\sum_{t\in [t_1,t_2{-}1]} \!\!I_{it}(\boldsymbol{\zeta}^i)^- \bigr].\nonumber
\end{align}

Assuming no overshoot occurs (recall that Chance Constraints \eqref{st_cc2} are imposed to limit the overshoot probability), the initial inventory of retailer $i$ at period $t_1$ is $s_{it_1}$. Since no replenishment takes place within the interval $[t_1, t_2 - 1]$, i.e., $\sum_{v\in[V]} u_{itv}(\boldsymbol{\zeta}^i) = 0$ for all $t\in[t_1, t_2 - 1]$, inventory evolves solely in response to demand. From Constraints~\eqref{eq:I}, the inventory level at any period $t \in [t_1, t_2 - 1]$ satisfies
\begin{align}
    I_{it}(\boldsymbol{\zeta}^i)=s_{it_1}-\sum_{t^{\prime}\in [t_1, t]} \zeta^i_{t^{\prime}}.\nonumber
\end{align}
Substituting into $f^{\rm inv}(i, t_1, t_2, s_{it_1})$ yields
\begin{align}
    f^{\rm inv}(i, t_1, t_2, s_{it_1}) := \sup_{\mathbb{P}\in\mathcal{P}}\mathbb{E}_{\mathbb{P}}\Bigl[h\sum_{t\in [t_1,t_2-1]}\bigl(s_{it_1}-\sum_{t^{\prime}\in [t_1,t]} \zeta_{t^{\prime}}^i\bigr)^++b\sum_{t\in [t_1,t_2-1]}\bigl(s_{it_1}-\sum_{t^{\prime}\in [t_1,t]} \zeta_{t^{\prime}}^i\bigr)^-\Bigr].\nonumber
\end{align}

We first establish a structural property:
\begin{proposition}\label{convex_prop}
The function $f^{\rm inv}(i, t_1, t_2, s_{it_1})$ is convex in $s_{it_1}$.
\end{proposition}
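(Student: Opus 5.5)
The plan is to show convexity pointwise in the demand realization and then use that expectation and supremum both preserve convexity. Fix $i$, $t_1$, $t_2$, and write $s:=s_{it_1}$. For a fixed realization $\boldsymbol{\zeta}^i$ and a fixed period $t\in[t_1,t_2-1]$, set $D_t:=\sum_{t'\in[t_1,t]}\zeta^i_{t'}$, which does not depend on $s$. The integrand is then
\begin{align}
g(s,\boldsymbol{\zeta}^i):=\sum_{t\in[t_1,t_2-1]}\Bigl(h\,(s-D_t)^+ + b\,(s-D_t)^-\Bigr)
=\sum_{t\in[t_1,t_2-1]}\max\bigl\{h(s-D_t),\,b(D_t-s)\bigr\}.\nonumber
\end{align}
The identity holds because $h,b\ge 0$ and exactly one of the two parts is nonzero. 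Each summand is the maximum of two affine functions of $s$, hence convex in $s$. A finite sum of convex functions is convex, so $g(\cdot,\boldsymbol{\zeta}^i)$ is convex for every realization.

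Next, for each fixed $\mathbb{P}\in\mathcal{P}$, the map $s\mapsto\mathbb{E}_{\mathbb{P}}[g(s,\boldsymbol{\zeta}^i)]$ is convex. This follows by taking expectations of the pointwise inequality $g(\lambda s+(1-\lambda)s',\boldsymbol{\zeta}^i)\le\lambda g(s,\boldsymbol{\zeta}^i)+(1-\lambda)g(s',\boldsymbol{\zeta}^i)$ and using linearity and monotonicity of the expectation. These expectations are finite, because every distribution in $\mathcal{P}$ has bounded support $[\underline{\zeta}^i_t,\bar{\zeta}^i_t]$, so $g$ is bounded on the support for each $s$. Finally, $f^{\rm inv}(i,t_1,t_2,\cdot)=\sup_{\mathbb{P}\in\mathcal{P}}\mathbb{E}_{\mathbb{P}}[g(\cdot,\boldsymbol{\zeta}^i)]$ is a pointwise supremum of a family of convex functions, so it is convex.

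The only point needing care is that this supremum is finite, so that $f^{\rm inv}$ is a proper real-valued convex function rather than taking the value $+\infty$. Boundedness of the support gives the uniform bound
\[
g(s,\boldsymbol{\zeta}^i)\le (t_2-t_1)\,\max\{h,b\}\Bigl(|s|+\sum_{t'\in[t_1,t_2-1]}\bar{\zeta}^i_{t'}\Bigr),
\]
which is independent of $\mathbb{P}$, so the supremum is finite for every $s$. Convexity on $\mathbb{R}$ then restricts in particular to the integer domain $s\in\mathbb{Z}^+$, where it holds in the discrete sense. I expect no substantive obstacle: the argument is a routine composition of convexity-preserving operations.
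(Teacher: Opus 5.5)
Your proof is correct and follows essentially the same route as the paper, which writes the integrand as positive parts of affine functions of $s_{it_1}$ and invokes preservation of convexity under positive part, summation, expectation, and pointwise supremum. Your explicit uniform bound showing the supremum is finite is a small addition the paper omits.
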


To compute $f^{\rm inv}(i, t_1, t_2, s_{it_1})$, one needs to solve a worst-case expectation problem over the ambiguity set. The following theorem provides a tractable characterization of this function. Under this characterization, the worst-case expectation inventory cost is denoted by $C^{\rm inv}$. 
\begin{theorem}\label{wd_theorem}
For retailer $i$ and interval $[t_1,t_2]$, given $s_{it_1}$, the worst-case inventory cost $f^{inv}(i, t_1, t_2, s_{it_1})$ equals the optimal value of the following linear optimization problem LOP:
\begin{align}
({\rm LOP})\quad  \max\ & \sum_{\hat{t}\in[t_1,t_2-1]}\Bigl(h\sum_{t\in[t_1,\hat{t})}\bigl(\pi_{\hat{t}}s_{it_1}-\sum_{t^{\prime}\in [t_1,t]}\lambda_{\hat{t}t^{\prime}}\bigr)+b\sum_{t\in[\hat{t},t_2-1]}\bigl(\sum_{t^{\prime}\in [t_1,t]}\lambda_{\hat{t}t^{\prime}}-\pi_{\hat{t}}s_{it_1}\bigr)\Bigr)+\nonumber\\
  &  h\sum_{t\in[t_1,t_2-1]}\bigl(\pi_{0}s_{it_1}-\sum_{t^{\prime}\in [t_1,t]}\lambda_{0t^{\prime}}\bigr)\nonumber\\
    s.t.\ &\sum_{\hat{t}\in[t_1,t_2-1]\cup \{0\}}\pi_{\hat{t}}=1,\nonumber\\
    &\sum_{\hat{t}\in[t_1,t_2-1]\cup \{0\}}\lambda_{\hat{t}t}=\mu_{t}^i\quad\forall t\in [t_1,t_2-1],\nonumber\\
    & \sum_{\hat{t}\in[t_1,t_2-1]\cup \{0\}} |\lambda_{\hat{t}t}-\pi_{\hat{t}}\mu_{t}^i|\leq \sigma_{t}^i\quad\forall t\in [t_1,t_2-1],\nonumber\\
    & \pi_{\hat{t}}\underline{\zeta}_{t}^i\leq \lambda_{\hat{t}t}\leq \pi_{\hat{t}}\bar{\zeta}_{t}^i\quad \forall t\in [t_1,t_2-1],\hat{t}\in[t_1,t_2-1]\cup \{0\},\nonumber\\
    &\sum_{t\in[t_1,\hat{t})}\lambda_{\hat{t}t}\leq \pi_{\hat{t}}s_{it_1}\leq \sum_{t\in[t_1,\hat{t}]}\lambda_{\hat{t}t} \quad\forall \hat{t}\in[t_1,t_2-1],\nonumber\\
    &\sum_{t\in[t_1,t_2-1]}\lambda_{0t}\leq \pi_{0}s_{it_1},\nonumber\\
    &\pi_{\hat{t}}\in\mathbb{R}_+, \boldsymbol{\lambda}_{\hat{t}}\in\mathbb{R}^{\hat{T}}\ \forall \hat{t}\in[t_1,t_2-1]\cup \{0\},\nonumber
\end{align}
where $\hat{T} = t_2 - t_1$ if $t_2>t_1$, and $\hat{T}=t_2+T-t_1$ otherwise, representing the length of the interval.
\end{theorem}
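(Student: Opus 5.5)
The plan is to partition the demand space according to the first period in which inventory becomes negative, and then move from distributions to conditional moments. Fix retailer $i$ and the interval $[t_1,t_2]$, and write $s=s_{it_1}$ and $D_t=\sum_{t'\in[t_1,t]}\zeta^i_{t'}$. Since demands are nonnegative, $D_t$ is nondecreasing in $t$. Hence the support $\prod_t[\underline{\zeta}_t^i,\bar{\zeta}_t^i]$ splits into disjoint polyhedral cells:
\[
\Xi_{\hat t}=\{\boldsymbol{\zeta}: D_{\hat t-1}\le s\le D_{\hat t}\},\quad \hat t\in[t_1,t_2-1],\qquad \Xi_0=\{\boldsymbol{\zeta}: D_{t_2-1}\le s\}.
\]
Ties on the boundaries are assigned arbitrarily. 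On $\Xi_{\hat t}$, the integrand $h\sum_t(s-D_t)^+ + b\sum_t(s-D_t)^-$ equals the affine function
\[
g_{\hat t}(\boldsymbol{\zeta})=h\sum_{t\in[t_1,\hat t)}(s-D_t)+b\sum_{t\in[\hat t,t_2-1]}(D_t-s),
\]
and on $\Xi_0$ it equals $h\sum_t(s-D_t)$. (At $t=\hat t$ the term is $\le 0$, so using $b$ there is exact.)

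\textbf{Step 1 (upper bound).} Take any $\mathbb{P}\in\mathcal{P}$ and set $\pi_{\hat t}=\mathbb{P}(\Xi_{\hat t})$ and $\lambda_{\hat t t}=\mathbb{E}_{\mathbb{P}}[\zeta_t^i\mathbbm{1}_{\Xi_{\hat t}}]$. Because each $g_{\hat t}$ is affine, $\mathbb{E}_{\mathbb{P}}[\text{cost}]=\sum_{\hat t}\mathbb{E}[g_{\hat t}\mathbbm{1}_{\Xi_{\hat t}}]$, which is exactly the LOP objective evaluated at $(\pi,\lambda)$. The LOP constraints then follow one by one:
\begin{itemize}
\item $\sum\pi=1$, since the cells partition the support.
\item $\sum_{\hat t}\lambda_{\hat t t}=\mu_t^i$, by the mean constraint.
\item The box constraints $\pi_{\hat t}\underline{\zeta}_t^i\le\lambda_{\hat t t}\le\pi_{\hat t}\bar{\zeta}_t^i$, from the support.
\item The cell constraints, by integrating the defining inequalities of $\Xi_{\hat t}$ against $\mathbbm{1}_{\Xi_{\hat t}}$.
\item The MAD constraint, via Jensen: $\mathbb{E}|\zeta_t-\mu_t|=\sum_{\hat t}\mathbb{E}[|\zeta_t-\mu_t|\mathbbm{1}_{\Xi_{\hat t}}]\ge\sum_{\hat t}|\lambda_{\hat t t}-\pi_{\hat t}\mu_t^i|$.
\end{itemize}
Hence $f^{\rm inv}\le$ LOP.

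\textbf{Step 2 (attainment).} Given an optimal $(\pi,\lambda)$, put mass $\pi_{\hat t}$ at the conditional-mean point $\boldsymbol{\zeta}^{\hat t}=\lambda_{\hat t}/\pi_{\hat t}$ for each $\hat t$ with $\pi_{\hat t}>0$. Cells with zero mass force $\lambda_{\hat t}=0$ through the box constraints, so they can be dropped. Each point lies in the box and, by the cell constraints divided by $\pi_{\hat t}$, lies in $\Xi_{\hat t}$ because the cells are convex polyhedra. The means match. The MAD of the resulting multi-point distribution equals $\sum_{\hat t}|\lambda_{\hat t t}-\pi_{\hat t}\mu_t^i|\le\sigma_t^i$, so the distribution belongs to $\mathcal{P}$. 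Its expected cost is $\sum_{\hat t}\pi_{\hat t}g_{\hat t}(\boldsymbol{\zeta}^{\hat t})$, which is the LOP objective by linearity of $g_{\hat t}$. This gives LOP $\le f^{\rm inv}$ and also exhibits the claimed multi-point worst-case distribution.

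The main obstacle is the independence across periods stated in the model: the multi-point distribution from Step 2 is generally not a product measure, and the cell-wise conditioning in Step 1 also ignores independence. I would follow the paper's ambiguity set $\mathcal{P}$ as written, which constrains only marginal support, mean and MAD, and treat the supremum as taken over joint distributions. In that reading the two bounds coincide and the theorem holds. Two further details need checking: boundary ties between cells, handled by a consistent assignment, and the equality in the cell constraints at $t=\hat t$, which is harmless because $(s-D_{\hat t})^+=0$ whenever $D_{\hat t}\ge s$. Finally, the absolute values in the MAD constraint can be linearized in the standard way, so the problem is a genuine LP.
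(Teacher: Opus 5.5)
Your proof is correct, but it reaches (LOP) by a different route from the paper.

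\textbf{The paper's route.} It works entirely on the dual side:
\begin{itemize}
\item It writes $f^{\rm inv}$ as a moment problem and invokes strong duality (Lemma EC.1 of \citet{ghosal2020distributionally}) to obtain a semi-infinite dual.
\item It splits that dual constraint into the same turning-period cells you use.
\item It replaces each cell-wise robust constraint by the LP dual of its inner minimization. This needs a three-case analysis (Slater point, empty cell, degenerate cell with a perturbation argument) and a separate continuity argument when $s$ sits on a cell boundary.
\item This gives a finite LP (F), which it dualizes once more to get (LOP).
\end{itemize}

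\textbf{Your route.} You stay on the primal side. The conditional-moment aggregation in Step~1 gives $f^{\rm inv}\le$ LOP for every $\mathbb{P}\in\mathcal{P}$, with the triangle inequality handling the MAD constraint. The conditional-mean atoms in Step~2 give the reverse inequality.

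\textbf{What each buys.} Your argument needs no duality theory. It also treats boundary ties and empty cells uniformly: an empty cell forces $\pi_{\hat t}=0$, since otherwise $\boldsymbol{\lambda}_{\hat t}/\pi_{\hat t}$ would be a point of that cell. It proves Corollary~\ref{wd} at the same time; the paper's proof of that corollary is essentially your Step~2. The paper's route produces the min-form (F) with explicit dual multipliers, which it reuses for the compact QCQP benchmark in Appendix~\ref{OA-QCQP}. However, (LOP) is a finite, feasible, bounded LP, so you can recover that min-form by ordinary LP duality.

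\textbf{Independence.} Your remark is well taken. The paper's moment problem also ranges over joint distributions constrained only by marginal support, mean and MAD, and its worst-case distribution is generally not a product measure. So your reading is exactly the one the paper proves.
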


\begin{remark}[LOP Formulation Analysis]
The (LOP) formulation admits an insightful interpretation. If we regard $\pi_{\hat{t}}$ as a probability and $\boldsymbol{\lambda}_{\hat{t}}/\pi_{\hat{t}}$ as the realized demand associated with probability $\pi_{\hat{t}}$, then the objective can be understood as a weighted sum of inventory costs under different demand realizations. The constraints also have a clear meaning. The first constraint ensures that the probabilities sum to one, while the second to fourth constraints capture the restrictions specified in the ambiguity set. Finally, the last two constraints define the turning period $\hat{t}$, which we introduce next. \hfill$\blacksquare$
\end{remark}

Define the \emph{turning period} $\hat{t}$ as the earliest period when inventory depletes to or gets below zero, i.e., $\sum_{t\in[t_1,\hat{t})}\zeta_t^i < s_{it_1} \leq \sum_{t\in[t_1,\hat{t}]}\zeta_t^i$. If the inventory suffices throughout the interval, we set $\hat{t}=0$ by convention. Here, recall that $[T]=\{1,...,T\}$; thus, using $\hat{t}=0$ explicitly indicates this special case. The turning period concept plays a key role in interpreting the worst-case cost and its corresponding distribution:
\begin{corollary}\label{wd}
Let $\mathbb{P}^*$ be a discrete maximum-$(\hat{T}+1)$-point distribution defined as 
\begin{align}
    \mathbb{P}^*=\sum_{\hat{t}\in[t_1,t_2-1]\cup\{0\},\ \pi^{*}_{\hat{t}}\neq 0}\pi_{\hat{t}}^*\delta_{\frac{\boldsymbol{\lambda}_{\hat{t}}^*}{\pi_{\hat{t}}^*}}, \nonumber
\end{align}
where $\pi_{\hat{t}}^{*}$ and $\boldsymbol{\lambda}_{\hat{t}}^{*}$ are optimal solutions of problem (LOP), and $\delta_{\boldsymbol{\zeta}^{\prime}}$ denotes the Dirac distribution that places unit mass on the demand realization $\boldsymbol{\zeta}=\boldsymbol{\zeta}^{\prime}$. Then: 

    ($i$) $\mathbb{P}^*\in\mathcal{P}$;
    
     ($ii$)$\mathbb{E}_{\mathbb{P}^*}\bigl[h\sum_{t\in [t_1,t_2-1]}(s_{it_1}-\sum_{t^{\prime}\in [t_1,t]} \zeta_{t^{\prime}}^i)^++b\sum_{t\in [t_1,t_2-1]}(s_{it_1}-\sum_{t^{\prime}\in [t_1,t]} \zeta_{t^{\prime}}^i)^-\bigl]= f^{\rm inv}(i, t_1, t_2, s_{it_1})$. 
\end{corollary}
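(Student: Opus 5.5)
The plan is to prove Corollary~\ref{wd} by reading Theorem~\ref{wd_theorem} as a primal statement and checking directly that the optimal (LOP) solution encodes a feasible, cost-attaining distribution. Write the interval cost as
\[
g(\boldsymbol{\zeta}) = h\sum_{t}\bigl(s_{it_1}-\textstyle\sum_{t'\le t}\zeta_{t'}\bigr)^+ + b\sum_t\bigl(s_{it_1}-\textstyle\sum_{t'\le t}\zeta_{t'}\bigr)^-.
\]
Fix an optimal solution $(\pi^*,\boldsymbol{\lambda}^*)$ of (LOP). We may assume $\lambda^*_{\hat t}=0$ whenever $\pi^*_{\hat t}=0$: the box constraint $\pi_{\hat t}\underline{\zeta}\le\lambda_{\hat t t}\le\pi_{\hat t}\bar\zeta$ forces this, since $\underline{\zeta},\bar\zeta$ are finite. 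Consequently the atoms with zero weight contribute nothing to any constraint or to the objective, and $\mathbb{P}^*$ is well defined with at most $\hat T+1$ atoms $\boldsymbol{\zeta}^{(\hat t)}=\boldsymbol{\lambda}^*_{\hat t}/\pi^*_{\hat t}$.

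For part ($i$), I divide each (LOP) constraint by $\pi^*_{\hat t}$ and sum, checking each moment condition in turn.
\begin{itemize}
\item The box constraint gives $\zeta^{(\hat t)}_t\in[\underline{\zeta}_t^i,\bar{\zeta}_t^i]$, so the support condition holds with probability one.
\item $\sum\pi^*_{\hat t}=1$ makes $\mathbb{P}^*$ a probability measure.
\item $\mathbb{E}_{\mathbb{P}^*}[\zeta_t]=\sum_{\hat t}\pi^*_{\hat t}\zeta^{(\hat t)}_t=\sum_{\hat t}\lambda^*_{\hat t t}=\mu_t^i$.
\item $\mathbb{E}_{\mathbb{P}^*}|\zeta_t-\mu_t^i|=\sum_{\hat t}\pi^*_{\hat t}|\lambda^*_{\hat t t}/\pi^*_{\hat t}-\mu^i_t|=\sum_{\hat t}|\lambda^*_{\hat t t}-\pi^*_{\hat t}\mu^i_t|\le\sigma^i_t$, using positive homogeneity of $|\cdot|$.
\end{itemize}
The ambiguity set only constrains marginal moments per period, so no further (joint or independence) conditions need checking. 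Hence $\mathbb{P}^*\in\mathcal{P}$.

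For part ($ii$), I evaluate $g$ at each atom. The turning-period constraints, divided by $\pi^*_{\hat t}$, read
\[
\sum_{t<\hat t}\zeta^{(\hat t)}_t\le s_{it_1}\le\sum_{t\le\hat t}\zeta^{(\hat t)}_t,
\]
and for $\hat t=0$ the cumulative demand never exceeds $s_{it_1}$. Since demands are nonnegative, cumulative demand is monotone in $t$. So at atom $\hat t$ the inventory $s_{it_1}-\sum_{t'\le t}\zeta_{t'}$ is nonnegative for $t<\hat t$ and nonpositive for $t\ge\hat t$. The positive and negative parts therefore resolve linearly: $h$ multiplies $s-\sum\zeta$ on $[t_1,\hat t)$ and $b$ multiplies $\sum\zeta-s$ on $[\hat t,t_2-1]$. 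At the boundary, where the first constraint may hold with equality, the term is zero on either side, so the assignment remains consistent. Multiplying by $\pi^*_{\hat t}$ reproduces exactly the $\hat t$-th summand of the (LOP) objective. Summing over atoms gives $\mathbb{E}_{\mathbb{P}^*}[g]=\mathrm{OPT(LOP)}$, and Theorem~\ref{wd_theorem} identifies this with $f^{\rm inv}(i,t_1,t_2,s_{it_1})$.

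The one point needing care is the treatment of zero-weight atoms together with the boundary case of the turning-period inequalities; both are handled above. I would also note, as a consistency check, that $\mathbb{E}_{\mathbb{P}^*}[g]\le f^{\rm inv}$ holds trivially from part ($i$). Thus equality could alternatively follow just from the inequality $\mathrm{OPT}\le\mathbb{E}_{\mathbb{P}^*}[g]$, which is what the atom-wise computation shows.
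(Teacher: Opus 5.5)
Your proposal is correct and follows essentially the same route as the paper. Both arguments verify $\mathbb{P}^*\in\mathcal{P}$ constraint by constraint from (LOP), then evaluate the cost atom by atom, using the turning-period constraints to linearize the positive and negative parts, and finally invoke Theorem~\ref{wd_theorem}. You also make explicit two details the paper leaves implicit: zero-weight atoms force $\boldsymbol{\lambda}^*_{\hat t}=\boldsymbol{0}$ through the box constraint, and cumulative demand is monotone because $\underline{\zeta}_t^i\ge 0$.
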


\begin{remark}[Sparsity Structure of Worst-Case Distributions]
The structure of the worst-case distribution $\mathbb{P}^*$ in Corollary~\ref{wd}, supported on at most $\hat{T}+1$ demand realizations, exhibits a sparsity property reminiscent of Carathéodory’s theorem in convex analysis. While the classical theorem pertains to linear optimization over distributions constrained by support and mean, our formulation extends this setting to include piecewise-linear cost terms and deviation constraints on the first moment. Nonetheless, the LP reformulation in Theorem~\ref{wd_theorem} ensures that the worst-case distribution can still be expressed as a convex combination of at most $\hat{T}+1$ extreme scenarios, thus preserving this structural sparsity.$\hfill\blacksquare$
\end{remark}

When the problem LOP admits alternative optimal solutions, we choose the one with the fewest active turning periods for better interpretability. We now examine two special cases. Illustrative examples of worst-case distributions for different order-up-to levels $s$ are provided in Section \ref{5.2.2}.

\begin{proposition}\label{p4}
If $\boldsymbol{\sigma}^i = \boldsymbol{0}$, the problem becomes deterministic and admits a unique turning period. If $\boldsymbol{\sigma}^i\geq \max\{\boldsymbol{\bar{\zeta}}^i-\boldsymbol{\mu}^i, \boldsymbol{\mu}^i-\boldsymbol{\underline{\zeta}}^i\}$ and the scaling condition $m(\boldsymbol{\bar{\zeta}}^i-\boldsymbol{\mu}^i)=n(\boldsymbol{\mu}^i-\boldsymbol{\underline{\zeta}}^i)$ holds for some $m,n\geq 0$, then the worst-case distribution reduces to a two-point distribution: 
\begin{align}
    \frac{m}{m+n}\delta_{\bar{\boldsymbol{\zeta}}^i}+\frac{n}{m+n}\delta_{\underline{\boldsymbol{\zeta}}^i}.\nonumber
\end{align}
\end{proposition}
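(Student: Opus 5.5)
We treat the two claims of Proposition~\ref{p4} separately, working through the (LOP) of Theorem~\ref{wd_theorem} and the worst-case distribution of Corollary~\ref{wd}.

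\textbf{Case $\boldsymbol{\sigma}^i=\boldsymbol{0}$.} We first show that the ambiguity set collapses to a point. The deviation constraint $\mathbb{E}_{\mathbb{P}}|\zeta_t^i-\mu_t^i|\le 0$ forces $\zeta_t^i=\mu_t^i$ almost surely, so $\mathcal{P}=\{\delta_{\boldsymbol{\mu}^i}\}$ and the problem is deterministic.

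In (LOP) the same conclusion appears as $\lambda_{\hat t t}=\pi_{\hat t}\mu_t^i$ for every $\hat t$. Hence every active scenario $\boldsymbol{\lambda}_{\hat t}/\pi_{\hat t}$ equals $\boldsymbol{\mu}^i$. The turning-period constraints then pin down $\hat t$ uniquely: it is the first $t$ with $\sum_{t'\in[t_1,t]}\mu_{t'}^i\ge s_{it_1}$, or $\hat t=0$ if no such $t$ exists.

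These constraints require $\pi_{\hat t}=0$ for every other index. The reason is that the intervals $\bigl(\sum_{[t_1,\hat t)}\mu,\sum_{[t_1,\hat t]}\mu\bigr]$ are disjoint, and we may assume $\mu_t^i>0$ since $\mu_t^i>\underline\zeta_t^i\ge0$. The boundary case is resolved by the half-open convention. So there is a unique turning period.

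\textbf{Case of large $\boldsymbol\sigma^i$ with the scaling condition.} Here we plan two steps.
\begin{itemize}
\item[(a)] Since $\mathbb{E}|\zeta-\mu|\le\max\{\bar\zeta-\mu,\mu-\underline\zeta\}$ holds automatically under the support constraint, the deviation constraints become redundant. The ambiguity set then reduces to support plus mean, a product over periods by independence.
\item[(b)] For each fixed realisation, the cost $g(\boldsymbol\zeta)=\sum_t\bigl[h(s-S_t)^++b(s-S_t)^-\bigr]$, with $S_t=\sum_{t'\le t}\zeta_{t'}$, is convex in $\boldsymbol\zeta$. Over a box with fixed mean, the worst-case expectation of a convex function is therefore attained on extreme points. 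We intend to use the Edmundson--Madansky-type argument coordinatewise: replace each $\zeta_t$ by the two-point law on $\{\underline\zeta_t,\bar\zeta_t\}$ with probabilities $(\mu_t-\underline\zeta_t)/(\bar\zeta_t-\underline\zeta_t)$ and $(\bar\zeta_t-\mu_t)/(\bar\zeta_t-\underline\zeta_t)$. This is a mean-preserving spread, so it cannot decrease the expectation.
\end{itemize}

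The main obstacle is passing from independent coordinatewise two-point laws to the comonotone two-point law on $\{\underline{\boldsymbol\zeta}^i,\bar{\boldsymbol\zeta}^i\}$. The scaling condition gives every period the same upper-tail probability $m/(m+n)$, since $(\bar\zeta_t-\mu_t)/(\mu_t-\underline\zeta_t)=n/m$. We then observe that $g$ depends on $\boldsymbol\zeta$ only through the partial sums $S_t$, and that $g$ is convex and supermodular-like in these sums.

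Coupling the coordinates comonotonically keeps every marginal, and hence every constraint in $\mathcal P$ when read marginally as in (LOP), which only involves per-period moments. It also maximises the expectation of each $\phi(S_t)$ with $\phi$ convex, by the convex-order property of comonotonic sums. Summing over $t$ shows that the comonotone law $\frac{m}{m+n}\delta_{\bar{\boldsymbol\zeta}^i}+\frac{n}{m+n}\delta_{\underline{\boldsymbol\zeta}^i}$ is worst-case.

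Finally, we exhibit it as an optimal (LOP) solution. We place $\pi$ on the turning period of $\bar{\boldsymbol\zeta}^i$ and the turning period of $\underline{\boldsymbol\zeta}^i$, taking $\boldsymbol\lambda=\pi\bar{\boldsymbol\zeta}^i$ and $\pi\underline{\boldsymbol\zeta}^i$ respectively. Its value matches the upper bound just derived, which confirms optimality.
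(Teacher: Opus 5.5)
Your proof is correct, and for the second claim it takes a different route from the paper's, one that is in fact more complete.

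For $\boldsymbol{\sigma}^i=\boldsymbol{0}$ you follow the paper (collapse of $\mathcal P$ to $\delta_{\boldsymbol{\mu}^i}$), just spelled out through (LOP). One detail: the (LOP) turning-period constraints are closed on both sides. So when $s_{it_1}$ equals a partial sum of $\boldsymbol{\mu}^i$, two adjacent indices are feasible with the same objective. Uniqueness then comes from the half-open definition of $\hat t$ (or the fewest-active-periods convention), not from (LOP) itself.

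For the second claim, the paper dualizes the support-and-mean moment problem. It chooses $(\alpha^*,\boldsymbol{\beta}^*)$ so that the semi-infinite constraint is tight at $\bar{\boldsymbol{\zeta}}^i$ and $\underline{\boldsymbol{\zeta}}^i$, and then invokes strong duality. But two equalities do not give dual feasibility. The inequality $\alpha^*+\boldsymbol{\zeta}^\top\boldsymbol{\beta}^*\ge f(\boldsymbol{\zeta})$ must hold on the whole box. When $\hat T\ge 2$, an arbitrary solution of the single linear equation for $\boldsymbol{\beta}^*$ can violate it at the mixed vertices. Moreover, the existence of a suitable $\boldsymbol{\beta}^*$ is equivalent to the optimality being proved.

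Your primal argument closes this gap. You apply an Edmundson--Madansky spread to vertex laws with fixed two-point marginals, then use convex-order dominance of the comonotone coupling for every partial sum $S_t$ simultaneously. It also shows where the scaling condition really enters. Equal upper-tail probabilities $m/(m+n)$ are what make the comonotone coupling supported on $\{\underline{\boldsymbol{\zeta}}^i,\bar{\boldsymbol{\zeta}}^i\}$ alone. The paper uses the condition only to write $\boldsymbol{\mu}^i$ as a convex combination of the endpoints. What the dual route would buy, if completed, is an explicit affine certificate.

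Three points need tightening.

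\emph{Independence.} Drop ``a product over periods by independence'' in (a). Your proof needs, and the claimed worst-case law requires, the reading in which $\mathcal P$ constrains only per-period marginals, as (LOP) and Corollary~\ref{wd} do. Under a genuinely product ambiguity set the comonotone law would not be admissible. The classical bound would instead select the independent product of two-point laws.

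\emph{The spread in (b).} For the same reason, state it for an arbitrary joint law. Given $\boldsymbol{\zeta}$, send each coordinate independently to $\bar\zeta_t$ with probability $(\zeta_t-\underline\zeta_t)/(\bar\zeta_t-\underline\zeta_t)$, and to $\underline\zeta_t$ otherwise. Conditional Jensen gives the inequality. The resulting vertex law puts mass $(\mu_t-\underline\zeta_t)/(\bar\zeta_t-\underline\zeta_t)=m/(m+n)$ on $\bar\zeta_t$; as written, your two probabilities read as attached to the opposite endpoints. Its coupling is arbitrary rather than independent. This is harmless, since the comonotone comparison holds against every coupling.

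\emph{The closing (LOP) step.} It is redundant: admissibility plus attaining the upper bound already proves optimality. If you keep it, handle the case where both endpoints share a turning period. There the two atoms merge into one index with $\boldsymbol{\lambda}_{\hat t}=\boldsymbol{\mu}^i$.
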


Finally, we obtain the deterministic reformulation of DR-CIRP, dneoted Deter-CIRP:
\begin{align}
\min_{x,\ y,\ s}\left\{C^{\rm vehicle}+C^{\rm trans}+C^{\rm inv}\big|{\rm Constraints}\ \eqref{eq:partition}-\eqref{eq:domain3},\ \eqref{st:sandy},\ \eqref{eq:domain4}-\eqref{eq:visit_consistency},\ \eqref{st:111},\ \eqref{st:222}\right\}.\label{deter-cirp}
\end{align}

\section{Nested Branch-and-Price}\label{s4}
Although deterministic, the Deter-CIRP remains NP-hard. Nevertheless, its well-structured formulation on a spatio-temporal network offers a strong basis for decomposition. In particular, the Deter-CIRP admits natural decompositions along both spatial and temporal dimensions, as well as within each dimension. Building upon these properties, we develop a nested branch-and-price decomposition framework.

In this section, we elaborate on our nested decomposition framework. While the service policies described in Section~\ref{s2.4} affect certain formulations, their influence on the overall framework is limited. Therefore, we present the framework under the consistent policy, which entails greater technical challenges than the alternatives. Details for the fixed-interval and flexible policies are provided in Online Appendix~\ref{OA-policies}.

\subsection{Decomposition Reformulation}
The nested branch-and-price framework is illustrated in Figure~\ref{fig: nested_CG}. The restricted master problem (RMP) in the first level (1st-RMP) seeks to determine a partition of retailers into clusters, where each cluster is assigned to a single vehicle for service. Since the set of possible retailer clusters is not exhaustive, the pricing problem (PP) generates new clusters that may improve the current solution of the 1st-RMP. Nevertheless, even after this first-level decomposition, the first-level PP remains computationally challenging to solve directly. Thus, we design a second-level branch-and-price for 1st-PP. At this level, two pricing problems are defined: the routing pricing problem (2nd-PP1), which generates feasible vehicle routes, and the replenishment pricing problem (2nd-PP2), which determines replenishment plans. The corresponding second-level restricted master problem (2nd-RMP) integrates these two components to identify a combination of routing and replenishment plans, thereby defining a retailer cluster and generating a new column for the 1st-RMP. In the following, we introduce the reformulated problems one by one.

\begin{figure}[!htb]
\centering
  \includegraphics[width=0.8\linewidth]{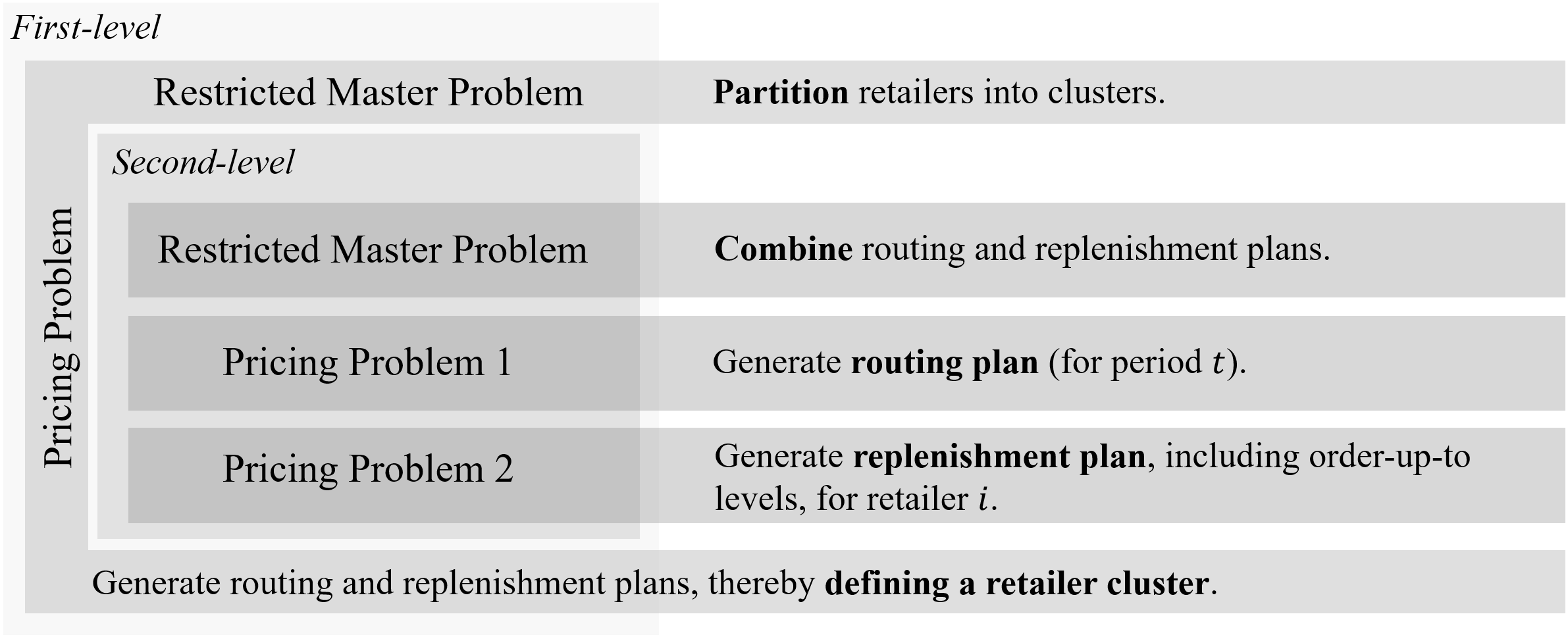}
  \caption{Illustration of the nested branch-and-price framework.}
  \label{fig: nested_CG}
\end{figure}

\subsubsection{First Level RMP: Retailers Partitioning.}\label{1st-rmp}
Define a pattern $\omega \in \Omega$ as a feasible combination of a cyclic routing plan and replenishment plans for a cluster of retailers. Let binary variable $q_\omega$ indicate the selection of pattern $\omega$, and $c_\omega$ its associated cost. The parameter $\alpha^i_\omega$ equals 1 if retailer $i$ is served under pattern $\omega$. The first-level restricted master problem is thus:
\begin{align}
{\rm (1st-RMP)}\ \min\ &\sum_{\omega\in \Omega}c_{\omega} q_{\omega}, \\
s.t. \ 
  & \sum_{\omega\in \Omega}\alpha_{\omega}^iq_{\omega} = 1 \quad \forall i\in[N],\label{master_dual}\\
  & q_{\omega} \in \{0,1\} \quad \forall \omega\in \Omega.
\end{align}

Let $\pi_i$ denote the dual variable corresponding to Constraints~\eqref{master_dual}. Instead of explicitly modeling the first-level pricing problem, we directly formulate the second-level restricted master problem (2nd-RMP), which integrates both routing and replenishment decisions for a retailer cluster.

\subsubsection{Second Level RMP: Retailer Cluster Defining.}\label{2-rmp}
Define a replenishment plan $l(i) \in L(i)$ as the replenishment decision for retailer $i \in [N]$, which specifies both the replenishment periods and the corresponding order-up-to levels. Let $\gamma_{l(i)}$ be a binary decision variable indicating whether replenishment plan $l(i)$ is selected for retailer $i$, and let $c_{l(i)}$ denote the total inventory cost incurred when retailer $i$ follows plan $l(i)$. We introduce parameter $\beta_{l(i)}^t$, which equals 1 if plan $l(i)$ involves replenishment during period $t$, and 0 otherwise. Similarly, let $s_{l(i)}^t$ represent the order-up-to level of retailer $i$ in period $t$ under plan $l(i)$. 
For the routing component, let $r \in R$ denote a feasible routing plan. Define $\lambda_r$ as a binary decision variable indicating whether routing plan $r$ is selected, and let $c_r$ be the transportation cost associated with plan $r$. The parameter $o_r^i$ equals 1 if retailer $i$ is visited under routing plan $r$, and 0 otherwise. Furthermore, let $x_r^t$ be a binary variable indicating whether routing plan $r$ is executed in period $t$. The formulation of the second-level restricted master problem (2nd-RMP) is presented as follows:
\begin{align}
{\rm (2nd-RMP)}\ \min\ &p+\frac{1}{T}\sum_{t\in[T]}\sum_{r\in R} c_{r}x_{r}^t+\sum_{i\in[N]}\sum_{l(i)\in L(i)}\bigl(\frac{1}{T}c_{l(i)}-\pi_i\bigr)\gamma_{l(i)}, \\
s.t. \ 
  & \sum_{r\in R}\lambda_{r} = 1, \label{iota}\\
  & \sum_{l(i)\in L(i)}\gamma_{l(i)} \leq 1 \quad\forall i\in [N], \\
  & \sum_{r\in R}x_{r}^to_{r}^i-\sum_{l(i)\in L(i)}\gamma_{l(i)}\beta_{l(i)}^t\geq 0\quad\forall i\in[N],t\in[T],\\
  & \sum_{i\in[N]}\sum_{l(i)\in L(i)}\gamma_{l(i)}\beta_{l(i)}^t\bigl(s_{l(i)}^{t}-s_{l(i)}^{t^-}+\sum_{t^{\prime}\in[t^-,t-1]}U_{t^{\prime}}^i\bigr)\leq Q \quad \forall t\in [T],\\ 
  & \lambda_r - x_{r}^t \geq 0 \quad\forall r\in R,  t\in[T], \label{alpha}\\
  & \lambda_{r} \in \{0,1\}\quad \forall r\in R,\\
  & x_r^t \in \{0,1\}\quad \forall t\in [T],r\in R,\\
  & \gamma_{l(i)} \in \{0,1\} \quad \forall i\in[N], l(i)\in L(i),
\end{align}
where $t^-$ and $t^+$ denote the preceding and succeeding replenishment periods of period $t$, respectively, under a cyclic replenishment policy. Let $\iota$, $\theta_i$, $\delta_{it}$, $\psi_t$, and $\alpha_{rt}$ denote the dual variables associated with Constraints~\eqref{iota} through~\eqref{alpha}, respectively.

\subsubsection{Second Level PP1: Routing Plan Generating.}
Note that Equation~\eqref{alpha} in the 2nd-RMP introduces column-dependent rows involving the variables $x_r^t$, which complicates the reduced cost calculation for newly generated routing plans, as it would typically require estimating the dual variables $\alpha_{rt}$ associated with these constraints. However, Theorem~\ref{rc_theorem} provides a tractable way to compute the reduced cost without explicitly evaluating the duals $\alpha_{rt}$.
\begin{theorem}\label{rc_theorem}
The dual variable $\alpha_{rt}$ corresponding to Constraints~\eqref{alpha} can be evaluated as:
\[
\alpha_{rt} = \min \bigl\{ \frac{1}{T}c_r - \sum_{i \in [N]} o_r^i \delta_{it},\; 0 \bigr\}.
\]
Consequently, the reduced cost of a new routing column $\lambda_r$ is:
\[
-\iota + \sum_{t \in [T]} \min \bigl\{ \frac{1}{T}c_r - \sum_{i \in [N]} o_r^i \delta_{it},\; 0 \bigr\}.
\]

Furthermore, if $\alpha_{rt} = \frac{1}{T}c_r - \sum_{i \in [N]} o_r^i \delta_{it}$, then the route $r$ is executed in period $t$, i.e., $\lambda_r = x_r^t$.
\end{theorem}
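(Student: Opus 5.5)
We would work with the LP relaxation of the 2nd-RMP and write its dual. In the relaxation, the variable $x_r^t$ appears in the objective with coefficient $\frac{1}{T}c_r$. It appears in the visit-consistency rows with coefficient $o_r^i$ (duals $\delta_{it}\ge 0$) and in the linking row \eqref{alpha} with coefficient $-1$ (dual $\alpha_{rt}\ge 0$ in the $\ge$ convention; we adopt the sign convention under which the statement's $\alpha_{rt}\le 0$, i.e.\ the row written as $x_r^t-\lambda_r\le 0$). The variable $\lambda_r$ appears in the convexity row \eqref{iota} (dual $\iota$, free) and in the linking rows with coefficient $+1$. Since $x_r^t$ does not appear in the capacity rows, the duals $\psi_t$ do not enter. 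The dual constraint for $x_r^t$ is therefore
\[
\sum_{i\in[N]} o_r^i\delta_{it}+\alpha_{rt}\le \tfrac{1}{T}c_r ,\qquad \alpha_{rt}\le 0 .
\]
The dual constraint for $\lambda_r$ is $\iota-\sum_{t}\alpha_{rt}\le 0$ (up to the sign convention), so its reduced cost is $-\iota+\sum_t\alpha_{rt}$.

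The key step is to show that, for a column $r$ not yet in the RMP (or in general, after choosing the dual solution appropriately), $\alpha_{rt}$ can be fixed at the largest value allowed by these two constraints, namely $\alpha_{rt}=\min\{\frac1T c_r-\sum_i o_r^i\delta_{it},0\}$.

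The argument is by complementary slackness and dual-feasibility extension. The variables $x_r^t$ for fixed $r$ and the rows \eqref{alpha} form a block that touches only $\lambda_r$ and the visit rows. Fix the duals $(\iota,\theta,\delta,\psi)$ of the other rows. The best choice of $\alpha_{r\cdot}$ is obtained by solving the small subproblem
\[
\min_{0\le x_r^t\le\lambda_r}\ \sum_t\Bigl(\tfrac1T c_r-\sum_i o_r^i\delta_{it}\Bigr)x_r^t .
\]
Its value per unit of $\lambda_r$ is exactly $\sum_t\min\{\frac1T c_r-\sum_i o_r^i\delta_{it},0\}$, and the optimal multipliers of the bounds $x_r^t\le\lambda_r$ are precisely these minima.

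Equivalently, we would argue that the column $\lambda_r$ together with its associated $x_r^t$ acts as one aggregated column. Given $\lambda_r$, the best response is $x_r^t=\lambda_r$ exactly when the coefficient $\frac1T c_r-\sum_i o_r^i\delta_{it}$ is negative (or zero), and $x_r^t=0$ otherwise. Substituting this response gives the aggregated cost coefficient $\sum_t\min\{\cdot,0\}$. Subtracting $\iota$ yields the stated reduced cost. The final claim, that $\lambda_r=x_r^t$ whenever $\alpha_{rt}$ equals the first term, follows from this best-response rule combined with complementary slackness: a nonzero $\alpha_{rt}$ forces the row $\lambda_r-x_r^t\ge 0$ to be tight. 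In the degenerate case $\alpha_{rt}=0=\frac1T c_r-\sum_i o_r^i\delta_{it}$, we take $x_r^t=\lambda_r$ by convention as a tie-break.

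The main obstacle is justifying that these $\alpha_{rt}$ form a valid, consistent dual for columns that are absent from the current RMP, since the rows \eqref{alpha} themselves do not yet exist for such columns. We would handle this by showing that setting $\alpha_{rt}$ to the formula value keeps the $x_r^t$ dual constraint feasible and gives the least negative (tightest) reduced cost for $\lambda_r$. The pricing test is then exact: there is an improving column if and only if the aggregated column, with $x$ at its best response, has negative reduced cost. This is the standard argument for column-dependent rows (as in Muter et al.).
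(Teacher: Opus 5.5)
Your proposal is correct and is essentially the paper's argument: you dualize the LP relaxation of the 2nd-RMP, set $\alpha_{rt}$ to the extreme value permitted by the dual constraint of $x_r^t$ and its sign restriction, read the reduced cost of $\lambda_r$ off its dual constraint, and use complementary slackness on the linking row for the last claim (the paper motivates the extreme choice by maximizing $-\iota$, you by the dual-extension argument for column-dependent rows). Your version is in fact cleaner: your sign convention makes the statement's $\min\{\cdot,0\}$ literally the multiplier, your period-by-period bound gives $\sum_t\min\{\cdot,0\}$ where the paper's proof writes the looser $\min\{\sum_t\cdot,0\}$, and you rightly treat the case $\frac{1}{T}c_r-\sum_i o_r^i\delta_{it}=0$ as a tie-break rather than a consequence of complementary slackness.
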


Based on Theorem~\ref{rc_theorem}, the pricing problem for routing plan generation, denoted as 2nd-PP1, can be formulated as:
\begin{align}
{\rm (2nd-PP1)}\ \min\ & -\iota+\sum_{t\in [T]}\min\Bigl\{\sum_{(i,j)\in A^{\rm spatial}}\bigl(\frac{1}{T}\rho c_{ij}-\delta_{it}\bigr)x_{ij}, 0\Bigr\},\nonumber\\
    s.t.\ 
  & \sum_{(i,j)\in A^{\rm spatial}} c_{ij} x_{ij} \leq L ,\nonumber\\
  & x_{ij}\in \mathscr{X} \quad \forall (i,j)\in A^{\rm spatial}\nonumber.
\end{align}

Theorem~\ref{rc_theorem} also indicates the relationship between the value of $\alpha_{rt}$ and route execution. This characterization facilitates a structured procedure for route generation by enumerating all possible subsets of periods during which a route may be active. For each candidate execution period set $\mathcal{\hat{T}} \subseteq [T]$, we solve a subproblem to construct a corresponding route. If the resulting route yields a negative reduced cost, it is added to the master problem. The subproblem associated with a fixed execution set $\mathcal{\hat{T}}$ is formulated as follows:
\begin{align}
\min\ & -\iota+\sum_{t\in \mathcal{\hat{T}}}\sum_{(i,j)\in A^{\rm spatial}}\bigl(\frac{1}{T}\rho c_{ij}-\delta_{it}\bigr)x_{ij},\nonumber\\
    s.t.\ 
  & x_{ij}\in \mathscr{X} \quad \forall (i,j)\in A^{\rm spatial}\nonumber,
\end{align}
which corresponds to an elementary shortest path problem with resource constraints (ESPPRC). This problem can be efficiently solved using a labeling algorithm \citep[see, e.g.,][]{feillet2004exact, costa2019exact}, as detailed in Online Appendix \ref{oa-labeling}.

\begin{remark}
The 2nd-PP1 problem is solved by enumerating all feasible combinations of execution periods. For each generated route $\lambda_r$, the corresponding execution schedule is determined during the route generation process and encoded using binary parameters $d_r^t$, indicating whether the route is executed in period $t$. Consequently, when adding a route to the master problem, the variables $x_r^t$ can be replaced by $d_r^t \lambda_r$, thereby reducing the number of binary decision variables and enhancing the computational efficiency of the master problem.$\hfill\blacksquare$
\end{remark}

\subsubsection{Second Level PP2: Replenishment Plan Generating.}\label{2-pp2}
The replenishment pricing problem, denoted as 2nd-PP2($i$), generates an optimal cyclic inventory policy for each retailer $i$, specifying both the replenishment periods and the associated order-up-to levels:
\begin{align}
{\rm (2nd-PP2}(i){\rm )}\ \min\ &\sum_{(t_1,t_2)\in A^{\rm temporal}}y_{it_1t_2}\bigl(\frac{1}{T}f^{inv}(i, t_1,t_2,s_{it_1})+\psi_{t_2}(s_{it_2}-s_{it_1})+\delta_{it_1}+\nonumber\\
&\psi_{t_2}\sum_{t\in[t_1,t_2-1]}U^i_{t}\bigr)+\theta_i-\pi_i, \\
s.t. \ 
    & y_{it_1t_2}\bigl(s_{it_2}-s_{it_1}+\sum_{t\in[t_1,t_2-1]}L_{t}^i\bigr)\geq 0 \quad \forall (t_1,t_2)\in A^{\rm temporal},\label{c31}\\ 
    &s_{it} \leq M \sum_{(t_1,t_2)\in\eta^+(A^{\rm temporal},t)}y_{it_1t_2}\quad \forall t\in[T],\\
    &s_{it} \in\mathbb{N} \quad \forall t\in[T],\\
    & y_{it_1t_2}\in \mathscr{Y}\quad\forall (t_1,t_2)\in A^{\rm temporal}.
\end{align}

To the best of our knowledge, most existing research in inventory management focuses on finite-horizon policies. Recently, \citet{chen2023robust} introduced the notion of cyclic policies in inventory systems; however, their framework remains confined to a finite planning horizon. In this study, we provide a cyclic inventory policy that repeats indefinitely for each retailer $i$, thus extending the literature on inventory policy design to the infinite-horizon setting. The algorithm for solving this problem is introduced in the subsequent subsection.

To accelerate the nested solution framework, we incorporate an early stopping criterion in the second-layer branch-and-price procedure. Specifically, the second-level branch-and-bound search terminates once a pattern with negative reduced cost is found after exploring at least $m$ nodes. This strategy improves computational efficiency in the first-level pricing step while preserving global optimality, as full enumeration is eventually conducted.

\subsection{Inventory Replenishment Strategy}
In this section, we present the solution algorithm for subproblem 2nd-PP2($i$). The main challenge in solving this formulation arises from Constraints \eqref{c31}, which prevent overshooting and impose limits on the order-up-to levels. However, overshooting occurs only with a small probability, and many of these constraints are redundant. Therefore, we treat them as lazy constraints, adding them only when necessary during the solution process.

\subsubsection{Solution Approach without Lazy Constraints.}
We first present the solution approach for 2nd-PP2($i$) without considering Constraints \eqref{c31}. In this case, the problem can be reformulated as
\begin{align}
{\rm (R-2nd-PP2}(i){\rm )}\ \min\ &\sum_{(t_1,t_2)\in A^{\rm temporal}}c_{t_1t_2}^{i*}y_{it_1t_2}+\theta_i-\pi_i, \nonumber
\end{align}
where $y_{it_1t_2}\in \mathscr{Y}$ and $c_{t_1t_2}^{i*}$ denotes the cost of replenishing retailer $i$ over the interval $(t_1,t_2)$, defined as
\begin{align}
    c_{t_1t_2}^{i*}=\min_{s_{it_1}}c_{t_1t_2}^{i}=\min_{s_{it_1}}\frac{1}{T}f^{\rm inv}(i, t_1, t_2, s_{it_1})+(\psi_{t_1}-\psi_{t_2})s_{it_1}+\delta_{it_1}+\psi_{t_2}\sum_{t\in [t_1,t_2-1]}U^i_t.\nonumber
\end{align}
In the following, we describe step by step how to compute the costs $c_{t_1t_2}^{i*}$ and how to solve the reformulated problem R-2nd-PP2($i$).

\textbf{Calculation of $c_{t_1t_2}^{i*}$.} Recall that Proposition \ref{convex_prop} establishes that the function $f^{\rm inv}(i, t_1, t_2, s_{it_1})$ is convex in $s_{it_1}$, implying that $c_{t_1t_2}^i$ is also convex in $s_{it_1}$. In addition, Theorem \ref{wd_theorem} shows that, for a given $s_{it_1}$, $f^{inv}(i, t_1, t_2, s_{it_1})$ equals the optimal value of a linear optimization problem. Therefore, for any fixed $s_{it_1}$, $c_{t_1t_2}^i$ can be computed by solving a corresponding linear program. Consequently, the optimal value $c_{t_1t_2}^{i*}$ can be efficiently obtained using a golden-section search. The detailed procedure is provided in Algorithm~\ref{goldenSection} in Appendix~\ref{others}.

It is worth noting that performing a golden-section search for every retailer and every interval at each update of the dual variables can be computationally expensive. To improve efficiency, we adopt two acceleration strategies.
First, all evaluated $s_{it_1}$ values and their corresponding inventory costs are stored during the golden-section process, avoiding redundant computations in subsequent iterations. Second, during the pricing stage, small changes in the dual variables typically have little impact on the optimal order-up-to level $s$. Therefore, in each iteration, we first examine the neighboring values of the previously optimal levels under zero dual variables before conducting a full re-optimization. The details of this procedure are summarized in Algorithm~\ref{updateCvw} in Appendix~\ref{others}.

\textbf{Solving R-2nd-PP2($i$).} Once the costs $c_{t_1t_2}^{i*}$ are updated, solving R-2nd-PP2($i$) reduces to a problem that is structurally similar to the Elementary Shortest Path Problem with Resource Constraints (ESPPRC). The key distinction is that there is no explicit starting or ending period, since the replenishment plan follows a cyclic pattern with no natural beginning or end. To address this, we define an artificial start and end period corresponding to the largest index in the cycle. With this definition, we solve the problem by enumerating all candidate cycle-ending periods $t^e \in [T]$. For each $t^e$, a labeling algorithm (described in Online Appendix~\ref{oa-labeling}) is applied to determine the minimum-cost replenishment plan that starts and ends at $t^e$ and includes only periods preceding it. Among all candidate cycles, the plan with the lowest total cost is selected as the final solution.

\subsubsection{Lazy Constraints.}
We now present the approach for handling lazy constraints. We first describe how to incorporate these constraints into the solution process introduced earlier, and then detail the procedure for solving 2nd-PP2($i$) with the incorporated lazy constraints.

\textbf{Lazy constraints incorporation.} 
It is straightforward to observe that the lazy constraint $y_{it_1t_2}(s_{it_2}-s_{it_1}+\sum_{t\in[t_1,t_2-1]}L_{t}^i)\geq 0$ for interval $(t_1,t_2)$ can be satisfied in two ways. First, by not performing replenishment during the interval, i.e., $y_{it_1t_2}=0$. Second, by enforcing replenishment in this interval, i.e., $y_{it_1t_2}=1$, in which case overshooting must be prevented by ensuring $s_{it_2}-s_{it_1}+\sum_{t\in[t_1,t_2-1]}L_{t}^i$. This indicates that whenever a new lazy constraint is triggered, it can be enforced in two alternative ways: either prohibiting replenishment in the corresponding interval or enforcing replenishment while imposing the associated restriction on the order-up-to levels. This branching logic motivates the incorporation of lazy constraints into the solution process via a branch-and-bound approach, where the state of each node includes the set of forbidden intervals $A_N$, the set of enforced intervals $A_Y$, and the restrictions on order-up-to levels associated with intervals in $A_Y$.

Specifically, the tree is initialized with a single root node, corresponding to the problem R-2nd-PP2($i$), where no lazy constraints are considered. At the beginning, the root node is selected for solving. During the branch-and-bound process, at each iteration, the node with the worst bound is selected. For the selected node, we solve 2nd-PP2($i$) without explicitly enforcing the triggered lazy constraints, but subject to the sets $A_Y$ and $A_N$. After solving, we check whether the solution violates Constraint~\eqref{c31}. If not, the solution is feasible, and the global upper bound can be updated. Otherwise, the lower bound is updated, and branching is performed based on the set of infeasible intervals:
\begin{align}
    A'=\bigl\{(t_1,t_2)\big|s_{it_2}-s_{it_1}+\sum_{t\in[t_1,t_2-1]}L_{t}^i<0\bigr\}.\nonumber
\end{align}
Branching is done by enumerating all possible combinations of enforcing or forbidding the intervals in $A'$, resulting in $2^{|A'|}$ child nodes. Each child node corresponds to updated sets $A'_{Y}$ and $A'_{N}$ such that $A'_{Y} \cup A'_{N} = A_{Y} \cup A_{N} \cup A'$. This process is repeated recursively until all leaf nodes yield solutions that satisfy Constraint~\eqref{c31}, thereby ensuring global feasibility. An example of this branching procedure is illustrated in Figure \ref{fig: branching}.

\begin{figure}[!htb]
\centering
  \includegraphics[width=0.5\linewidth]{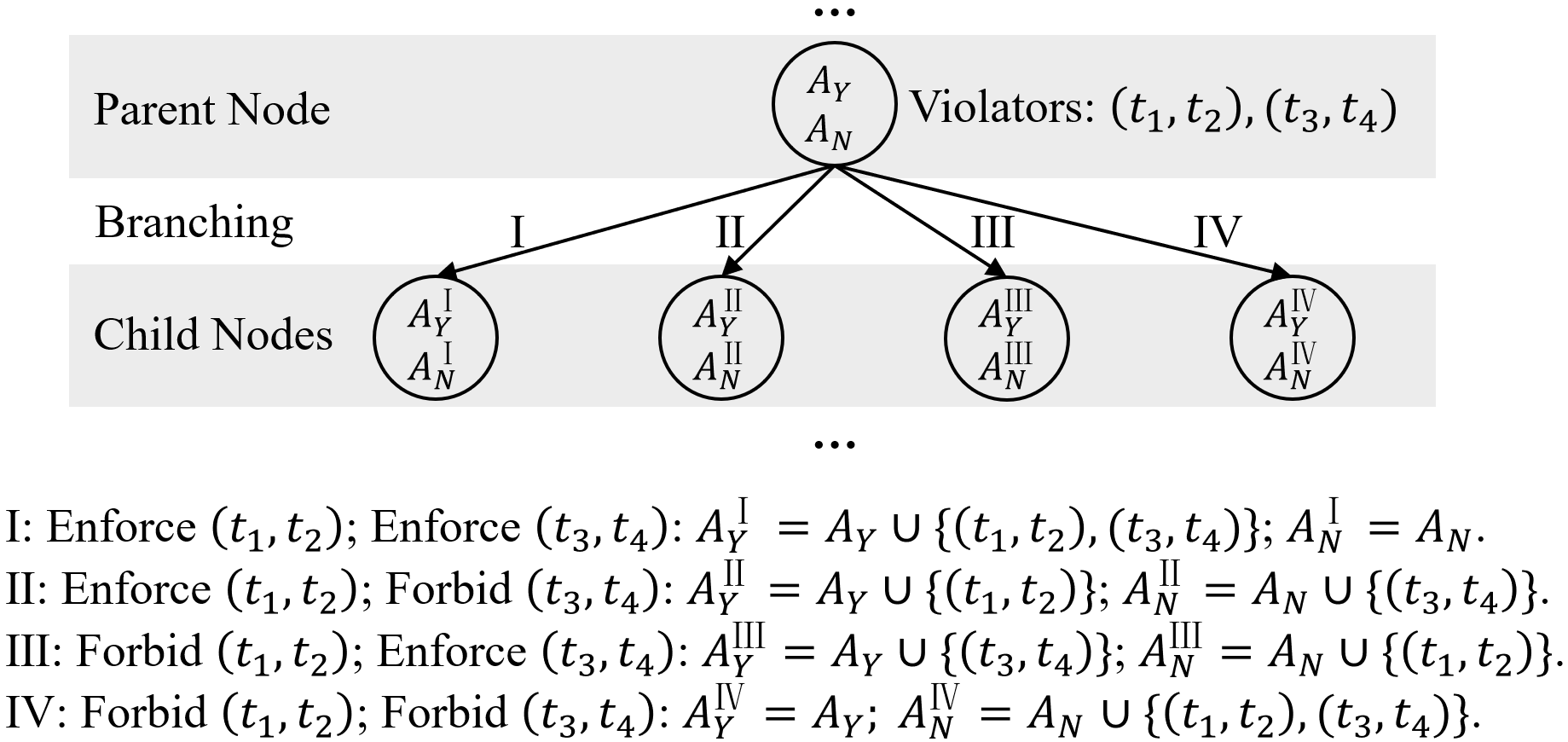}
  \caption{Illustration of a node with two infeasible intervals, leading to four child nodes, each with updated sets according to the branching conditions.}
  \label{fig: branching}
\end{figure}


\textbf{Solving 2nd-PP2($i$) with incorporated lazy constraints.}
We now describe how to solve 2nd-PP2($i$) under given states $A_{Y}$ and $A_{N}$. Eariler, we introduced how to solve R-2nd-PP2($i$); with the additional restrictions imposed by the sets $A_{Y}$ and $A_{N}$, the solution process remains similar, but the cost $c_{t_1t_2}^{i*}$ of certain intervals must be adjusted. 

For any interval $(t_1,t_2) \in A_{N}$, replenishment is prohibited. To enforce this, we set $c_{t_1t_2}^{i*} = +\infty$, effectively excluding the interval from consideration.

For any interval $(t_1,t_2) \in A_{Y}$, replenishment must be performed, and the associated order-up-to level constraints must be satisfied. These restrictions influence not only the interval itself but also other related intervals. Consequently, the cost $c_{t_1t_2}^{i*}$ must be recalculated for all intervals $(t_1',t_2') \in A^{\rm temporal}$. The intervals are classified into three categories, with each category requiring a distinct adjustment procedure:

    \paragraph{Category 1: Extensions of required intervals.} These are intervals $(t_1^{\prime}, t_2^{\prime})$ such that $t_1^{\prime} = t_1$ and $(t_1^{\prime}, t_2^{\prime})$ strictly contains $(t_1, t_2)$ in cyclic order. Although Constraints~\eqref{c31} impose a relationship between $s_{it_1}$ and $s_{it_2}$, $s_{it_2} - s_{it_1} + \sum_{t \in [t_1, t_2 - 1]} L_t^i \geq 0$, the actual values of $s_{it_1}$ and $s_{it_2}$ are influenced by the timing of subsequent replenishment, i.e., the endpoint $t_2^{\prime}$ of the next interval $(t_2, t_2^{\prime})$. Consequently, it is necessary to evaluate the intervals $(t_1, t_2)$ and $(t_2, t_2^{\prime})$ jointly. A similar dependency arises when the required intervals form a replenishment chain. In either case, whether dealing with a partial chain or a complete cycle (e.g., $(t_1, t_2), (t_2, t_3), \ldots, (t_k, t_1)$), the optimal order-up-to levels must be computed simultaneously for all involved periods. In the cyclic case, an additional consistency condition ensures that the starting and ending order-up-to levels are aligned.

    To solve this problem efficiently, we design a dynamic programming algorithm. Each stage corresponds to a time period, and each state represents a feasible order-up-to level. State transitions are governed by Constraints~\eqref{c31}, and suffix minima are employed to accelerate the forward recursion. After completing the recursion, we obtain the optimal sequence of order-up-to levels via backward tracing.

    \paragraph{Category 2: Conflicting intervals.} These are intervals that conflict with the required structural properties, such as $(t_1^{\prime}, t_2^{\prime})$ partially overlapping with $(t_1, t_2)$, and are therefore considered structurally incompatible. To conservatively exclude such intervals from consideration, we assign them an infinite cost, i.e., set $c^{i*}_{t_1't_2'} = +\infty$.

    \paragraph{Category 3: Independent intervals.} Intervals not structurally related to $A^{Y}$ are treated normally and evaluated using the standard baseline method.


\subsection{Branching Strategies}
Efficient branching strategies are critical to accelerating the convergence of the branch-and-bound process. We develop a two-layer branching scheme that aligns with the nested structure of our branch-and-price framework.

In the first-layer branch-and-price framework, let $q_{\omega}^*$ denote the LP relaxation value of pattern $q_{\omega}$ in the restricted master problem (1st-RMP). Under the consistent policy, the routing plan associated with a pattern remains identical across all periods. When the solution is fractional, we identify the pattern for which $q_{\omega}^*$ is closest to 0.6 and randomly select an arc from its corresponding route. Two child nodes are then created: one enforces the use of the selected arc, while the other forbids it.

In the second-layer branch-and-price framework, we branch on replenishment decisions through three sequential refinement steps:

    \paragraph{Retailer-level branching.} Let $\gamma_{l(i)}^*$ denote the LP relaxation value of replenishment plan $\gamma_{l(i)}$ for retailer $i$ in the 2nd-RMP. If $\sum_{l(i) \in L(i)} \gamma_{l(i)}^*$ is fractional for any retailer, we select the one closest to 0.6. Two child nodes are created: one enforces serving the retailer (i.e., $\sum_{l(i)} \gamma_{l(i)} = 1$), and the other forbids it (i.e., $\sum_{l(i)} \gamma_{l(i)} = 0$).
    \paragraph{Interval-level branching.} If all retailer-level decisions are integral, we proceed to branch on replenishment intervals. We identify a fractional replenishment plan and select one visiting interval from it. Two child nodes are created: one enforces the execution of this interval, while the other prohibits it. The corresponding branching decisions are incorporated into the sets $A^{\rm temporal}_Y$ and $A^{\rm temporal}_N$, respectively.
    \paragraph{Order-up-to-level branching.} If all retailer and interval decisions are integral, but a retailer still has multiple replenishment plans with the same visiting periods but different order-up-to levels, we branch on the order-up-to level $s_{it}$. We identify such a conflict, select one of the replenishment plans, and choose a retailer whose $s_{it}$ differs in other plans. Two child nodes are generated: one enforces the retailer to adopt the specified $s_{it}$, and the other forbids it.

\subsection{Initial Solution Construction}
We construct an initial feasible solution in which each retailer is visited in every period of the planning horizon. The order-up-to levels for each retailer are uniformly set across all periods to the upper bound of their respective demand support, as specified in the ambiguity set. This choice guarantees inventory feasibility while mitigating the risk of stockouts and excessive overstocking.

To generate consistent routing plans, i.e., identical vehicle routes across periods, we solve a capacitated vehicle routing problem (CVRP), where each retailer's demand is set to their maximum possible value over the planning horizon. The resulting routes define distinct retailer clusters, each forming a candidate pattern for the subsequent optimization.

This initialization ensures feasibility of both inventory and routing decisions and provides a high-coverage baseline that can be refined in later stages of the solution process.

\section{Computational Experiments}\label{s5}

Our solution approach is implemented in Python 3.12.2. The branch-and-bound framework is developed using the PyBnB library, and the pricing subproblems are implemented in C++ and integrated as a callable Python module. All linear and mixed-integer programming problems are solved using the Gurobi Optimizer (version 12.0.1). Computational experiments are conducted on a server running Ubuntu 22.04.4 LTS, equipped with an Intel(R) Core(TM) i9-14900K processor (24 cores, 32 threads, base frequency 1.60GHz, turbo up to 6.00GHz) and 125 GB RAM. All instances are solved using a single computational thread to ensure consistency.

\subsection{Synthetic Datasets.}
We construct synthetic datasets to enable controlled performance comparisons and sensitivity analyses. To generate the data, we first create ground-truth moment information for each retailer and each cycle period. Specifically, the demand mean $\mu_i^t$ is sampled from the uniform distribution $U(6, 14)$, and the mean absolute deviation $\sigma_i^t$ is sampled from $U(1, 3)$. We consider five cycle configurations: one stationary case and four cyclic cases with $T \in \{4, 5, 6, 7\}$. In the cyclic settings, both the planning horizon and the underlying demand cycle span $T$ periods. In contrast, under the stationary setting, the demand is invariant across all periods (i.e., the demand cycle length is 1), while $T$ denotes the length of the planning cycle for decision-making purposes. Hence, in the stationary case, the value of $T$ may refer to either the demand cycle (always 1) or the planning cycle (varying with instance settings). For clarity and conciseness, we do not explicitly distinguish these interpretations when the context is self-explanatory. For each configuration, $(\mu_i^t, \sigma_i^t)$ values are independently generated for every retailer $i$ and each period $t$ within the planning cycle.

For each retailer $i$ and period $t \in [T]$, 100,000 sample demands are generated. Following \citet{cui2023inventory}, each demand is drawn with equal probability from either a truncated normal distribution $N(\mu_i^t, \sigma_i^t)$, truncated to $\bigl[\max\{0, \mu_i^t - 3\sigma_i^t\}, \mu_i^t + 3\sigma_i^t\bigr]$, or a uniform distribution $U(\mu_i^t - 2\sigma_i^t, \mu_i^t + 2\sigma_i^t)$. This results in a multimodal mixture distribution that introduces nontrivial uncertainty. Based on the generated samples, we estimate the parameters of the ambiguity set for each retailer in each period. For each cycle type, multiple instances are created by varying the number of retailers and selecting different subsets.

To assess out-of-sample performance, we simulate demand over 100 test periods using the same ground-truth $(\mu_i^t, \sigma_i^t)$ parameters. These simulations respect the cyclic structure of each instance. Retailer locations are randomly sampled with latitude in $[50.7, 53.4]$ and longitude in $[3.5, 7.1]$.

\subsection{Numerical Experiments on Synthetic Data}
This subsection evaluates the performance of our proposed approach under synthetic data across various settings. We compare the three service policies in terms of solution quality and computational time, under different cycle configurations and retailer sizes. We then conduct a series of sensitivity analyses to examine the effects of the chance-constraint parameters ($\varepsilon_1$ and $\varepsilon_2$), unit cost variations, and the choice of the order-up-to level $s_{it}$ on the distribution of worst-case expected inventory costs. Finally, we compare the proposed decomposition method with a direct solution approach using Gurobi on the compact formulation (CP).

Since the fixed-interval policy is only meaningful under stationary demand, we adapt it for non-stationary instances ($T > 1$) by treating demand as stationary within each cycle. Specifically, we compute the mean and mean absolute deviation over the entire cycle to define the parameters used by the fixed-interval policy.

Unless otherwise specified, the default parameter settings are as follows: unit holding cost $h = 1$, unit backorder cost $b = 4$, unit transportation cost $\rho = 0.25$, fixed vehicle dispatch cost $p = 10$, and chance-constraint parameters $\varepsilon_1 = 0.3$ and $\varepsilon_2 = 0.1$. Each vehicle has a capacity of 70 units, and no constraint is imposed on the maximum driving distance.

\subsubsection{Performance Evaluation.}
In the performance evaluation, we impose two time limits to manage computational cost. First, the total runtime for solving each instance is capped at one hour. Second, each call to the second-level branch-and-price solver is limited to five minutes. Recall that our framework employs an early-stopping criterion: the second-level branch-and-bound search terminates as soon as a column with negative reduced cost is found. If no such column is identified within five minutes, we assume no improving column exists and terminate the pricing process at that first-level branch-and-bound node. As a result, the corresponding node may remain unsolved to proven optimality. Nevertheless, our approach consistently yields high-quality solutions. In practice, the second-level solver often identifies the optimal integer solution quickly, while certifying its optimality may take significantly longer. Thus, even when formal optimality is not attained within the time limit, the resulting solution is frequently optimal or near-optimal.

\textbf{Solution performance.}
Table~\ref{tab:solution} summarizes the performance of different service policies, reporting averages over three instances generated under the same configuration but with different retailer samples. The table presents results for the cyclic case with $T=7$; complete results for $T \in \{4,5,6,7\}$ are available in Table~\ref{tab:oa-solution} in Online Appendix~E. We defer analysis of the stationary case ($T=1$) to a later subsection. In the table, ``T.O.\%" denotes the percentage of first-level branch-and-bound nodes that were not solved to proven optimality due to the five-minute time limit. ``Avg I." denotes the average replenishment interval, and ``S.L." indicates the average service level. ``O.\%" represents the overshooting rate, and ``Avg. O." reports the average overshooting amount, conditional on overshooting occurring. Similarly, ``E.T.\%" denotes the rate of emergency transportation, and ``Avg. E.T." gives the average emergency transportation quantity when such events occur. Note that the statistics for ``S.L.", ``Vehicle Util", ``O.\%", ``Avg O.", ``E.T.\%", and ``Avg E.T." are computed based on the simulation outcomes.

\begin{table}[!htb]
\centering
\caption{Solution performance of different service policies.}
\label{tab:solution}
\scriptsize
\begin{tabular}{c c r r r r rr r r r r r}
\toprule
\#Retailer          & Policy    &Time (s)   &T.O.\%  & Cost & \#Cluster & Avg I. & S.L. & Vehicle Util & O.\% & Avg O. & E.T.\% & Avg E.T. \\
\midrule
\multirow{3}{*}{5}  & Fixed-Interval&4 & 0\%  & 217  & 3.0       & 2.7              & 92\%          & 56\%                & 0.0\%                & 0.0               & 0.0\%                               & 0.0                              \\
                    & Consistent &909 & 100\%    & 188  & 2.0       & 1.8              & 94\%          & 64\%                & 0.0\%                & 0.0               & 0.0\%                               & 0.0                              \\
                    & Flexible  &1074  & 100\%    & 175  & 1.0       & 2.1              & 94\%          & 70\%                & 0.0\%                & 0.0               & 0.0\%                               & 0.0                              \\
\midrule
\multirow{3}{*}{7}  & Fixed-Interval&5 & 0\% & 273  & 2.3       & 1.6              & 89\%          & 59\%                & 0.0\%                & 0.0               & 0.0\%                               & 0.0                              \\
                    & Consistent &3368  & 88\%   & 254  & 2.0       & 1.4              & 94\%          & 64\%                & 0.6\%                & 1.1               & 0.0\%                               & 0.0                              \\
                    & Flexible &731  & 100\%     & 238  & 2.0       & 1.8              & 94\%          & 60\%                & 0.0\%                & 0.0               & 0.0\%                               & 0.0                              \\
\midrule
\multirow{3}{*}{10} & Fixed-Interval&5 & 0\% & 369  & 4.0       & 1.8              & 90\%          & 55\%                & 0.0\%                & 0.0               & 0.0\%                               & 0.0                              \\
                    & Consistent &3623  & 0\%   & 476  & 4.0       & 1.0              & 100\%         & 35\%                & 1.8\%                & 1.9               & 0.0\%                               & 0.0                              \\
                    & Flexible  &3622  & 77\%    & 339  & 2.7       & 1.7              & 95\%          & 55\%                & 0.2\%                & 1.2               & 0.0\%                               & 0.0                              \\
\midrule
\multirow{3}{*}{12} & Fixed-Interval&7 & 0\% & 419  & 4.0       & 1.5              & 89\%          & 56\%                & 0.0\%                & 0.0               & 0.0\%                               & 0.0                              \\
                    & Consistent  &3626 & 0\%   & 515  & 4.0       & 1.0              & 100\%         & 42\%                & 1.5\%                & 1.9               & 0.0\%                               & 0.0                              \\
                    & Flexible   &3443  & 75\%   & 371  & 3.0       & 1.7              & 94\%          & 62\%                & 0.0\%                & 0.3               & 0.0\%                               & 0.0                              \\
\midrule
\multirow{3}{*}{15} & Fixed-Interval&7 & 0\% & 491  & 4.0       & 1.3              & 89\%          & 61\%                & 0.0\%                & 0.0               & 0.0\%                               & 0.0                              \\
                    & Consistent &3628 & 0\%    & 599  & 5.0       & 1.0              & 100\%         & 42\%                & 1.5\%                & 1.9               & 0.0\%                               & 0.0                              \\
                    & Flexible  &3636  & 0\%    & 475  & 3.7       & 1.5              & 96\%          & 60\%                & 1.1\%                & 2.6               & 0.0\%                               & 0.0    \\
\bottomrule
\end{tabular}
\end{table}

Several key observations emerge from the computational results. The flexible policy consistently achieves the lowest total cost and requires fewer clusters than the other two policies. This advantage stems from its ability to jointly optimize routing and inventory decisions over flexible intervals and vehicle routes, enabling more efficient vehicle utilization and more responsive replenishment decisions. 

When the number of retailers is small, the consistent policy outperforms the fixed-interval policy in both total cost and cluster usage. However, as the problem scale increases, the limited computational budget (one hour) prevents the consistent policy from reaching optimality, resulting in higher costs and increased cluster counts relative to the fixed-interval policy. Although the flexible policy is also affected by time limits, its structural adaptability helps mitigate the impact of incomplete optimization. Nonetheless, we anticipate that for sufficiently large-scale instances, the fixed-interval policy may become more favorable due to its superior computational traceability.

In terms of service level, the fixed-interval policy performs the worst among the three. This can be attributed, on the one hand, to its reliance on aggregated demand information within the cycle, which overlooks intra-cycle variability; and on the other hand, to its rigid visiting schedule, which limits responsiveness to actual demand realizations and thereby reduces overall service quality.

Importantly, the chance constraints prove effective in managing risk. Across all tested instances, both the overshooting rate and emergency transportation rate remain low. Notably, the emergency transportation rate is consistently zero, indicating high reliability of the planned schedules under uncertainty.

\begin{figure*}[!htb]
    \centering
    \subfigure[Fixed-Interval Policy]{\includegraphics[width=0.8\linewidth]{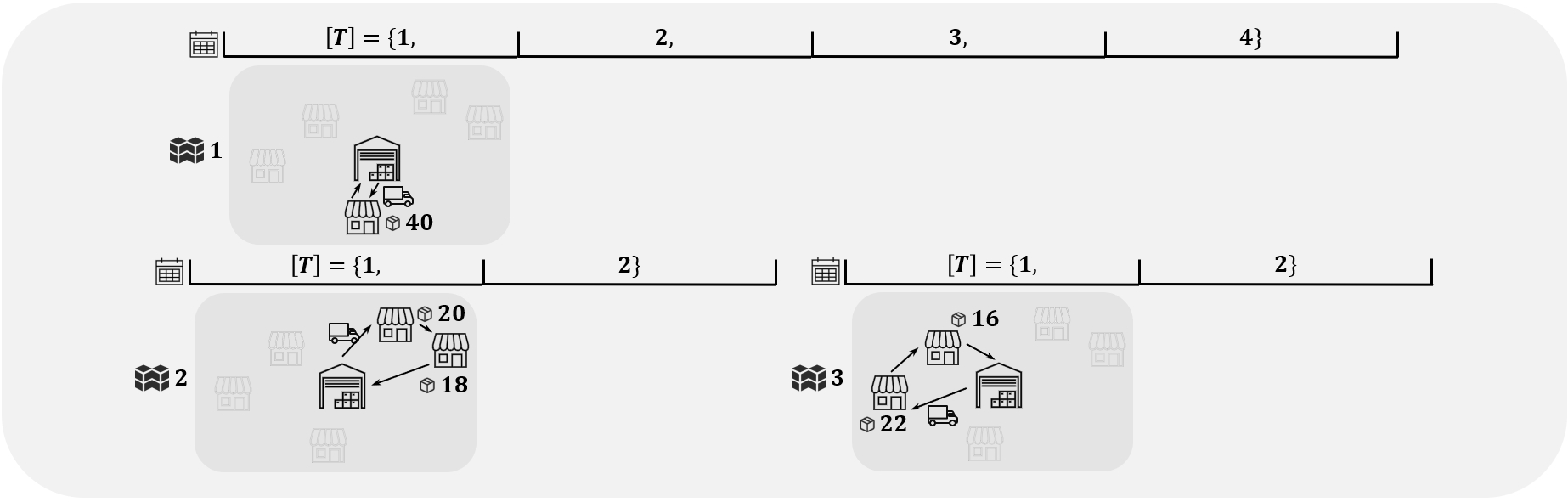}}
    \subfigure[Consistent Policy]{\includegraphics[width=0.8\linewidth]{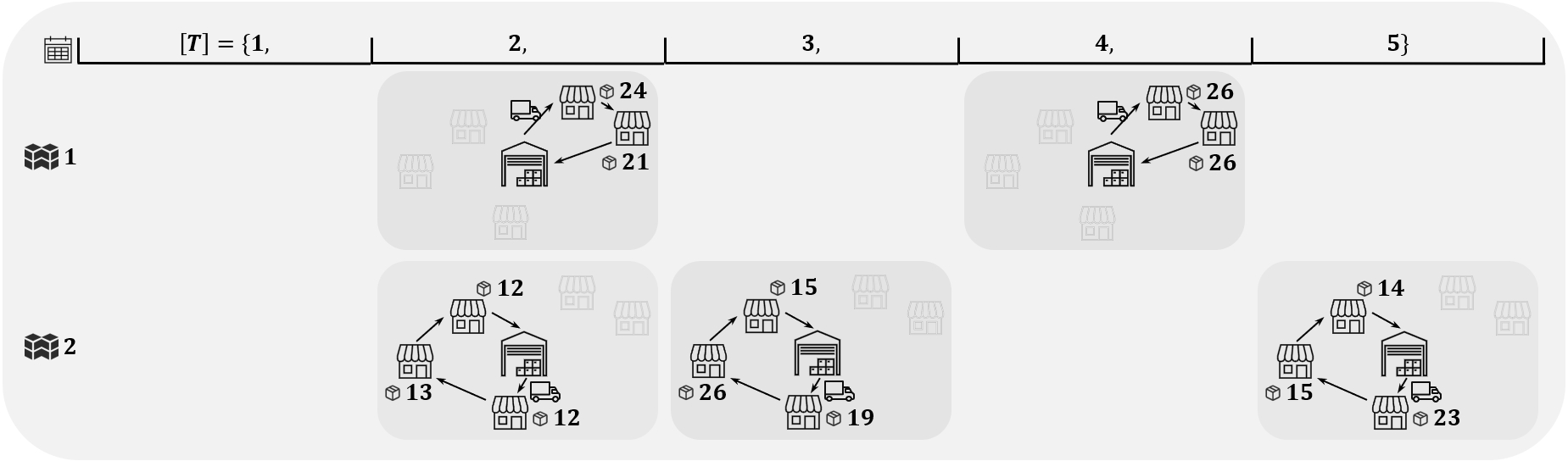}}
    \subfigure[Flexible Policy]{\includegraphics[width=0.8\linewidth]{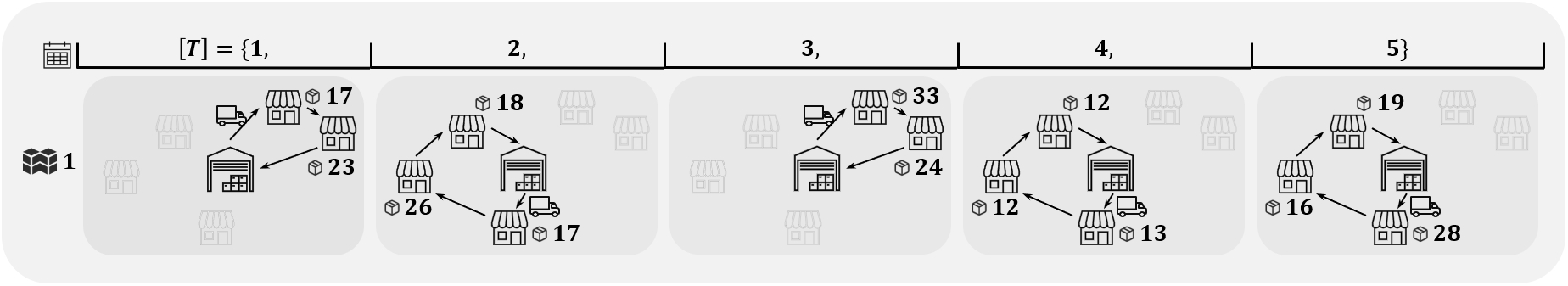}}
    \caption{Solution visualization of the three service policies.}
    \label{fig-policy}
\end{figure*}

To further illustrate the operational differences among the three policies, Figure~\ref{fig-policy} presents a representative solution from a randomly selected instance involving five retailers over a five-period cycle. Each cluster is shown separately, and the number next to each retailer denotes its corresponding order-up-to level. Notably, no overshooting or emergency transportation occurs in this instance. The visualization underscores the superior efficiency of the flexible policy in jointly coordinating inventory replenishment and vehicle routing decisions.

\textbf{Computation performance.}
We also report the computational performance of the proposed approach. Table~\ref{tab:compu} presents detailed solving statistics for the three service policies. In the table, ``PP T." denotes the total time spent solving the pricing problems. ``Route" indicates the total number of routes generated, and ``Route T." reports the cumulative time spent solving the routing subproblem. ``Replnsh" refers to the total number of replenishment plans generated, and ``Replnsh T." corresponds to the time required to solve the replenishment subproblem. Additional results for other cycle configurations are provided in Table~\ref{tab:oa-compu} in Online Appendix~E.

From Table~\ref{tab:compu}, we observe that when the number of retailers is small, the one-hour time limit is rarely reached, whereas the five-minute time limit for the second-level branch-and-price is frequently triggered. As previously discussed, this secondary limit generally does not significantly compromise solution quality, since near-optimal columns are often found quickly even if optimality cannot be certified within the time bound. As the number of retailers increases, total solving time rises correspondingly, and eventually reaches the one-hour cap. When this occurs, further exploration of the first-level branch-and-bound tree is curtailed, potentially affecting solution quality for larger instances.

We also note a sharp increase in the time required to solve the second-level routing problem as the number of retailers grows, under the consistent and flexible policies. This trend arises because the routing subproblem corresponds to solving an ESPPRC, which is NP-hard and grows exponentially in complexity with the number of retailers. In contrast, the replenishment subproblem, which determines inventory control policies, scales linearly with the number of retailers, and its solution time increases more gradually.

\begin{table}[!htb]
\centering
\caption{Computation performance of different service policies.}
\label{tab:compu}
\scriptsize
\begin{tabular}{c c r r r r r r r r r r r}
\toprule
\multirow{2}{*}{\#Retailer} & \multirow{2}{*}{Policy} & \multirow{2}{*}{Cost} & \multirow{2}{*}{Time (s)} & \multirow{2}{*}{T.O.\%} & \multicolumn{3}{c}{First-Level} & \multicolumn{5}{c}{Second-Level}                         \\
\cmidrule(r){6-8}
\cmidrule(r){9-13}
                            &                         &                       &                           &                         & \#Node   & \#Column  & PP T.  & \#Node & \#Route & Route T. & \#Replnsh & Replnsh T. \\
                            \midrule
\multirow{3}{*}{5}          & Fixed-Interval          & 217                   & 4                         & 0\%                     & 7        & 41        & 0        & -      & -       & -          & -         & -            \\
                            & Consistent              & 188                   & 909                       & 100\%                   & 1        & 19        & 895      & 2168   & 18154   & 250        & 11908     & 429          \\
                            & Flexible                & 175                   & 1074                      & 100\%                   & 1        & 21        & 1061     & 4680   & 4498    & 14         & 12511     & 670          \\
                            \midrule
\multirow{3}{*}{7}          & Fixed-Interval          & 273                   & 5                         & 0\%                     & 9        & 86        & 0        & -      & -       & -          & -         & -            \\
                            & Consistent              & 254                   & 3368                      & 88\%                    & 5        & 40        & 3351     & 3951   & 73299   & 1728       & 22622     & 802          \\
                            & Flexible                & 238                   & 731                       & 100\%                   & 1        & 18        & 714      & 1019   & 7079    & 11         & 4749      & 250          \\
                            \midrule
\multirow{3}{*}{10}         & Fixed-Interval          & 369                   & 5                         & 0\%                     & 1        & 84        & 0        & -      & -       & -          & -         & -            \\
                            & Consistent              & 476                   & 3623                      & 0\%                     & 1        & 8         & 3600     & 57     & 12959   & 3415       & 809       & 150          \\
                            & Flexible                & 339                   & 3622                      & 77\%                    & 4        & 77        & 3600     & 4383   & 77678   & 155        & 20673     & 808          \\
                            \midrule
\multirow{3}{*}{12}         & Fixed-Interval          & 419                   & 7                         & 0\%                     & 35       & 322       & 0        & -      & -       & -          & -         & -            \\
                            & Consistent              & 515                   & 3626                      & 0\%                     & 1        & 11        & 3600     & 65     & 14370   & 3374       & 1008      & 187          \\
                            & Flexible                & 371                   & 3443                      & 75\%                    & 2        & 60        & 3413     & 694    & 108437  & 427        & 12296     & 525          \\
                            \midrule
\multirow{3}{*}{15}         & Fixed-Interval          & 491                   & 7                         & 0\%                     & 25       & 453       & 1        & -      & -       & -          & -         & -            \\
                            & Consistent              & 599                   & 3628                      & 0\%                     & 1        & 9         & 3600     & 45     & 11992   & 3372       & 810       & 196          \\
                            & Flexible                & 475                   & 3636                      & 0\%                     & 1        & 41        & 3604     & 290    & 90405   & 2162       & 7612      & 369     \\
                            \bottomrule
\end{tabular}
\end{table}

\textbf{Stationary demand performance.}
Table~\ref{tab:sta-solution} reports the solution performance of the three service policies under stationary demand, where the decision-making cycle is set to $T=7$. For results under alternative planning cycle lengths ($T \in \{4,5,6,7\}$), we refer the reader to Table~\ref{tab:oa-sta-solution} in Online Appendix~E.

\begin{table}[!htb]
\centering
\caption{Solution performance of different service policies under stationary demand.}
\label{tab:sta-solution}
\scriptsize
\begin{tabular}{c c r r r r rr r r r r r}
\toprule
\#Retailer          & Policy         & Time (s) & T.O.\% & Cost & \#Cluster & Avg I. & S.L. & Vehicle Util & O.\%  & Avg O. & E.T.\% & Avg E.T. \\
\midrule
\multirow{3}{*}{5}  & Fixed-Interval & 4        & 0\%    & 191  & 2.3       & 2.2          & 82\%          & 67\%         & 0.0\% & 0.0    & 3.2\%  & 3.4      \\
                    & Consistent     & 728      & 100\%  & 197  & 2.0       & 1.8          & 83\%          & 59\%         & 0.0\% & 0.0    & 0.0\%  & 0.0      \\
                    & Flexible       & 1303     & 87\%   & 181  & 1.3       & 1.9          & 92\%          & 61\%         & 0.0\% & 0.0    & 0.0\%  & 0.0      \\
                    \midrule
\multirow{3}{*}{7}  & Fixed-Interval & 5        & 0\%    & 245  & 2.7       & 2.2          & 81\%          & 71\%         & 0.0\% & 0.0    & 1.2\%  & 3.8      \\
                    & Consistent     & 1433     & 100\%  & 244  & 2.0       & 1.7          & 86\%          & 68\%         & 0.3\% & 0.7    & 2.5\%  & 3.1      \\
                    & Flexible       & 1803     & 72\%   & 233  & 2.0       & 1.8          & 94\%          & 59\%         & 0.0\% & 0.0    & 0.0\%  & 0.0      \\
                    \midrule
\multirow{3}{*}{10} & Fixed-Interval & 7        & 0\%    & 335  & 3.7       & 2.0          & 82\%          & 69\%         & 0.0\% & 0.0    & 1.7\%  & 3.2      \\
                    & Consistent     & 3620     & 0\%    & 386  & 3.0       & 1.0          & 98\%          & 46\%         & 0.0\% & 0.0    & 0.0\%  & 0.0      \\
                    & Flexible       & 3623     & 67\%   & 324  & 2.0       & 2.0          & 93\%          & 64\%         & 0.0\% & 0.0    & 0.0\%  & 0.0      \\
                    \midrule
\multirow{3}{*}{12} & Fixed-Interval & 8        & 0\%    & 366  & 3.0       & 1.3          & 80\%          & 67\%         & 0.0\% & 0.0    & 0.0\%  & 0.0      \\
                    & Consistent     & 3623     & 0\%    & 433  & 3.0       & 1.0          & 98\%          & 56\%         & 0.0\% & 0.0    & 0.0\%  & 0.0      \\
                    & Flexible       & 3633     & 39\%   & 390  & 3.0       & 1.7          & 93\%          & 57\%         & 2.4\% & 2.0    & 0.0\%  & 0.0      \\
                    \midrule
\multirow{3}{*}{15} & Fixed-Interval & 9        & 0\%    & 424  & 3.3       & 1.3          & 82\%          & 71\%         & 0.0\% & 0.0    & 1.2\%  & 1.8      \\
                    & Consistent     & 3625     & 0\%    & 499  & 4.0       & 1.0          & 98\%          & 53\%         & 0.0\% & 0.0    & 0.0\%  & 0.0      \\
                    & Flexible       & 3627     & 0\%    & 459  & 3.3       & 1.3          & 96\%          & 61\%         & 0.7\% & 2.0    & 0.0\%  & 0.0     \\
                    \bottomrule
\end{tabular}
\end{table}

From Table~\ref{tab:sta-solution}, we observe that under stationary demand, all policies share the same underlying demand information. Consequently, the performance of the fixed-interval policy improves relative to the non-stationary case. In particular, when the number of retailers is small, the fixed-interval policy achieves comparable performance to the consistent policy, unlike in the non-stationary setting, where it consistently underperforms. 

Interestingly, while in the non-stationary case, a lower total cost is typically associated with a smaller number of clusters, this pattern does not hold under stationary demand. When the fixed-interval policy outperforms in total cost, it does not necessarily use fewer clusters, suggesting that its cost advantage primarily arises from more efficient inventory control rather than superior vehicle utilization. This observation aligns with the finding of \citet{hasturk2024stochastic}, which shows that the base-stock policy is optimal when inter-delivery intervals are fixed. This further supports the notion that the fixed-interval policy can be highly efficient in inventory control under stationary demand.

Despite having the same demand information as the other two policies, the fixed-interval policy still delivers the lowest service level. This underscores the limitations imposed by its rigid delivery schedule, which reduces responsiveness to actual demand realizations. We also observe that the fixed-interval policy incurs emergency transportation more frequently and exhibits higher vehicle utilization. This suggests that, under the same risk parameters, it entails higher operational risk than the other two policies. Practically, this implies that fixed-interval policies may require more conservative risk control, e.g., by selecting a smaller value of $\varepsilon_1$. Other observations remain consistent with those obtained in the non-stationary case. 

Computation results are summarized in Table~\ref{tab:sta-copu}, which exhibits trends consistent with those observed under non-stationary demand. Complete results for varying cycle lengths are provided in Table E4 of Online Appendix E.

\begin{table}[!htb]
\centering
\caption{Computation performance of different service policies under stationary demand.}
\label{tab:sta-copu}
\scriptsize
\begin{tabular}{c c r r r r r r r r r r r}
\toprule
\multirow{2}{*}{\#Retailer} & \multirow{2}{*}{Policy} & \multirow{2}{*}{Cost} & \multirow{2}{*}{Time (s)} & \multirow{2}{*}{T.O.\%} & \multicolumn{3}{c}{First-Level} & \multicolumn{5}{c}{Second-Level}                         \\
\cmidrule(r){6-8}
\cmidrule(r){9-13}
                            &                         &                       &                           &                         & \#Node   & \#Column  & PP T.  & \#Node & \#Route & Route T. & \#Replnsh & Replnsh T. \\
                            \midrule
\multirow{3}{*}{5}  & Fixed-Interval & 191  & 4        & 0\%    & 6           & 44       & 0       & 0            & 0       & 0          & 0         & 0            \\
                    & Consistent     & 197  & 728      & 100\%  & 1           & 17       & 715     & 821          & 11124   & 132        & 3988      & 477          \\
                    & Flexible       & 181  & 1303     & 87\%   & 2           & 20       & 1290    & 3407         & 3164    & 12         & 8729      & 975          \\
                            \midrule
\multirow{3}{*}{7}  & Fixed-Interval & 245  & 5        & 0\%    & 46          & 269      & 0       & 0            & 0       & 0          & 0         & 0            \\
                    & Consistent     & 244  & 1433     & 100\%  & 1           & 24       & 1420    & 1315         & 33011   & 591        & 7471      & 594          \\
                    & Flexible       & 233  & 1803     & 72\%   & 5           & 40       & 1790    & 1968         & 13799   & 21         & 8938      & 970          \\
                            \midrule
\multirow{3}{*}{10} & Fixed-Interval & 335  & 7        & 0\%    & 111         & 546      & 1       & 0            & 0       & 0          & 0         & 0            \\
                    & Consistent     & 386  & 3620     & 0\%    & 1           & 19       & 3600    & 117          & 29912   & 3334       & 1785      & 183          \\
                    & Flexible       & 324  & 3623     & 67\%   & 3           & 67       & 3603    & 3364         & 70651   & 122        & 15213     & 1002         \\
                            \midrule
\multirow{3}{*}{12} & Fixed-Interval & 366  & 8        & 0\%    & 81          & 833      & 2       & 0            & 0       & 0          & 0         & 0            \\
                    & Consistent     & 433  & 3623     & 0\%    & 1           & 16       & 3600    & 97           & 20531   & 3372       & 1221      & 184          \\
                    & Flexible       & 390  & 3633     & 39\%   & 2           & 57       & 3609    & 549          & 99366   & 335        & 9558      & 653          \\
                            \midrule
\multirow{3}{*}{15} & Fixed-Interval & 424  & 9        & 0\%    & 85          & 1002     & 3       & 0            & 0       & 0          & 0         & 0            \\
                    & Consistent     & 499  & 3625     & 0\%    & 1           & 13       & 3600    & 71           & 15582   & 3393       & 1003      & 177          \\
                    & Flexible       & 459  & 3627     & 0\%    & 1           & 40       & 3603    & 284          & 92007   & 2203       & 6682      & 378      \\
                    \bottomrule
\end{tabular}
\end{table}

\subsubsection{Sensitivity Analysis}\label{5.2.2}
This subsection presents a comprehensive sensitivity analysis to evaluate the influence of key parameters in our framework. Specifically, we examine the effects of (i) the tolerance levels of chance constraints and (ii) variations in unit cost parameters. The analysis is based on the average results of three synthetic instances, each involving five retailers and a planning cycle of five periods, solved under the flexible service policy. In addition, we provide a visual and explanatory demonstration of how different values of the order-up-to level $s_{it}$ affect the structure of the worst-case distribution of inventory costs.

\textbf{Chance constraints tolerance.} The impact of the chance constraint parameters $\varepsilon_1$ and $\varepsilon_2$ is illustrated in Figure~\ref{fig:epsilon}, which displays normalized values (scaled to [0,1]) of total cost, solving time, vehicle utilization, and service level (applicable to $\varepsilon_2$) as the tolerance levels increase. The cumulative quantities of emergency transportation and inventory overshooting are also shown on the secondary axis.

As $\varepsilon_1$ increases (relaxing the tolerance on vehicle capacity violations), both total cost and solving time decrease, indicating improved routing efficiency and faster computation. Vehicle utilization also rises, as less conservative solutions are permitted: with tighter constraints, vehicles must reserve additional capacity to hedge against uncertainty, leading to underutilization. Loosening this constraint enables more fully loaded routes. However, this flexibility comes at the cost of increased emergency transportation, reflecting the higher operational risk under more permissive $\varepsilon_1$ values.

For $\varepsilon_2$, which controls the tolerance on inventory overshooting, we observe a clear increase in cumulative overshooting as the parameter grows. In contrast, other performance indicators, total cost, solving time, vehicle utilization, and service level, remain relatively unaffected. This suggests that $\varepsilon_2$ primarily influences local inventory behaviors without substantially impacting broader system-level efficiency.

\begin{figure*}[!htb]
    \centering
    \subfigure[$\varepsilon_1$]{\includegraphics[width=0.31\linewidth]{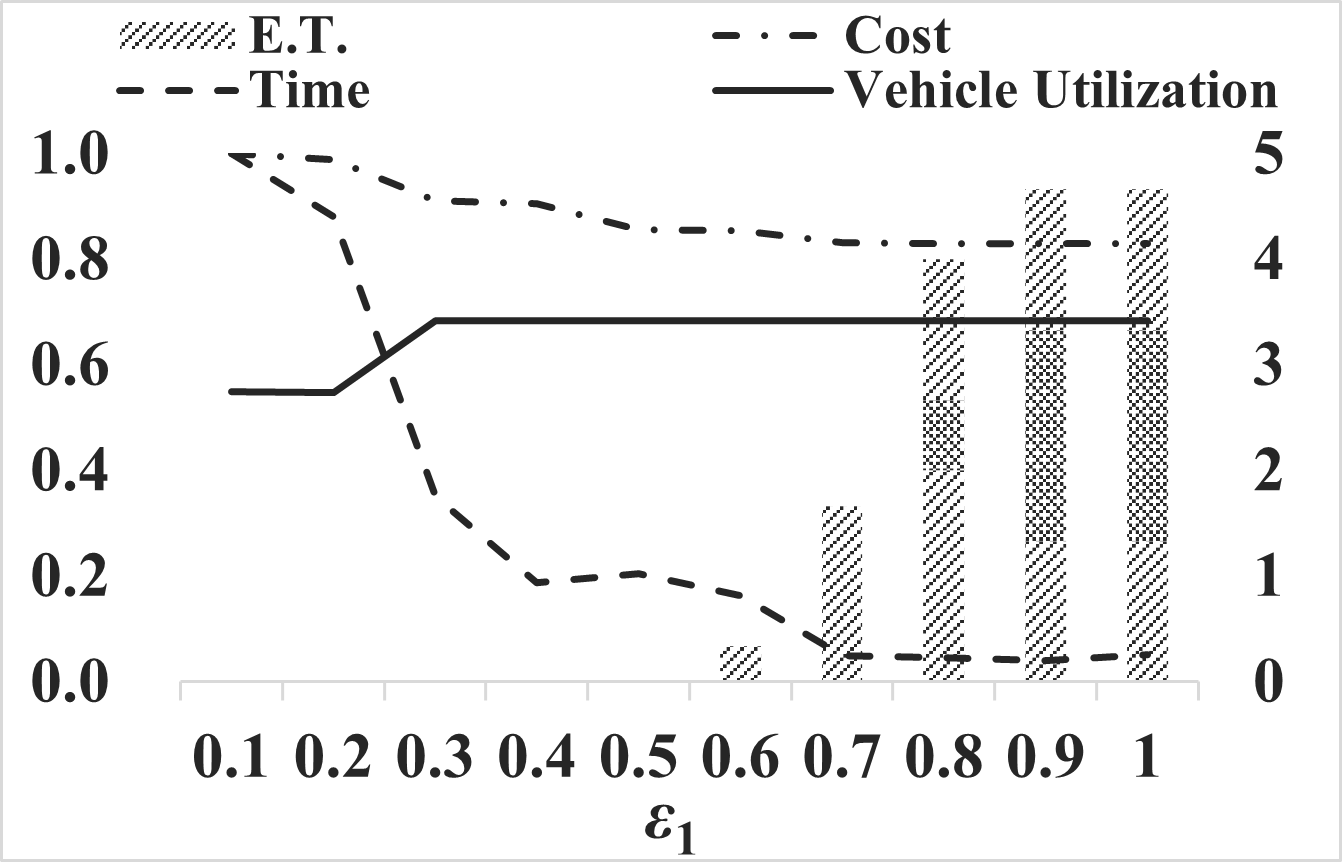}}
    \subfigure[$\varepsilon_2$]{\includegraphics[width=0.31\linewidth]{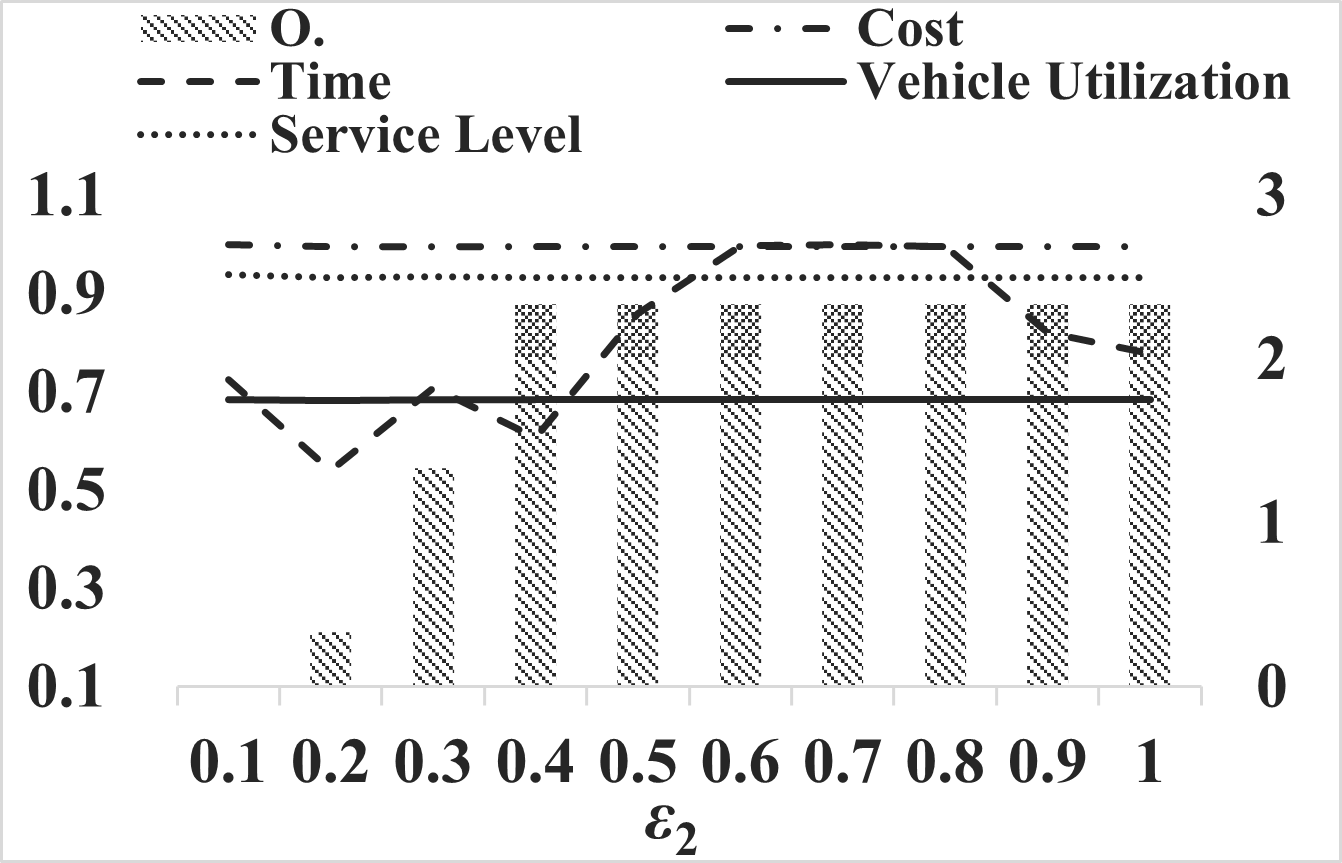}}
    \caption{Influence of chance constraint parameters.}
    \label{fig:epsilon}
\end{figure*}

\textbf{Unit Cost Parameters.}
We further analyze the impact of unit cost parameters on solution performance, as illustrated in Figure~\ref{fig:unitcost}. The cumulative costs are shown on the left axis, while the average replenishment interval and service level are normalized and displayed on the right axis.

We first observe that increasing the unit holding cost leads to a reduction in the average replenishment interval. This is intuitive: higher holding costs discourage excessive inventory, prompting more frequent replenishments to minimize inventory exposure. However, the impact on service level is more complex. The service level reflects trade-offs among transportation cost, holding cost, and backorder cost. On one hand, a higher holding cost makes backorders relatively less expensive, potentially lowering the service level. On the other hand, more frequent replenishment driven by increased holding cost can improve service quality. Because routing decisions are discrete by nature, marginal changes in holding cost may have no immediate effect until a threshold is crossed. At such points, a change in the transportation plan may occur, allowing for more efficient vehicle utilization and shipment consolidation, which can temporarily boost service level. This non-linear behavior is visible in the figure: each time the holding cost increase triggers a routing adjustment (indicated by a change in transportation cost), the service level exhibits a noticeable jump. Conversely, when the routing plan remains fixed, continued increases in holding cost may gradually erode the service level.

The effect of the unit backorder cost is more direct. As the backorder penalty increases, the system prioritizes avoiding stockouts, leading to shorter replenishment intervals and improved service levels. Moreover, as the backorder cost becomes increasingly dominant relative to the holding cost, this effect is amplified, resulting in a monotonic rise in service quality.

The influence of transportation cost, however, appears counterintuitive. As expected, higher transportation costs lead to longer replenishment intervals, as the system seeks to reduce delivery frequency. Yet, under the base cost configuration, we observe an increase in service level. This can be attributed to the relatively high backorder cost: when transportation becomes expensive, the system compensates by maximizing load efficiency and delivering larger quantities per trip, thereby reducing the risk of stockouts and improving service reliability. To validate this interpretation, we adjust the cost structure by lowering the backorder cost to match the holding cost. The resulting trend, shown in Figure~\ref{fig:unitcost}(d), reveals that service level indeed declines as transportation cost rises, confirming that the previously observed improvement was due to the relatively high penalty on backorders.

\begin{figure*}[!htb]
    \centering
    \subfigure[Unit holding cost]{\includegraphics[width=0.24\linewidth]{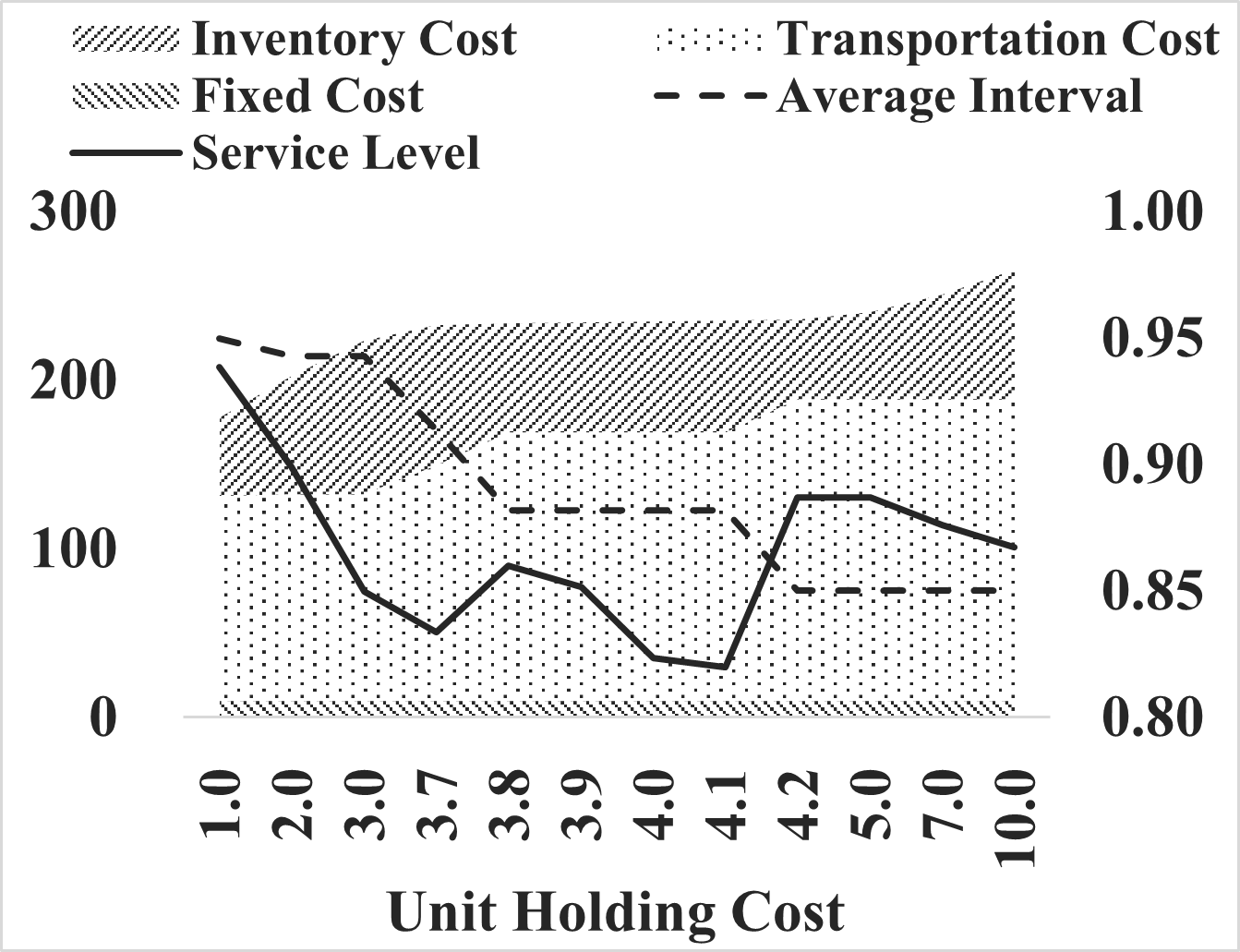}}
    \subfigure[Unit back-order cost]{\includegraphics[width=0.24\linewidth]{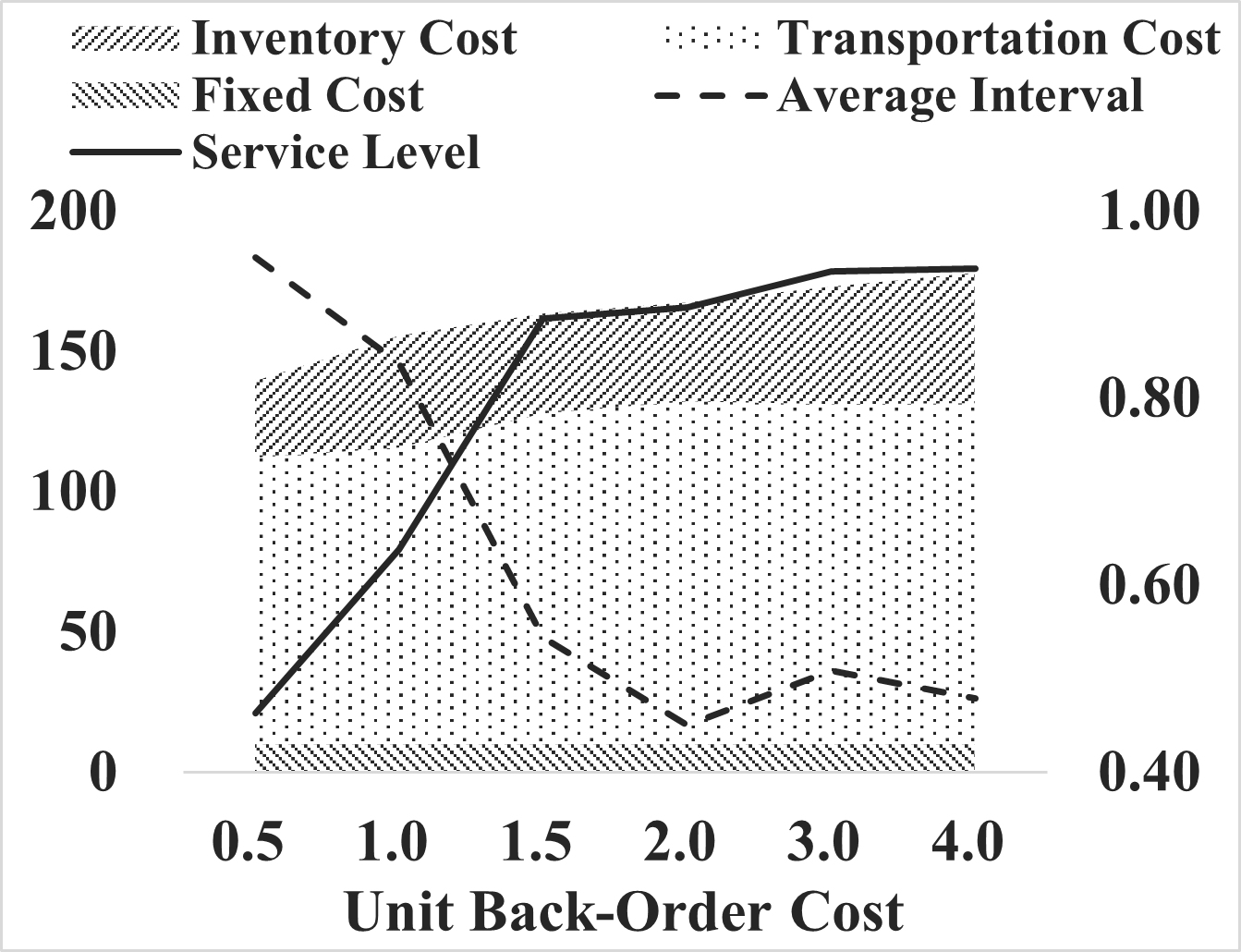}}
    \subfigure[Unit transportation cost]{\includegraphics[width=0.24\linewidth]{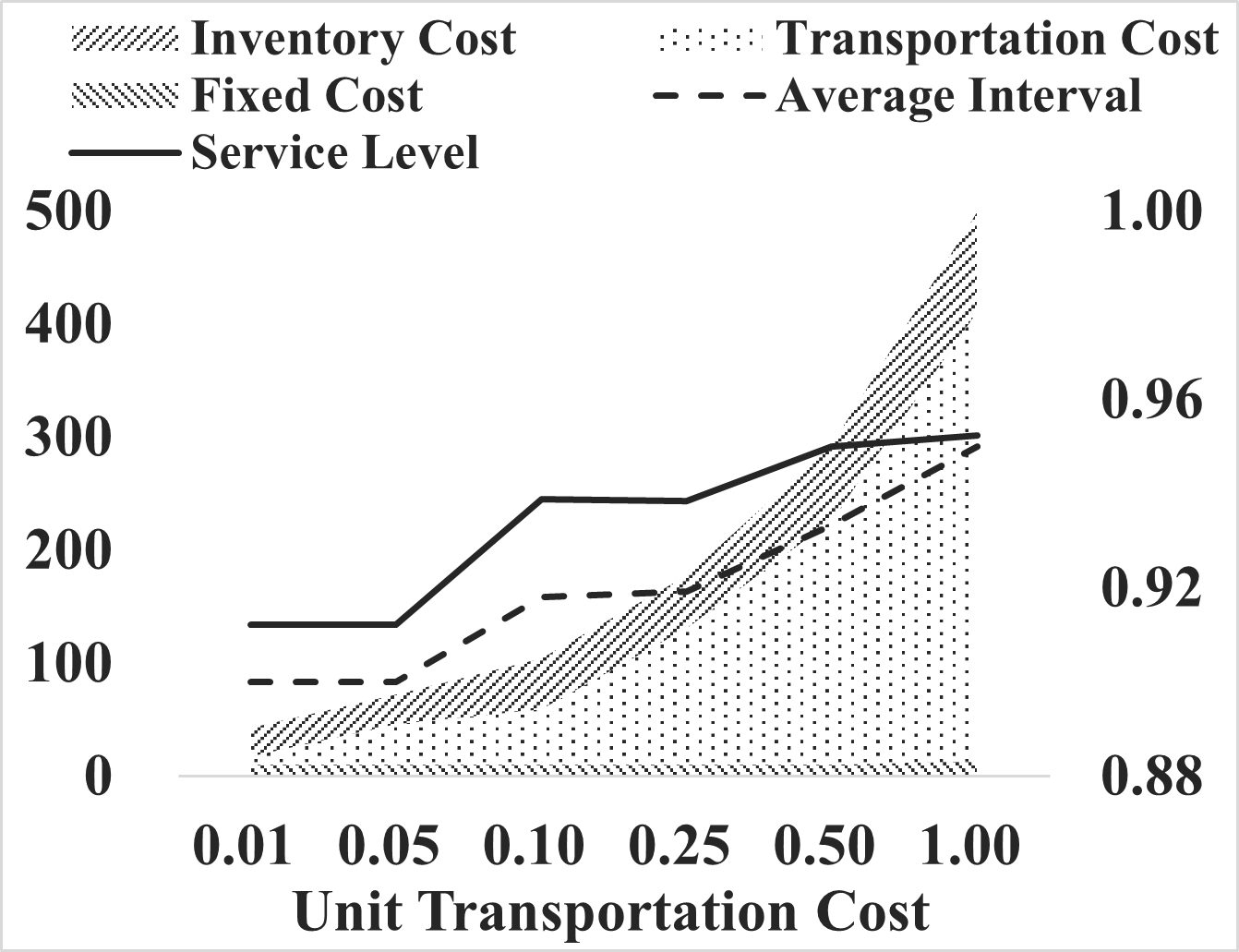}}
    \subfigure[Unit transportation cost (added)]{\includegraphics[width=0.24\linewidth]{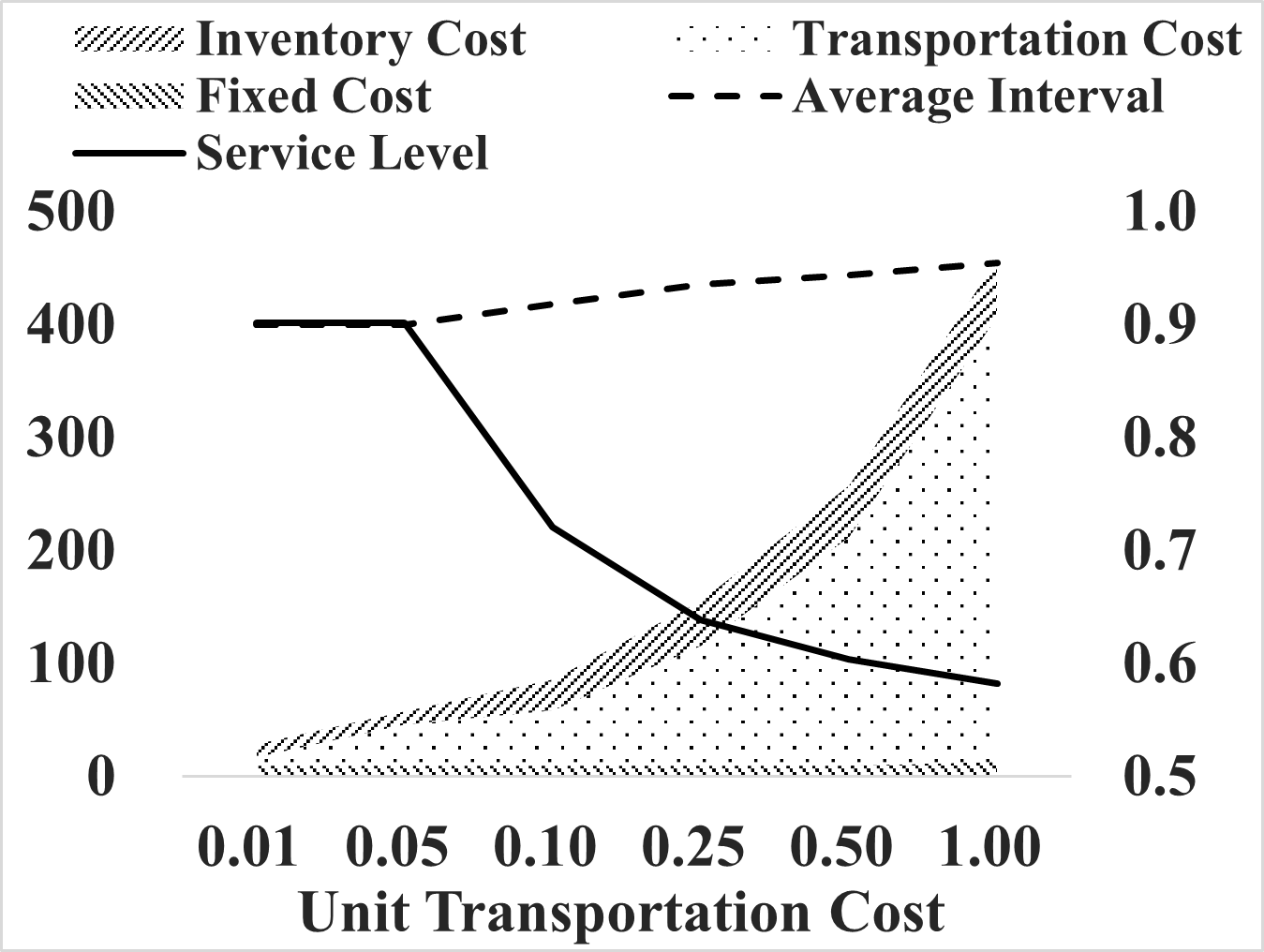}}
    \caption{Influence of unit cost parameters.}
    \label{fig:unitcost}
\end{figure*}

\textbf{Worst-case distribution.}
We additionally illustrate how the worst-case distribution of expected inventory cost evolves as the order-up-to level increases, to provide visual insight into the worst-case distribution introduced in Section~\ref{s3}. We consider a representative retailer $i$ over a replenishment cycle covering periods $1$ to $4$. A turning period value of $5$ indicates that the available inventory is sufficient to satisfy demand in all four periods, implying no risk of stockouts. Accordingly, the worst-case distribution is a discrete distribution with at most five support points. The ambiguity set parameters are specified as follows:
 $\boldsymbol{\mu}^i = \{6.0, 11.0, 13.0, 12.0\}$,
 $\boldsymbol{\underline{\zeta}}^i = \{0.0, 5.0, 4.0, 9.0\}$,
 $\boldsymbol{\bar{\zeta}}^i = \{15.0, 17.0, 22.0, 15.0\}$, and
 $\boldsymbol{\sigma}^i = \{2.7, 1.8, 2.7, 0.9\}$. 
 
The results are presented in Figure~\ref{fig:wd}, which shows how the probability mass is distributed across different turning periods under varying order-up-to levels $s$. Each bar corresponds to the probability of the worst-case demand path resulting in a specific turning period. The legend indicates the representative demand realizations associated with each case. As $s$ increases, the turning period tends to shift later in the planning horizon, reflecting a reduced risk of stockouts due to higher inventory levels. This visual representation captures how the worst-case distribution adapts dynamically to different replenishment decisions.

\begin{figure*}[!htb]
    \centering
    \subfigure[$s$=10]{\includegraphics[width=0.25\linewidth]{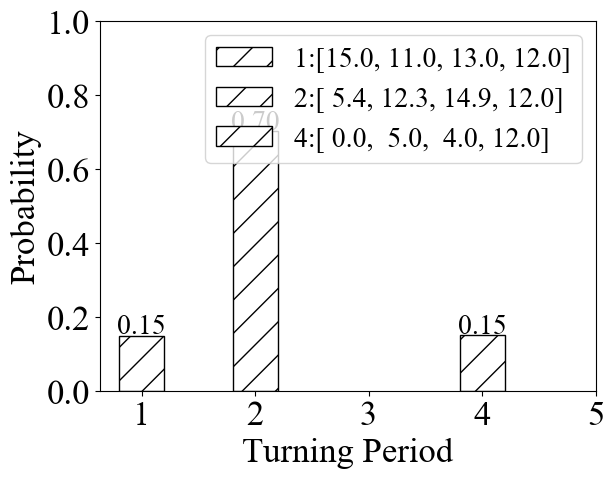}}
    \subfigure[$s$=20]{\includegraphics[width=0.25\linewidth]{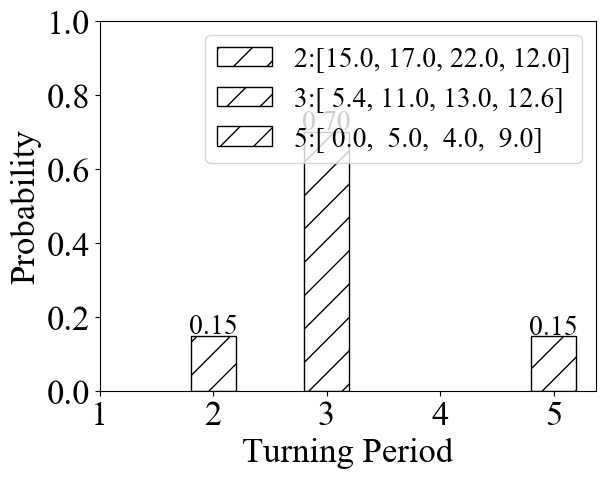}}
    \subfigure[$s$=30]{\includegraphics[width=0.25\linewidth]{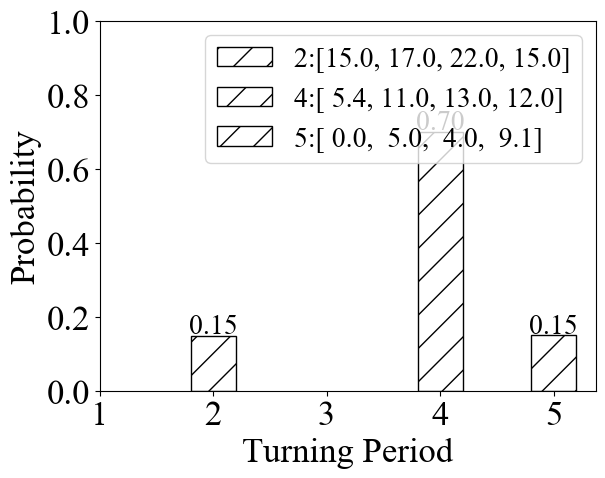}}
    \subfigure[$s$=40]{\includegraphics[width=0.25\linewidth]{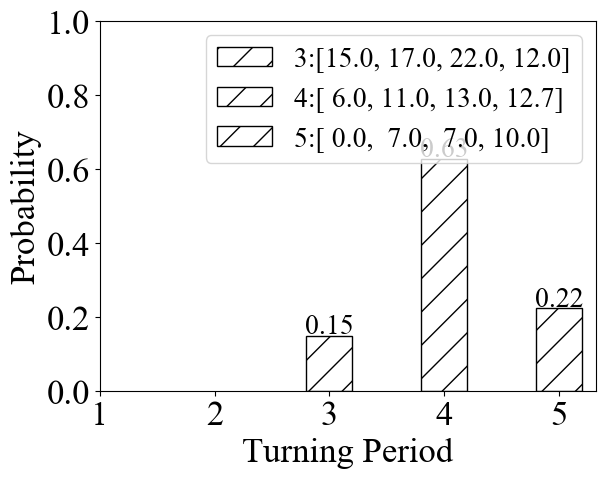}}
    \subfigure[$s$=50]{\includegraphics[width=0.25\linewidth]{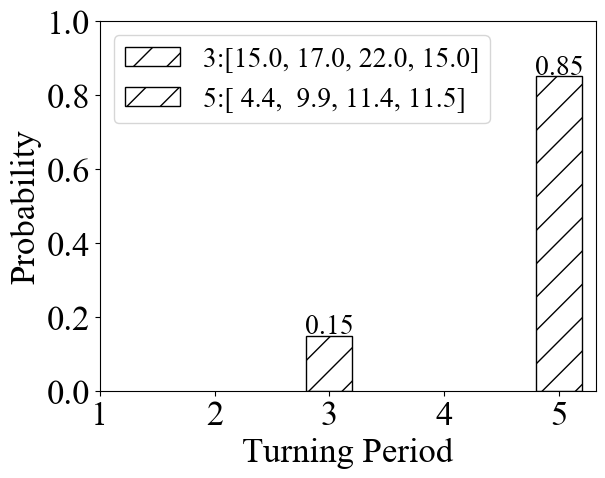}}
    \subfigure[$s$=60]{\includegraphics[width=0.25\linewidth]{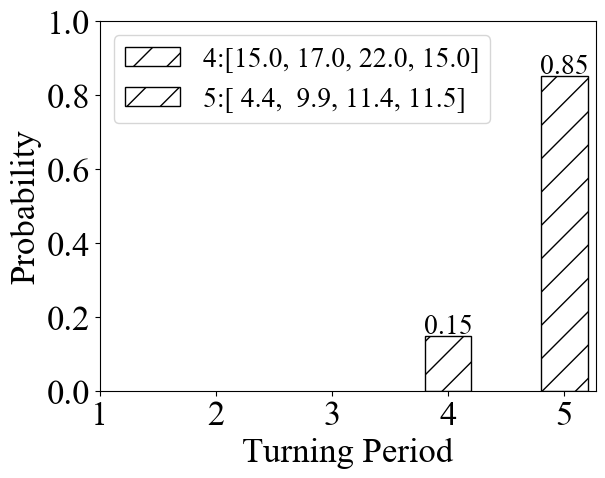}}
    \caption{Worst-case distributions under increasing order-up-to levels.}
    \label{fig:wd}
\end{figure*}

\subsubsection{Comparation with Direct Solver.}
In this subsection, we compare the performance of the proposed nested branch-and-price framework with a direct solution approach using Gurobi on a quadratically constrained quadratic programming (QCQP) formulation, as detailed in Online Appendix~\ref{OA-QCQP}. This QCQP model is a Gurobi-compatible reformulation derived from the compact formulation (CP). Table~\ref{tab:grb} reports the average results across three instances for each configuration.

\begin{table}[!htb]
\centering
\caption{Performance comparison between the proposed nested branch-and-price and Gurobi.}
\label{tab:grb}
\scriptsize
\begin{tabular}{c c r r r r r| c c r r r r r}
\toprule
\multirow{2}{*}{\#Period} & \multirow{2}{*}{\#Retailer} & \multicolumn{2}{c}{N-BnP} & \multicolumn{2}{c}{GRB} & \multirow{2}{*}{Cost Gap}&\multirow{2}{*}{\#Period} & \multirow{2}{*}{\#Retailer} & \multicolumn{2}{c}{N-BnP} & \multicolumn{2}{c}{GRB} & \multirow{2}{*}{Cost Gap} \\
                          \cmidrule(r){3-4}
                          \cmidrule(r){5-6}
                          \cmidrule(r){10-11}
                          \cmidrule(r){12-13}
                          &                             & Cost        & Time        & Cost       & Time       &  &   &                             & Cost        & Time        & Cost       & Time       &                           \\
                          \midrule
\multirow{7}{*}{4}                         & 3                           & 114         & 12          & 157        & T.O.       & 28\%           &\multirow{7}{*}{5}                          & 3                           & 113         & 17          & 154        & T.O.       & 27\%              \\
                                                   \cmidrule(r){2-7}
                                                   \cmidrule(r){9-14}
 & 4                           & 149         & 21          & 176        & T.O.       & 16\%                 && 4                           & 142         & 34          & 191        & T.O.       & 25\%       \\
                                                   \cmidrule(r){2-7}
                                                   \cmidrule(r){9-14}
 & 5                           & 178         & 271         & 252        & T.O.       & 29\%                 && 5                           & 171         & 631         & 269        & T.O.       & 37\%           \\
                                                   \cmidrule(r){2-7}
                                                   \cmidrule(r){9-14}
 & 7                           & 227         & 1857        & 388        & T.O.       & 41\%                  && 7                           & 238         & 2728        & 297        & T.O.       & 20\%          \\
                                                    \cmidrule(r){2-7}
                                                    \cmidrule(r){9-14}
& 10                          & 315         & 2502        & 496        & T.O.       & 36\%         & & 10                          & 328         & T.O.        & inf        & T.O.       & -                 \\
\midrule
\multirow{7}{*}{6}                          & 3                           & 116         & 54          & 160        & T.O.       & 28\%               &\multirow{7}{*}{7}                          & 3                           & 112         & 46          & 149        & T.O.       & 25\%         \\
                                                                            \cmidrule(r){2-7}
                                                                            \cmidrule(r){9-14}
 & 4                           & 148         & 161         & 234        & T.O.       & 37\%                  && 4                           & 142         & 181         & 221        & T.O.       & 36\%      \\
                                                                           \cmidrule(r){2-7}
                                                                           \cmidrule(r){9-14}
  & 5                           & 175         & 914         & 496        & T.O.       & 65\%           & & 5                           & 175         & 1074        & inf        & T.O.       & -               \\
                                                                            \cmidrule(r){2-7}
                                                                            \cmidrule(r){9-14}
 & 7                           & 234         & 2247        & inf        & T.O.       & -             & & 7                           & 238         & 731         & inf        & T.O.       & -             \\
                                                                            \cmidrule(r){2-7}
                                                                            \cmidrule(r){9-14}
 & 10                          & 320         & 2513        & inf        & T.O.       & -          & & 10                          & 339         & T.O.        & inf        & T.O.       & -                \\
                          \bottomrule
\end{tabular}
\end{table}

The results highlight the computational difficulty of solving the QCQP formulation directly: in several cases, Gurobi fails to identify even a feasible solution within the one-hour time limit. In contrast, the nested branch-and-price framework closes, on average, 32.1\% of the cost gap relative to Gurobi’s best solution, with the maximum observed gap being 65\%. Moreover, it achieves this with significantly shorter computation times. These findings demonstrate the practical efficiency and scalability of the proposed decomposition approach.

\subsection{Case Study on Real-life Data}
The real-life case study is based on aftermarket spare parts demand data from SAIC Volkswagen Automotive Co., Ltd. The inventory routing system under study manages the transportation and replenishment of spare parts from SAIC Volkswagen, as the original equipment manufacturer, to downstream retailers and repair shops (hereafter collectively referred to as retailers). Currently, inventory management is handled independently by each retailer, and SAIC Volkswagen does not directly observe the demand they face. As a result, retailer demand is inferred from historical daily order records. Although replenishment and distribution are presently managed as separate functions, SAIC Volkswagen has proactively established a real-time inventory information-sharing system with its retailers. This ongoing digital integration effort positions our proposed framework as a timely and practical reference for advancing SAIC Volkswagen’s transition toward a vendor-managed inventory paradigm.

\begin{figure*}[!htb]
    \centering
    \subfigure[Nationwide retailers]{\includegraphics[width=0.45\linewidth]{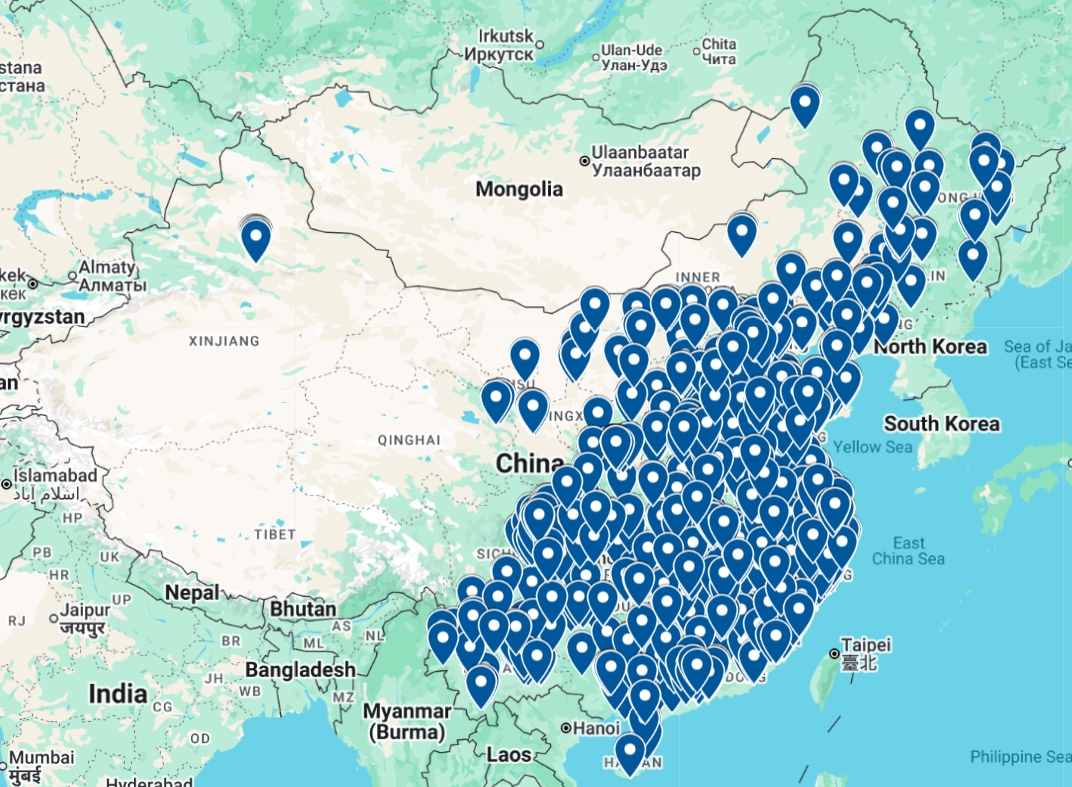}}
    \subfigure[Retailers in Shanghai]{\includegraphics[width=0.32\linewidth]{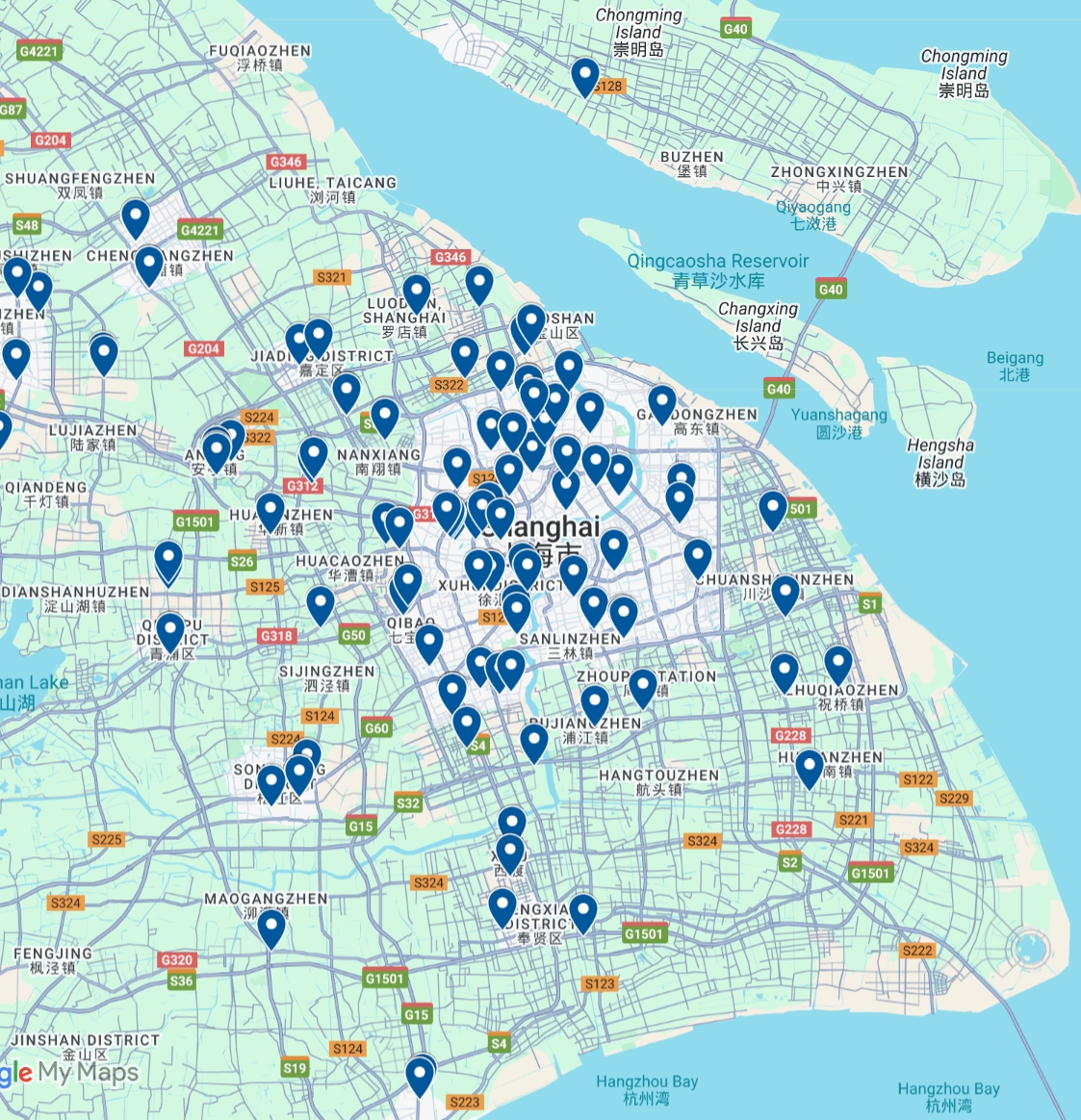}}
    \caption{Geographic distribution of all nationwide retailers and those located in Shanghai.}
    \label{fig:example}
\end{figure*}

SAIC Volkswagen provides data on 2,221 retailers across China, which are served by five regional warehouses. One of these warehouses is located in Shanghai and is responsible for fulfilling orders from retailers within Shanghai and nearby cities. In this study, we focus on the Shanghai region, which includes 112 retailers. The geographic distribution of all nationwide retailers, as well as those located in Shanghai, is illustrated in Figure~\ref{fig:example}. For the 112 Shanghai-based retailers, SAIC Volkswagen has established 10 fixed routing plans. Each day, operational delivery routes are constructed by following the structure of these fixed plans, with minor adjustments made to skip retailers who do not place an order on that day. To construct our experimental instances, we group geographically adjacent routes into clusters, with each cluster forming a distinct instance. This design allows for a direct, instance-level comparison between our optimized routing and inventory solutions and the corresponding operational plans. In this paper, we analyze two such instances: one consisting of 9 retailers located in central Shanghai, formed by combining 2 fixed routes; and another consisting of 12 retailers in suburban areas, derived from 3 fixed routes. The retailer locations and corresponding fixed routes of the second instance (includes 12 retailers) are depicted in Figure~\ref{fig:route}. It is essential to note that some retailers place orders only a few times a month. These infrequent-order retailers are excluded from our instances to ensure the representativeness and stability of the data. Accordingly, they are also excluded from the real-world cost evaluations to maintain consistency.

\begin{figure*}[!htb]
    \centering
    \subfigure[Fixed route 1]{\includegraphics[width=0.32\linewidth]{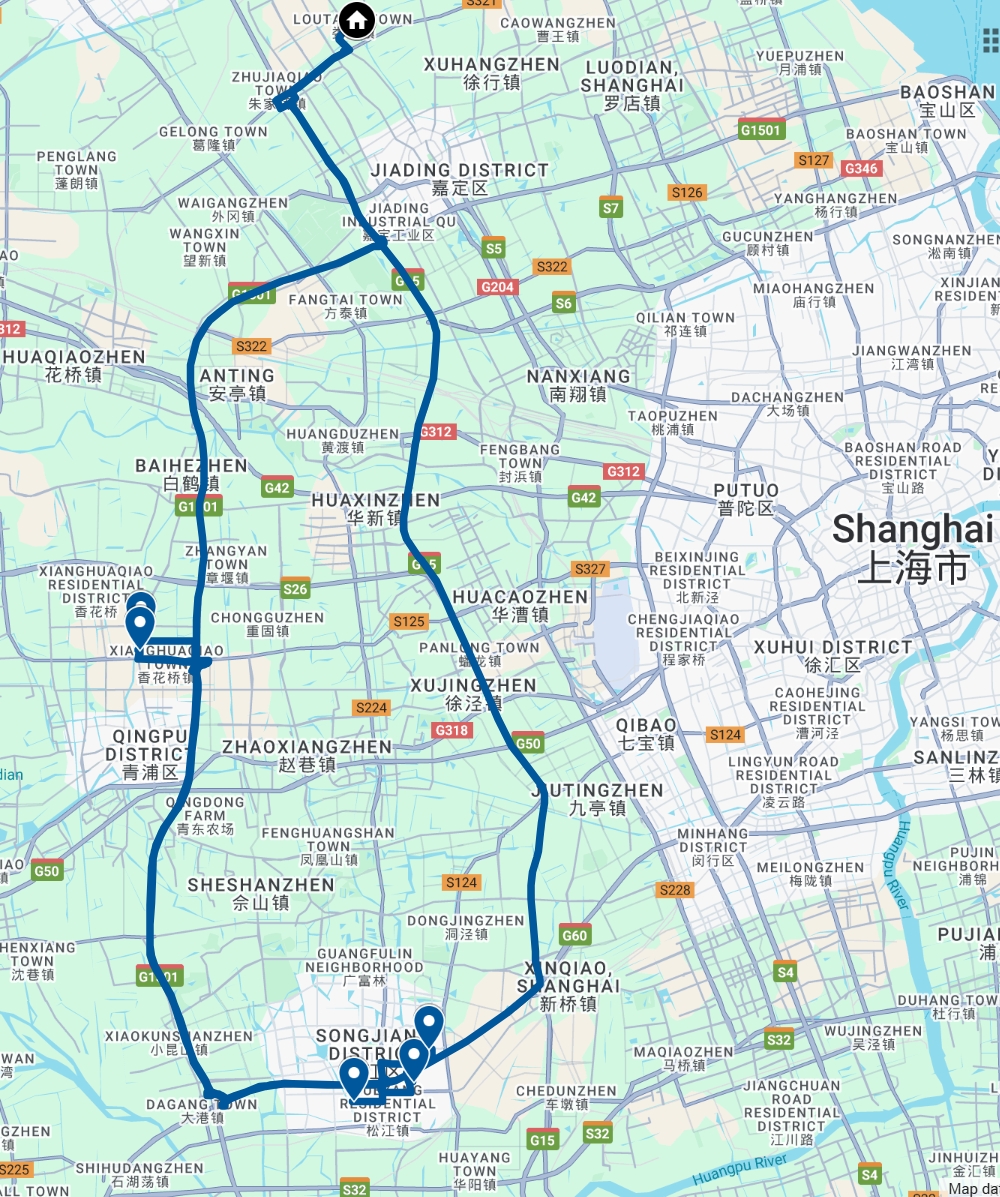}}
    \subfigure[Fixed route 2]{\includegraphics[width=0.32\linewidth]{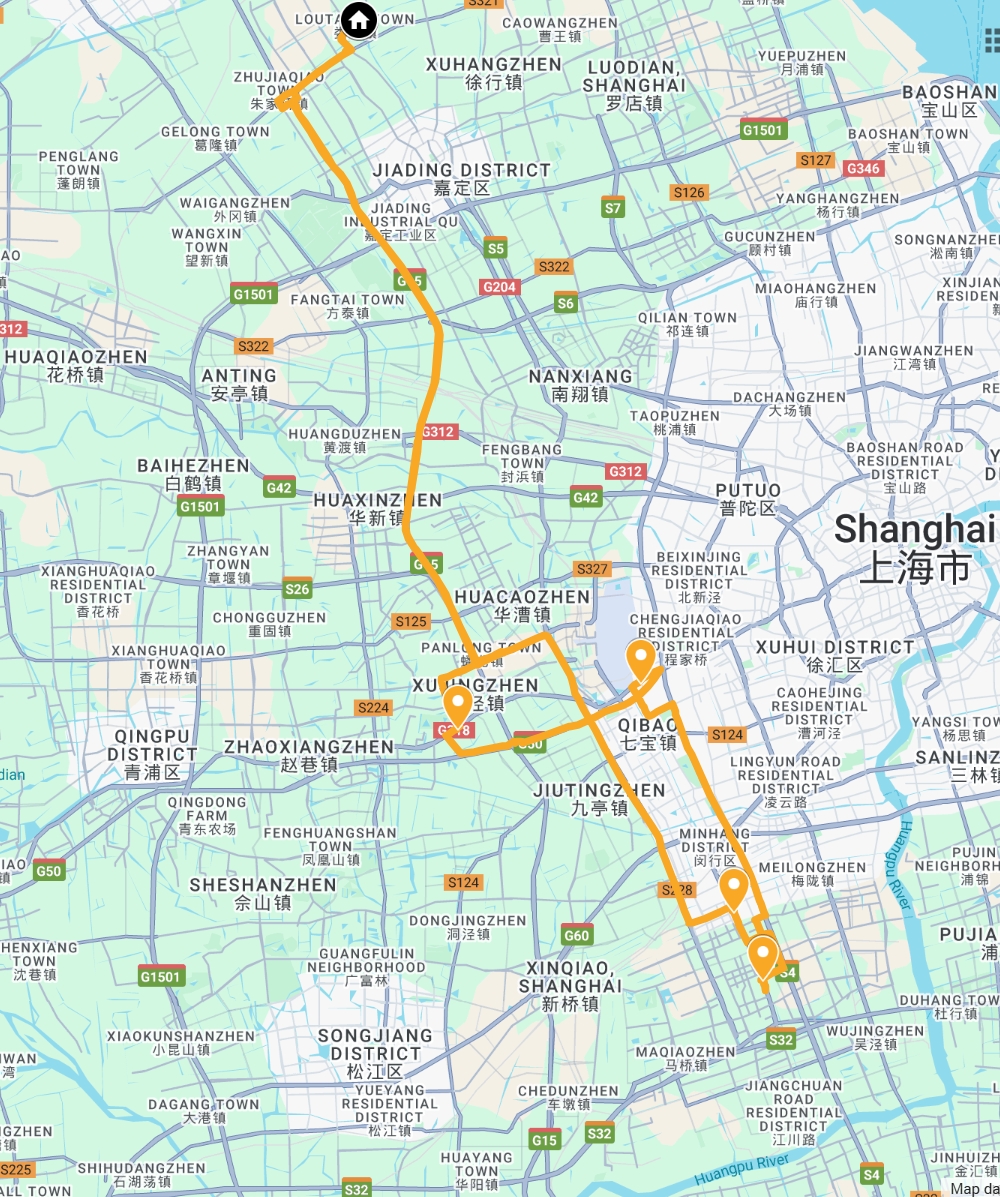}}
    \subfigure[Fixed route 3]{\includegraphics[width=0.32\linewidth]{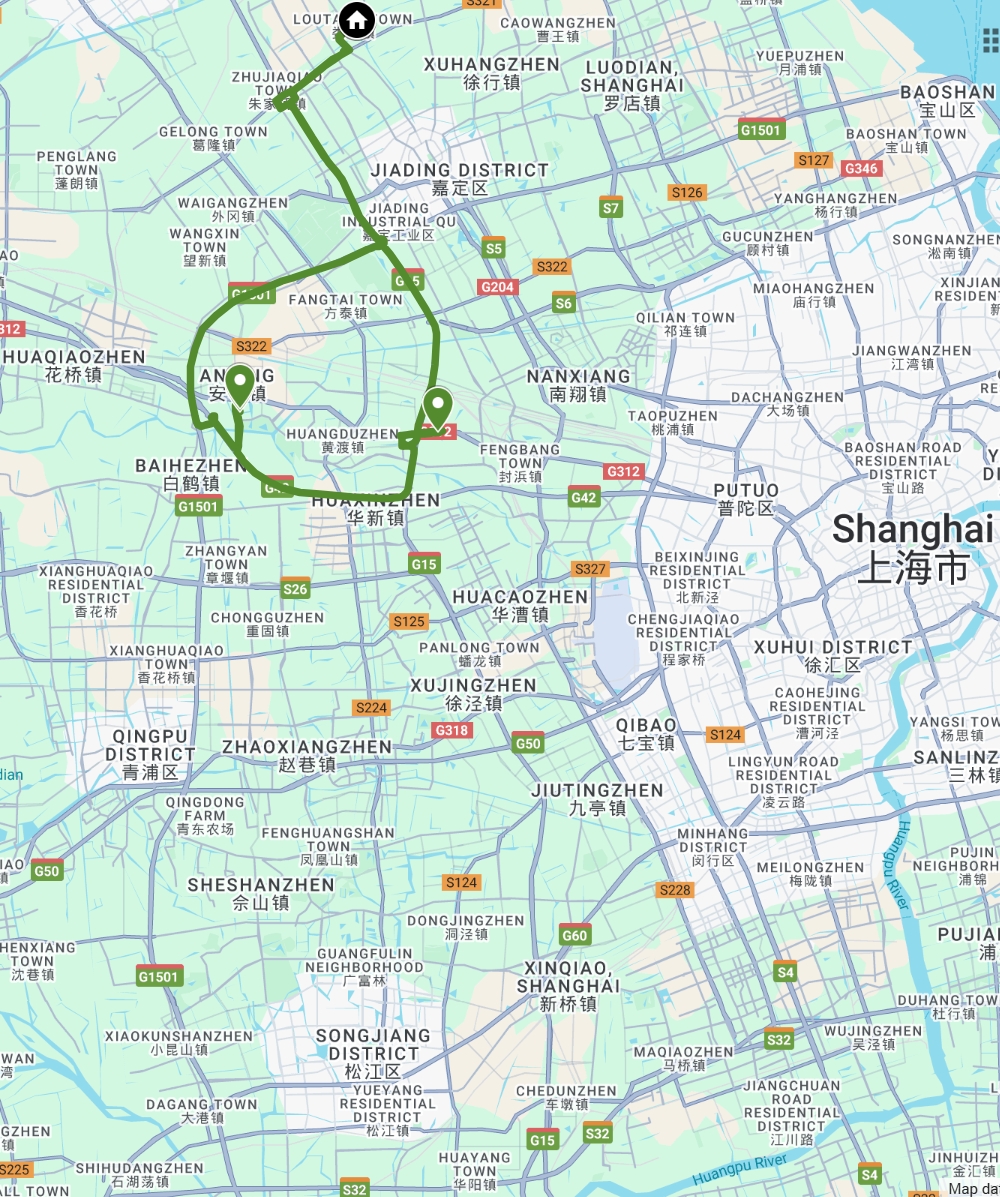}}
    \caption{Retailer locations and corresponding fixed delivery routes for the second experimental instance.}
    \label{fig:route}
\end{figure*}

In the order records provided by SAIC Volkswagen, each retailer places daily orders involving multiple types of spare parts. Following prior work \citep[e.g.,][]{gaur2004periodic}, we aggregate the demand across different spare parts into a unified measure for modeling purposes. However, because spare parts vary significantly in size and weight, e.g., screws versus car tires, a naive aggregation could lead to distortions, where extremely large or small items dominate the overall measure. To address this, we first pre-selected a subset of representative spare parts by filtering out those with extreme physical characteristics. Specifically, we excluded spare parts with volume less than 0.01 cubic meters or greater than 1 cubic meter, and with weight less than 0.1 kilograms or greater than 10 kilograms. In addition, parts with missing or zero values for volume or weight, due to data entry omissions, were also removed. To jointly account for volume and weight, we computed the volumetric weight (also known as dimensional weight) for each part, which is widely used in logistics to capture the density-related cost of shipping. Volumetric weight is calculated as the volume in cubic centimeters divided by 5000, yielding a comparable weight in kilograms. We further excluded parts with volumetric weight below 0.1 kilograms or above 10 kilograms. For each part, we took the maximum of its actual weight and its volumetric weight as the effective weight, and used this as the basis for aggregating spare parts into a single demand unit. After these preprocessing steps, 18,302 distinct spare parts remained in the dataset. For each day and each retailer, we aggregated the effective weights of all ordered spare parts to obtain the total daily demand. These aggregate values exhibited significant variability: the maximum was 863.73 kg, while the minimum was 0. To normalize the data for modeling, we scaled the daily aggregated demand values to the range [0, 60], and rounded them to the nearest integer to obtain a proxy for the retailer-level daily demand. However, we observed a heavy-tailed distribution: only 0.65\% of the demand values exceeded 20, indicating the presence of outliers that could severely affect the stationarity and robustness of the dataset. To mitigate this, we truncated the scaled demand values at 20, using it as the upper cap for all observations.

We utilize demand data from the entire year of 2020, excluding national holidays, resulting in 287 effective observation days. The first 259 days are used as training data to estimate the ambiguity set, while the remaining 28 days are reserved for out-of-sample performance evaluation. Figure~\ref{fig:real-data} illustrates the demand patterns of three representative retailers over a seven-period cycle corresponding to a weekly schedule. As shown in the figure, demand is typically near zero during periods 6 and 7 (i.e., weekends), while demand in other periods remains generally low, typically below 10 units, with occasional spikes reaching up to 20 units. Additionally, significant heterogeneity exists across retailers; for instance, the retailer in subfigure (c) exhibits consistently lower demand levels compared to those in subfigures (a) and (b). These patterns highlight both the temporal and spatial variability of demand, as well as the considerable distributional ambiguity present in real-world operations.

\begin{figure*}[!htb]
    \centering
    \subfigure[]{\includegraphics[width=0.31\linewidth]{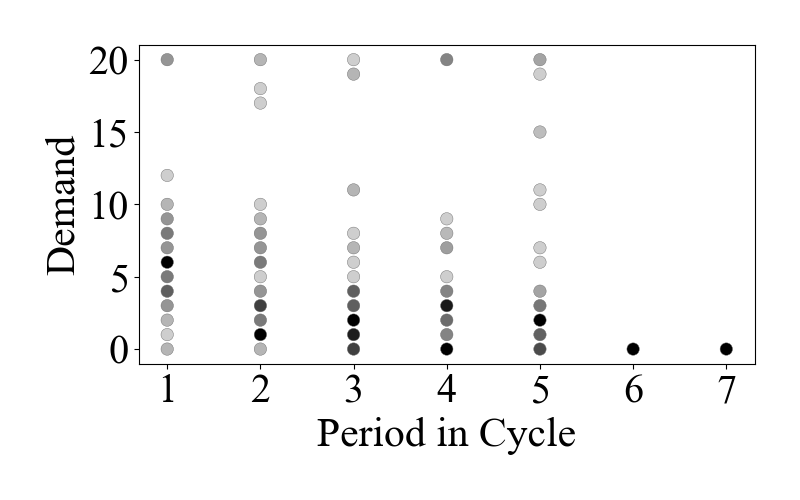}}
    \subfigure[]{\includegraphics[width=0.31\linewidth]{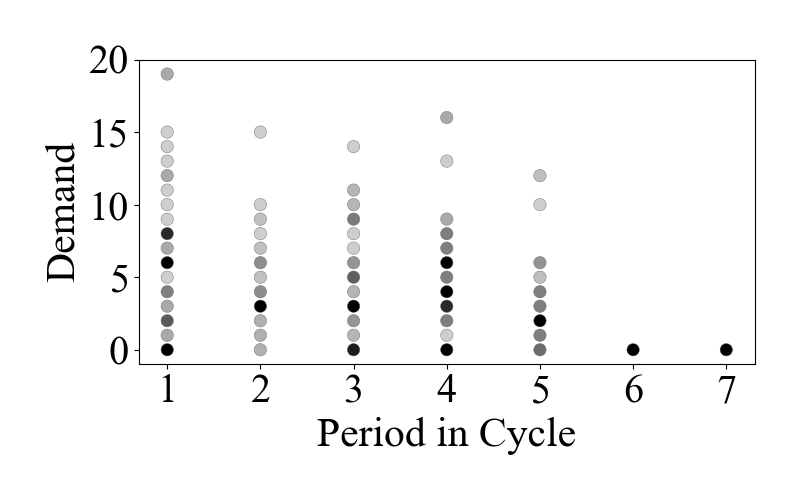}}
    \subfigure[]{\includegraphics[width=0.31\linewidth]{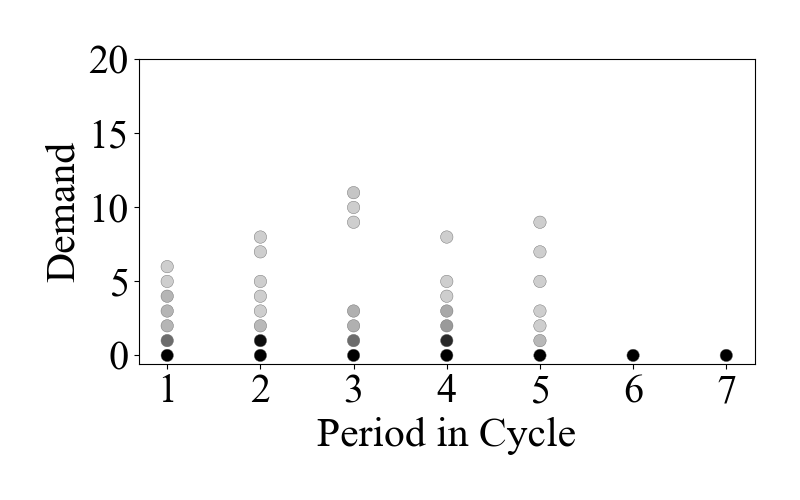}}
    \caption{Representative real-life demand patterns over a weekly cycle ($T=7$).}
    \label{fig:real-data}
\end{figure*}

Although the demand data exhibit a natural weekly cycle ($T=7$), we evaluate system performance under a range of cycle lengths, $T \in \{5,6,7,8\}$, to assess the sensitivity of planning quality to the selected periodicity. Under the fixed-interval policy, the demand sequence is assumed to be stationary, and the ambiguity set parameters are estimated accordingly, independent of the cycle length. In contrast, under the consistent and flexible policies, the demand sequence is treated as cyclic with period $T$, and the ambiguity set parameters are estimated under this assumption, enabling cycle-aware planning. The vehicle capacity is set to 48 units, corresponding to the maximum daily load observed under the operational fixed routes serving the selected retailers. This setting ensures a fair and consistent comparison between our experimental results and the real-world operational baseline. The backorder cost is set to 12, and the unit transportation cost is set to $\rho = 2.5$. To accommodate the computational complexity of the real-life instances, we allocate extended solving time: the second-level branch-and-price solver is limited to one hour per call, and the total runtime for solving each instance is capped at 24 hours.

\begin{table}[!htb]
\centering
\caption{Solution performance of different service policies under real-life data.}
\label{tab:real-solution}
\scriptsize
\begin{tabular}{c c c r r rr r r r r r r r}
\toprule
Inst                & \#Period           & Policy         & Time (s) & T.O.\% & Cost & \#Cluster & Avg I. & S.L. & Vehicle Util & O.\%   & Avg O. & E.T.\% & Avg E.T. \\
\midrule
\multirow{12}{*}{1} & \multirow{3}{*}{5} & Fixed-Interval & 1176     & 0\%    & 372  & 2.0       & 2.5    & 93\% & 34\%         & 0.0\%  & 0.0    & 2.9\%  & 56.0     \\
                    &                    & Consistent     & T.O.     & 83\%   & 397  & 2.0       & 1.2    & 96\% & 41\%         & 20.2\% & 3.6    & 5.1\%  & 16.5     \\
                    &                    & Flexible       & T.O.     & 5\%    & 352  & 2.0       & 1.2    & 97\% & 43\%         & 13.9\% & 4.0    & 5.9\%  & 17.0     \\
                                                                             \cmidrule(r){2-14}
& \multirow{3}{*}{6} & Fixed-Interval & 1175     & 0\%    & 372  & 2.0       & 2.5    & 93\% & 34\%         & 0.0\%  & 0.0    & 2.9\%  & 56.0     \\
                    &                    & Consistent     & T.O.     & 67\%   & 404  & 2.0       & 1.2    & 96\% & 31\%         & 13.4\% & 1.2    & 2.4\%  & 29.0     \\
                    &                    & Flexible       & T.O.     & 52\%   & 358  & 2.0       & 1.1    & 96\% & 36\%         & 10.2\% & 3.7    & 5.3\%  & 20.5     \\
                                                                                               \cmidrule(r){2-14}
  & \multirow{3}{*}{7} & Fixed-Interval & 1213     & 0\%    & 372  & 2.0       & 2.5    & 93\% & 34\%         & 0.0\%  & 0.0    & 2.9\%  & 56.0     \\
                    &                    & Consistent     & T.O.     & 67\%   & 329  & 2.0       & 2.0    & 93\% & 36\%         & 0.0\%  & 0.0    & 0.0\%  & 0.0      \\
                    &                    & Flexible       & T.O.     & 56\%   & 307  & 2.0       & 1.9    & 96\% & 40\%         & 0.0\%  & 0.0    & 3.6\%  & 13.0     \\
                                                                                                \cmidrule(r){2-14}
 & \multirow{3}{*}{8} & Fixed-Interval & 1217     & 0\%    & 372  & 2.0       & 2.5    & 93\% & 34\%         & 0.0\%  & 0.0    & 2.9\%  & 56.0     \\
                    &                    & Consistent     & T.O.     & 0\%    & 383  & 2.0       & 1.4    & 97\% & 39\%         & 22.3\% & 2.9    & 2.6\%  & 31.0     \\
                    &                    & Flexible       & T.O.     & 50\%   & 353  & 2.0       & 1.2    & 96\% & 41\%         & 20.8\% & 2.4    & 2.9\%  & 40.0     \\
                    \midrule
\multirow{12}{*}{2} & \multirow{3}{*}{5} & Fixed-Interval & 23       & 0\%    & 625  & 2.0       & 1.0    & 90\% & 36\%         & 0.0\%  & 0.0    & 7.1\%  & 20.3     \\
                    &                    & Consistent     & T.O.     & 0\%    & 728  & 2.0       & 1.2    & 97\% & 46\%         & 25.0\% & 3.0    & 5.4\%  & 19.7     \\
                    &                    & Flexible       & T.O.     & 100\%  & 590  & 2.0       & 1.2    & 93\% & 44\%         & 19.6\% & 3.3    & 8.9\%  & 11.6     \\
                                                                                                                   \cmidrule(r){2-14}
 & \multirow{3}{*}{6} & Fixed-Interval & 23       & 0\%    & 625  & 2.0       & 1.0    & 90\% & 36\%         & 0.0\%  & 0.0    & 7.1\%  & 20.3     \\
                    &                    & Consistent     & T.O.     & 0\%    & 844  & 3.0       & 1.1    & 97\% & 30\%         & 22.3\% & 2.5    & 2.4\%  & 25.0     \\
                    &                    & Flexible       & T.O.     & 100\%  & 591  & 2.0       & 1.2    & 94\% & 40\%         & 14.2\% & 2.2    & 7.1\%  & 18.0     \\
                                                                                                                    \cmidrule(r){2-14}
& \multirow{3}{*}{7} & Fixed-Interval & 24       & 0\%    & 625  & 2.0       & 1.0    & 90\% & 36\%         & 0.0\%  & 0.0    & 7.1\%  & 20.3     \\
                    &                    & Consistent     & T.O.     & 0\%    & 801  & 3.0       & 2.2    & 91\% & 37\%         & 8.3\%  & 1.7    & 1.8\%  & 16.0     \\
                    &                    & Flexible       & T.O.     & 67\%   & 487  & 2.0       & 2.2    & 88\% & 49\%         & 0.0\%  & 0.0    & 7.5\%  & 8.7      \\
                                                                                                                   \cmidrule(r){2-14}
 & \multirow{3}{*}{8} & Fixed-Interval & 25       & 0\%    & 625  & 2.0       & 1.0    & 90\% & 36\%         & 0.0\%  & 0.0    & 7.1\%  & 20.3     \\
                    &                    & Consistent     & T.O.     & 0\%    & 1032 & 5.0       & 1.0    & 97\% & 21\%         & 30.4\% & 3.9    & 0.0\%  & 0.0      \\
                    &                    & Flexible       & T.O.     & 100\%  & 596  & 2.0       & 1.4    & 92\% & 44\%         & 16.2\% & 4.2    & 5.4\%  & 11.7    \\
                                                   \bottomrule    
\end{tabular}
\end{table}

The results are summarized in Table~\ref{tab:real-solution}. For the consistent and flexible policies, performance is highly sensitive to the choice of the cycle length parameter $T$. Specifically, setting $T$ equal to the true cycle length (i.e., $T=7$) yields the lowest costs in most cases, except for the consistent policy under the second instance, where suboptimal routing plans, attributable to solution inefficiencies, lead to higher costs. This highlights the critical importance of accurately identifying the underlying demand periodicity for policy effectiveness. In contrast, the fixed-interval policy exhibits limited sensitivity to the choice of $T$, as its replenishment decisions rely primarily on a stationary demand interpretation rather than explicit cyclicality. Overall, when the true cycle length is employed, the flexible policy outperforms the fixed-interval policy, demonstrating superior adaptability. Moreover, even under cycle length mismatches, the flexible policy remains competitive, underscoring its robustness in realistic demand environments. By comparison, the consistent policy performs well only under the first instance with the correct cycle length; its relatively poorer performance elsewhere is due, in part, to solution inefficiencies that produce non-optimal routing plans, as will be further elaborated below. Additionally, the consistent policy’s performance variability across instances indicates limited robustness. Notable differences also emerge when comparing synthetic and real-world datasets. Vehicle utilization tends to be lower under real-life demand data, reflecting greater variability and uncertainty. Furthermore, emergency transportation occurs more frequently and in larger volumes in the real dataset, highlighting the challenges of routing under unpredictable demand conditions.

We visualize the routing solutions for Instance 2 with a cycle length of $T=7$ in Figures \ref{fig:real-policy1}–\ref{fig:real-policy3}, which correspond to the fixed-interval, consistent, and flexible policies, respectively. Under the fixed-interval policy (Figure \ref{fig:real-policy1}), retailers are partitioned into two clusters, with each cluster executing its assigned route every day. The consistent policy (Figure \ref{fig:real-policy2}) partitions retailers into three clusters: Cluster 1 is served in periods $\{2,3,4,5\}$, Cluster 2 in $\{2,3,4,5,6\}$, and Cluster 3 in $\{1,2,3,4,5\}$. The flexible policy partitions retailers into two clusters. Since routes under the flexible policy may vary across periods, Figure \ref{fig:real-policy3} shows one representative route that visits all retailers within each cluster, and the complete set of period-specific routes is provided in Figure \ref{fig:oa-real-policy} of Online Appendix F. Three key observations emerge from these results. First, the consistent policy does not yield an effective retailer partition, leading to substantial route overlap and low transportation efficiency. This does not necessarily imply that the consistent policy is incapable of finding a good retailer partition (high-quality routing plans); rather, it suggests that its solution procedure lacks efficiency and is unable to identify such a plan within the allotted time limit. Second, although Figure \ref{fig:real-data} shows that weekend demand is nearly zero, all service policies still schedule replenishments during weekends. A plausible explanation is that demand remains consistently low throughout the week, so the optimization shifts its emphasis from inventory cost reduction to routing efficiency. Third, under the consistent policy, certain retailers are visited without receiving any replenishment (not visible in the figures but confirmed in the detailed results, not shown). This highlights an advantage of the flexible policy: it can omit such retailers from the routing plan entirely, thereby lowering transportation costs. In contrast, the consistent policy must adhere to a fixed routing pattern, even when it results in unnecessary visits.

\begin{figure*}[!htb]
    \centering
    \subfigure[Cluster 1 Every period]{\includegraphics[width=0.32\linewidth]{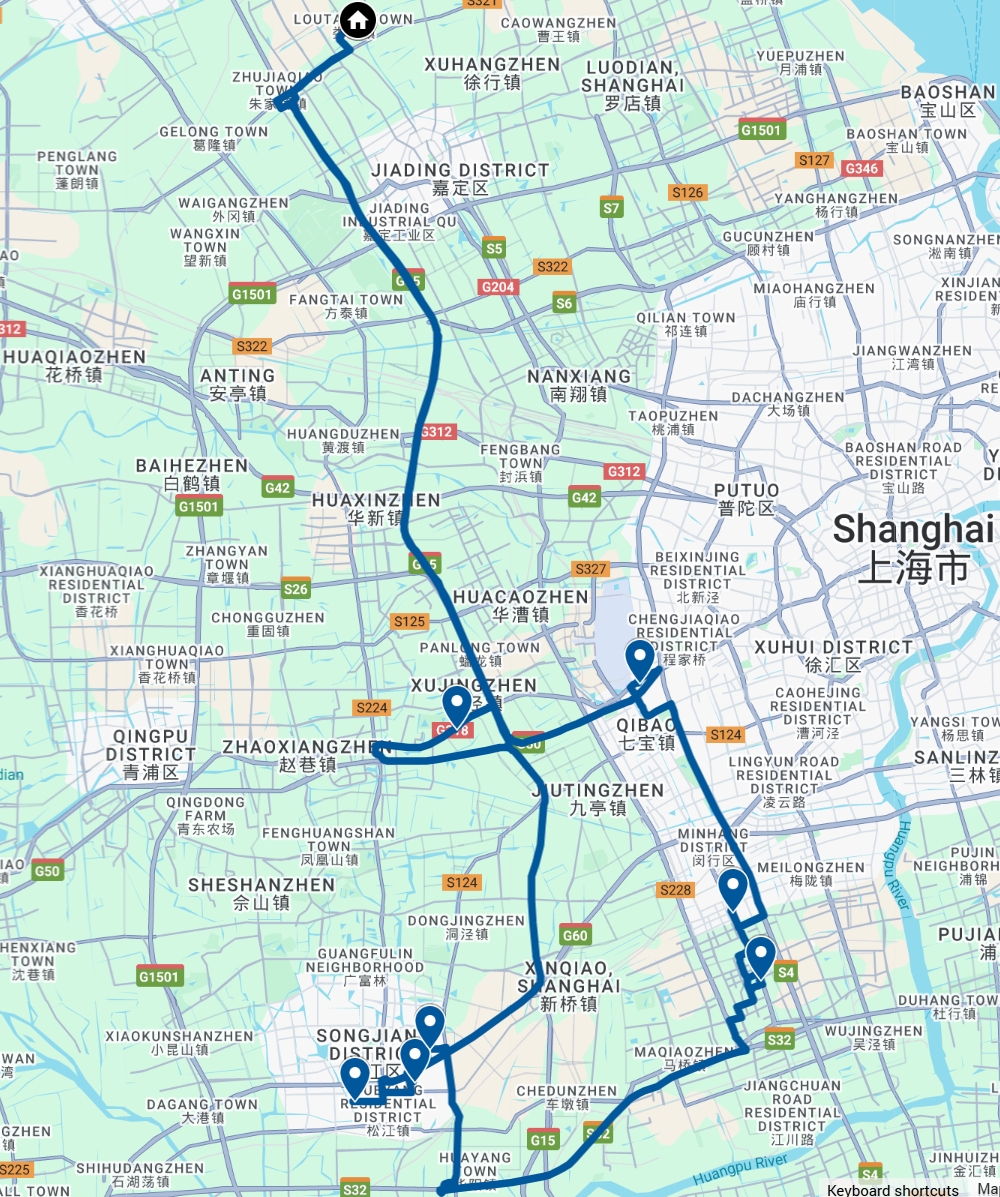}}
    \subfigure[Cluster 2 Every period]{\includegraphics[width=0.32\linewidth]{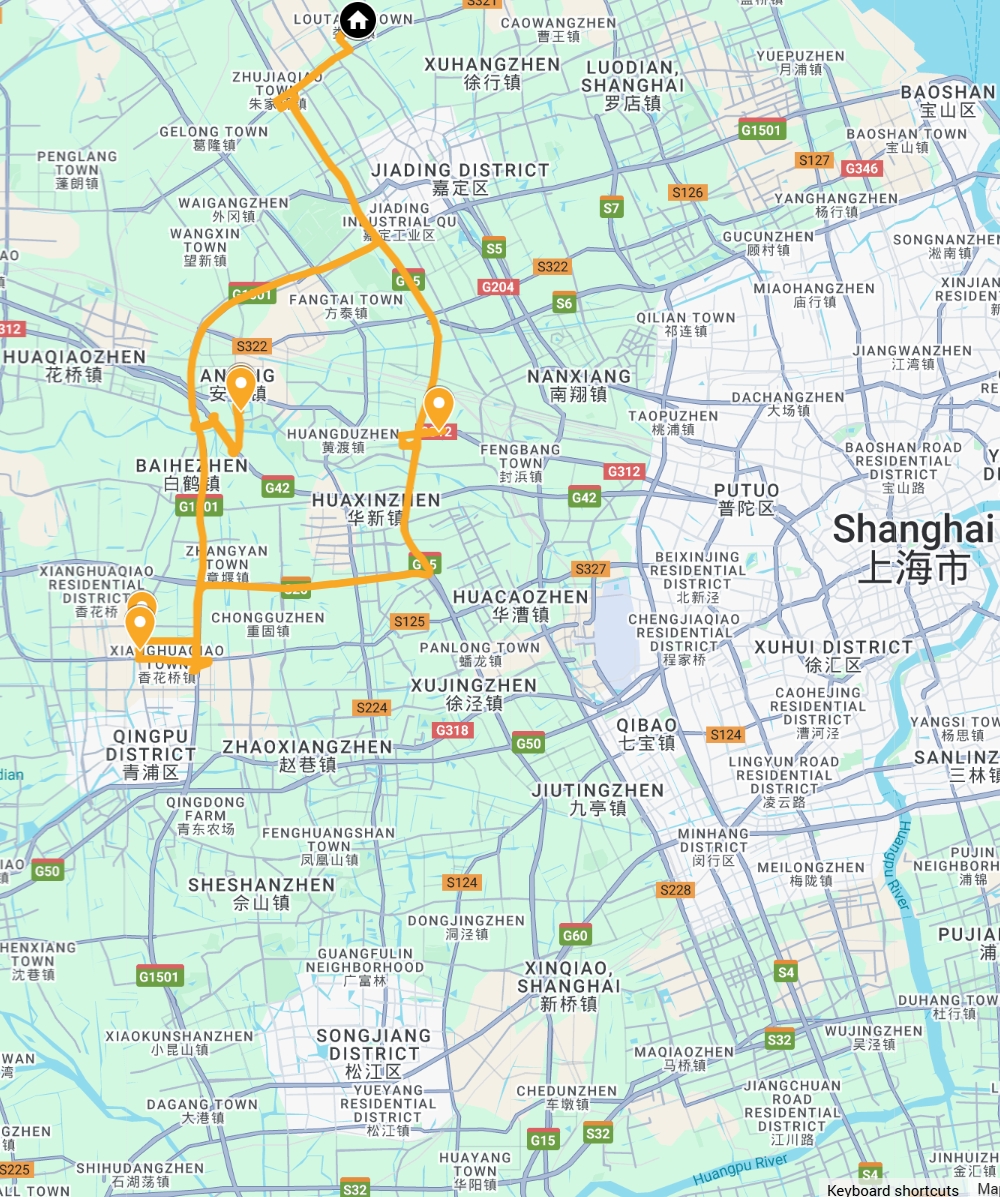}}
            \caption{Solution visualization of the fixed-interval service policy of instance 2 under real-life data.}
    \label{fig:real-policy1}
\end{figure*}

\begin{figure*}[!htb]
    \centering
    \subfigure[Cluster 1 Periods 2,3,4,5] {\includegraphics[width=0.32\linewidth]{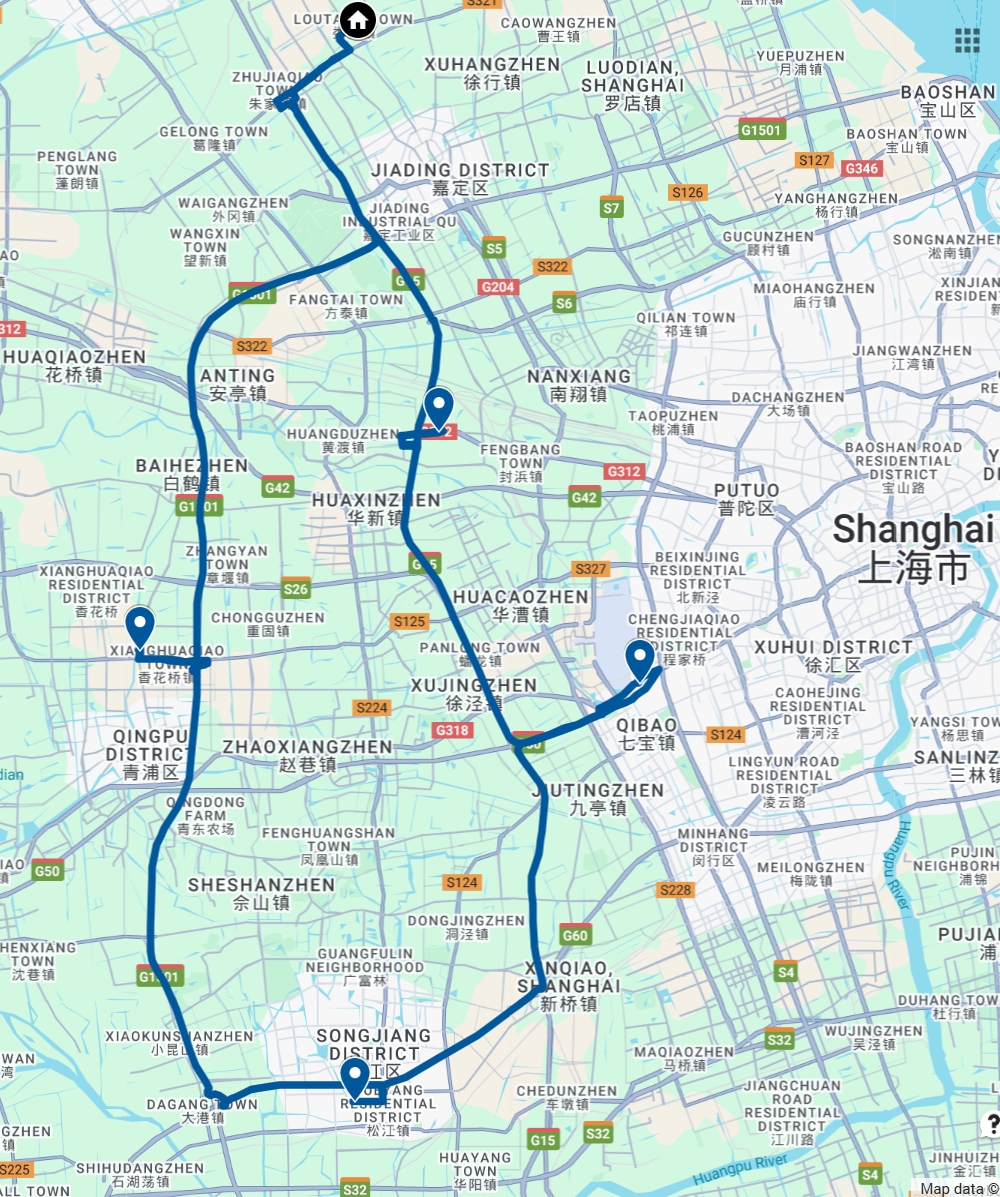}}
    \subfigure[Cluster 2 Periods 2,3,4,5,6]{\includegraphics[width=0.32\linewidth]{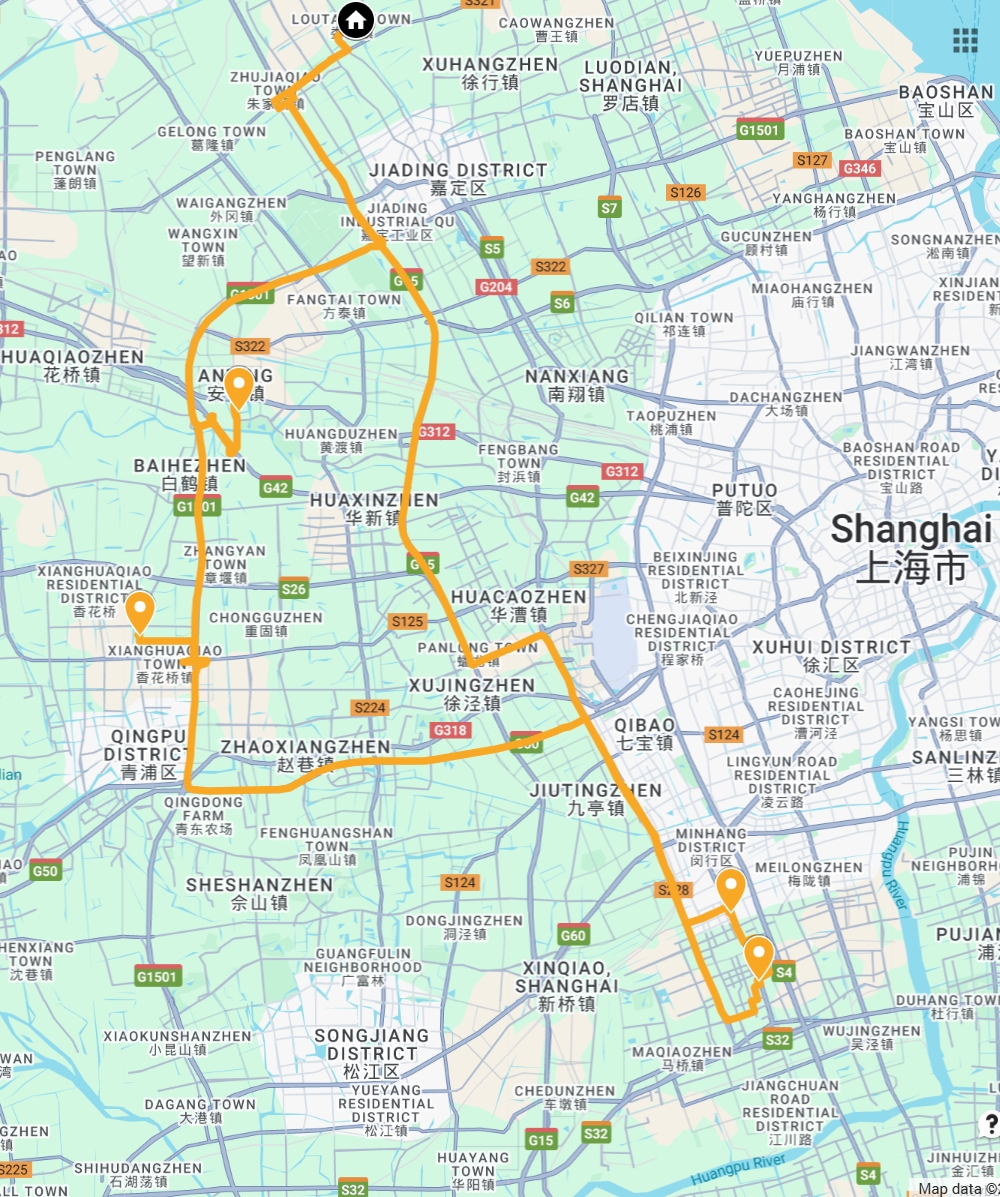}}
    \subfigure[Cluster 3 Periods 1,2,3,4,5]{\includegraphics[width=0.32\linewidth]{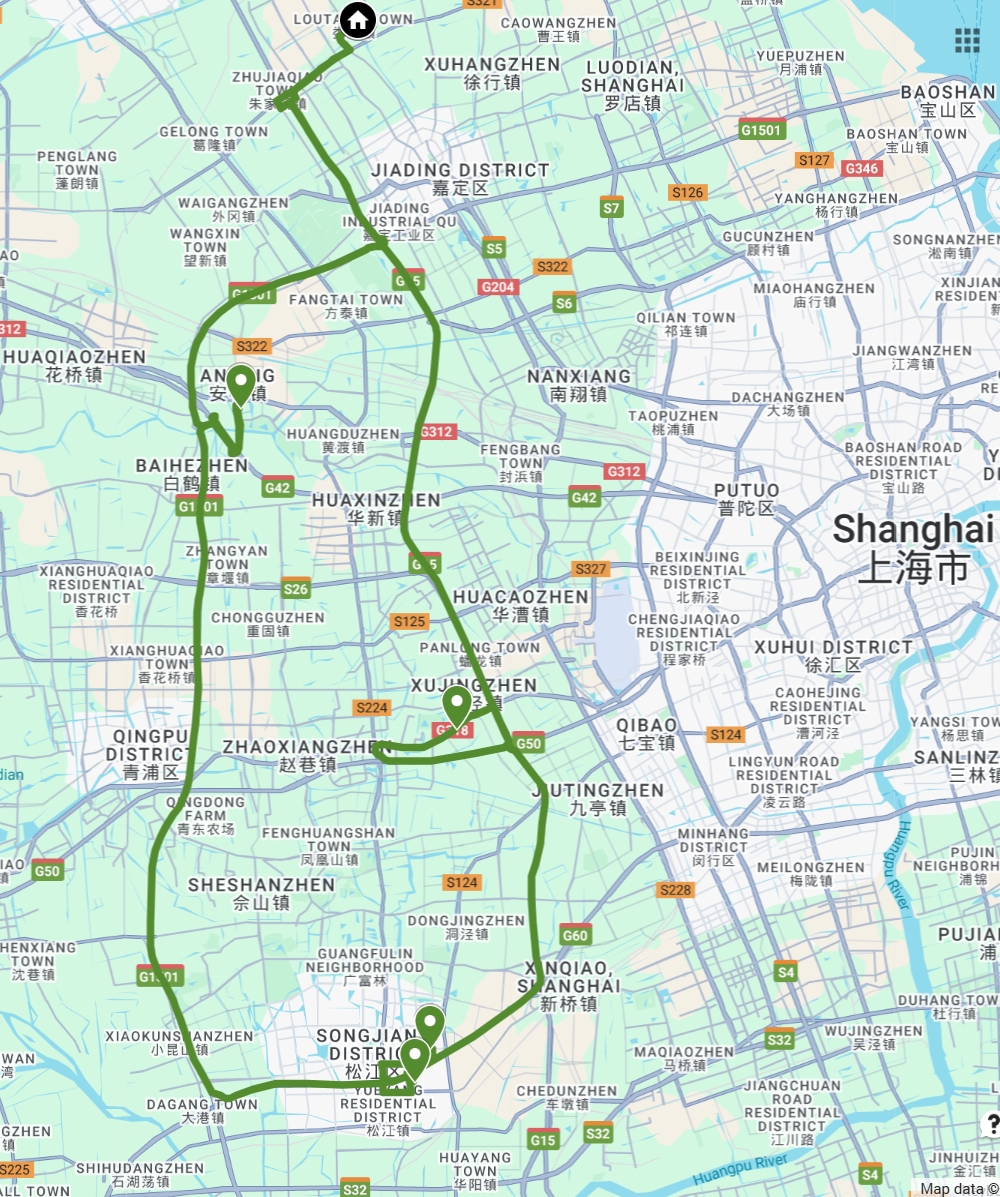}}
        \caption{Solution visualization of the consistent service policy of instance 2 under real-life data.}
    \label{fig:real-policy2}
\end{figure*}

\begin{figure*}[!htb]
    \centering
    \subfigure[Cluster 1]{\includegraphics[width=0.32\linewidth]{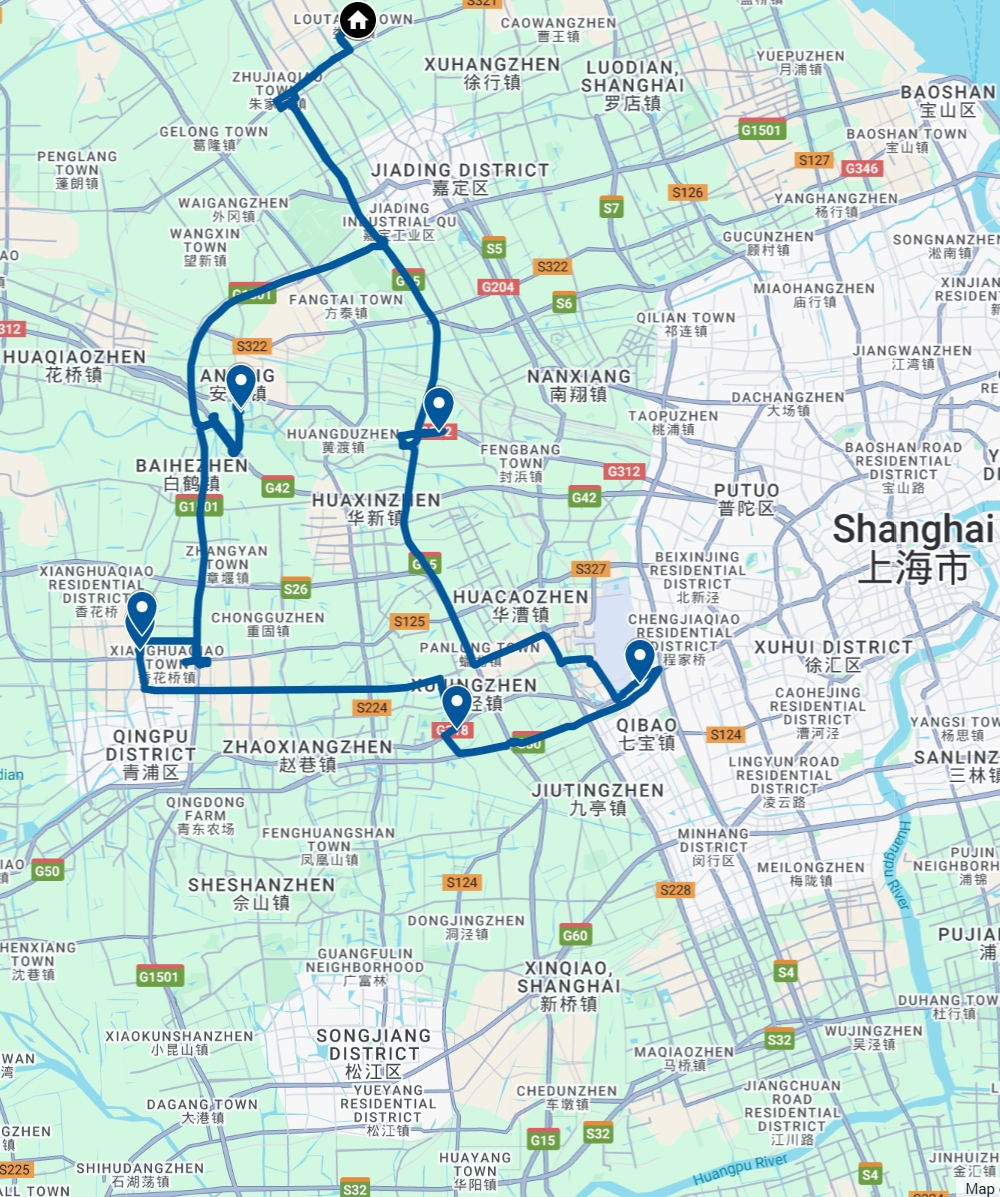}}
    \subfigure[Cluster 2]{\includegraphics[width=0.32\linewidth]{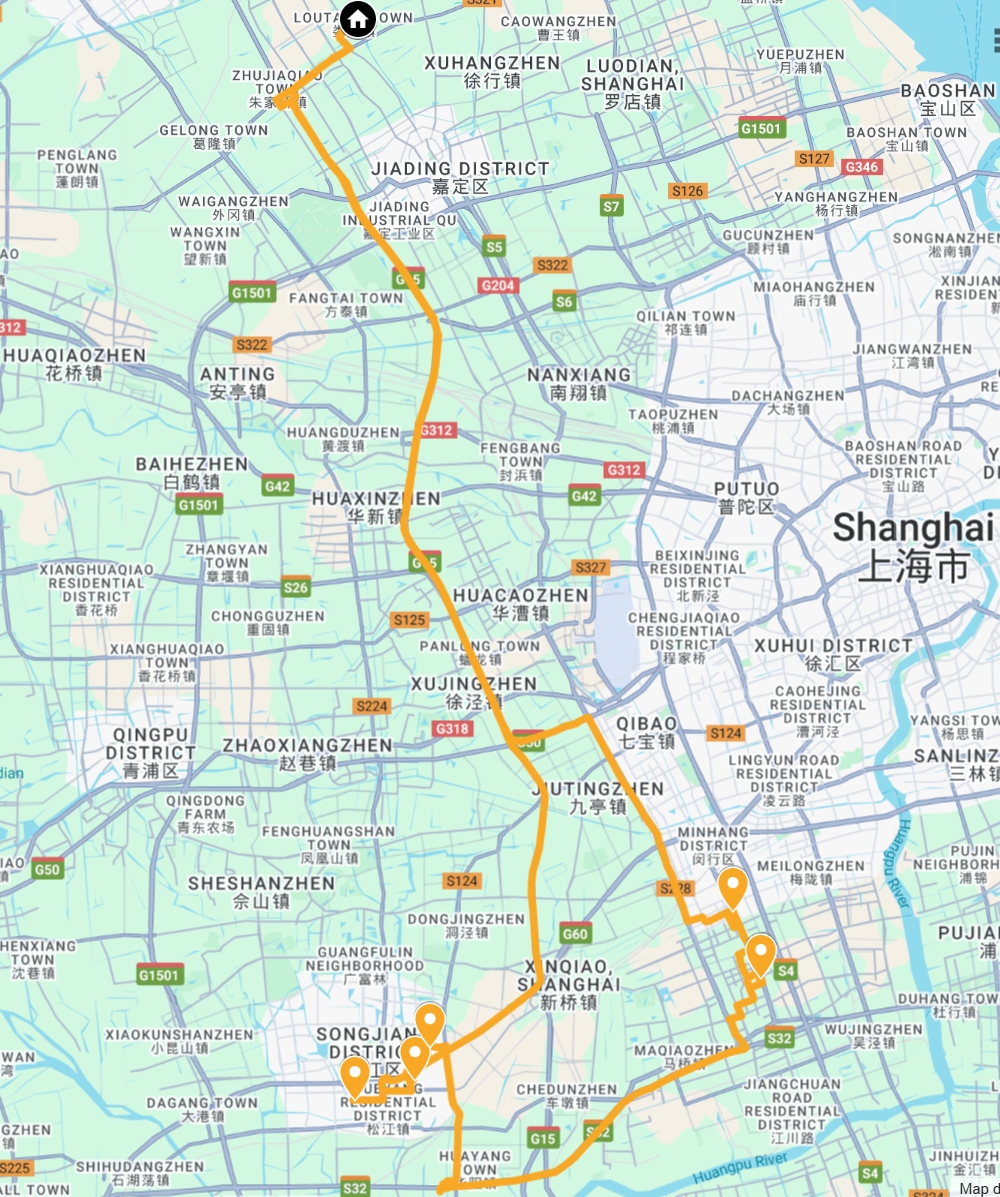}}
        \caption{Solution visualization of the flexible service policy of instance 2 under real-life data.}
    \label{fig:real-policy3}
\end{figure*}


Next, we compare the simulation-period performance of our proposed approach (with cycle length being 7) with the operational strategy currently employed by SAIC Volkswagen, referred to as the manual-planning policy. Two important clarifications are necessary for interpreting this comparison. First, the SAIC Volkswagen aftermarket system operates under a two-level structure: retailers independently determine their replenishment decisions based on local demand forecasts, and the warehouse subsequently fulfills these orders by dispatching the required spare parts. In our study, historical orders are used as a proxy for demand. Under this setting, the manual-planning policy can be interpreted as a one-level system in which the decision-maker has access to ground-truth demand and always applies a zero-inventory replenishment strategy, that is, delivering exactly the amount needed to meet demand on each day. As a result, there are no inventory holding costs or backorders at the retailer level, leading to zero inventory-related costs under this policy. It is important to note, however, that such a zero-inventory strategy may not be optimal from a total cost perspective, as alternative replenishment strategies could consolidate shipments to reduce transportation costs. Nonetheless, because the manual-planning policy assumes full knowledge of future demand, we do not compare replenishment strategies directly. Instead, our evaluation focuses solely on transportation performance, specifically, fixed vehicle usage and routing efficiency. Second, the delivery routes used in the manual-planning policy are conceptually similar to those employed in the fixed-interval or consistent policies with a daily replenishment. However, there are important differences. In our fixed-interval and consistent policies, transportation costs are incurred for visiting all retailers on the predetermined route, even if no delivery is made to a specific retailer on a given day, due to the route consistency requirement (see Assumption~\ref{a2}). In contrast, the manual-planning policy dynamically adjusts routes each day, skipping retailers that do not place orders, thereby achieving lower transportation costs. Moreover, the manual routes are based on extensive operational experience and are likely refined through long-term historical data and practical heuristics. In contrast, our approach constructs routing and inventory policies using only the first 259 days of demand data from 2020, highlighting the data efficiency and adaptability of our method. 

Table \ref{tab:manual} reports the performance of all policies in terms of the number of vehicles used, total transportation cost over the 28-day simulation period, and average vehicle utilization. Across both instances, the flexible policy consistently outperforms all others on every transportation metric, achieving cost reductions of 27.64\% and 36.89\% relative to the manual-planning policy in Instances 1 and 2, respectively. In Instance 1, the manual-planning policy performs worst, whereas in Instance 2, the consistent policy performs worst, followed by the manual-planning policy. These results demonstrate both the practical effectiveness and operational relevance of our framework, particularly when implemented with the flexible policy. For cases where rapid solutions are required, the fixed-interval policy within our framework remains attractive: it consistently outperforms the manual-planning policy, achieving cost reductions of 9.46\% and 1.89\% in Instances 1 and 2, respectively, while requiring substantially less computation time. Finally, although the flexible policy has longer computation times, this limitation is mitigated in practice because the resulting infinite-horizon replenishment plan is solved only once and can be repeatedly applied without re-optimization, rendering the one-time computation effort relatively unimportant in operational settings.

\begin{table}[!htb]
\centering
\caption{Simulation-period performance of different policies.}
\label{tab:manual}
\scriptsize
\begin{tabular}{ c c r r r}
\toprule
Inst               & Policy          & Vehicle Number & Transportation Cost & Vehicle Utilization \\
\midrule
\multirow{4}{*}{1} & Fixed-Interval  & 2              & 5945                & 34\%         \\
                   & Consistent      & 2              & 5434                & 36\%         \\
                   & Flexible        & 2              & 4751                & 40\%         \\
                   & Manual-Planning & 2              & 6566                & 19\%         \\
                   \midrule
\multirow{4}{*}{2} & Fixed-Interval  & 2              & 12326               & 37\%         \\
                   & Consistent      & 3              & 15012               & 37\%         \\
                   & Flexible        & 2              & 7929                & 49\%         \\
                   & Manual-Planning & 3              & 12564               & 22\%        \\
                                   \bottomrule
\end{tabular}
\end{table}

\section{Conclusion}\label{s6}
This paper studies the Cyclic Inventory Routing Problem (CIRP) under demand uncertainty, where demand distributions are assumed to reside within a moment-based ambiguity set. The resulting Distributionally Robust CIRP (DR-CIRP) is particularly challenging due to the combined complexities of distributionally robust optimization and the inherently combinatorial structure of CIRP. To address this, we first conduct theoretical analysis to reformulate the distributionally robust components into tractable forms. Building on these insights, we develop a nested branch-and-price framework that decomposes the DR-CIRP into mixed-integer linear subproblems, each solvable using standard optimization techniques. The proposed method is evaluated on both synthetic and real-world datasets. In a real-world case study using data from SAIC Volkswagen Automotive Co., Ltd., our best-performing approach achieves an average transportation cost reduction of 32.27\% compared with the company’s actual operational practice.

To the best of our knowledge, this is the first work to address CIRP under distributional ambiguity, offering a tractable and practically efficient solution approach. A central component of our framework involves the determination of cyclic inventory policies, a novel problem within the inventory management literature. Our work thus builds a bridge between the closely related domains of inventory management and inventory routing. Moreover, we propose three service policies to reflect diverse cyclic operational practices observed in the literature. These policies are seamlessly integrated into the unified decomposition framework, enabling consistent and comparative evaluation within a single modeling paradigm.

This study also opens several promising directions for future research. First, more efficient algorithms or tailored heuristics may be developed to further enhance computational scalability. In this work, our primary goal is to establish the tractability of DR-CIRP; hence, we adopt a general-purpose decomposition strategy with provable optimality guarantees. Second, while we focus on moment-based ambiguity sets, future research could investigate alternative ambiguity models, such as Wasserstein sets, to assess whether similar or improved performance can be achieved. Finally, some of our theoretical results, including Theorem~\ref{wd_theorem} on worst-case distributions, may have broader applications beyond CIRP. Extending these insights to other classes of distributionally robust operations management problems presents an important avenue for future work.

\bibliographystyle{informs2014}
\bibliography{0-reference}

\clearpage
\begin{APPENDICES} 

\setcounter{equation}{0}
\renewcommand{\theequation}{A\arabic{equation}}
\setcounter{table}{0} 
\setcounter{figure}{0}
\renewcommand{\thetable}{A\arabic{table}}
\renewcommand{\thefigure}{A\arabic{figure}}
\setcounter{algorithm}{0}
\renewcommand{\thealgorithm}{A\arabic{algorithm}}
\section{Gurobi-Solvable Formulation} \label{OA-QCQP}
This section presents a quadratically constrained quadratic programming (QCQP) formulation that is directly solvable using commercial solvers such as Gurobi. The model enables an off-the-shelf solution approach and serves as a benchmark for evaluating the performance of the decomposition algorithm.
\begin{align}
\min\ &\sum_{v\in[V]}\Bigl(pz_v+\frac{1}{T}\sum_{(i, j)\in A^{\rm spatial}} \sum_{t\in [T]}\rho c_{ij}x_{ijtv}+\frac{1}{T}\sum_{i\in[N]}\sum_{(t_1,t_2)\in A^{\rm temporal}}y_{it_1t_2v}\bigl(\alpha_{it_1t_2}+\boldsymbol{\mu}_{it_1t_2}^\top\boldsymbol{\beta}_{it_1t_2}+\boldsymbol{\sigma}_{it_1t_2}^\top\boldsymbol{\gamma}_{it_1t_2}\bigr)\Bigr),\nonumber\\
s.t. \ 
& \sum_{i\in[N]}\sum_{(t_1,t_2)\in \eta^+(\mathcal{G}^{\rm temporal},t_1)}y_{it_1t_2v}\Bigl(s_{it_2}-s_{it_1}+\sum_{t^{\prime}\in[t_1,t_2-1]}U_{t^{\prime}}^i\Bigr)\leq Q \quad \forall t_1\in [T],v\in[V],\nonumber\\
& \sum_{v\in[V]}\sum_{(t_1,t_2)\in \eta^+(\mathcal{G}^{\rm temporal},t_1)}y_{it_1t_2v}\Bigl(s_{it_2}-s_{it_1}+ \sum_{t^{\prime}\in[t_1,t_2-1]} L_{t^{\prime}}^i\Bigr)\geq 0 \quad \forall i\in[N], t_1\in[T],\nonumber\\
& \sum_{v\in [V]}w_{iv}= 1 \quad \forall i\in[N],\nonumber\\
&s_{it_1} \leq M \sum_{v\in[V]}\sum_{(t_1,t_2)\in\eta^+(\mathcal{G}^{\rm temporal},t_1)}y_{it_1t_2v}\quad \forall i\in[N], t_1\in[T],\nonumber\\
& \sum_{(i,j)\in\eta^+(\mathcal{G}^{\rm spatial},i)}x_{ijt_1v}\geq\sum_{(t_1,t_2)\in\eta^+(\mathcal{G}^{\rm temporal},t_1)}y_{it_1t_2v}\quad \forall i\in[N], t_1\in[T],v\in[V],\nonumber\\
& \sum_{t\in[T]}\sum_{(i,j)\in\eta^+(\mathcal{G}^{\rm spatial},i)}x_{ijtv} \leq M w_{iv} \quad \forall i\in[N], v\in[V],\nonumber\\
& \sum_{i\in [N]}w_{iv}\leq Mz_v\quad\forall v\in[V],\nonumber\\
& w_{iv}\in\{0,1\}\quad\forall i\in [N],v\in[V],\nonumber\\
& z_{v}\in\{0,1\}\quad\forall v\in[V],\nonumber\\
&s_{it} \in\mathbb{N} \quad \forall i\in[N], t\in[T], \nonumber\\
    &\sum_{(i,j)\in\eta^+(\mathcal{G}^{\rm spatial},i)}x_{ijtv}\leq 1  \quad\forall i\in[N], t\in[T],v\in [V],\nonumber\\
  &\sum_{i\in [N]]} x_{0itv} = \sum_{i\in [N]]} x_{i0tv} = 1 \quad\forall t\in[T],v\in [V],\nonumber\\
  &\sum_{(i,j)\in \eta^+(\mathcal{G}^{\rm spatial},i)} x_{ijtv} = \sum_{(j,i)\in \eta^-(\mathcal{G}^{\rm spatial},i)} x_{jitv} \quad\forall i\in [N], t\in[T], v\in[V],\nonumber\\
  &\sum_{i\in S}\sum_{j\in S, j\neq i}x_{ijtv}\leq |S|-1 \quad\forall S\subset [N], 2\leq|S|\leq N,t\in[T], v\in[V],\nonumber\\
  & \sum_{(i,j)\in A^{\rm spatial}}c_{ij}x_{ijtv}\leq L,\quad\forall v\in[V],\nonumber\\
  & x_{ijtv}\in \{0,1\} \quad\forall (i,j)\in A^{\rm spatial}, t\in[T],v\in[V],\nonumber\\
  & \sum_{v\in[V]}\sum_{(t_1,t_2)\in A^{\rm temporal}} y_{it_1t_2v} \geq 1 \quad\forall i\in[N], \nonumber\\
  & \sum_{(t_1,t_2)\in \eta^+(\mathcal{G}^{\rm temporal},t_1)} y_{it_1t_2v} \leq 1 \quad\forall i\in[N],t_1\in [T], v\in[V], \nonumber\\
  &\sum_{(t_1,t_2)\in\eta^+(\mathcal{G}^{\rm temporal},t_1)}y_{it_1t_2v} = \sum_{(t_2, t_1)\in \eta^-(\mathcal{G}^{\rm temporal},t_1)} y_{it_2t_1v} \quad\forall i\in[N], t_1\in [T], v\in[V], \nonumber\\
  & \sum_{v\in[V]}\sum_{t_1\in [T]}\sum_{t_2\in[1,t_1]}y_{it_1t_2v} = 1 \quad\forall i\in[N], \nonumber\\
  & y_{it_1t_2v}\in \{0,1\} \quad\forall i\in[N],  (t_1,t_2)\in A^{\rm temporal}, v\in[V], \nonumber\\
&-s_{it_1}\eta_{it_1t_20}^{\hat{t}} + s_{it_1}\eta_{it_1t_21}^{\hat{t}} +\underline{\boldsymbol{\zeta}}_{it_1t_2}^\top\underline{\boldsymbol{\nu}}_{it_1t_2}^{\hat{t}}-\bar{\boldsymbol{\zeta}}_{it_1t_2}^\top\bar{\boldsymbol{\nu}}_{it_1t_2}^{\hat{t}}-\boldsymbol{\mu}_{it_1t_2}^\top\boldsymbol{\pi}^{+\hat{t}}_{it_1t_2}+\boldsymbol{\mu}_{it_1t_2}^\top\boldsymbol{\pi}_{it_1t_2}^{-\hat{t}} \geq \nonumber\\
&\quad h(\hat{t}-t_1)s_{it_1}-b\Bigl(\bigl((t_2-1)-\hat{t}\bigr)+1\Bigr)s_{it_1}-\alpha_{it_1t_2} \quad \forall \hat{t}\in[t_1,t_2-1], (t_1,t_2)\in A^{\rm temporal},i\in[N],\nonumber\\
&\beta_{it_1t_2t}=-h(\hat{t}-t)^++b\Bigl(\bigl((t_2-1)-\hat{t}\bigr)+1\Bigr)-b(t-\hat{t})^+-\eta_{it_1t_20}^{\hat{t}}\mathbbm{1}_{t<\hat{t}}+\eta_{it_1t_21}^{\hat{t}}\mathbbm{1}_{t\leq\hat{t}}+\underline{\nu}_{it_1t_2t}^{\hat{t}}- \nonumber\\
&\qquad\qquad\quad \bar{\nu}_{it_1t_2t}^{\hat{t}}-\pi^{+\hat{t}}_{it_1t_2t}+\pi^{-\hat{t}}_{it_1t_2t}\quad\forall t\in[t_1,t_2-1],\hat{t}\in[t_1,t_2-1],(t_1,t_2)\in A^{\rm temporal}, i\in[N],\nonumber\\
&\boldsymbol{\pi}^{+\hat{t}}_{it_1t_2}+\boldsymbol{\pi}^{-\hat{t}}_{it_1t_2}\leq \boldsymbol{\gamma}_{it_1t_2}\quad \forall \hat{t}\in[t_1,t_2-1]\cup \{0\},(t_1,t_2)\in A^{\rm temporal}, i\in[N],\nonumber\\
&-s_{it_1}\eta_{it_1t_20}^{0} + s_{it_1}\eta_{it_1t_21}^{0} +\underline{\boldsymbol{\zeta}}_{it_1t_2}^\top\underline{\boldsymbol{\nu}}_{it_1t_2}^{0}-\bar{\boldsymbol{\zeta}}_{it_1t_2}^\top\bar{\boldsymbol{\nu}}_{it_1t_2}^{0}-\boldsymbol{\mu}_{it_1t_2}^\top\boldsymbol{\pi}_{it_1t_2}^{+0}+\boldsymbol{\mu}_{it_1t_2}^\top\boldsymbol{\pi}^{-0}_{it_1t_2} \geq \nonumber\\
&\qquad\qquad \qquad\qquad\qquad\qquad\ h\Bigl(\bigl((t_2-1)-t_1\bigr)+1\Bigr)s_{it_1}-\alpha_{it_1t_2}\quad\forall (t_1,t_2)\in A^{\rm temporal}, i\in[N],\nonumber\\
&\beta_{it_1t_2t}=-h\Bigl(\bigl((t_2-1)-t\bigr)+1\Bigr)-\eta_{it_1t_20}^{0}+\eta_{it_1t_21}^{0}+\underline{\nu}_{it_1t_2t}^{0}-\bar{\nu}_{it_1t_2t}^{0}-\pi^{+0}_{it_1t_2t}+\pi^{-0}_{it_1t_2t} \nonumber\\
&\qquad\qquad\qquad\qquad \qquad\qquad\qquad\qquad\qquad\qquad \forall t\in[t_1,t_2-1],(t_1,t_2)\in A^{\rm temporal}, i\in[N],\nonumber\\
& \eta_{it_1t_21}^{0} = 0\quad \forall (t_1,t_2)\in A^{\rm temporal}, i\in[N],\nonumber\\
& \underline{\boldsymbol{\nu}}_{it_1t_2}^{\hat{t}},\bar{\boldsymbol{\nu}}_{it_1t_2}^{\hat{t}},\boldsymbol{\pi}_{it_1t_2}^{+\hat{t}},\boldsymbol{\pi}_{it_1t_2}^{-\hat{t}},\boldsymbol{\gamma}_{it_1t_2}\in\mathbb{R}_+^{\hat{T}}\quad \forall \hat{t}\in[t_1,t_2-1]\cup \{0\},(t_1,t_2)\in A^{\rm temporal}, i\in[N],\nonumber\\
&\eta_{it_1t_20}^{\hat{t}},\eta_{it_1t_21}^{\hat{t}}\in\mathbb{R}_+\quad \forall \hat{t}\in[t_1,t_2-1]\cup \{0\},(t_1,t_2)\in A^{\rm temporal}, i\in[N],\nonumber\\
&\alpha_{it_1t_2}\in\mathbb{R}, \ \boldsymbol{\beta}_{it_1t_2}\in\mathbb{R}^{\hat{T}}\quad\forall (t_1,t_2)\in A^{\rm temporal}, i\in[N],\nonumber
\end{align}
where $\hat{T}=t_2-t_1$ if $t_2>t_1$, and $\hat{T}=t_2+T-t_1$ otherwise, representing the length of the interval.

\clearpage

\setcounter{equation}{0}
\renewcommand{\theequation}{B\arabic{equation}}
\setcounter{table}{0} 
\setcounter{figure}{0}
\renewcommand{\thetable}{B\arabic{table}}
\renewcommand{\thefigure}{B\arabic{figure}}
\setcounter{algorithm}{0}
\renewcommand{\thealgorithm}{B\arabic{algorithm}}
\section{Nested Branch-and-Price Decomposition for Alternative Policies} \label{OA-policies}
This section details the nested branch-and-price algorithm as applied to the remaining two service policies: the fixed-interval policy (Section~\ref{OA-policies.1}) and the flexible policy (Section~\ref{OA-policies.2}). For each policy, we describe the problem reformulation, the solution procedure, the branching strategy, and the construction of initial feasible solutions.

\subsection{Fixed-Interval Policy}\label{OA-policies.1}
\subsubsection{Reformulation and Solution Procedure}
Under Assumptions~\ref{a1}-\ref{a2}, the order-up-to levels remain consistent for each retailer across all replenishment periods. Specifically, for any retailer~$i$ in the cluster served by vehicle~$v$, we have $s_{it_1} = s_{it_2} = s_i$ for all $t_1, t_2 \in \mathcal{T}_v^*$, where $\mathcal{T}_v^*$ denotes the set of periods during which vehicle~$v$ replenishes its assigned retailer cluster. As a result, the Chance Constraints~\eqref{st_cc2} are always satisfied, ensuring that inventory overshooting is completely avoided, that is, inventory levels never exceed the prescribed order-up-to levels. 

We now reformulate the same first-level restricted master problem (1st-RMP) defined in Section~\ref{1st-rmp}. Under the fixed-interval policy, replenishment intervals and routes remain identical across periods. That is, each retailer~$i$ served by vehicle~$v$ is visited every $\kappa_v$ periods, and the same subset of retailers is served in each visit. The associated pricing problem for generating a feasible pattern under this policy can be formulated as:
\begin{align}
\min\ &p + \sum_{(i, j)\in A^{\rm spatial}} \tilde{c}_{ij} x_{ij}, \nonumber\\
s.t. \ 
  & \sum_{(i,j)\in A^{\rm spatial}}\kappa_v U^ix_{ij} \leq Q,\nonumber\\ 
  & x_{ij} \in \mathscr{X} \quad \forall (i,j)\in A^{\rm spatial}\nonumber,\\
  & \kappa_v \in \mathbb{N},\nonumber
\end{align}
where $\tilde{c}_{ij}=\frac{1}{\kappa_v}\Bigl(\rho c_{ij}+\sup_{\mathbb{P}\in\mathcal{P}}\mathbb{E}_{\mathbb{P}}\bigl[h\sum_{t=1}^{\kappa_v}(s_{i}-t\zeta^i)^++b\sum_{t=1}^{\kappa_v}(s_{i}-t \zeta^i)^-\bigr]\Bigr)-\pi_i$, with $s_i \in \mathbb{N}$ for all $i\in[N]$.

Let $\mathcal{K} = \{1, 2, \ldots, \kappa_v + 1\}$. Similar to Theorem~\ref{wd_theorem}, for given $s_i$ and $\kappa_v$, the worst-case expected inventory cost for retailer~$i$, defined as $\sup_{\mathbb{P}\in\mathcal{P}}\mathbb{E}_{\mathbb{P}}\bigl[h\sum_{t=1}^{\kappa_v}(s_{i}-t\zeta^i)^++b\sum_{t=1}^{\kappa_v}(s_{i}-t \zeta^i)^-\bigr]$, can be equivalently computed by solving the following optimization problem:
\begin{align}
    \max\ & \sum_{\hat{t}\in\mathcal{K}}\Bigl(h(\hat{t}-1)\bigl(s_i\pi_{\hat{t}}-\frac{\hat{t}}{2}\lambda_{\hat{t}}\bigr)-b(\kappa_v+1-\hat{t})\bigl(s_i\pi_{\hat{t}}-\frac{\kappa_v+\hat{t}}{2}\lambda_{\hat{t}}\bigr)\Bigr)\nonumber\\
    s.t.\ &\sum_{\hat{t}\in\mathcal{K}}\pi_{\hat{t}}=1,\nonumber\\ &\sum_{\hat{t}\in\mathcal{K}}\lambda_{\hat{t}}=\mu_i,\nonumber\\
    & \sum_{\hat{t}\in\mathcal{K}} |\lambda_{\hat{t}}-\pi_{\hat{t}}\mu_i|\leq \sigma_i,\nonumber\\
    & \pi_{\hat{t}}\underline{\zeta}_i\leq \lambda_{\hat{t}}\leq \pi_{\hat{t}}\bar{\zeta}_i\quad \forall \hat{t}\in\mathcal{K},\nonumber\\
    &(\hat{t}-1)\lambda_{\hat{t}}\leq \pi_{\hat{t}}s_i\quad\forall \hat{t}\in\mathcal{K},\nonumber\\
    &\hat{t}\lambda_{\hat{t}}\geq \pi_{\hat{t}}s_i\quad\forall \hat{t}\in\mathcal{K},\nonumber\\
    &\pi_{\hat{t}}\in\mathbb{R}_+, \lambda_{\hat{t}}\in\mathbb{R}\quad \forall \hat{t}\in\mathcal{K}.\nonumber
\end{align}

Accordingly, for each retailer~$i$ and fixed replenishment interval~$\kappa_v$, the optimal order-up-to level~$s_i^*$ and the associated worst-case expected inventory cost can be efficiently obtained using a golden-section search, similar as outlined in Algorithm 1. Once $\tilde{c}_{ij}$ values are computed for all $(i,j) \in A^{\rm spatial}$, the pricing problem reduces to an ESPPRC instance, which can be solved using the labeling algorithm described in Online Appendix \ref{oa-labeling}. The complete pricing routine thus consists of enumerating over all feasible values of $\kappa_v$ and solving the corresponding ESPPRC for each.

\subsubsection{Branching Strategies}
We adopt a single-layer branch-and-price framework under the fixed-interval policy. In the LP relaxation of the restricted master problem, let $q_{\omega}^{*}$ denote the value of variable $q_{\omega}$. When the solution is fractional, we identify the pattern whose $q_{\omega}^{*}$ is closest to 0.6. From the corresponding routing plan, we randomly select one arc and generate two child nodes: one excludes the selected arc from any future patterns, and the other enforces its inclusion.

\subsubsection{Initial Solution Construction}
Under the fixed-interval policy, a solution is constructed for $\kappa_v = 1$. The order-up-to level for each retailer is set to its optimal value $s_i^*$, as detailed in Section B.1.1. The corresponding vehicle routes are then determined by solving a vehicle routing problem, where each retailer’s demand is given by $\kappa_v WVaR^i$, following the specification in Section B.1.1.


\subsection{Flexible Policy}\label{OA-policies.2}
\subsubsection{Reformulation and Overall Framework}

We adopt the same overall decomposition framework for the consistent policy as described in Section~4.1.

\textbf{First Level: Retailer Partitioning.} The first-level restricted master problem remains the same as in Section~4.1.1, which was developed under the consistent policy.

\textbf{Second Level: Routing and Replenishment for One Cluster.}
We define replenishment decision variables as in Section 4.1.2. For routing plans, let $r(t)\in R(t)$ represent a routing plan in period $t$. Define $\lambda_{r(t)}$ as a binary variable indicating whether routing plan $r(t)$ is selected in period~$t$, and let $c_{r(t)}$ denote the associated transportation cost. The parameter $o_{r(t)}^i$ indicates whether routing plan $r(t)$ serves retailer~$i$. The second-level restricted master problem (2nd-RMP) is then formulated as:
\begin{align}
\min\ &p+\frac{1}{T}\sum_{t\in[T]}\sum_{r(t)\in R(t)} c_{r(t)}\lambda_{r(t)}+\sum_{i\in[N]}\sum_{l(i)\in L(i)}\bigl(\frac{1}{T}c_{l(i)}-\pi_i\bigr)\gamma_{l(i)}, \\
s.t. \ 
  & \sum_{r(t)\in R(t)}\lambda_{r(t)} \leq 1\quad\forall t\in[T],\label{pp1}\\
  & \sum_{l(i)\in L(i)}\gamma_{l(i)} \leq 1\quad\forall i\in[N],\label{pp2}\\
  & \sum_{r(t)\in R(t)}\lambda_{r(t)}o_{r(t)}^i\geq\sum_{l(i)\in L(i)}\gamma_{l(i)}\beta_{l(i)}^t \quad\forall i\in[N],t\in[T],\label{pp3}\\
  & \sum_{i\in[N]}\sum_{l(i)\in L(i)}\gamma_{l(i)}\beta_{l(i)}^t\bigl(s_{l(i)}^{t}-s_{l(i)}^{t^-}+\sum_{t^{\prime}\in[t^-,t-1]}U_{t^{\prime}}^i\bigr)\leq Q \quad \forall t\in [T],\label{pp4}\\ 
  & \lambda_{r(t)} \in \{0,1\} \quad \forall t\in[T], r(t)\in R(t),\\
  & \gamma_{l(i)} \in \{0,1\} \quad \forall i\in[N], l(i)\in L(i).
\end{align}

Let $\iota_t$, $\theta_i$, $\delta_{it}$, and $\psi_t$ be the dual variables associated with Constraints~\eqref{pp1}-\eqref{pp4}, respectively.

\textbf{Second Level: Routing Plan Generation.} 
The pricing problem for generating routing plan $r(t)$ in period $t$ is given by:
\begin{align}
   \min\ & \sum_{(i,j)\in A^{\rm spatial}}\bigl(\frac{1}{T}\rho c_{ij}-\delta_{it}\bigr)x_{ij}^t+\iota_t,\nonumber\\
    s.t.\ 
  & x_{ijt}\in \mathscr{X}\quad\forall (i,j)\in A^{\rm spatial}\nonumber.
\end{align}
This is an ESPPRC, which can be solved using the labeling algorithm described in Online Appendix~\ref{oa-labeling}.

\textbf{Second Level: Replenishment Plan Generation.} 
The pricing problem for the replenishment plan of retailer~$i$ is identical to the 2nd-PP2($i$) formulation in Section~4.1.4 and is omitted here for brevity.

\subsubsection{Inventory Policy Solving.}
The inventory policy is solved in the same manner as in Section~4.2 under the consistent policy, and we therefore omit repetition of the procedure here.

\subsubsection{Branching Strategies}
Similar to the consistent policy, let $q_{\omega}^{*}$ denote the value of $q_{\omega}$ in the LP relaxation of the first-layer restricted master problem. Under the flexible policy, routing patterns vary across periods, rendering arc-based branching (as used in the consistent policy) ineffective. Consequently, when the solution is fractional, we identify the pattern with $q_{\omega}^{*}$ closest to 0.6 and perform pattern-based branching. Specifically, we create two child nodes: one where the selected pattern is enforced and another where it is excluded.

To prevent regeneration of a forbidden pattern in the corresponding child node, we incorporate additional constraints into the second-layer restricted master problem (2nd-RMP). For each forbidden pattern $\mathcal{M}\in \mathscr{M}$, we introduce the following constraint:
\begin{align}
\sum_{i \in [N] \setminus \mathcal{M}} \sum_{l(i) \in L(i)} \gamma_{l(i)} - \sum_{i \in \mathcal{M}} \sum_{l(i) \in L(i)} \gamma_{l(i)} \geq -|\mathcal{M}| + 1, \nonumber
\end{align}
where $|\mathcal{M}|$ is the number of retailers in pattern $\mathcal{M}$.

It is important to note that these constraints alter the structure of the second-layer pricing problem. Specifically, the reduced cost associated with replenishment plans must be revised to incorporate the dual penalties corresponding to the added constraints. This adjustment ensures accurate evaluation of candidate plans during column generation.

Finally, the second-layer branch-and-price algorithm proceeds by branching on replenishment plans using three sequential refinement steps, as detailed in Section~4.3.

\subsubsection{Initial Solution Construction}
Since any feasible solution under the consistent policy remains feasible under the flexible policy, we construct the initial solution using the same procedure developed for the consistent policy.

\clearpage

\setcounter{equation}{0}
\renewcommand{\theequation}{C\arabic{equation}}
\setcounter{table}{0} 
\setcounter{figure}{0}
\renewcommand{\thetable}{C\arabic{table}}
\renewcommand{\thefigure}{C\arabic{figure}}
\setcounter{algorithm}{0}
\renewcommand{\thealgorithm}{C\arabic{algorithm}}
\section{Labeling Algorithms} \label{oa-labeling}
\subsection{Labeling Algorithm for the Fixed-Interval Policy}
The pricing problem for the fixed-interval policy is described in Section B.1.1 of Online Appendix~\ref{OA-policies}. For a given value of \(\kappa_v\) and precomputed arc costs \(\tilde{c}_{ij}\) for all 
\((i,j)\in A^{\rm spatial}\), this problem can be solved using a labeling algorithm.

Each label $\tau$ is characterized by five elements: \(\vec{V}(\tau)\), \(U(\tau)\), \(Q(\tau)\), \(D(\tau)\), and \(C(\tau)\), which respectively represent the current partial route, the set of unreachable retailers (i.e., those already visited or infeasible to visit), the accumulated vehicle capacity usage, the total distance traveled, and the cumulative objective value (comprising transportation and inventory costs adjusted by dual variables). Label initialization begins with a visit to the warehouse, given by \(\vec{V}(\tau) = (0), U(\tau) = \emptyset\), and \(Q(\tau) = D(\tau) = C(\tau) = 0.\)

The resource-extension functions update the label \(\tau\) when a retailer \(j \in [N]\) is appended to the current route. Let the most recently visited retailer in $\vec{V}(\tau)$ be denoted by $i$. The updated label \(\tau^{\prime}\) is then computed as follows:
\begin{align}
    \vec{V}(\tau^{\prime}) &= \vec{V}(\tau)\cup (j), \nonumber \\
    U(\tau^{\prime}) &= U(\tau) \cup \{j\}\cup\bigl\{k\big|Q(\tau) + \kappa_v U^j+\kappa_v U^k> Q\ \forall k\in[N]\bigr\}\cup\nonumber\\
    &\quad\bigl\{k\big|D(\tau) + c_{ij}+c_{jk}>L\ \forall k\in[N]\bigr\}, \nonumber \\
    Q(\tau^{\prime}) &= Q(\tau) + \kappa_v U^j, \nonumber \\
    D(\tau^{\prime}) &= D(\tau) + c_{ij}, \nonumber \\
    C(\tau^{\prime}) &= C(\tau) + \tilde{c}_{ij}. \nonumber
\end{align}

Labels are recursively extended to explore all feasible routes. To enhance computational efficiency, a dominance rule is employed to prune inferior labels. Specifically, a label \(\tau\) is said to be dominated by another label \(\tau^{\prime}\) if both labels terminate at the same retailer and the following conditions are satisfied:
\begin{align}
    U(\tau^{\prime}) \subseteq U(\tau), \quad Q(\tau^{\prime}) \leq Q(\tau), \quad D(\tau^{\prime}) \leq D(\tau), \quad C(\tau^{\prime}) \leq C(\tau). \nonumber
\end{align}

The labeling algorithm functions as a dynamic programming procedure, where each label represents a system state and label extensions define transitions. The dominance rule effectively prunes the search space by retaining only the most promising partial solutions. The algorithm terminates when no further feasible extensions can be made, ensuring that the pricing problem is solved efficiently under the given resource constraints.

\subsection{Labeling Algorithm for the Consistent Policy}
Under the consistent policy, the nested branch-and-price framework solves two pricing problems: one for generating routing plans and another for generating replenishment plans. We describe below the labeling algorithms used to solve each of these subproblems.

\subsubsection{Routing Plan.} 
In the routing pricing problem, a label \(\tau\) is defined by four components: \(\vec{V}(\tau)\), \(U(\tau)\), \(D(\tau)\), and \(C(\tau)\), representing the current partial route, the set of unreachable or already visited retailers, the cumulative travel distance, and the objective value composed of transportation cost minus dual contributions, respectively. The label is initialized with a single warehouse visit: \(\vec{V}(\tau) = (0), U(\tau) = \emptyset\), and \(D(\tau) = C(\tau) = 0.\)

When a retailer \(j \in [N]\) is appended to the current route, the label is extended. Let $i$ denote the last retailer visited in $\vec{V}(\tau)$. The updated label \(\tau^{\prime}\) is computed as follows:
\begin{align}
    \vec{V}(\tau^{\prime}) &= \vec{V}(\tau)\cup (j), \nonumber \\
    U(\tau^{\prime}) &= U(\tau) \cup \{j\}\cup\bigl\{k\big|D(\tau) + c_{ij}+c_{jk}>L\ \forall k\in[N]\bigr\}, \nonumber \\
    D(\tau^{\prime}) &= D(\tau) + c_{ij}, \nonumber \\
    C(\tau^{\prime}) &= C(\tau) + \sum_{t \in \hat{\mathcal{T}}} \bigl( \frac{1}{T} \rho c_{ij} - \delta_{it} \bigr). \nonumber
\end{align}

To improve computational efficiency, we apply a dominance rule: a label \(\tau\) is dominated by another label \(\tau^{\prime}\) if \(\tau^{\prime}\) terminates at the same retailer and satisfies the following conditions:
\begin{align}
    U(\tau^{\prime}) \subseteq U(\tau), \quad D(\tau^{\prime}) \leq D(\tau), \quad C(\tau^{\prime}) \leq C(\tau). \nonumber
\end{align}

\subsubsection{Replenishment Plan.} 
For any given retailer \(i \in [N]\) and a specified cycle end period \(t^e \in [T]\), with precomputed inventory costs \(c^i_{t_1t_2}\) for all \((t_1,t_2) \in A_t\), we construct labels defined by three components: \(\vec{V}(\tau)\), \(U(\tau)\), and \(C(\tau)\), representing the current replenishment schedule (as a sequence of periods), unreachable or already covered periods, and the cumulative objective value comprising inventory cost minus the incurred dual costs, respectively. The label is initialized as:\(\vec{V}(\tau) = (t^e),  U(\tau) = \{t|t> t^e\ \forall t\in[T]\}\), and \(C(\tau) = 0.\)

To extend the label \(\tau\) by appending a new period \(t^a\in [T]\), let $t^l$ denote the most recently visited period in $\vec{V}(\tau)$. The updated label \(\tau^{\prime}\) is computed as:
\begin{align}
    \vec{V}(\tau^{\prime}) &= \vec{V}(\tau)\cup(t^a), \nonumber \\
    U(\tau^{\prime}) &= U(\tau) \cup \bigl\{t\big|t\leq t^a\ \forall t\in[T]\bigr\}, \nonumber \\
    C(\tau^{\prime}) &= C(\tau) + c^i_{t^lt^a}. \nonumber
\end{align}

A dominance rule is also applied: label \(\tau\) is dominated by label \(\tau^{\prime}\) if both terminate at the same period and the following conditions are satisfied:
\begin{align}
    U(\tau^{\prime}) \subseteq U(\tau), \quad C(\tau^{\prime}) \leq C(\tau). \nonumber
\end{align}

\subsection{Labeling Algorithm for the Flexible Policy}
This subsection outlines the labeling algorithm used to solve the pricing problem under the flexible policy. As the pricing problem for generating replenishment plans is structurally identical to that under the consistent policy, we omit its description here and focus exclusively on the labeling algorithm for constructing routing plans.

For each period $t\in [T]$, we define a label \(\tau\) by four elements: \(\vec{V}(\tau)\), \(U(\tau)\), \(D(\tau)\), and \(C(\tau)\). , representing the current partial route, the set of unreachable retailers (i.e., those already visited or excluded due to resource constraints), the cumulative travel distance, and the reduced-cost value (i.e., the transportation cost minus the accumulated dual values), respectively. The initial label corresponds to a visit to the warehouse only, defined as: \(\vec{V}(\tau) = (0), U(\tau) = \emptyset\), and \(D(\tau) = C(\tau) = 0.\)

Label extension is performed by appending a retailer $j\in[N]$ to the current route. Let $i$ denote the last visited retailer in $\vec{V}(\tau)$. The extended label \(\tau^{\prime}\) is computed as follows:
\begin{align}
    \vec{V}(\tau^{\prime}) &= \vec{V}(\tau)\cup(j), \nonumber \\
    U(\tau^{\prime}) &= U(\tau) \cup \{j\}\cup\bigl\{k\big|D(\tau) + c_{ij}+c_{jk}>L\ \forall k\in[N]\bigr\}, \nonumber \\
    D(\tau^{\prime}) &= D(\tau) + c_{ij}, \nonumber \\
    C(\tau^{\prime}) &= C(\tau) + \frac{1}{T} \rho c_{ij} - \delta_{it}. \nonumber
\end{align}

To maintain computational tractability, we employ a dominance rule to discard non-promising labels. Specifically, a label \(\tau\) is dominated by another label \(\tau^{\prime}\) if both terminate at the same retailer and the following conditions hold:
\begin{align}
    U(\tau^{\prime}) \subseteq U(\tau), \quad D(\tau^{\prime}) \leq D(\tau), \quad C(\tau^{\prime}) \leq C(\tau). \nonumber
\end{align}
This dominance criterion ensures that only the most efficient partial paths are retained during label propagation, thereby significantly reducing the number of labels considered.

\clearpage

\setcounter{equation}{0}
\renewcommand{\theequation}{D\arabic{equation}}
\setcounter{table}{0} 
\setcounter{figure}{0}
\renewcommand{\thetable}{D\arabic{table}}
\renewcommand{\thefigure}{D\arabic{figure}}
\setcounter{algorithm}{0}
\renewcommand{\thealgorithm}{D\arabic{algorithm}}
\section{Proofs} \label{oa-proofs}
\subsection{Proof of Proposition 1.}
The expression for the worst-case VaR follows directly from Proposition 2 of \citet{ghosal2020distributionally}. The result for the worst-case LQ is its lower-tail analogue. The proof proceeds mutatis mutandis from their argument, with $\sup$ replaced by $\inf$, $1-\varepsilon$ replaced by $\varepsilon$, inequalities reversed as appropriate, and the corresponding primal/dual signs adjusted. Under these substitutions, their derivation carries through step by step, yielding the claimed closed-form expression.
\hfill\qed

\subsection{Proof of Proposition 2.}
Define the auxiliary functions:
\begin{align}
    f_0(s_{it_1}, \boldsymbol{\zeta}) := s_{it_1} - \sum_{t^{\prime} \in [t_1, t]} \zeta_{t^{\prime}}^i, \quad
    f_1(s_{it_1}, \boldsymbol{\zeta}) := \sum_{t^{\prime} \in [t_1, t]} \zeta_{t^{\prime}}^i - s_{it_1}. \nonumber
\end{align}
Then the function can be written as:
\begin{align}
    f^{inv}(i,t_1,t_2,s_{it_1}) = \sup_{\mathbb{P} \in \mathcal{P}} \mathbb{E}_{\mathbb{P}} \Bigl[ h \sum_{t \in [t_1, t_2 - 1]} \bigl( f_0(s_{it_1}, \boldsymbol{\zeta}) \bigr)^+ + b \sum_{t \in [t_1, t_2 - 1]} \bigl( f_1(s_{it_1}, \boldsymbol{\zeta}) \bigr)^+ \Bigr]. \nonumber
\end{align}

Note that \( f_0(s_{it_1}, \boldsymbol{\zeta})\) and \( f_1(s_{it_1}, \boldsymbol{\zeta}) \) are affine in \( s_{it_1} \), and the operations of taking the nonnegative part (i.e., \( (\cdot)^+ \)), summation, expectation, and pointwise supremum over convex functions all preserve convexity \citep[Chapter~3.2]{boyd2004convex}. Therefore, \( f^{inv}(i,t_1,t_2,s_{it_1}) \) is convex in \( s_{it_1} \). \hfill\qed

\subsection{Proof of Theorem 1.}
The worst-case expected holding and backorder cost over this interval can be expressed as:
\begin{align}
    \sup_{\mathbb{P}\in\mathcal{P}}\mathbb{E}_{\mathbb{P}}\Bigl[h\sum_{t\in [t_1,t_2-1]}\bigl(s_{it_1}-\sum_{t^{\prime}\in [t_1,t]} \zeta_{t^{\prime}}^i\bigr)^++b\sum_{t\in [t_1,t_2-1]}\bigl(s_{it_1}-\sum_{t^{\prime}\in [t_1,t]} \zeta_{t^{\prime}}^i\bigr)^-\Bigr].\nonumber
\end{align} 
This is equivalent to the following moment problem:
\begin{align}
\max \ & \int_{[\boldsymbol{\underline{\zeta}},\boldsymbol{\bar{\zeta}}]}\Bigl(h\sum_{t\in [t_1,t_2-1]}\bigl(s-\sum_{t^{\prime}\in [t_1,t]} \zeta_{t^{\prime}}^i\bigr)^++b\sum_{t\in [t_1,t_2-1]}\bigl(s-\sum_{t^{\prime}\in [t_1,t]} \zeta_{t^{\prime}}^i\bigr)^-\Bigr)\mathbb{P}(\mathrm{d}\boldsymbol{\zeta}),\nonumber\\
s.t.\  & \int_{[\boldsymbol{\underline{\zeta}},\boldsymbol{\bar{\zeta}}]}\mathbb{P}(\mathrm{d}\boldsymbol{\zeta})=1,\nonumber\\
& \int_{[\boldsymbol{\underline{\zeta}},\boldsymbol{\bar{\zeta}}]}\boldsymbol{\zeta}\mathbb{P}(\mathrm{d}\boldsymbol{\zeta})=\boldsymbol{\mu},\nonumber\\
& \int_{[\boldsymbol{\underline{\zeta}},\boldsymbol{\bar{\zeta}}]}|\boldsymbol{\zeta}-\boldsymbol{\mu}|\mathbb{P}(\mathrm{d}\boldsymbol{\zeta})\leq \boldsymbol{\sigma},\nonumber\\
&\mathbb{P}\in \mathcal{M}_+\bigl([\boldsymbol{\underline{\zeta}},\boldsymbol{\bar{\zeta}}]\bigr),\nonumber
\end{align}
where we drop the retailer and time indices for clarity and denote the random demand vector over the interval as $\boldsymbol{\zeta}\subseteq \mathbb{R}^{\hat{T}}$, with $\hat{T} = t_2 - t_1$ if $t_2>t_1$, and $\hat{T}=t_2+T-t_1$ otherwise.

By Lemma EC.1 of \citet{ghosal2020distributionally}, strong duality holds between this primal formulation and the following semi-infinite dual:
\begin{align}
\min \ & \alpha+\boldsymbol{\mu}^\top\boldsymbol{\beta}+\boldsymbol{\sigma}^\top\boldsymbol{\gamma},\nonumber\\
s.t.\ &\alpha+\boldsymbol{\zeta}^\top\boldsymbol{\beta}+|\boldsymbol{\zeta}-\boldsymbol{\mu}|^\top\boldsymbol{\gamma}\geq h\sum_{t\in [t_1,t_2-1]}\bigl(s-\sum_{t^{\prime}\in [t_1,t]} \zeta_{t^{\prime}}\bigr)^++b\sum_{t\in [t_1,t_2-1]}\bigl(s-\sum_{t^{\prime}\in [t_1,t]} \zeta_{t^{\prime}}\bigr)^- \quad\forall \boldsymbol{\zeta}\in [\boldsymbol{\underline{\zeta}},\boldsymbol{\bar{\zeta}}],\nonumber\\
& \alpha\in\mathbb{R}, \ \boldsymbol{\beta}\in\mathbb{R}^{\hat{T}}, \ \boldsymbol{\gamma}\in \mathbb{R}^{\hat{T}}_+.\nonumber
\end{align}

By splitting the semi-infinite constraints, we obtain the following optimization problem:
\begin{align}
{\rm (S)}\min \ & \alpha+\boldsymbol{\mu}^\top\boldsymbol{\beta}+\boldsymbol{\sigma}^\top\boldsymbol{\gamma},\nonumber\\
s.t.\ &\alpha+\boldsymbol{\zeta}^\top\boldsymbol{\beta}+|\boldsymbol{\zeta}-\boldsymbol{\mu}|^\top\boldsymbol{\gamma}\geq h\sum_{t\in [t_1,\hat{t})}\bigl(s-\sum_{t^{\prime}\in [t_1,t]} \zeta_{t^{\prime}}\bigr)+b\sum_{t\in [\hat{t},t_2-1]}\bigl(\sum_{t^{\prime}\in [t_1,t]} \zeta_{t^{\prime}}-s\bigr) \nonumber\\
&\qquad\qquad\qquad\qquad\qquad\forall \boldsymbol{\zeta}\in [\boldsymbol{\underline{\zeta}},\boldsymbol{\bar{\zeta}}]: \sum_{t\in[t_1,\hat{t})}\zeta_t^i<s\leq \sum_{t\in[t_1,\hat{t}]}\zeta_t^i, \ \hat{t}\in[t_1,t_2-1], \label{eq3-1}\\
&\alpha+\boldsymbol{\zeta}^\top\boldsymbol{\beta}+|\boldsymbol{\zeta}-\boldsymbol{\mu}|^\top\boldsymbol{\gamma}\geq h\sum_{t\in [t_1,t_2-1]}\bigl(s-\sum_{t^{\prime}\in [t_1,t]} \zeta_{t^{\prime}}\bigr) \quad \forall \boldsymbol{\zeta}\in [\boldsymbol{\underline{\zeta}},\boldsymbol{\bar{\zeta}}]: \sum_{t\in[t_1,t_2-1]}\zeta_t^i<s, \label{eq3-that0}\\
& \alpha\in\mathbb{R}, \ \boldsymbol{\beta}\in\mathbb{R}^{\hat{T}}, \ \boldsymbol{\gamma}\in \mathbb{R}^{\hat{T}}_+.\nonumber
\end{align}

For a given $\hat{t}\in[t_1,t_2-1]$, Constraints \eqref{eq3-1} is equivalent to the following constraint:
\begin{align}
\left[
\begin{aligned}
   {\rm (P)} \inf \ &\boldsymbol{\zeta}^\top\boldsymbol{\beta}+|\boldsymbol{\zeta}-\boldsymbol{\mu}|^\top\boldsymbol{\gamma}+\\
   &h\sum_{t\in [t_1,\hat{t})}\sum_{t^{\prime}\in [t_1,t]} \zeta_{t^{\prime}}-b\sum_{t\in [\hat{t},t_2-1]}\sum_{t^{\prime}\in [t_1,t]} \zeta_{t^{\prime}},\\
    s.t. \ &\sum_{t\in[t_1,\hat{t})}\zeta_t< s, \\
    &\sum_{t\in[t_1,\hat{t}]}\zeta_t\geq s,\\
    & \boldsymbol{\zeta}\in [\boldsymbol{\underline{\zeta}},\boldsymbol{\bar{\zeta}}].
\end{aligned}
\right]\geq h(\hat{t}-t_1)s-b\Bigl(\bigl((t_2-1)-\hat{t}\bigr)+1\Bigr)s-\alpha.\label{eq3-2}
\end{align}

Assume first that $\sum_{t\in[t_1,\hat{t}]}\underline{\zeta}_t\neq s$. In this case, the constraint set $\bigl\{\boldsymbol{\zeta}\in [\boldsymbol{\underline{\zeta}},\boldsymbol{\bar{\zeta}}]: \sum_{t\in[t_1,\hat{t})}\zeta_t<s\bigr\}$ is nonempty if and only if its closure $\bigl\{\boldsymbol{\zeta}\in [\boldsymbol{\underline{\zeta}},\boldsymbol{\bar{\zeta}}]: \sum_{t\in[t_1,\hat{t})}\zeta_t\leq s\bigr\}$ is nonempty. Thus, the problem (P) can be reformulated as 
\begin{align}
   {\rm (P)} \min \ &\boldsymbol{\zeta}^\top\boldsymbol{\beta}+|\boldsymbol{\zeta}-\boldsymbol{\mu}|^\top\boldsymbol{\gamma}+h\sum_{t\in [t_1,\hat{t})}\sum_{t^{\prime}\in [t_1,t]} \zeta_{t^{\prime}}-b\sum_{t\in [\hat{t},t_2-1]}\sum_{t^{\prime}\in [t_1,t]} \zeta_{t^{\prime}},\nonumber\\
    s.t. \ &\sum_{t\in[t_1,\hat{t})}\zeta_t\leq s, \nonumber\\
    &\sum_{t\in[t_1,\hat{t}]}\zeta_t\geq s,\nonumber\\
    & \boldsymbol{\zeta}\in [\boldsymbol{\underline{\zeta}},\boldsymbol{\bar{\zeta}}].\nonumber
\end{align}

We analyze by considering three mutually exclusive and collectively exhaustive cases:
\begin{itemize}
    \item[($i$)] There exists $\boldsymbol{\zeta}\in (\boldsymbol{\underline{\zeta}},\boldsymbol{\bar{\zeta}})$ such that $\sum_{t\in[t_1,\hat{t})}\zeta_t<s$ and $\sum_{t\in[t_1,\hat{t}]}\zeta_t> s$.
    \item[($ii$)] No $\boldsymbol{\zeta}\in [\boldsymbol{\underline{\zeta}},\boldsymbol{\bar{\zeta}}]$ satisfies $\sum_{t\in[t_1,\hat{t})}\zeta_t\leq s$ and $\sum_{t\in[t_1,\hat{t}]}\zeta_t\geq s$.
    \item[($iii$)] There exists $\boldsymbol{\zeta}\in [\boldsymbol{\underline{\zeta}},\boldsymbol{\bar{\zeta}}]$ satisfying $\sum_{t\in[t_1,\hat{t})}\zeta_t\leq s$ and \(\sum_{t\in[t_1,\hat{t}]}\zeta_t\geq s\), but no such \(\boldsymbol{\zeta} \in (\boldsymbol{\underline{\zeta}},\boldsymbol{\bar{\zeta}})\) satisfies both \(\sum_{t \in [t_1, \hat{t})} \zeta_t < s\) and \(\sum_{t \in [t_1, \hat{t}]} \zeta_t > s\).
\end{itemize}

In Case ($i$), strong duality holds by Slater’s condition, and Constraint \eqref{eq3-2} is satisfied if and only if the optimal value of the corresponding dual problem (D) exceeds the right-hand side. The dual takes the form:
\begin{align}
{\rm (D)}\ \max \ & -s\eta_0 + s\eta_1 +\underline{\boldsymbol{\zeta}}^\top\underline{\boldsymbol{\nu}}-\bar{\boldsymbol{\zeta}}^\top\bar{\boldsymbol{\nu}}-\boldsymbol{\mu}^\top\boldsymbol{\pi}^++\boldsymbol{\mu}^\top\boldsymbol{\pi}^-,\nonumber\\
s.t. \ & \beta_t=-h(\hat{t}-t)^++b\Bigl(\bigl((t_2-1)-\hat{t}\bigr)+1\Bigr)-b(t-\hat{t})^+-\eta_0\mathbbm{1}_{t<\hat{t}}+\eta_1\mathbbm{1}_{t\leq\hat{t}}+\underline{\nu}_t-\bar{\nu}_t-\pi^+_t+\pi^-_t \nonumber\\
&\qquad\qquad\qquad\qquad\qquad\qquad\qquad\qquad\qquad\qquad\qquad\qquad\qquad\quad \forall t\in[t_1,t_2-1],\nonumber\\
&\boldsymbol{\pi}^++\boldsymbol{\pi}^-\leq \boldsymbol{\gamma},\nonumber\\
& \underline{\boldsymbol{\nu}},\bar{\boldsymbol{\nu}},\boldsymbol{\pi}^+,\boldsymbol{\pi}^-\in\mathbb{R}_+^{\hat{T}},\ \eta_0,\eta_1\in\mathbb{R}_+,\nonumber
\end{align}
where the dependence of the variables on $\hat{t}$ is omitted for notational simplicity.

In Case ($ii$), define vectors $\boldsymbol{\mathrm{e}}$ and $\boldsymbol{0}$ as the all-ones and all-zeros vectors, respectively. Let $\boldsymbol{\mathrm{e}}_{[t_a, t_b]}$ denote the indicator vector such that $\boldsymbol{\mathrm{e}}_{[t_a, t_b]}[t]=\mathbbm{1}_{t\in[t_a,t_b]}$ for all $t\in [t_1,t_2-1]$. In this case, two mutually exclusive situations can arise:
\begin{itemize}
    \item[(a)] $\sum_{t\in[t_1,\hat{t})}\zeta_t> s$; or
    \item[(b)] $\sum_{t\in[t_1,\hat{t}]}\zeta_t< s$.
\end{itemize}

Fix $\eta_0, \eta_1\in\mathbb{R}_+$, and define the dual variables as follows: $\underline{\nu}_t=\biggl(\beta_t+h(\hat{t}-t)^+-b\Bigl(\bigl((t_2-1)-\hat{t}\bigr)+1\Bigr)+b(t-\hat{t})^++\eta_0\mathbbm{1}_{t<\hat{t}}\biggr)$, $\bar{\boldsymbol{\nu}}=\eta_1\boldsymbol{\mathrm{e}}_{[t_1,\hat{t}]}$, and $\boldsymbol{\pi}^+=\boldsymbol{\pi}^-=\boldsymbol{0}$. This choice is feasible for the dual problem (D), and yields the following objective value:
\begin{align}
    &\quad -s\eta_0 +s\eta_1+\sum_{t\in[t_1,t_2-1]}\underline{\zeta}_t\biggl(\beta_t+h(\hat{t}-t)^+-b\Bigl(\bigl((t_2-1)-\hat{t}\bigr)+1\Bigr)+b(t-\hat{t})^++\eta_0\mathbbm{1}_{t<\hat{t}}\biggr)-\bar{\boldsymbol{\zeta}}^\top\eta_1\boldsymbol{\mathrm{e}}_{[t_1,\hat{t}]}\nonumber\\
    &=\eta_0\Biggl(-s+\sum_{t\in[t_1,t_2-1]}\underline{\zeta}_t\biggl(\frac{\beta_t+h(\hat{t}-t)^+-b\Bigl(\bigl((t_2-1)-\hat{t}\bigr)+1\Bigr)+b(t-\hat{t})^+}{\eta_0}+\mathbbm{1}_{t<\hat{t}}\biggr)\Biggr)+\eta_1(s-\bar{\boldsymbol{\zeta}}^\top\boldsymbol{\mathrm{e}}_{[t_1,\hat{t}]})\nonumber.
\end{align}
This expression tends to $+\infty$ under the following conditions:
\begin{itemize}
    \item[(a)] Let $\eta_0\rightarrow +\infty$ and $\eta_1=0$. Then the summation converges to $\underline{\boldsymbol{\zeta}}^\top \boldsymbol{\mathrm{e}}_{[t_1,\hat{t})}$, which exceeds $s$ by assumption, and the entire expression diverges to $+\infty$.
    \item[(b)] Let $\eta_0=0$ and $\eta_1\rightarrow +\infty$. Since $\sum_{t\in[t_1,\hat{t}]}\zeta_t< s$ by assumption, the expression again diverges to $+\infty$.
\end{itemize}

In Case (iii), we consider the perturbed formulations $({\rm P}_{\varepsilon})$ and $({\rm D}_{\varepsilon})$, defined by expanding the support set to $[\underline{\boldsymbol{\zeta}}-\varepsilon\boldsymbol{\mathrm{e}}, \bar{\boldsymbol{\zeta}}+\varepsilon\boldsymbol{\mathrm{e}}]$. For all $\varepsilon> 0$, strong duality holds. The mapping $\varepsilon\mapsto ({\rm P}_{\varepsilon})$ is right-continuous at $\varepsilon= 0$, and since the value of $({\rm D}_{\varepsilon})$ is non-decreasing and bounded below by that of (D), we conclude that (P) and (D) have equal optimal values.

It is straightforward to verify that Constraints \eqref{eq3-that0} yield the same result when setting $\eta_1=0$. We refer to this condition as the case where $\hat{t}=0$.

Hence, for all $s\in \mathbb{R}_+, \sum_{t\in[t_1,\hat{t}]}\zeta_t\neq s$, the original worst-case expected inventory cost expression $\sup_{\mathbb{P}\in\mathcal{P}}\mathbb{E}_{\mathbb{P}}\bigl[h\sum_{t\in [t_1,t_2-1]}(s-\sum_{t^{\prime}\in [t_1,t]} \zeta_{t^{\prime}})^++b\sum_{t\in [t_1,t_2-1]}(s-\sum_{t^{\prime}\in [t_1,t]} \zeta_{t^{\prime}})^-\bigr]$ is equivalent to the linear formulation (F), where the constraints are given by a family of dual problems indexed by $\hat{t}\in[t_1,t_2-1]\cup \{0\}$.

If $\sum_{t\in[t_1,\hat{t}]}\underline{\zeta}_t= s$, continuity arguments must be invoked. We define the feasible set mapping $\mathcal{S}(s)$ as the set of all variables satisfying the constraint system in (F) at a given $s$. The graph of $\mathcal{S}$ is closed since the constraints are convex and continuous, implying that $\mathcal{S}(s)$ is outer semicontinuous. Therefore, the strong duality conclusion extends to this boundary case.

Finally, since problem (F) admits a Slater point, we apply strong duality to obtain the equivalent dual formulation (O):
\begin{align}
  {\rm (O)}
  \max\ & \sum_{\hat{t}\in[t_1,t_2-1]}\Bigl(h\sum_{t\in[t_1,\hat{t})}\bigl(\pi_{\hat{t}}s_{it_1}-\sum_{t^{\prime}\in [t_1,t]}\lambda_{\hat{t}t^{\prime}}\bigr)+b\sum_{t\in[\hat{t},t_2-1]}\bigl(\sum_{t^{\prime}\in [t_1,t]}\lambda_{\hat{t}t^{\prime}}-\pi_{\hat{t}}s_{it_1}\bigr)\Bigr)+\nonumber\\
  &
  h\sum_{t\in[t_1,t_2-1]}\bigl(\pi_{0}s_{it_1}-\sum_{t^{\prime}\in [t_1,t]}\lambda_{0t^{\prime}}\bigr)\nonumber\\
    s.t.\ &\sum_{\hat{t}\in[t_1,t_2-1]\cup \{0\}}\pi_{\hat{t}}=1,\nonumber\\
    &\sum_{\hat{t}\in[t_1,t_2-1]\cup \{0\}}\lambda_{\hat{t}t}=\mu_{t}^i\quad\forall t\in [t_1,t_2-1],\nonumber\\
    & \sum_{\hat{t}\in[t_1,t_2-1]\cup \{0\}} |\lambda_{\hat{t}t}-\pi_{\hat{t}}\mu_{t}^i|\leq \sigma_{t}^i\quad\forall t\in [t_1,t_2-1],\nonumber\\
    & \pi_{\hat{t}}\underline{\zeta}_{t}^i\leq \lambda_{\hat{t}t}\leq \pi_{\hat{t}}\bar{\zeta}_{t}^i\quad \forall t\in [t_1,t_2-1],\hat{t}\in[t_1,t_2-1]\cup \{0\},\nonumber\\
    &\sum_{t\in[t_1,\hat{t})}\lambda_{\hat{t}t}\leq \pi_{\hat{t}}s_{it_1}\leq \sum_{t\in[t_1,\hat{t}]}\lambda_{\hat{t}t} \quad\forall \hat{t}\in[t_1,t_2-1],\nonumber\\
    &\sum_{t\in[t_1,t_2-1]}\lambda_{0t}\leq \pi_{0}s_{it_1},\nonumber\\
    &\pi_{\hat{t}}\in\mathbb{R}_+, \boldsymbol{\lambda}_{\hat{t}}\in\mathbb{R}^{\hat{T}}\ \forall \hat{t}\in[t_1,t_2-1]\cup \{0\}.\nonumber
\end{align}

We conclude that the worst-case expected inventory cost is equivalently given by the linear program (O). \hfill\qed

\subsection{Proof of Corollary 1.}
We construct a discrete distribution $\mathbb{P}^*$ supported on at most $\hat{T}+1$ points as follows:
\begin{align}
    \mathbb{P}^*=\sum_{\hat{t}\in[t_1,t_2-1],\ \pi^{*}_{\hat{t}}\neq 0}\pi_{\hat{t}}^{*}\delta_{\frac{\boldsymbol{\lambda}_{\hat{t}}^{*}}{\pi_{\hat{t}}^{*}}}+{\mathbbm{1}_{\pi^*_0 \neq 0}}\pi_{0}^{*}\delta_{\frac{\boldsymbol{\lambda}_{0}^{*}}{\pi_{0}^{*}}}, \nonumber
\end{align}
where $(\pi^*_{\hat{t}}, \boldsymbol{\lambda}^*_{\hat{t}})$ for all $\hat{t} \in [t_1, t_2-1] \cup \{0\}$ denotes the optimal solution to Formulation~(O) in the proof of Theorem~1, and $\delta_{\boldsymbol{\zeta}}$ denotes the Dirac measure concentrated at $\boldsymbol{\zeta}$.

We claim that:
\begin{itemize}
    \item[($i$)] $\mathbb{P}^* \in \mathcal{P}$; and  
    \item[($ii$)] $\mathbb{P}^*$ achieves the worst-case expected inventory cost, i.e.,
\begin{align}
    \mathbb{E}_{\mathbb{P}^*}\bigl[f(\boldsymbol{\zeta})\bigr] = \sup_{\mathbb{P} \in \mathcal{P}} \mathbb{E}_{\mathbb{P}}\bigl[f(\boldsymbol{\zeta})\bigr], \nonumber
\end{align}
where the cost function is defined as
\begin{align}
    f(\boldsymbol{\zeta}) := h \sum_{t \in [t_1, t_2 - 1]} \bigl(s - \sum_{t' \in [t_1, t]} \zeta_{t'}\bigr)^+ + b \sum_{t \in [t_1, t_2 - 1]} \bigl(s - \sum_{t' \in [t_1, t]} \zeta_{t'}\bigr)^-. \nonumber
\end{align}
\end{itemize}

\textit{Proof of ($i$).} By the first constraint of Formulation~(O), the weights sum to one: $\sum_{\hat{t}\in[t_1,t_2-1]\cup \{0\}}\pi_{\hat{t}}^*=1$, so $\mathbb{P}^*$ is a valid probability distribution. By the fourth constraint, each atom satisfies $\boldsymbol{\lambda}^*_{\hat{t}} / \pi^*_{\hat{t}} \in [\underline{\boldsymbol{\zeta}}, \bar{\boldsymbol{\zeta}}]$, ensuring $\operatorname{supp}(\mathbb{P}^*) \subseteq [\boldsymbol{\underline{\zeta}},\boldsymbol{\bar{\zeta}}]$. The moment condition follows directly from:
\begin{align}
    \mathbb{E}_{\mathbb{P}^*}[\boldsymbol{\zeta}] &= \sum_{\hat{t}\in[t_1,t_2-1]} \pi^*_{\hat{t}} \frac{\boldsymbol{\lambda}^*_{\hat{t}}}{\pi^*_{\hat{t}}}+\pi_{0}^{*}\frac{\boldsymbol{\lambda}_{0}^{*}}{\pi_{0}^{*}} = \sum_{\hat{t}\in[t_1,t_2-1]\cup\{0\}} \boldsymbol{\lambda}^*_{\hat{t}} = \boldsymbol{\mu}, \nonumber
\end{align}
by the second constraints. Likewise, the deviation constraint satisfies:
\begin{align}
    \mathbb{E}_{\mathbb{P}^*}\bigl[\boldsymbol{|\boldsymbol{\zeta}-\boldsymbol{\mu}|}\bigr]=&\sum_{\hat{t}\in[t_1,t_2-1]}\pi_{\hat{t}}^{*}\bigl|\frac{\boldsymbol{\lambda}_{\hat{t}}^{*}}{\pi_{\hat{t}}^{*}}-\boldsymbol{\mu}\bigr|+\pi_{0}^{*}\bigl|\frac{\boldsymbol{\lambda}_{0}^{*}}{\pi_{0}^{*}}-\boldsymbol{\mu}\bigr|\leq \boldsymbol{\sigma},\nonumber
\end{align}
by the third constraints. Thus, $\mathbb{P}^* \in \mathcal{P}$.

\textit{Proof of (ii).}  
By Theorem~1, the optimal value of Formulation~(O) equals the worst-case expectation $\sup_{\mathbb{P} \in \mathcal{P}} \mathbb{E}_{\mathbb{P}}\bigl[f(\boldsymbol{\zeta})\bigr]$. It remains to verify that $\mathbb{P}^*$ achieves this value:
\begin{align}
    &\mathbb{E}_{\mathbb{P}^*}\bigl[f(\boldsymbol{\zeta})\bigr] \nonumber\\
    =&\sum_{\hat{t}\in[t_1,t_2-1]\cup \{0\}}\pi_{\hat{t}}^{*}\Bigl(h\sum_{t\in [t_1,t_2-1]}\bigl(s-\sum_{t^{\prime}\in [t_1,t]} \frac{\lambda_{\hat{t}t^{\prime}}^{*}}{\pi_{\hat{t}}^{*}}\bigr)^++b\sum_{t\in [t_1,t_2-1]}\bigl(s-\sum_{t^{\prime}\in [t_1,t]} \frac{\lambda_{\hat{t}t^{\prime}}^{*}}{\pi_{\hat{t}}^{*}}\bigr)^-\Bigr)\nonumber\\
    =&\sum_{\hat{t}\in[t_1,t_2-1]}\pi_{\hat{t}}^{*}\Bigl(h\sum_{t\in [t_1,\hat{t})}\bigl(s-\sum_{t^{\prime}\in [t_1,t]} \frac{\lambda_{\hat{t}t^{\prime}}^{*}}{\pi_{\hat{t}}^{*}}\bigr)+b\sum_{t\in [\hat{t},t_2-1]}\bigl(\sum_{t^{\prime}\in [t_1,t]} \frac{\lambda_{\hat{t}t^{\prime}}^{*}}{\pi_{\hat{t}}^{*}}-s\bigr)\Bigr)+\pi_{0}^{*}h\sum_{t\in [t_1,t_2-1]}\bigl(s-\sum_{t^{\prime}\in [t_1,t]} \frac{\lambda_{0t^{\prime}}^{*}}{\pi_{0}^{*}}\bigr),\nonumber
\end{align}
where the last equality follows from the fifth and sixth constraint sets in Formulation~(O), and the expression coincides with the objective value of Formulation~(O). Hence, $\mathbb{P}^*$ is optimal.\hfill\qed

\subsection{Proof of Proposition 3.}
We consider two cases separately.

\textit{Case 1: $\boldsymbol{\sigma}^i = \boldsymbol{0}$.}
In this case, the ambiguity set $\mathcal{P}$ degenerates to a singleton that contains only the deterministic vector $\boldsymbol{\mu}^i$. The expected cost function becomes deterministic and piecewise linear, leading to a unique turning period. Therefore, the optimal policy exhibits a deterministic structure.

\textit{Case 2: $\boldsymbol{\sigma}^i \geq \max\{ \bar{\boldsymbol{\zeta}}^i - \boldsymbol{\mu}^i,\ \boldsymbol{\mu}^i - \underline{\boldsymbol{\zeta}}^i \}$.}
Under this condition, the ambiguity set $\mathcal{P}$ is entirely characterized by the support $[\underline{\boldsymbol{\zeta}}^i, \bar{\boldsymbol{\zeta}}^i]$ and the mean vector $\boldsymbol{\mu}^i$, since the deviation constraint is rendered inactive. The corresponding worst-case expectation problem can be written as the following moment problem:
\begin{align*}
\max & \int_{[\boldsymbol{\underline{\zeta}}^i,\boldsymbol{\bar{\zeta}}^i]} \Bigl( h \sum_{t=t_1}^{t_2 - 1} \bigl(s_{it_1} - \sum_{t^{\prime}=t_1}^{t} \zeta_{t^{\prime}}^i\bigr)^+ + b \sum_{t=t_1}^{t_2 - 1} \bigl(s_{it_1} - \sum_{t^{\prime}=t_1}^{t} \zeta_{t^{\prime}}^i\bigr)^- \Bigr) \mathbb{P}(\mathrm{d}\boldsymbol{\zeta}^i), \\
s.t.\ & \int_{[\boldsymbol{\underline{\zeta}}^i,\boldsymbol{\bar{\zeta}}^i]} \mathbb{P}(\mathrm{d}\boldsymbol{\zeta}^i) = 1, \nonumber\\
&\int_{[\boldsymbol{\underline{\zeta}}^i,\boldsymbol{\bar{\zeta}}^i]} \boldsymbol{\zeta}^i \mathbb{P}(\mathrm{d}\boldsymbol{\zeta}^i) = \boldsymbol{\mu}^i,\nonumber\\
&\mathbb{P}\in \mathcal{M}_+\bigl([\boldsymbol{\underline{\zeta}}^i,\boldsymbol{\bar{\zeta}}^i]\bigr)\nonumber.
\end{align*}

By Lemma EC.1 in \citet{ghosal2020distributionally}, strong duality holds for this moment problem. Its dual formulation is given by:
\begin{align}
\min\ & \alpha + {\boldsymbol{\mu}^i}^\top \boldsymbol{\beta}, \label{eq:dual-objective} \\
s.t.\ & \alpha + {\boldsymbol{\zeta}^i}^\top \boldsymbol{\beta} \geq f(\boldsymbol{\zeta}^i), \quad \forall \boldsymbol{\zeta}^i \in [\underline{\boldsymbol{\zeta}}^i, \bar{\boldsymbol{\zeta}}^i], \label{eq:dual-constraint}\\
& \alpha\in\mathbb{R}, \ \boldsymbol{\beta}\in\mathbb{R}^{\hat{T}},\nonumber
\end{align}
where
\[
f(\boldsymbol{\zeta}^i) := h \sum_{t=t_1}^{t_2 - 1} \bigl(s_{it_1} - \sum_{t^{\prime}=t_1}^{t} \zeta_{t^{\prime}}^i\bigr)^+ + b \sum_{t=t_1}^{t_2 - 1} \bigl(s_{it_1} - \sum_{t^{\prime}=t_1}^{t} \zeta_{t^{\prime}}^i\bigr)^-.
\]

Suppose now that the scaling condition
\[
m(\bar{\boldsymbol{\zeta}}^i - \boldsymbol{\mu}^i) = n(\boldsymbol{\mu}^i - \underline{\boldsymbol{\zeta}}^i), \quad \text{for some } m,n \geq 0,
\]
holds. Then, $\boldsymbol{\mu}^i$ lies on the convex hull of $\underline{\boldsymbol{\zeta}}^i$ and $\bar{\boldsymbol{\zeta}}^i$, and can be written as
\[
\boldsymbol{\mu}^i = \frac{m}{m+n} \bar{\boldsymbol{\zeta}}^i + \frac{n}{m+n} \underline{\boldsymbol{\zeta}}^i.
\]

We now construct a feasible dual solution $(\alpha^*, \boldsymbol{\beta}^*)$ that satisfies the dual Constraints \eqref{eq:dual-constraint} with equality at both endpoints:
\begin{align}
\alpha^* + {\bar{\boldsymbol{\zeta}}}^{i\top} \boldsymbol{\beta}^* &= f(\bar{\boldsymbol{\zeta}}^i), \label{eq:dual-tight-bar} \\
\alpha^* + {\underline{\boldsymbol{\zeta}}}^{i\top} \boldsymbol{\beta}^* &= f(\underline{\boldsymbol{\zeta}}^i). \label{eq:dual-tight-under}
\end{align}
Subtracting \eqref{eq:dual-tight-under} from \eqref{eq:dual-tight-bar} yields
\[
(\bar{\boldsymbol{\zeta}}^i - \underline{\boldsymbol{\zeta}}^i)^\top \boldsymbol{\beta}^* = f(\bar{\boldsymbol{\zeta}}^i) - f(\underline{\boldsymbol{\zeta}}^i),
\]
a linear equation in $\boldsymbol{\beta}^*$. Since $\bar{\boldsymbol{\zeta}}^i \neq \underline{\boldsymbol{\zeta}}^i$ by assumption, a solution $\boldsymbol{\beta}^*$ exists. Plugging into either \eqref{eq:dual-tight-bar} or \eqref{eq:dual-tight-under} gives a corresponding $\alpha^*$, ensuring dual feasibility.

Substituting $(\alpha^*, \boldsymbol{\beta}^*)$ into the dual objective \eqref{eq:dual-objective} and using the convex decomposition of $\boldsymbol{\mu}^i$ leads to:
\begin{align*}
\alpha^* + {\boldsymbol{\mu}}^{i\top} \boldsymbol{\beta}^* 
&= \frac{m}{m+n} (\alpha^* + {\bar{\boldsymbol{\zeta}}}^{i\top} \boldsymbol{\beta}^* ) 
+ \frac{n}{m+n} (\alpha^* + {\underline{\boldsymbol{\zeta}}}^{i\top} \boldsymbol{\beta}^* ) \\
&= \frac{m}{m+n} f(\bar{\boldsymbol{\zeta}}^i) + \frac{n}{m+n} f(\underline{\boldsymbol{\zeta}}^i).
\end{align*}

Thus, the dual objective matches the primal objective evaluated under the two-point distribution:
\[
\mathbb{P}^* = \frac{m}{m+n} \delta_{\bar{\boldsymbol{\zeta}}^i} + \frac{n}{m+n} \delta_{\underline{\boldsymbol{\zeta}}^i}.
\]

By strong duality, this distribution $\mathbb{P}^*$ is optimal. Therefore, the worst-case distribution is supported on the two extreme points $\bar{\boldsymbol{\zeta}}^i$ and $\underline{\boldsymbol{\zeta}}^i$, as claimed.
\hfill\qed

\subsection{Proof of Theorem 2.}
Consider the Formulation (2nd-RMP). Let $\iota$, $\theta_i$, $\delta_{it}$, $\psi_t$, and $\alpha_{rt}$ denote the dual variables associated with constraints~\eqref{iota}-\eqref{alpha}, respectively. Then, the corresponding dual problem can be written as:
\begin{align}
    {\rm (2nd-RMP-D)} \max \ &-\iota-\sum_{i\in [N]}\theta_i-Q\sum_{t\in [T]}\psi_t+p,\nonumber\\
    s.t. \ &\sum_{i\in [N]}o_r^i\delta_{it}-\alpha_{rt}\leq \frac{1}{T}c_r \quad\forall r\in R, t\in [T],\nonumber\\
    &-\iota+\sum_{t\in [T]}\alpha_{rt}\leq 0\quad\forall r\in R,\nonumber\\
    & -\theta_i-\sum_{t\in [T]}\beta^t_{l(i)}\delta_{it}-\sum_{t\in [T]}\beta^t_{l(i)}\bigl(s_{l(i)}^{t}-s_{l(i)}^{t^-}+\sum_{t^{\prime}\in[t^-,t-1]}a_{t^{\prime}}^i\bigr)\psi_t\leq \frac{1}{T}c_{l(i)}-\pi_i\nonumber\\
    &\qquad\qquad\qquad\qquad\qquad\qquad\qquad\qquad\qquad\qquad\quad \forall l(i)\in L(i), i\in [N]\nonumber. 
\end{align}

From the first and second constraints, we have:
\begin{align}
    -\iota \leq -\sum_{t \in [T]} \alpha_{rt} \leq \min\Bigl\{ \sum_{t \in [T]} \bigl( \frac{1}{T} c_r - \sum_{i \in [N]} o_r^i \delta_{it} \bigr),\ 0 \Bigr\}. \nonumber
\end{align}
Since the objective function maximizes $-\iota$, its optimal value is achieved when $-\iota$ is as large as possible. Thus, the optimal value of $-\sum_{t \in [T]} \alpha_{rt}$ also attains its maximum under this constraint, implying:
\begin{align}
    -\sum_{t \in [T]} \alpha_{rt} = \min\Bigl\{ \sum_{t \in [T]} \bigl( \frac{1}{T} c_r - \sum_{i \in [N]} o_r^i \delta_{it} \bigr),\ 0 \Bigr\}. \nonumber
\end{align}
As a result, the reduced cost of adding a new variable $\lambda_r$ is given by:
\begin{align}
    -\iota - \sum_{t \in [T]} \alpha_{rt} 
    = -\iota + \min\Bigl\{ \sum_{t \in [T]} \bigl( \frac{1}{T} c_r - \sum_{i \in [N]} o_r^i \delta_{it} \bigr),\ 0 \Bigr\}. \nonumber
\end{align}

According to the Complementary Slackness Theorem, the Constraints~\eqref{alpha} and its associated dual variable $\alpha_{rt}$ must satisfy the complementary slackness condition. In particular, if $\alpha_{rt} > 0$, then the constraint must be binding, that is,
\[
    \lambda_r - x_r^t = 0 \quad \Rightarrow \quad \lambda_r = x_r^t.
\]
Since $\alpha_{rt} =  \frac{1}{T} c_r - \sum_{i \in [N]} o_r^i \delta_{it} $ in this case, we conclude that route $r$ is executed in period~$t$.
\hfill\qed

\clearpage

\setcounter{equation}{0}
\renewcommand{\theequation}{E\arabic{equation}}
\setcounter{table}{0} 
\setcounter{figure}{0}
\renewcommand{\thetable}{E\arabic{table}}
\renewcommand{\thefigure}{E\arabic{figure}}
\setcounter{algorithm}{0}
\renewcommand{\thealgorithm}{E\arabic{algorithm}}
\section{Comprehensive Computational Experiment Results} \label{tables}

\begin{table}[!htb]
\centering
\caption{Solution performance of different service policies across different numbers of periods.}
\label{tab:oa-solution}
\tiny
\begin{tabular}{c c c r r r r r r r r r r r}
\toprule
\#Retailer &\#Period         & Policy    & Time (s)  &T.O.\%   & Cost & \#Cluster & Avg I. & S.L. & Vehicle Util & O.\% & Avg O. & E.T.\% & Avg E.T. \\
\midrule
\multirow{12}{*}{5}  & \multirow{3}{*}{4} & Fixed-Interval& 4 & 0\% & 218  & 3.0       & 2.7          & 92\%          & 55\%         & 0.0\%       & 0.0           & 0.0\%  & 0.0      \\
                     &                    & Consistent  & 266 & 0\%   & 195  & 2.0       & 1.5          & 94\%          & 55\%         & 0.0\%       & 0.0           & 0.0\%  & 0.0      \\
                     &                    & Flexible & 271  & 0\%     & 178  & 1.0       & 1.7          & 94\%          & 68\%         & 0.0\%       & 0.0           & 0.0\%  & 0.0      \\
                     \cmidrule(r){2-14}
                     & \multirow{3}{*}{5} & Fixed-Interval& 4 & 0\% & 217  & 3.0       & 2.6          & 92\%          & 54\%         & 0.0\%       & 0.0           & 0.0\%  & 0.0      \\
                     &                    & Consistent& 623  & 33\%    & 180  & 2.0       & 1.9          & 95\%          & 69\%         & 0.0\%       & 0.0           & 0.0\%  & 0.0      \\
                     &                    & Flexible  & 631   & 0\%   & 171  & 1.0       & 1.9          & 95\%          & 69\%         & 0.0\%       & 0.0           & 0.3\%  & 0.3      \\
                                          \cmidrule(r){2-14}
& \multirow{3}{*}{6} & Fixed-Interval& 4 & 0\% & 216  & 3.0       & 2.7          & 92\%          & 56\%         & 0.0\%       & 0.0           & 0.0\%  & 0.0      \\
                     &                    & Consistent & 839 & 100\%    & 187  & 2.0       & 1.7          & 93\%          & 60\%         & 0.0\%       & 0.0           & 0.0\%  & 0.0      \\
                     &                    & Flexible  & 914  & 100\%    & 175  & 1.0       & 2.0          & 94\%          & 70\%         & 0.0\%       & 0.0           & 0.0\%  & 0.0      \\
                                          \cmidrule(r){2-14}
& \multirow{3}{*}{7} & Fixed-Interval& 4 & 0\% & 217  & 3.0       & 2.7          & 92\%          & 56\%         & 0.0\%       & 0.0           & 0.0\%  & 0.0      \\
                     &                    & Consistent & 909 & 100\%    & 188  & 2.0       & 1.8          & 94\%          & 64\%         & 0.0\%       & 0.0           & 0.0\%  & 0.0      \\
                     &                    & Flexible  & 1074  & 100\%    & 175  & 1.0       & 2.1          & 94\%          & 70\%         & 0.0\%       & 0.0           & 0.0\%  & 0.0      \\
                     \midrule
\multirow{12}{*}{7}  & \multirow{3}{*}{4} & Fixed-Interval& 4 & 0\% & 271  & 2.0       & 1.5          & 89\%          & 63\%         & 0.0\%       & 0.0           & 0.0\%  & 0.0      \\
                     &                    & Consistent  & 827 & 100\%  & 240  & 2.0       & 1.4          & 94\%          & 63\%         & 0.0\%       & 0.0           & 0.0\%  & 0.0      \\
                     &                    & Flexible & 1857  & 100\%     & 227  & 2.0       & 1.6          & 95\%          & 63\%         & 0.0\%       & 0.0           & 0.4\%  & 5.7      \\
                                          \cmidrule(r){2-14}
& \multirow{3}{*}{5} & Fixed-Interval & 4& 0\% & 270  & 2.0       & 1.5          & 88\%          & 63\%         & 0.0\%       & 0.0           & 0.0\%  & 0.0      \\
                     &                    & Consistent  & 2105 & 95\%   & 242  & 2.0       & 1.5          & 94\%          & 64\%         & 0.0\%       & 0.0           & 0.2\%  & 2.7      \\
                     &                    & Flexible   & 2728  & 68\%   & 238  & 2.0       & 2.1          & 93\%          & 64\%         & 0.0\%       & 0.0           & 0.0\%  & 0.0      \\
                                          \cmidrule(r){2-14}
& \multirow{3}{*}{6} & Fixed-Interval& 5 & 0\% & 273  & 2.3       & 1.6          & 89\%          & 58\%         & 0.0\%       & 0.0           & 0.0\%  & 0.0      \\
                     &                    & Consistent & 940 & 100\%    & 249  & 2.0       & 1.6          & 94\%          & 64\%         & 0.0\%       & 0.0           & 0.4\%  & 5.3      \\
                     &                    & Flexible  & 2247 & 72\%     & 234  & 2.0       & 1.8          & 94\%          & 64\%         & 0.0\%       & 0.0           & 0.2\%  & 1.0      \\
                                         \cmidrule(r){2-14}
 & \multirow{3}{*}{7} & Fixed-Interval& 5 & 0\% & 273  & 2.3       & 1.6          & 89\%          & 59\%         & 0.0\%       & 0.0           & 0.0\%  & 0.0      \\
                     &                    & Consistent  & 3368 & 88\%   & 254  & 2.0       & 1.4          & 94\%          & 64\%         & 0.6\%       & 1.1           & 0.0\%  & 0.0      \\
                     &                    & Flexible  & 731  & 100\%    & 238  & 2.0       & 1.8          & 94\%          & 60\%         & 0.0\%       & 0.0           & 0.0\%  & 0.0      \\
                     \midrule
\multirow{12}{*}{10} & \multirow{3}{*}{4} & Fixed-Interval& 4 & 0\% & 370  & 4.0       & 1.8          & 91\%          & 55\%         & 0.0\%       & 0.0           & 0.0\%  & 0.0      \\
                     &                    & Consistent & 1730  & 100\%   & 334  & 3.3       & 1.7          & 95\%          & 67\%         & 0.0\%       & 0.0           & 0.2\%  & 3.3      \\
                     &                    & Flexible & 2502 & 60\%      & 315  & 2.0       & 1.7          & 94\%          & 69\%         & 0.0\%       & 0.0           & 0.0\%  & 0.0      \\
                                          \cmidrule(r){2-14}
& \multirow{3}{*}{5} & Fixed-Interval & 5& 0\% & 369  & 4.0       & 1.8          & 90\%          & 55\%         & 0.0\%       & 0.0           & 0.0\%  & 0.0      \\
                     &                    & Consistent  & 3611  & 0\%  & 345  & 3.7       & 1.9          & 94\%          & 67\%         & 0.0\%       & 0.0           & 0.0\%  & 0.0      \\
                     &                    & Flexible   & 3611  & 77\%   & 328  & 2.7       & 1.8          & 94\%          & 63\%         & 0.1\%       & 0.3           & 0.0\%  & 0.0      \\
                                          \cmidrule(r){2-14}
& \multirow{3}{*}{6} & Fixed-Interval& 5 & 0\% & 369  & 4.0       & 1.8          & 90\%          & 55\%         & 0.0\%       & 0.0           & 0.0\%  & 0.0      \\
                     &                    & Consistent & 3617 & 0\%    & 473  & 3.3       & 1.1          & 99\%          & 47\%         & 1.4\%       & 1.9           & 0.0\%  & 0.0      \\
                     &                    & Flexible  & 2513 & 89\%     & 320  & 2.0       & 1.7          & 94\%          & 69\%         & 0.1\%       & 0.7           & 0.0\%  & 0.0      \\
                                          \cmidrule(r){2-14}
& \multirow{3}{*}{7} & Fixed-Interval & 5 & 0\%   & 369  & 4.0       & 1.8          & 90\%          & 55\%         & 0.0\%       & 0.0           & 0.0\%  & 0.0      \\
                     &                    & Consistent  & 3623 & 0\%   & 476  & 4.0       & 1.0          & 100\%         & 35\%         & 1.8\%       & 1.9           & 0.0\%  & 0.0      \\
                     &                    & Flexible    & 3622 & 77\%   & 339  & 2.7       & 1.7          & 95\%          & 55\%         & 0.2\%       & 1.2           & 0.0\%  & 0.0      \\
                     \midrule
\multirow{12}{*}{12} & \multirow{3}{*}{4} & Fixed-Interval& 5 & 0\% & 411  & 3.7       & 1.5          & 90\%          & 60\%         & 0.0\%       & 0.0           & 0.0\%  & 0.0      \\
                     &                    & Consistent  & 3613 & 0\%   & 518  & 4.0       & 1.0          & 100\%         & 42\%         & 1.9\%       & 2.0           & 0.0\%  & 0.0      \\
                     &                    & Flexible   & 1874  & 89\%   & 358  & 3.0       & 1.5          & 94\%          & 66\%         & 0.0\%       & 0.0           & 0.0\%  & 0.0      \\
                                          \cmidrule(r){2-14}
& \multirow{3}{*}{5} & Fixed-Interval& 6 & 0\% & 406  & 3.7       & 1.5          & 89\%          & 60\%         & 0.0\%       & 0.0           & 0.0\%  & 0.0      \\
                     &                    & Consistent  & 3615  & 0\%  & 517  & 4.0       & 1.0          & 100\%         & 42\%         & 1.7\%       & 1.7           & 0.0\%  & 0.0      \\
                     &                    & Flexible  & 2261  & 92\%    & 367  & 3.0       & 1.7          & 94\%          & 66\%         & 0.0\%       & 0.0           & 0.0\%  & 0.0      \\
                                          \cmidrule(r){2-14}
& \multirow{3}{*}{6} & Fixed-Interval& 7 & 0\% & 407  & 3.7       & 1.5          & 90\%          & 60\%         & 0.0\%       & 0.0           & 0.0\%  & 0.0      \\
                     &                    & Consistent & 3619  & 0\%   & 515  & 4.0       & 1.0          & 100\%         & 42\%         & 1.7\%       & 1.8           & 0.0\%  & 0.0      \\
                     &                    & Flexible  & 2847  & 92\%    & 365  & 3.0       & 1.7          & 94\%          & 65\%         & 0.0\%       & 0.0           & 0.0\%  & 0.0      \\
                                          \cmidrule(r){2-14}
& \multirow{3}{*}{7} & Fixed-Interval& 7 & 0\% & 419  & 4.0       & 1.5          & 89\%          & 56\%         & 0.0\%       & 0.0           & 0.0\%  & 0.0      \\
                     &                    & Consistent  & 3626 & 0\%   & 515  & 4.0       & 1.0          & 100\%         & 42\%         & 1.5\%       & 1.9           & 0.0\%  & 0.0      \\
                     &                    & Flexible  & 3443  & 75\%    & 371  & 3.0       & 1.7          & 94\%          & 62\%         & 0.0\%       & 0.3           & 0.0\%  & 0.0      \\
                     \midrule
\multirow{12}{*}{15} & \multirow{3}{*}{4} & Fixed-Interval & 8& 0\% & 486  & 4.0       & 1.3          & 89\%          & 60\%         & 0.0\%       & 0.0           & 0.0\%  & 0.0      \\
                     &                    & Consistent   & 3610 & 0\% & 602  & 5.0       & 1.0          & 100\%         & 42\%         & 1.7\%       & 2.0           & 0.0\%  & 0.0      \\
                     &                    & Flexible  & 3612  & 0\%   & 458  & 3.7       & 1.5          & 95\%          & 59\%         & 0.3\%       & 0.9           & 0.1\%  & 0.3      \\
                                         \cmidrule(r){2-14}
 & \multirow{3}{*}{5} & Fixed-Interval& 6& 0\% & 480  & 3.7       & 1.2          & 89\%          & 64\%         & 0.0\%       & 0.0           & 0.0\%  & 0.0      \\
                     &                    & Consistent & 3616 & 0\%   & 601  & 5.0       & 1.0          & 100\%         & 43\%         & 1.6\%       & 1.6           & 0.0\%  & 0.0      \\
                     &                    & Flexible  & 3489  & 33\%   & 450  & 3.3       & 1.4          & 95\%          & 65\%         & 0.4\%       & 0.9           & 0.3\%  & 1.0      \\
                                          \cmidrule(r){2-14}
& \multirow{3}{*}{6} & Fixed-Interval& 7 & 0\%& 483  & 3.7       & 1.2          & 89\%          & 63\%         & 0.0\%       & 0.0           & 0.0\%  & 0.0      \\
                     &                    & Consistent  & 3620 & 0\%  & 600  & 5.0       & 1.0          & 100\%         & 42\%         & 1.7\%       & 1.7           & 0.0\%  & 0.0      \\
                     &                    & Flexible  & 3638  & 0\%   & 477  & 3.7       & 1.4          & 96\%          & 59\%         & 1.1\%       & 2.9           & 0.1\%  & 1.0      \\
                                         \cmidrule(r){2-14}
 & \multirow{3}{*}{7} & Fixed-Interval& 7& 0\% & 491  & 4.0       & 1.3          & 89\%          & 61\%         & 0.0\%       & 0.0           & 0.0\%  & 0.0      \\
                     &                    & Consistent & 3628 & 0\%   & 599  & 5.0       & 1.0          & 100\%         & 42\%         & 1.5\%       & 1.9           & 0.0\%  & 0.0      \\
                     &                    & Flexible  & 3636  & 0\%   & 475  & 3.7       & 1.5          & 96\%          & 60\%         & 1.1\%       & 2.6           & 0.0\%  & 0.0    \\
                     \bottomrule
\end{tabular}
\end{table}

\begin{table}[!htb]
\centering
\caption{Computation performance of different service policies across different numbers of periods.}
\label{tab:oa-compu}
\tiny
\begin{tabular}{c c c r r r r r r r r r r r}
\toprule
\multirow{2}{*}{\#Retailer} & \multirow{2}{*}{\#Period} & \multirow{2}{*}{Policy} & \multirow{2}{*}{Cost} & \multirow{2}{*}{Time (s)} & \multirow{2}{*}{T.O.\%} & \multicolumn{3}{c}{First-Level} & \multicolumn{5}{c}{Second-Level}                         \\
\cmidrule(r){7-9}
\cmidrule(r){10-14}
                            &                           &                         &                       &                           &                         & \#Node   & \#Column  & PP T.  & \#Node & \#Route & Route T. & \#Replnsh & Replnsh T. \\
                            \midrule
\multirow{12}{*}{5}         & \multirow{3}{*}{4}        & Fixed-Interval          & 218                   & 4                         & 0\%                     & 7        & 40        & 0        & -      & -       & -          & -         & -            \\
                            &                           & Consistent              & 195                   & 266                       & 0\%                     & 1        & 15        & 261      & 3049   & 10535   & 34         & 6524      & 133          \\
                            &                           & Flexible                & 178                   & 271                       & 0\%                     & 1        & 20        & 263      & 4859   & 4888    & 9          & 7130      & 126          \\
                            \cmidrule(r){2-14}
                            & \multirow{3}{*}{5}        & Fixed-Interval          & 217                   & 4                         & 0\%                     & 7        & 42        & 0        & -      & -       & -          & -         & -            \\
                            &                           & Consistent              & 180                   & 623                       & 33\%                    & 1        & 17        & 617      & 4022   & 15108   & 93         & 13667     & 345          \\
                            &                           & Flexible                & 171                   & 631                       & 0\%                     & 1        & 25        & 625      & 5946   & 6034    & 16         & 14669     & 380          \\
                                                        \cmidrule(r){2-14}
& \multirow{3}{*}{6}        & Fixed-Interval          & 216                   & 4                         & 0\%                     & 7        & 44        & 0        & -      & -       & -          & -         & -            \\
                            &                           & Consistent              & 187                   & 839                       & 100\%                   & 1        & 22        & 830      & 3137   & 17124   & 152        & 13381     & 475          \\
                            &                           & Flexible                & 175                   & 914                       & 100\%                   & 1        & 22        & 905      & 6127   & 4959    & 14         & 13515     & 621          \\
                                                        \cmidrule(r){2-14}
& \multirow{3}{*}{7}        & Fixed-Interval          & 217                   & 4                         & 0\%                     & 7        & 41        & 0        & -      & -       & -          & -         & -            \\
                            &                           & Consistent              & 188                   & 909                       & 100\%                   & 1        & 19        & 895      & 2168   & 18154   & 250        & 11908     & 429          \\
                            &                           & Flexible                & 175                   & 1074                      & 100\%                   & 1        & 21        & 1061     & 4680   & 4498    & 14         & 12511     & 670          \\
                            \midrule
\multirow{12}{*}{7}         & \multirow{3}{*}{4}        & Fixed-Interval          & 271                   & 4                         & 0\%                     & 10       & 87        & 0        & -      & -       & -          & -         & -            \\
                            &                           & Consistent              & 240                   & 827                       & 100\%                   & 1        & 26        & 821      & 7763   & 28486   & 195        & 16014     & 251          \\
                            &                           & Flexible                & 227                   & 1857                      & 100\%                   & 1        & 41        & 1849     & 23348  & 34624   & 98         & 29530     & 548          \\
                                                        \cmidrule(r){2-14}
& \multirow{3}{*}{5}        & Fixed-Interval          & 270                   & 4                         & 0\%                     & 8        & 93        & 0        & -      & -       & -          & -         & -            \\
                            &                           & Consistent              & 242                   & 2105                      & 95\%                    & 3        & 40        & 2097     & 9947   & 50503   & 593        & 35933     & 847          \\
                            &                           & Flexible                & 238                   & 2728                      & 68\%                    & 8        & 41        & 2719     & 19369  & 24681   & 103        & 37238     & 1160         \\
                                                       \cmidrule(r){2-14}
 & \multirow{3}{*}{6}        & Fixed-Interval          & 273                   & 5                         & 0\%                     & 8        & 83        & 0        & -      & -       & -          & -         & -            \\
                            &                           & Consistent              & 249                   & 940                       & 100\%                   & 1        & 26        & 928      & 2056   & 29000   & 360        & 11107     & 302          \\
                            &                           & Flexible                & 234                   & 2247                      & 72\%                    & 5        & 52        & 2236     & 6542   & 19739   & 43         & 20387     & 691          \\
                                                       \cmidrule(r){2-14}
 & \multirow{3}{*}{7}        & Fixed-Interval          & 273                   & 5                         & 0\%                     & 9        & 86        & 0        & -      & -       & -          & -         & -            \\
                            &                           & Consistent              & 254                   & 3368                      & 88\%                    & 5        & 40        & 3351     & 3951   & 73299   & 1728       & 22622     & 802          \\
                            &                           & Flexible                & 238                   & 731                       & 100\%                   & 1        & 18        & 714      & 1019   & 7079    & 11         & 4749      & 250          \\
                            \midrule
\multirow{12}{*}{10}        & \multirow{3}{*}{4}        & Fixed-Interval          & 370                   & 4                         & 0\%                     & 1        & 87        & 0        & -      & -       & -          & -         & -            \\
                            &                           & Consistent              & 334                   & 1730                      & 100\%                   & 1        & 42        & 1723     & 2805   & 51860   & 1250       & 8103      & 83           \\
                            &                           & Flexible                & 315                   & 2502                      & 60\%                    & 8        & 94        & 2493     & 17024  & 81069   & 240        & 30570     & 381          \\
                                                       \cmidrule(r){2-14}
 & \multirow{3}{*}{5}        & Fixed-Interval          & 369                   & 5                         & 0\%                     & 1        & 98        & 0        & -      & -       & -          & -         & -            \\
                            &                           & Consistent              & 345                   & 3611                      & 0\%                     & 1        & 34        & 3600     & 282    & 44229   & 3392       & 2864      & 72           \\
                            &                           & Flexible                & 328                   & 3611                      & 77\%                    & 4        & 91        & 3600     & 16811  & 77253   & 261        & 30665     & 940          \\
                                                       \cmidrule(r){2-14}
 & \multirow{3}{*}{6}        & Fixed-Interval          & 369                   & 5                         & 0\%                     & 1        & 95        & 0        & -      & -       & -          & -         & -            \\
                            &                           & Consistent              & 473                   & 3617                      & 0\%                     & 1        & 14        & 3600     & 85     & 19225   & 3447       & 1247      & 101          \\
                            &                           & Flexible                & 320                   & 2513                      & 89\%                    & 2        & 53        & 2496     & 1223   & 51469   & 95         & 12266     & 353          \\
                                                      \cmidrule(r){2-14}
  & \multirow{3}{*}{7}        & Fixed-Interval          & 369                   & 5                         & 0\%                     & 1        & 84        & 0        & -      & -       & -          & -         & -            \\
                            &                           & Consistent              & 476                   & 3623                      & 0\%                     & 1        & 8         & 3600     & 57     & 12959   & 3415       & 809       & 150          \\
                            &                           & Flexible                & 339                   & 3622                      & 77\%                    & 4        & 77        & 3600     & 4383   & 77678   & 155        & 20673     & 808          \\
                            \midrule
\multirow{12}{*}{12}        & \multirow{3}{*}{4}        & Fixed-Interval          & 411                   & 5                         & 0\%                     & 26       & 281       & 0        & -      & -       & -          & -         & -            \\
                            &                           & Consistent              & 518                   & 3613                      & 0\%                     & 1        & 13        & 3605     & 82     & 7157    & 3569       & 647       & 22           \\
                            &                           & Flexible                & 358                   & 1874                      & 89\%                    & 2        & 78        & 1863     & 5355   & 91303   & 395        & 16390     & 170          \\
                                                       \cmidrule(r){2-14}
 & \multirow{3}{*}{5}        & Fixed-Interval          & 406                   & 6                         & 0\%                     & 19       & 245       & 0        & -      & -       & -          & -         & -            \\
                            &                           & Consistent              & 517                   & 3615                      & 0\%                     & 1        & 11        & 3603     & 67     & 7030    & 3529       & 625       & 53           \\
                            &                           & Flexible                & 367                   & 2261                      & 92\%                    & 2        & 69        & 2249     & 3539   & 77738   & 348        & 11938     & 288          \\
                                                      \cmidrule(r){2-14}
  & \multirow{3}{*}{6}        & Fixed-Interval          & 407                   & 7                         & 0\%                     & 36       & 352       & 1        & -      & -       & -          & -         & -            \\
                            &                           & Consistent              & 515                   & 3619                      & 0\%                     & 1        & 12        & 3600     & 74     & 11015   & 3451       & 1001      & 116          \\
                            &                           & Flexible                & 365                   & 2847                      & 92\%                    & 2        & 64        & 2828     & 978    & 88226   & 381        & 13930     & 373          \\
                                                      \cmidrule(r){2-14}
  & \multirow{3}{*}{7}        & Fixed-Interval          & 419                   & 7                         & 0\%                     & 35       & 322       & 0        & -      & -       & -          & -         & -            \\
                            &                           & Consistent              & 515                   & 3626                      & 0\%                     & 1        & 11        & 3600     & 65     & 14370   & 3374       & 1008      & 187          \\
                            &                           & Flexible                & 371                   & 3443                      & 75\%                    & 2        & 60        & 3413     & 694    & 108437  & 427        & 12296     & 525          \\
                            \midrule
\multirow{12}{*}{15}        & \multirow{3}{*}{4}        & Fixed-Interval          & 486                   & 8                         & 0\%                     & 76       & 834       & 2        & -      & -       & -          & -         & -            \\
                            &                           & Consistent              & 602                   & 3610                      & 0\%                     & 1        & 13        & 3601     & 81     & 6164    & 3565       & 560       & 25           \\
                            &                           & Flexible                & 458                   & 3612                      & 0\%                     & 1        & 69        & 3600     & 1121   & 96775   & 3021       & 9047      & 70           \\
                                                       \cmidrule(r){2-14}
 & \multirow{3}{*}{5}        & Fixed-Interval          & 480                   & 6                         & 0\%                     & 14       & 358       & 1        & -      & -       & -          & -         & -            \\
                            &                           & Consistent              & 601                   & 3616                      & 0\%                     & 1        & 14        & 3603     & 86     & 8976    & 3514       & 866       & 62           \\
                            &                           & Flexible                & 450                   & 3489                      & 33\%                    & 1        & 56        & 3476     & 555    & 88954   & 2673       & 6996      & 122          \\
                                                      \cmidrule(r){2-14}
  & \multirow{3}{*}{6}        & Fixed-Interval          & 483                   & 7                         & 0\%                     & 14       & 387       & 1        & -      & -       & -          & -         & -            \\
                            &                           & Consistent              & 600                   & 3620                      & 0\%                     & 1        & 11        & 3600     & 65     & 10739   & 3451       & 836       & 124          \\
                            &                           & Flexible                & 477                   & 3638                      & 0\%                     & 1        & 48        & 3618     & 336    & 92864   & 2585       & 9055      & 238          \\
                                                   \cmidrule(r){2-14}
     & \multirow{3}{*}{7}        & Fixed-Interval          & 491                   & 7                         & 0\%                     & 25       & 453       & 1        & -      & -       & -          & -         & -            \\
                            &                           & Consistent              & 599                   & 3628                      & 0\%                     & 1        & 9         & 3600     & 45     & 11992   & 3372       & 810       & 196          \\
                            &                           & Flexible                & 475                   & 3636                      & 0\%                     & 1        & 41        & 3604     & 290    & 90405   & 2162       & 7612      & 369   \\
                            \bottomrule
\end{tabular}
\end{table}

\begin{table}[!htb]
\centering
\caption{Solution performance of different service policies across different numbers of periods under stationary demand.}
\label{tab:oa-sta-solution}
\tiny
\begin{tabular}{c c c r r r r r r r r r r r}
\toprule
\#Retailer           & \#Period           & Policy         & Time (s) & T.O.\% & Cost & \#Cluster & Avg I. & S.L. & Vehicle Util & O.\%  & Avg O. & E.T.\% & Avg E.T. \\
\midrule
\multirow{12}{*}{5}  & \multirow{3}{*}{4} & Fixed-Interval & 4        & 0\%    & 191  & 2.3       & 2.2          & 83\%          & 65\%         & 0.0\% & 0.0    & 0.7\%  & 1.0      \\
                     &                    & Consistent     & 227      & 0\%    & 190  & 2.0       & 1.7          & 84\%          & 59\%         & 0.0\% & 0.0    & 0.3\%  & 0.3      \\
                     &                    & Flexible       & 276      & 0\%    & 174  & 1.0       & 1.8          & 93\%          & 64\%         & 0.0\% & 0.0    & 0.0\%  & 0.0      \\
                                        \cmidrule(r){2-14}
  & \multirow{3}{*}{5} & Fixed-Interval & 4        & 0\%    & 191  & 2.3       & 2.2          & 82\%          & 65\%         & 0.0\% & 0.0    & 1.6\%  & 3.0      \\
                     &                    & Consistent     & 624      & 100\%  & 187  & 2.0       & 1.8          & 86\%          & 65\%         & 0.0\% & 0.0    & 0.3\%  & 0.3      \\
                     &                    & Flexible       & 655      & 33\%   & 176  & 1.0       & 1.9          & 94\%          & 64\%         & 0.0\% & 0.0    & 0.0\%  & 0.0      \\
                                       \cmidrule(r){2-14}
   & \multirow{3}{*}{6} & Fixed-Interval & 4        & 0\%    & 191  & 2.3       & 2.2          & 82\%          & 66\%         & 0.0\% & 0.0    & 2.9\%  & 2.7      \\
                     &                    & Consistent     & 844      & 100\%  & 184  & 2.0       & 1.8          & 83\%          & 63\%         & 0.3\% & 0.9    & 1.0\%  & 1.3      \\
                     &                    & Flexible       & 365      & 67\%   & 173  & 1.0       & 2.0          & 93\%          & 64\%         & 0.0\% & 0.0    & 0.0\%  & 0.0      \\
                                       \cmidrule(r){2-14}
   & \multirow{3}{*}{7} & Fixed-Interval & 4        & 0\%    & 191  & 2.3       & 2.2          & 82\%          & 67\%         & 0.0\% & 0.0    & 3.2\%  & 3.4      \\
                     &                    & Consistent     & 728      & 100\%  & 197  & 2.0       & 1.8          & 83\%          & 59\%         & 0.0\% & 0.0    & 0.0\%  & 0.0      \\
                     &                    & Flexible       & 1303     & 87\%   & 181  & 1.3       & 1.9          & 92\%          & 61\%         & 0.0\% & 0.0    & 0.0\%  & 0.0      \\
\midrule
\multirow{12}{*}{7}  & \multirow{3}{*}{4} & Fixed-Interval & 4        & 0\%    & 245  & 2.7       & 2.2          & 82\%          & 70\%         & 0.0\% & 0.0    & 0.5\%  & 0.8      \\
                     &                    & Consistent     & 704      & 100\%  & 233  & 2.0       & 1.5          & 87\%          & 67\%         & 0.3\% & 0.8    & 2.1\%  & 1.0      \\
                     &                    & Flexible       & 2981     & 57\%   & 226  & 2.0       & 1.9          & 93\%          & 63\%         & 0.0\% & 0.0    & 0.0\%  & 0.0      \\
                                        \cmidrule(r){2-14}
  & \multirow{3}{*}{5} & Fixed-Interval & 4        & 0\%    & 245  & 2.7       & 2.2          & 83\%          & 69\%         & 0.0\% & 0.0    & 1.1\%  & 1.2      \\
                     &                    & Consistent     & 1142     & 100\%  & 240  & 2.0       & 1.5          & 88\%          & 65\%         & 0.2\% & 0.8    & 0.5\%  & 0.8      \\
                     &                    & Flexible       & 2360     & 83\%   & 236  & 2.0       & 1.9          & 95\%          & 61\%         & 0.0\% & 0.0    & 0.0\%  & 0.0      \\
                                        \cmidrule(r){2-14}
  & \multirow{3}{*}{6} & Fixed-Interval & 4        & 0\%    & 245  & 2.7       & 2.2          & 82\%          & 71\%         & 0.0\% & 0.0    & 1.8\%  & 5.0      \\
                     &                    & Consistent     & 1957     & 96\%   & 239  & 2.7       & 1.9          & 84\%          & 70\%         & 0.3\% & 0.5    & 1.9\%  & 2.4      \\
                     &                    & Flexible       & 2967     & 56\%   & 228  & 2.0       & 2.0          & 93\%          & 62\%         & 0.0\% & 0.0    & 0.0\%  & 0.0      \\
                                        \cmidrule(r){2-14}
  & \multirow{3}{*}{7} & Fixed-Interval & 5        & 0\%    & 245  & 2.7       & 2.2          & 81\%          & 71\%         & 0.0\% & 0.0    & 1.2\%  & 3.8      \\
                     &                    & Consistent     & 1433     & 100\%  & 244  & 2.0       & 1.7          & 86\%          & 68\%         & 0.3\% & 0.7    & 2.5\%  & 3.1      \\
                     &                    & Flexible       & 1803     & 72\%   & 233  & 2.0       & 1.8          & 94\%          & 59\%         & 0.0\% & 0.0    & 0.0\%  & 0.0      \\
\midrule
\multirow{12}{*}{10} & \multirow{3}{*}{4} & Fixed-Interval & 7        & 0\%    & 335  & 3.7       & 2.0          & 82\%          & 69\%         & 0.0\% & 0.0    & 2.5\%  & 1.5      \\
                     &                    & Consistent     & 3039     & 87\%   & 328  & 3.7       & 1.7          & 85\%          & 64\%         & 0.1\% & 0.3    & 0.4\%  & 1.2      \\
                     &                    & Flexible       & 2641     & 69\%   & 304  & 2.0       & 2.1          & 94\%          & 64\%         & 0.0\% & 0.0    & 0.0\%  & 0.0      \\
                                        \cmidrule(r){2-14}
  & \multirow{3}{*}{5} & Fixed-Interval & 7        & 0\%    & 335  & 3.7       & 2.0          & 82\%          & 69\%         & 0.0\% & 0.0    & 1.3\%  & 2.1      \\
                     &                    & Consistent     & 3436     & 33\%   & 343  & 3.3       & 1.7          & 87\%          & 70\%         & 0.5\% & 0.5    & 2.7\%  & 2.6      \\
                     &                    & Flexible       & 2981     & 84\%   & 323  & 2.3       & 1.7          & 95\%          & 61\%         & 0.0\% & 0.0    & 0.0\%  & 0.0      \\
                                         \cmidrule(r){2-14}
 & \multirow{3}{*}{6} & Fixed-Interval & 7        & 0\%    & 335  & 3.7       & 2.0          & 82\%          & 68\%         & 0.0\% & 0.0    & 1.6\%  & 3.3      \\
                     &                    & Consistent     & 3615     & 22\%   & 369  & 3.0       & 1.3          & 91\%          & 63\%         & 0.0\% & 0.0    & 4.3\%  & 3.6      \\
                     &                    & Flexible       & 2720     & 90\%   & 311  & 2.0       & 2.0          & 93\%          & 64\%         & 0.0\% & 0.0    & 0.0\%  & 0.0      \\
                                        \cmidrule(r){2-14}
  & \multirow{3}{*}{7} & Fixed-Interval & 7        & 0\%    & 335  & 3.7       & 2.0          & 82\%          & 69\%         & 0.0\% & 0.0    & 1.7\%  & 3.2      \\
                     &                    & Consistent     & 3620     & 0\%    & 386  & 3.0       & 1.0          & 98\%          & 46\%         & 0.0\% & 0.0    & 0.0\%  & 0.0      \\
                     &                    & Flexible       & 3623     & 67\%   & 324  & 2.0       & 2.0          & 93\%          & 64\%         & 0.0\% & 0.0    & 0.0\%  & 0.0      \\
\midrule
\multirow{12}{*}{12} & \multirow{3}{*}{4} & Fixed-Interval & 7        & 0\%    & 366  & 3.0       & 1.3          & 81\%          & 66\%         & 0.0\% & 0.0    & 0.0\%  & 0.0      \\
                     &                    & Consistent     & 3608     & 0\%    & 433  & 3.0       & 1.0          & 98\%          & 55\%         & 0.0\% & 0.0    & 0.0\%  & 0.0      \\
                     &                    & Flexible       & 3608     & 58\%   & 376  & 3.0       & 1.7          & 92\%          & 59\%         & 3.3\% & 6.3    & 0.0\%  & 0.0      \\
                                          \cmidrule(r){2-14}
& \multirow{3}{*}{5} & Fixed-Interval & 8        & 0\%    & 366  & 3.0       & 1.3          & 80\%          & 67\%         & 0.0\% & 0.0    & 0.0\%  & 0.0      \\
                     &                    & Consistent     & 3613     & 0\%    & 433  & 3.0       & 1.0          & 98\%          & 56\%         & 0.0\% & 0.0    & 0.0\%  & 0.0      \\
                     &                    & Flexible       & 3612     & 47\%   & 375  & 3.0       & 1.6          & 94\%          & 62\%         & 0.9\% & 3.3    & 0.0\%  & 0.0      \\
                                         \cmidrule(r){2-14}
 & \multirow{3}{*}{6} & Fixed-Interval & 8        & 0\%    & 366  & 3.0       & 1.3          & 81\%          & 66\%         & 0.0\% & 0.0    & 0.1\%  & 0.7      \\
                     &                    & Consistent     & 3620     & 0\%    & 433  & 3.0       & 1.0          & 98\%          & 56\%         & 0.0\% & 0.0    & 0.0\%  & 0.0      \\
                     &                    & Flexible       & 3618     & 73\%   & 363  & 3.0       & 1.8          & 92\%          & 64\%         & 0.0\% & 0.0    & 0.0\%  & 0.0      \\
                                         \cmidrule(r){2-14}
 & \multirow{3}{*}{7} & Fixed-Interval & 8        & 0\%    & 366  & 3.0       & 1.3          & 80\%          & 67\%         & 0.0\% & 0.0    & 0.0\%  & 0.0      \\
                     &                    & Consistent     & 3623     & 0\%    & 433  & 3.0       & 1.0          & 98\%          & 56\%         & 0.0\% & 0.0    & 0.0\%  & 0.0      \\
                     &                    & Flexible       & 3633     & 39\%   & 390  & 3.0       & 1.7          & 93\%          & 57\%         & 2.4\% & 2.0    & 0.0\%  & 0.0      \\
\midrule
\multirow{12}{*}{15} & \multirow{3}{*}{4} & Fixed-Interval & 10       & 0\%    & 424  & 3.3       & 1.3          & 82\%          & 71\%         & 0.0\% & 0.0    & 1.2\%  & 1.9      \\
                     &                    & Consistent     & 3608     & 0\%    & 499  & 4.0       & 1.0          & 98\%          & 53\%         & 0.0\% & 0.0    & 0.0\%  & 0.0      \\
                     &                    & Flexible       & 3609     & 17\%   & 444  & 3.0       & 1.6          & 93\%          & 66\%         & 0.0\% & 0.0    & 0.0\%  & 0.0      \\
                     \cmidrule(r){2-14}
                     & \multirow{3}{*}{5} & Fixed-Interval & 10       & 0\%    & 424  & 3.3       & 1.3          & 82\%          & 72\%         & 0.0\% & 0.0    & 0.9\%  & 1.9      \\
                     &                    & Consistent     & 3611     & 0\%    & 499  & 4.0       & 1.0          & 98\%          & 53\%         & 0.0\% & 0.0    & 0.0\%  & 0.0      \\
                     &                    & Flexible       & 3612     & 17\%   & 435  & 3.0       & 1.4          & 95\%          & 67\%         & 0.2\% & 0.4    & 0.1\%  & 2.0      \\
                                         \cmidrule(r){2-14}
 & \multirow{3}{*}{6} & Fixed-Interval & 11       & 0\%    & 424  & 3.3       & 1.3          & 82\%          & 71\%         & 0.0\% & 0.0    & 0.9\%  & 1.7      \\
                     &                    & Consistent     & 3617     & 0\%    & 499  & 4.0       & 1.0          & 98\%          & 53\%         & 0.0\% & 0.0    & 0.0\%  & 0.0      \\
                     &                    & Flexible       & 3617     & 0\%    & 471  & 3.7       & 1.2          & 98\%          & 55\%         & 0.0\% & 0.0    & 0.0\%  & 0.0      \\
                                         \cmidrule(r){2-14}
 & \multirow{3}{*}{7} & Fixed-Interval & 9        & 0\%    & 424  & 3.3       & 1.3          & 82\%          & 71\%         & 0.0\% & 0.0    & 1.2\%  & 1.8      \\
                     &                    & Consistent     & 3625     & 0\%    & 499  & 4.0       & 1.0          & 98\%          & 53\%         & 0.0\% & 0.0    & 0.0\%  & 0.0      \\
                     &                    & Flexible       & 3627     & 0\%    & 459  & 3.3       & 1.3          & 96\%          & 61\%         & 0.7\% & 2.0    & 0.0\%  & 0.0     \\
                     \bottomrule
\end{tabular}
\end{table}

\begin{table}[!htb]
\centering
\caption{Computation performance of different service policies across different numbers of periods under stationary demand.}
\label{tab:oa-sta-compu}
\tiny
\begin{tabular}{c c c r r r r r r r r r r r}
\toprule
\multirow{2}{*}{\#Retailer} & \multirow{2}{*}{\#Period} & \multirow{2}{*}{Policy} & \multirow{2}{*}{Cost} & \multirow{2}{*}{Time (s)} & \multirow{2}{*}{T.O.\%} & \multicolumn{3}{c}{First-Level} & \multicolumn{5}{c}{Second-Level}                         \\
\cmidrule(r){7-9}
\cmidrule(r){10-14}
                            &                           &                         &                       &                           &                         & \#Node   & \#Column  & PP T.  & \#Node & \#Route & Route T. & \#Replnsh & Replnsh T. \\
                            \midrule
\multirow{12}{*}{5}         & \multirow{3}{*}{4}        & Fixed-Interval          & 191                   & 4                         & 0\%                     & 6        & 39        & 0        & 0      & 0       & 0          & 0         & 0            \\
                            &                           & Consistent              & 190                   & 227                       & 0\%                     & 1        & 12        & 222      & 2487   & 8351    & 30         & 4813      & 123          \\
                            &                           & Flexible                & 174                   & 276                       & 0\%                     & 1        & 17        & 270      & 4253   & 4631    & 8          & 5664      & 171          \\
                                                    \cmidrule(r){2-14}
    & \multirow{3}{*}{5}        & Fixed-Interval          & 191                   & 4                         & 0\%                     & 6        & 42        & 0        & 0      & 0       & 0          & 0         & 0            \\
                            &                           & Consistent              & 187                   & 624                       & 100\%                   & 1        & 18        & 618      & 3423   & 13815   & 93         & 11237     & 359          \\
                            &                           & Flexible                & 176                   & 655                       & 33\%                    & 1        & 25        & 649      & 6848   & 7336    & 17         & 12965     & 404          \\
                                                    \cmidrule(r){2-14}
    & \multirow{3}{*}{6}        & Fixed-Interval          & 191                   & 4                         & 0\%                     & 6        & 44        & 0        & 0      & 0       & 0          & 0         & 0            \\
                            &                           & Consistent              & 184                   & 844                       & 100\%                   & 1        & 16        & 835      & 2557   & 12358   & 149        & 10438     & 501          \\
                            &                           & Flexible                & 173                   & 365                       & 67\%                    & 1        & 13        & 356      & 1913   & 2366    & 6          & 4572      & 268          \\
                                               \cmidrule(r){2-14}
         & \multirow{3}{*}{7}        & Fixed-Interval          & 191                   & 4                         & 0\%                     & 6        & 44        & 0        & 0      & 0       & 0          & 0         & 0            \\
                            &                           & Consistent              & 197                   & 728                       & 100\%                   & 1        & 17        & 715      & 821    & 11124   & 132        & 3988      & 477          \\
                            &                           & Flexible                & 181                   & 1303                      & 87\%                    & 2        & 20        & 1290     & 3407   & 3164    & 12         & 8729      & 975          \\
                            \midrule
\multirow{12}{*}{7}         & \multirow{3}{*}{4}        & Fixed-Interval          & 245                   & 4                         & 0\%                     & 46       & 251       & 0        & 0      & 0       & 0          & 0         & 0            \\
                            &                           & Consistent              & 233                   & 704                       & 100\%                   & 1        & 24        & 698      & 8226   & 31131   & 177        & 14449     & 220          \\
                            &                           & Flexible                & 226                   & 2981                      & 57\%                    & 10       & 58        & 2975     & 42400  & 69089   & 176        & 44524     & 1077         \\
                                                    \cmidrule(r){2-14}
    & \multirow{3}{*}{5}        & Fixed-Interval          & 245                   & 4                         & 0\%                     & 46       & 261       & 0        & 0      & 0       & 0          & 0         & 0            \\
                            &                           & Consistent              & 240                   & 1142                      & 100\%                   & 1        & 36        & 1135     & 6108   & 34042   & 350        & 18357     & 399          \\
                            &                           & Flexible                & 236                   & 2360                      & 83\%                    & 4        & 46        & 2353     & 19170  & 29206   & 84         & 29387     & 830          \\
                                                     \cmidrule(r){2-14}
   & \multirow{3}{*}{6}        & Fixed-Interval          & 245                   & 4                         & 0\%                     & 46       & 266       & 0        & 0      & 0       & 0          & 0         & 0            \\
                            &                           & Consistent              & 239                   & 1957                      & 96\%                    & 4        & 30        & 1947     & 5834   & 51619   & 710        & 26389     & 726          \\
                            &                           & Flexible                & 228                   & 2967                      & 56\%                    & 11       & 57        & 2957     & 14185  & 23427   & 77         & 29849     & 1415         \\
                                                     \cmidrule(r){2-14}
   & \multirow{3}{*}{7}        & Fixed-Interval          & 245                   & 5                         & 0\%                     & 46       & 269       & 0        & 0      & 0       & 0          & 0         & 0            \\
                            &                           & Consistent              & 244                   & 1433                      & 100\%                   & 1        & 24        & 1420     & 1315   & 33011   & 591        & 7471      & 594          \\
                            &                           & Flexible                & 233                   & 1803                      & 72\%                    & 5        & 40        & 1790     & 1968   & 13799   & 21         & 8938      & 970          \\
                            \midrule
\multirow{12}{*}{10}        & \multirow{3}{*}{4}        & Fixed-Interval          & 335                   & 7                         & 0\%                     & 104      & 529       & 1        & 0      & 0       & 0          & 0         & 0            \\
                            &                           & Consistent              & 328                   & 3039                      & 87\%                    & 4        & 56        & 3033     & 11644  & 111926  & 1586       & 23571     & 254          \\
                            &                           & Flexible                & 304                   & 2641                      & 69\%                    & 6        & 98        & 2634     & 22348  & 74867   & 266        & 28921     & 532          \\
                                                     \cmidrule(r){2-14}
   & \multirow{3}{*}{5}        & Fixed-Interval          & 335                   & 7                         & 0\%                     & 112      & 550       & 1        & 0      & 0       & 0          & 0         & 0            \\
                            &                           & Consistent              & 343                   & 3436                      & 33\%                    & 1        & 45        & 3426     & 773    & 57004   & 3065       & 4672      & 115          \\
                            &                           & Flexible                & 323                   & 2981                      & 84\%                    & 4        & 68        & 2971     & 9380   & 58076   & 176        & 18275     & 593          \\
                                                      \cmidrule(r){2-14}
  & \multirow{3}{*}{6}        & Fixed-Interval          & 335                   & 7                         & 0\%                     & 111      & 543       & 1        & 0      & 0       & 0          & 0         & 0            \\
                            &                           & Consistent              & 369                   & 3615                      & 22\%                    & 2        & 28        & 3600     & 479    & 45668   & 3312       & 4552      & 139          \\
                            &                           & Flexible                & 311                   & 2720                      & 90\%                    & 5        & 76        & 2705     & 6419   & 57362   & 171        & 21649     & 858          \\
                                                      \cmidrule(r){2-14}
  & \multirow{3}{*}{7}        & Fixed-Interval          & 335                   & 7                         & 0\%                     & 111      & 546       & 1        & 0      & 0       & 0          & 0         & 0            \\
                            &                           & Consistent              & 386                   & 3620                      & 0\%                     & 1        & 19        & 3600     & 117    & 29912   & 3334       & 1785      & 183          \\
                            &                           & Flexible                & 324                   & 3623                      & 67\%                    & 3        & 67        & 3603     & 3364   & 70651   & 122        & 15213     & 1002         \\
                            \midrule
\multirow{12}{*}{12}        & \multirow{3}{*}{4}        & Fixed-Interval          & 366                   & 7                         & 0\%                     & 81       & 803       & 2        & 0      & 0       & 0          & 0         & 0            \\
                            &                           & Consistent              & 433                   & 3608                      & 0\%                     & 1        & 27        & 3600     & 168    & 17589   & 3546       & 1357      & 23           \\
                            &                           & Flexible                & 376                   & 3608                      & 58\%                    & 3        & 90        & 3600     & 16251  & 136003  & 643        & 32516     & 410          \\
                                                     \cmidrule(r){2-14}
   & \multirow{3}{*}{5}        & Fixed-Interval          & 366                   & 8                         & 0\%                     & 81       & 817       & 2        & 0      & 0       & 0          & 0         & 0            \\
                            &                           & Consistent              & 433                   & 3613                      & 0\%                     & 1        & 22        & 3602     & 138    & 17379   & 3505       & 1230      & 55           \\
                            &                           & Flexible                & 375                   & 3612                      & 47\%                    & 3        & 91        & 3600     & 5441   & 101144  & 436        & 14577     & 447          \\
                                                       \cmidrule(r){2-14}
 & \multirow{3}{*}{6}        & Fixed-Interval          & 366                   & 8                         & 0\%                     & 81       & 828       & 2        & 0      & 0       & 0          & 0         & 0            \\
                            &                           & Consistent              & 433                   & 3620                      & 0\%                     & 1        & 18        & 3603     & 111    & 18205   & 3455       & 1249      & 106          \\
                            &                           & Flexible                & 363                   & 3618                      & 73\%                    & 5        & 102       & 3600     & 3507   & 112384  & 491        & 22537     & 679          \\
                                                    \cmidrule(r){2-14}
    & \multirow{3}{*}{7}        & Fixed-Interval          & 366                   & 8                         & 0\%                     & 81       & 833       & 2        & 0      & 0       & 0          & 0         & 0            \\
                            &                           & Consistent              & 433                   & 3623                      & 0\%                     & 1        & 16        & 3600     & 97     & 20531   & 3372       & 1221      & 184          \\
                            &                           & Flexible                & 390                   & 3633                      & 39\%                    & 2        & 57        & 3609     & 549    & 99366   & 335        & 9558      & 653          \\
                            \midrule
\multirow{12}{*}{15}        & \multirow{3}{*}{4}        & Fixed-Interval          & 424                   & 10                        & 0\%                     & 91       & 917       & 4        & 0      & 0       & 0          & 0         & 0            \\
                            &                           & Consistent              & 499                   & 3608                      & 0\%                     & 1        & 22        & 3601     & 138    & 10608   & 3560       & 1011      & 23           \\
                            &                           & Flexible                & 444                   & 3609                      & 17\%                    & 1        & 71        & 3601     & 1560   & 117119  & 2898       & 9990      & 75           \\
                                                      \cmidrule(r){2-14}
  & \multirow{3}{*}{5}        & Fixed-Interval          & 424                   & 10                        & 0\%                     & 89       & 1012      & 4        & 0      & 0       & 0          & 0         & 0            \\
                            &                           & Consistent              & 499                   & 3611                      & 0\%                     & 1        & 18        & 3601     & 114    & 12876   & 3532       & 1099      & 47           \\
                            &                           & Flexible                & 435                   & 3612                      & 17\%                    & 1        & 63        & 3601     & 702    & 112342  & 2612       & 7495      & 107          \\
                                                       \cmidrule(r){2-14}
 & \multirow{3}{*}{6}        & Fixed-Interval          & 424                   & 11                        & 0\%                     & 83       & 999       & 4        & 0      & 0       & 0          & 0         & 0            \\
                            &                           & Consistent              & 499                   & 3617                      & 0\%                     & 1        & 15        & 3600     & 92     & 14252   & 3484       & 1084      & 92           \\
                            &                           & Flexible                & 471                   & 3617                      & 0\%                     & 1        & 55        & 3601     & 426    & 124977  & 2630       & 10796     & 205          \\
                            \cmidrule(r){2-14}
                            & \multirow{3}{*}{7}        & Fixed-Interval          & 424                   & 9                         & 0\%                     & 85       & 1002      & 3        & 0      & 0       & 0          & 0         & 0            \\
                            &                           & Consistent              & 499                   & 3625                      & 0\%                     & 1        & 13        & 3600     & 71     & 15582   & 3393       & 1003      & 177          \\
                            &                           & Flexible                & 459                   & 3627                      & 0\%                     & 1        & 40        & 3603     & 284    & 92007   & 2203       & 6682      & 378      \\
                            \bottomrule
\end{tabular}
\end{table}
\clearpage

\setcounter{equation}{0}
\renewcommand{\theequation}{F\arabic{equation}}
\setcounter{table}{0} 
\setcounter{figure}{0}
\renewcommand{\thetable}{F\arabic{table}}
\renewcommand{\thefigure}{F\arabic{figure}}
\setcounter{algorithm}{0}
\renewcommand{\thealgorithm}{F\arabic{algorithm}}
\section{Extension to Include Emergency Transportation Costs}\label{etc}
Let $e$ denote the unit emergency transportation cost. Then, the average worst-case expected emergency transportation cost per period is defined as
\begin{align}
C^{\rm etc}=\sup_{\mathbb{P}\in\mathcal{P}}\mathbb{E}_{\mathbb{P}}\biggl[\frac{1}{T}\sum_{t\in[T]}e\sum_{v\in [V]}\Bigl(Q-
\sum_{i\in[N]}\ \sum_{(t_1,t_2)\in \eta^+(\mathcal{G}^{\rm temporal},t_1)}
y_{it_1t_2v}\,\bigl(s_{it_2}-s_{it_1}+\!\!\sum_{t'\in [t_1,t_2{-}1]}\!\! \zeta_{t'}^i\bigr)^+\Bigr)\biggr].\nonumber
\end{align}

Under the assumption that overshooting does not occur, and given the independence of $\zeta_t^i$ in $\mathcal{P}$, the supremum over $\mathbb{P}\in\mathcal{P}$ and the expectation can be taken separately for each $\zeta_t^i$, yielding the equivalent expression:
\begin{align}
C^{\rm etc}=&\frac{1}{T}\sum_{t\in[T]}e\sum_{v\in [V]}\Bigl(Q-
\sum_{i\in[N]}\ \sum_{(t_1,t_2)\in \eta^+(\mathcal{G}^{\rm temporal},t_1)}
y_{it_1t_2v}\,\bigl(s_{it_2}-s_{it_1}+\!\!\sum_{t'\in [t_1,t_2{-}1]}\!\! \sup_{\mathbb{P}\in\mathcal{P}}\mathbb{E}_{\mathbb{P}}[\zeta_{t'}^i]\bigr)\Bigr),\nonumber\\
=&\frac{1}{T}\sum_{t\in[T]}e\sum_{v\in [V]}\Bigl(Q-
\sum_{i\in[N]}\ \sum_{(t_1,t_2)\in \eta^+(\mathcal{G}^{\rm temporal},t_1)}
y_{it_1t_2v}\,\bigl(s_{it_2}-s_{it_1}+\!\!\sum_{t'\in [t_1,t_2{-}1]}\mu_{t'}^i\bigr)\Bigr).\nonumber
\end{align}

With this modification, the distributionally robust CIRP (DR-CIRP) model, originally defined in Model~\eqref{dr-cirp}, can be reformulated as
\begin{align}
\min_{x,\ y,\ s}\left\{C^{\rm vehicle}+C^{\rm trans}+C^{\rm DR\textit{-}inv}+C^{\rm etc}\big|{\rm Constraints}\ \eqref{eq:partition}-\eqref{eq:domain3},\ \eqref{st:sandy}-\eqref{eq:chance}\right\}.\nonumber
\end{align}

Under this extension, the second-level RMP (2nd-RMP) introduced in Section~\ref{2-rmp} has the following objective:
\begin{align}
    \min\ &p+\frac{1}{T}\sum_{t\in[T]}\sum_{r\in R} c_{r}x_{r}^t+\sum_{i\in[N]}\sum_{l(i)\in L(i)}\bigl(\frac{1}{T}c_{l(i)}-\pi_i\bigr)\gamma_{l(i)} +eVQ,\nonumber
\end{align}
where the cost $c_{l(i)}$ incurred when retailer $i$ follows plan $l(i)$ now reflects not only the inventory cost but also a partial component of the corresponding emergency transportation cost.

Correspondingly, the second-level subproblem PP2 for retailer $i$ (2nd-PP2($i$)) introduced in Section \ref{2-pp2} is modified as follows:
\begin{align}
&\sum_{(t_1,t_2)\in A^{\rm temporal}}y_{it_1t_2}\bigl(\frac{1}{T}f^{inv}(i, t_1,t_2,s_{it_1})+\psi_{t_2}(s_{it_2}-s_{it_1})+\delta_{it_1}+\nonumber\\
&\psi_{t_2}\sum_{t\in[t_1,t_2-1]}U^i_{t}\bigr)+\theta_i-\pi_i-e\bigl(s_{it_2}-s_{it_1}+\sum_{t\in[t_1,t_2-1]}\mu^i_{t}\bigr), \nonumber\\
=&\sum_{(t_1,t_2)\in A^{\rm temporal}}y_{it_1t_2}\bigl(\frac{1}{T}f^{inv}(i, t_1,t_2,s_{it_1})+(\psi_{t_2}-e)(s_{it_2}-s_{it_1})+\delta_{it_1}+\nonumber\\
&\psi_{t_2}\sum_{t\in[t_1,t_2-1]}U^i_{t}\bigr)+\theta_i-\pi_i-e\sum_{t\in[t_1,t_2-1]}\mu^i_{t}.\nonumber
\end{align}

Importantly, for both the 2nd-RMP and 2nd-PP2($i$) modifications, the emergency transportation costs enter as additive constants relative to the original model. Therefore, these extensions do not alter the solution process and can be incorporated without affecting the overall computational procedure.
\clearpage

\setcounter{equation}{0}
\renewcommand{\theequation}{G\arabic{equation}}
\setcounter{table}{0} 
\setcounter{figure}{0}
\renewcommand{\thetable}{G\arabic{table}}
\renewcommand{\thefigure}{G\arabic{figure}}
\setcounter{algorithm}{0}
\renewcommand{\thealgorithm}{G\arabic{algorithm}}
\section{Others}\label{others}
\begin{figure*}[!htb]
    \centering
    \subfigure[Cluster 1 Period 1]{\includegraphics[width=0.24\linewidth]{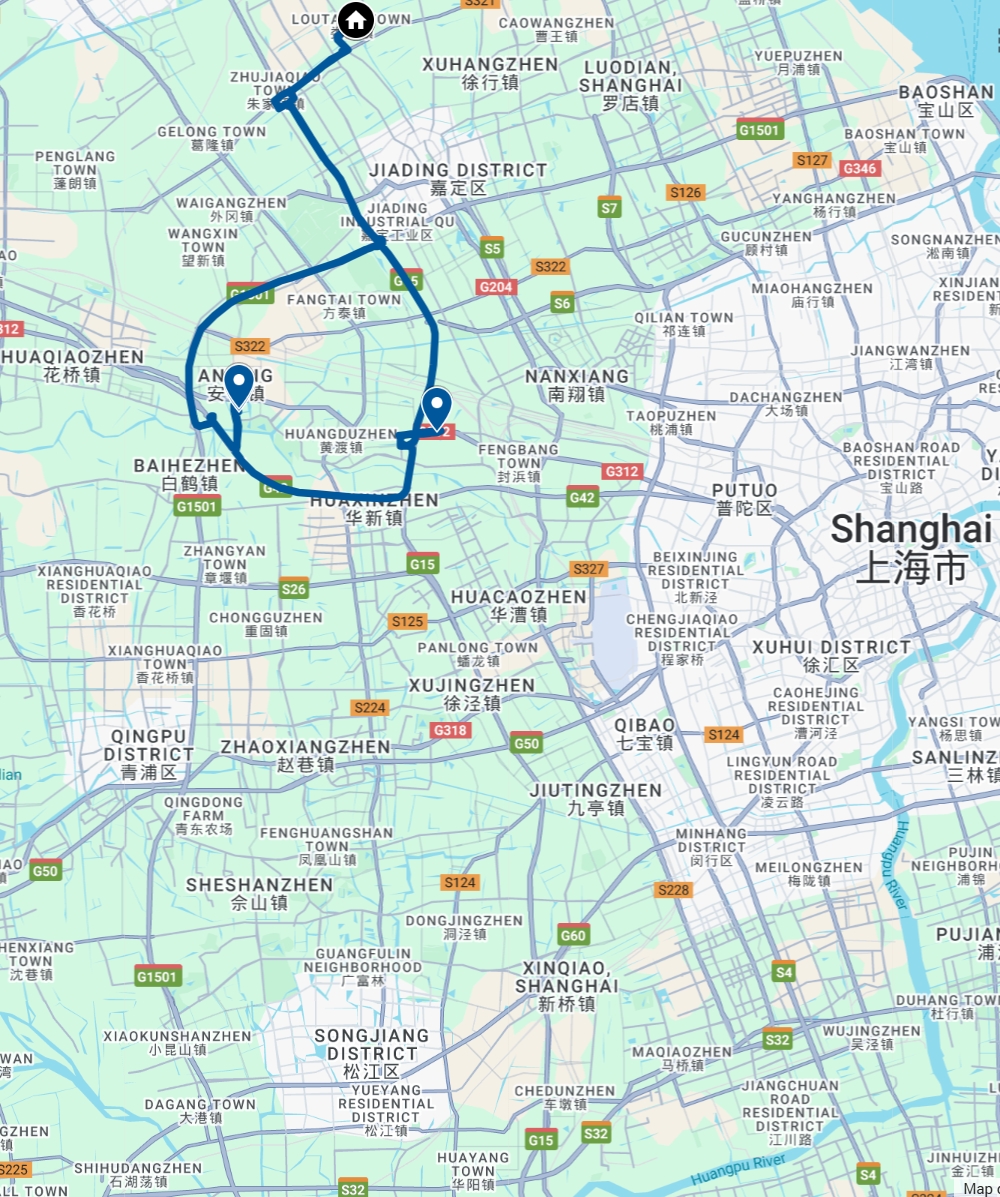}}
    \subfigure[Cluster 1 Period 2]{\includegraphics[width=0.24\linewidth]{fig-2-fle-1-1.png}}
    \subfigure[Cluster 1 Period 3]{\includegraphics[width=0.24\linewidth]{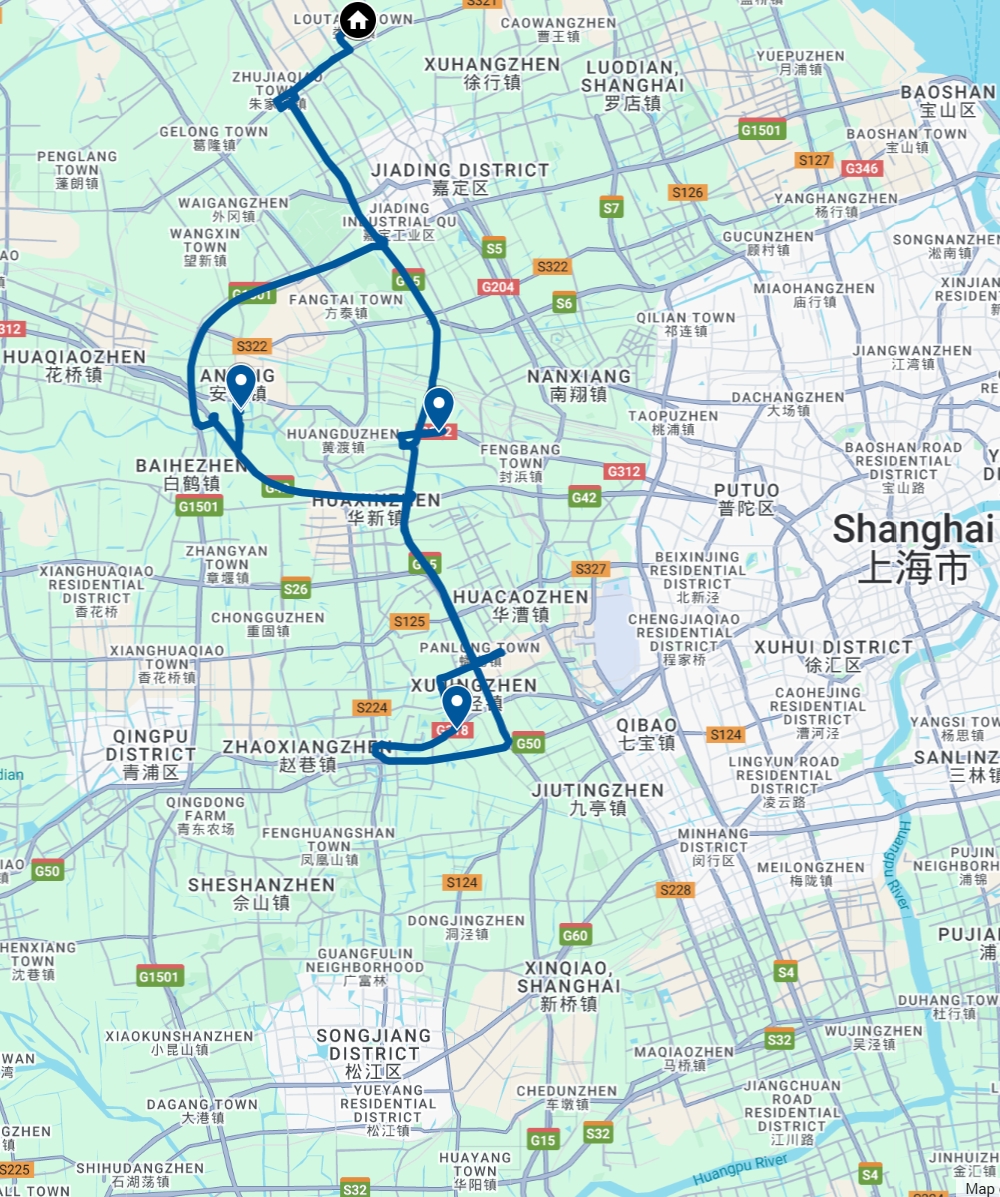}}\\
    \subfigure[Cluster 1 Period 4]{\includegraphics[width=0.24\linewidth]{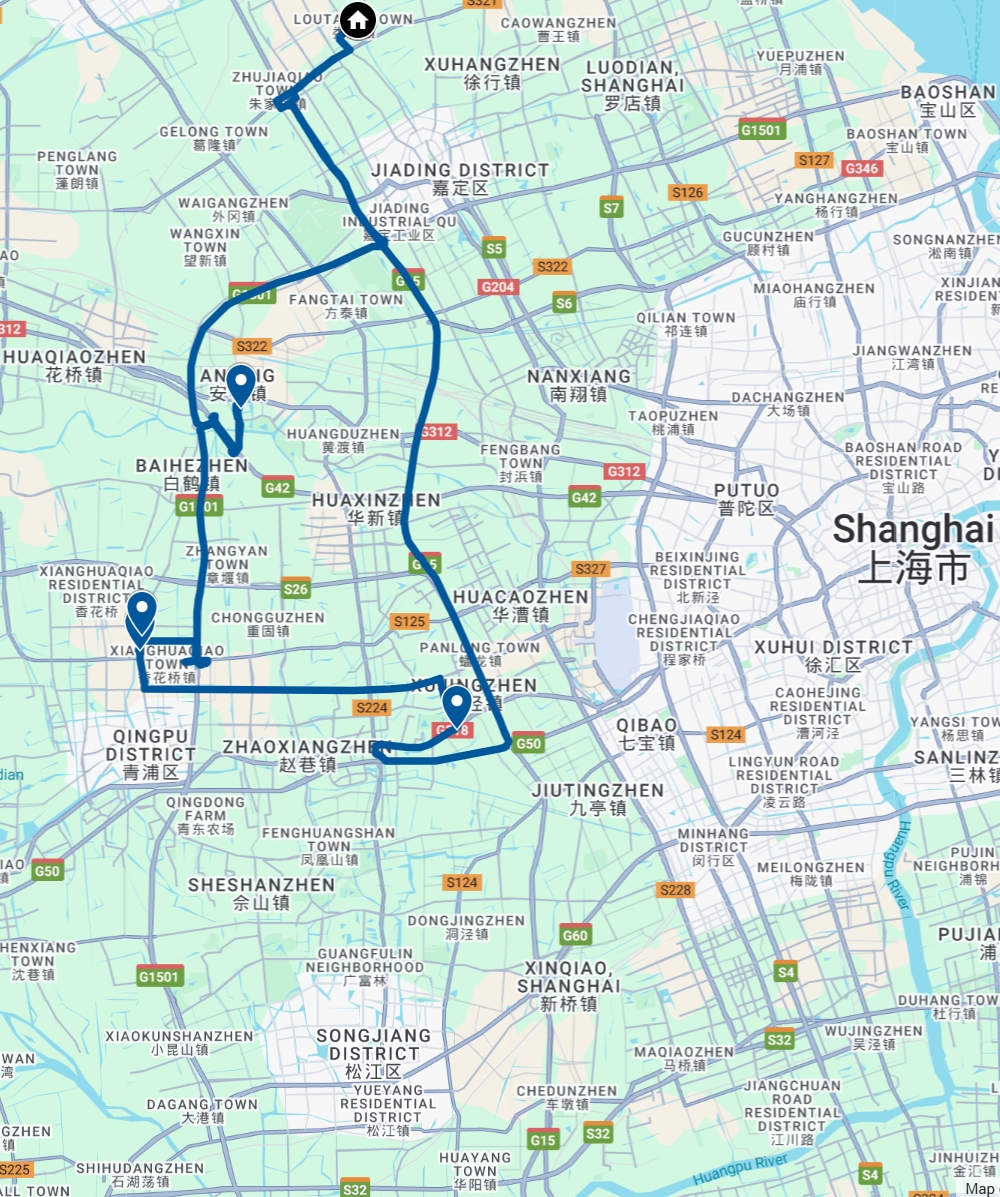}}
    \subfigure[Cluster 1 Period 5]{\includegraphics[width=0.24\linewidth]{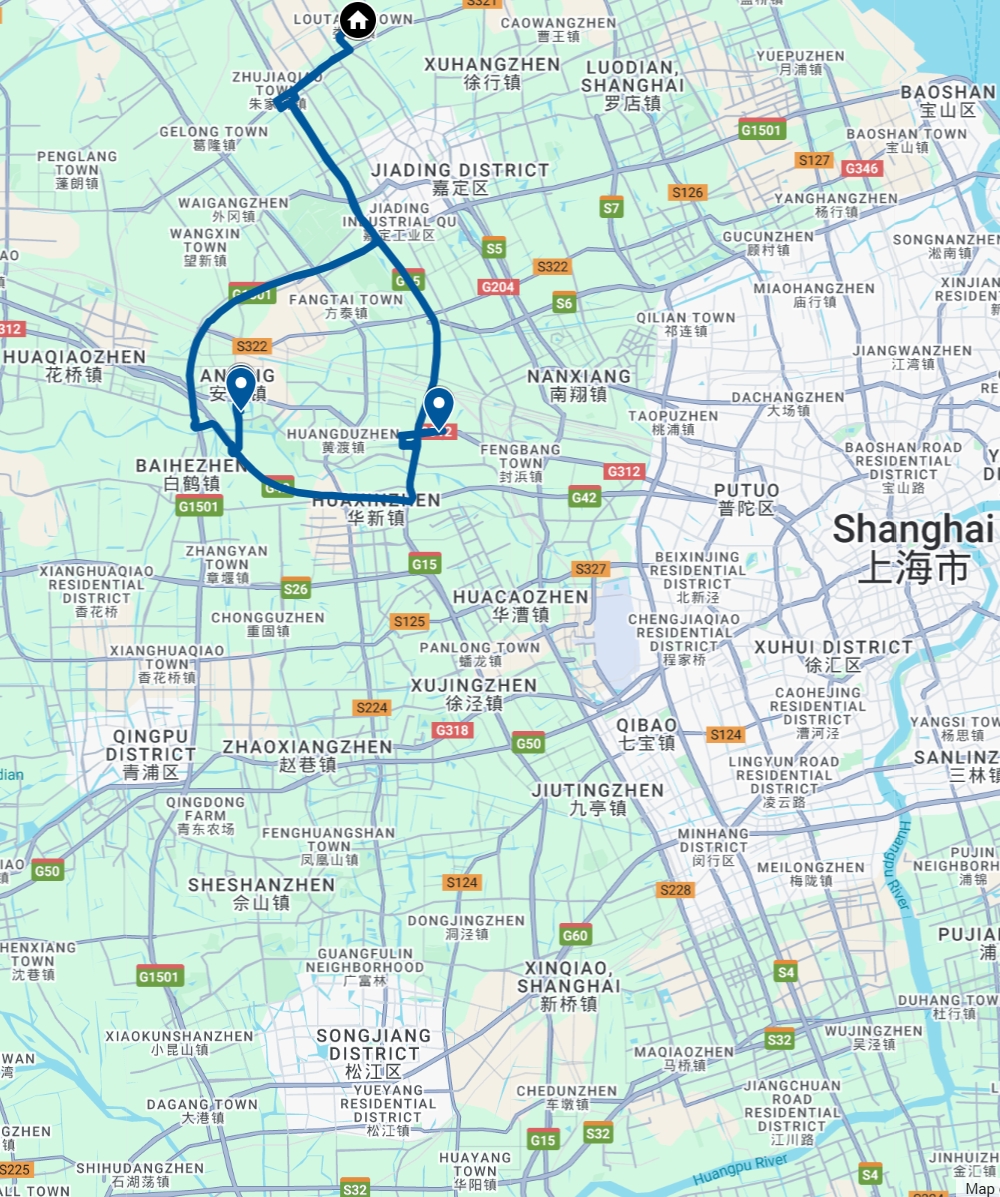}}
    \subfigure[Cluster 1 Period 6]{\includegraphics[width=0.24\linewidth]{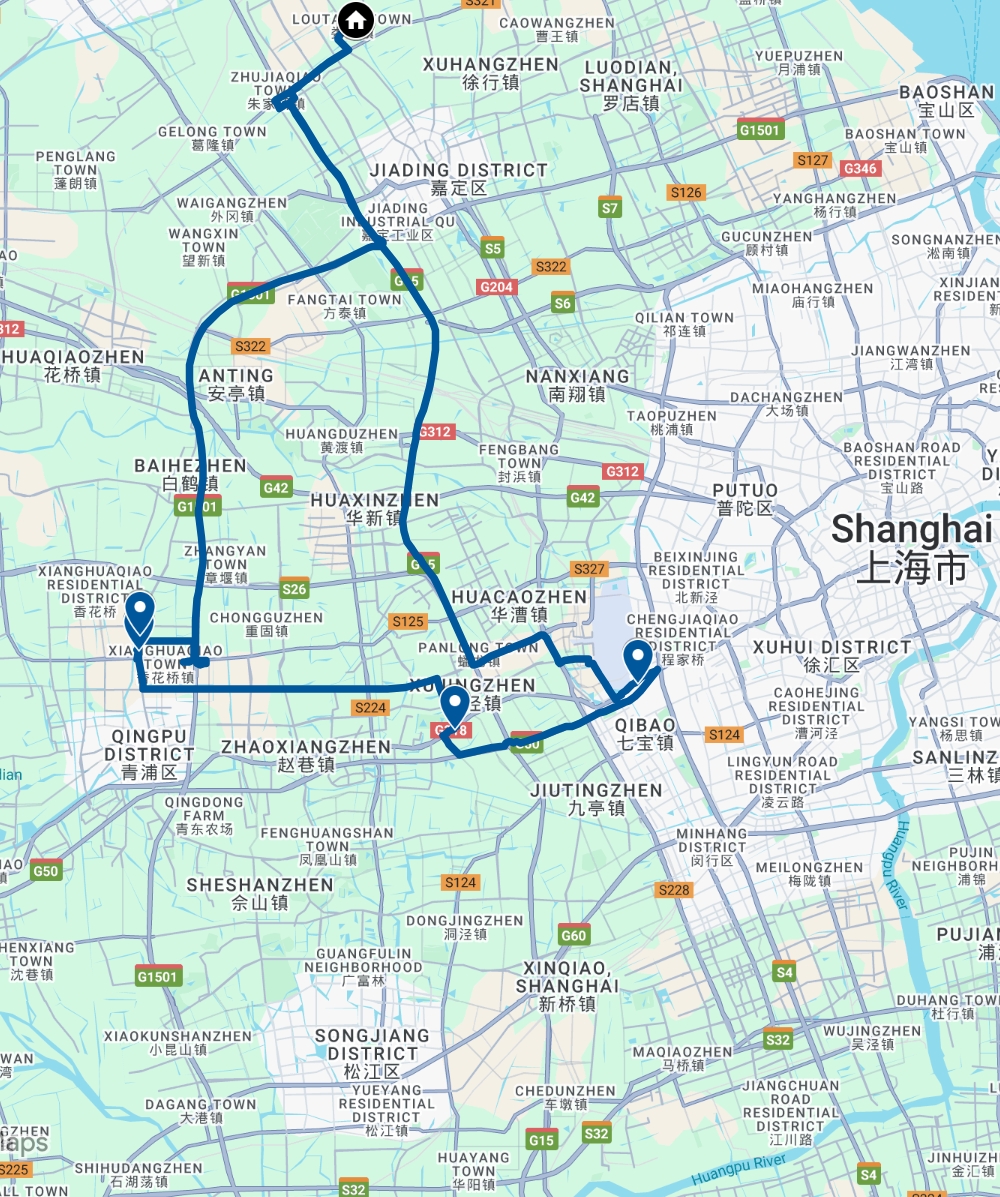}}\\
    \subfigure[Cluster 2 Period 2]{\includegraphics[width=0.24\linewidth]{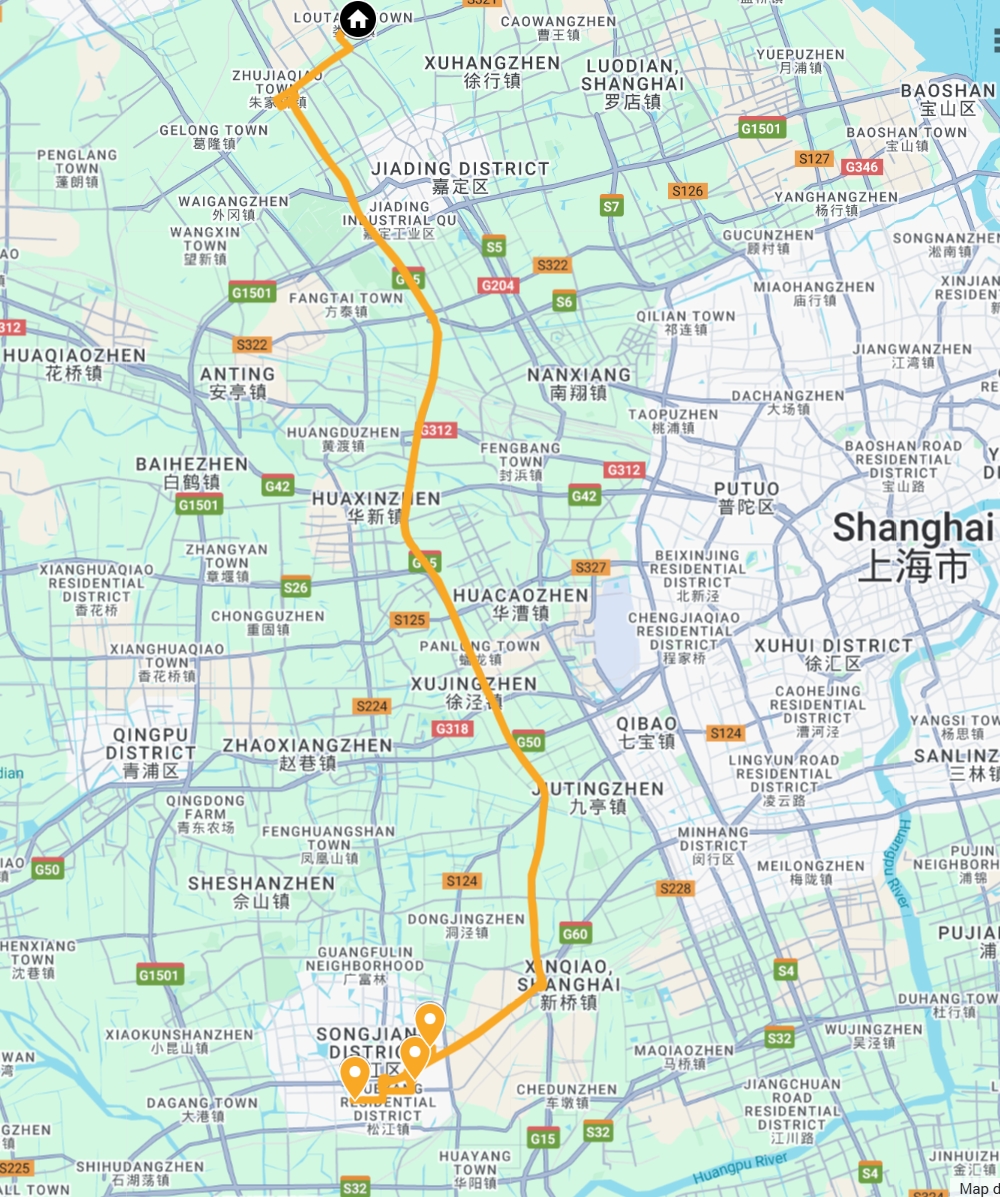}}
    \subfigure[Cluster 2 Period 3]{\includegraphics[width=0.24\linewidth]{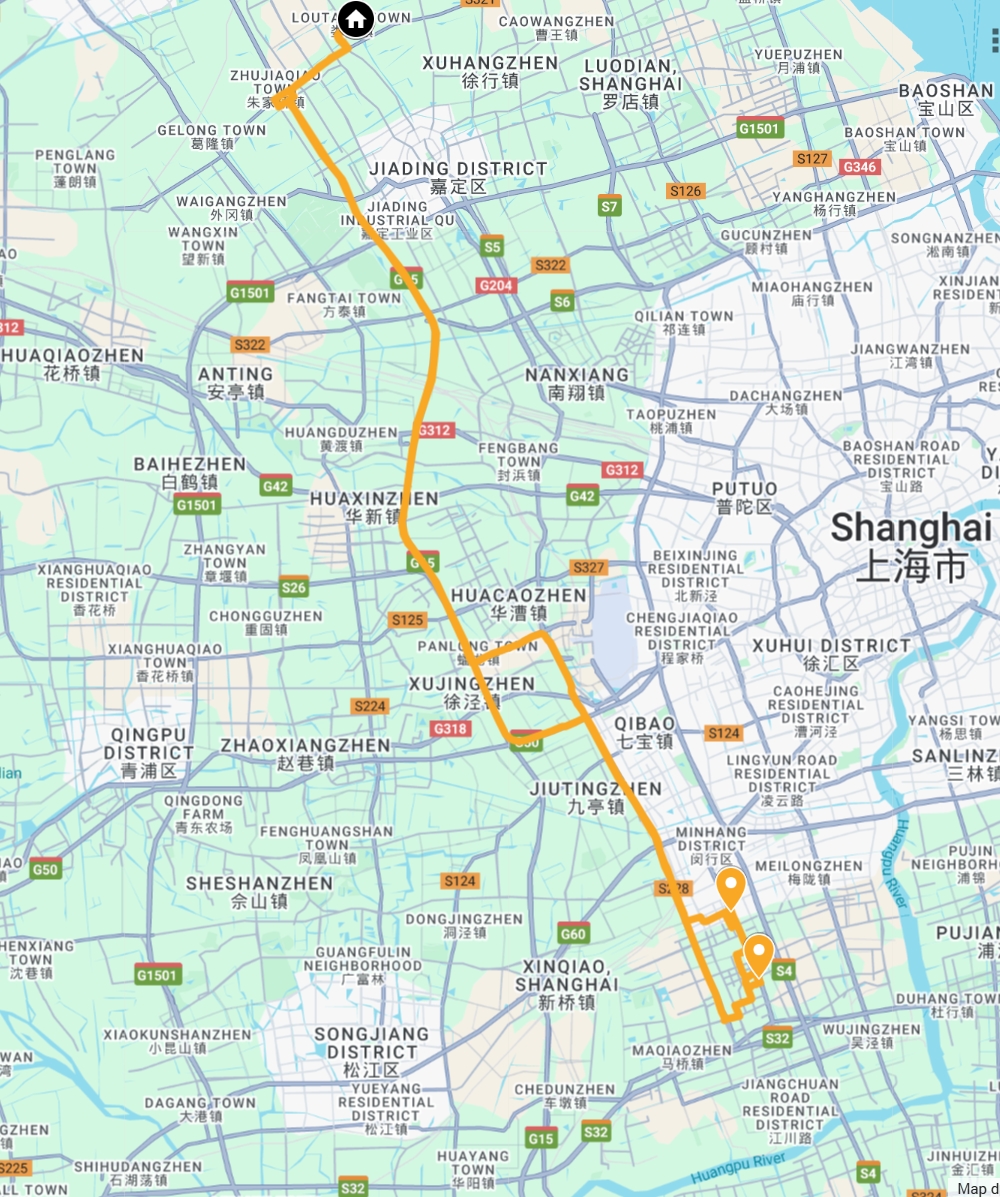}}
    \subfigure[Cluster 2 Period 4]{\includegraphics[width=0.24\linewidth]{fig-2-fle-2-13.png}}
    \subfigure[Cluster 2 Period 5]{\includegraphics[width=0.24\linewidth]{fig-2-fle-2-4.png}}
        \caption{Complete solution visualization of the flexible service policy of instance 2 under real-life data.}
    \label{fig:oa-real-policy}
\end{figure*}

\begin{algorithm}[!htb]
\DontPrintSemicolon
\textbf{Input}: dual variables, $\psi_{t_1},\psi_{t_2},\delta_{it_1}$; constants, $U_t^i$.\\
\textbf{Initialize}: $\varphi=\frac{1+\sqrt{5}}{2}$; $L=0$; $R=\sum_{t\in[T]}\bar{\zeta}_t^i$. \\
$s_1=R-\frac{R-L}{\varphi}, s_2=L+\frac{R-L}{\varphi}$\\
$c_1=costCal(\lfloor s_1\rfloor,\psi_{t_1},\psi_{t_2}); \ c_2=costCal(\lceil s_2\rceil,\psi_{t_1},\psi_{t_2})$\\
\While{$R-L>3$}{
\If{$c_1<c_2$}{
$R=\lceil s_2\rceil;\ s_2=s_1, s_1=R-\frac{R-L}{\varphi}$\\
$c_2=costCal(\lceil s_2\rceil,\psi_{t_1},\psi_{t_2}); \ c_1=costCal(\lfloor s_1\rfloor,\psi_{t_1},\psi_{t_2})$
}
\Else{
$L=\lfloor s_1\rfloor;\ s_1=s_2, s_2=L+\frac{R-L}{\varphi}$\\
$c_1=costCal(\lfloor s_1\rfloor,\psi_{t_1},\psi_{t_2});\ c_2=costCal(\lceil s_2\rceil,\psi_{t_1},\psi_{t_2})$
}
}
$c_L = costCal(L,\psi_{t_1},\psi_{t_2});\ c_{L+1} = costCal(L+1,\psi_{t_1},\psi_{t_2})$\\
$c_{R-1} = costCal(R-1,\psi_{t_1},\psi_{t_2});\ c_R = costCal(R,\psi_{t_1},\psi_{t_2})$\\
$c^* = \min\{c_L, c_{L+1}, c_{R-1}, c_R\}$\\
\Return{cost=$c^*+\delta_{it_1}+\psi_{t_2}\sum_{t\in [t_1,t_2-1]}U^i_t$; {\rm corresponding} $s^*\in\{L,L+1,R-1,R\}$.}\\
\SetKwFunction{FMain}{$costCal$}
\SetKwProg{Fn}{Function}{:}{}
\Fn{\FMain{$s, \psi_{t_1},\psi_{t_2}$}}{
$inv^c=\mathcal{LP}(s)$ // LP formulation in Theorem \ref{wd_theorem}\\
$cost = \frac{1}{T}inv^c+(\psi_{t_1}-\psi_{t_2})s$\\
\Return $cost$
}
\caption{Golden Section Search.}
\label{goldenSection}
\end{algorithm}

\begin{algorithm}[!htb]
\DontPrintSemicolon
\textbf{Input}: The optimal order-up-to level when dual variables being 0, $s$;\\
\hphantom{\textbf{Input}: }the dual value, $dual=\psi_{t_1}-\psi_{t_2}$; \\
\hphantom{\textbf{Input}: }the constant value, $constant=\delta_{it_1}+\psi_{t_2}\sum_{t\in [t_1,t_2-1]}U^i_t$;\\
\hphantom{\textbf{Input}: }dictionary of order-up-to levels and corresponding worst-case expected inventory costs, $dict$.\\
\textbf{Initialize}: $s^{*}=s$\\
\While{not terminate}{
\If{$dual>0$}{
$s^{\prime}=s^{*}-1$
}
\Else{
$s^{\prime}=s^{*}+1$
}
\If{$dict$ {\rm includes} $s^{\prime}$}{
$diff=dict(s^{\prime})-dict(s^{*})$\\
\If{$diff\geq |dual|$}{
$inv^c=dict(s^*)$\\
Terminate
}
\ElseIf{$diff \leq \varepsilon$ or $|dual| \leq 5*diff$}{
\If{$dual>0$}{
$s^{*} -= 1$
}
\Else{
$s^* += 1$
}
}
\Else{
$s^*, inv^c, dict$ = \textbf{goldenSection}($dual, dict$) // Algorithm \ref{goldenSection}\\
Terminate
}
}
\Else{
$inv^c=\mathcal{LP}(s^{\prime})$ // LP formulation in Theorem \ref{wd_theorem}\\
Include $(s^{\prime},inv^c)$ in $dict$
}
\If{$s^{*}=0$ or $s^{*}=$ {\rm maximum inventory capacity}}{
$inv^c=dict(s^*)$\\
Terminate
}
}
$cost=\frac{1}{T}inv^c+dual*s^*+constant$\\
\Return{$s^*, cost, dict$}
\caption{Cost Update.}
\label{updateCvw}
\end{algorithm}
\clearpage

\end{APPENDICES}

\end{document}